\PassOptionsToPackage{english}{babel}
\documentclass[11pt]{article}
\pdfminorversion=4

\usepackage{amsmath, amsfonts, amssymb, amsthm,  graphicx, mathtools, enumerate, dsfont}

\usepackage[blocks, affil-it]{authblk}

\allowdisplaybreaks

\usepackage[square]{natbib}
\usepackage[CJKbookmarks=true,
            bookmarksnumbered=true,
			bookmarksopen=true,
			colorlinks=true,
			citecolor=red,
			linkcolor=blue,
			anchorcolor=red,
			urlcolor=blue]{hyperref}
\usepackage[usenames]{color}

\usepackage[letterpaper, left=1.2truein, right=1.2truein, top = 1.2truein, bottom = 1.2truein]{geometry}
\usepackage[ruled, vlined, lined, commentsnumbered]{algorithm2e}
\usepackage[font=small,labelfont=it]{caption}

\usepackage{prettyref,soul}
\usepackage[ngerman]{babel}
\usepackage{bbm}
\usepackage{multirow}
\usepackage{diagbox}
\usepackage{hyperref}
    \hypersetup{ colorlinks = true, linkcolor = blue, citecolor = blue}
\usepackage{makecell}

\newtheorem{lemma}{Lemma}
\newtheorem{proposition}{Proposition}
\newtheorem{thm}{Theorem}

\newtheorem{corollary}{Corollary}
\newtheorem{assumption}{Assumption}
\newtheorem{remark}{Remark}

\makeatletter
\newcommand{\neutralize}[1]{\expandafter\let\csname c@#1\endcsname\count@}
\makeatother


\newrefformat{eq}{(\ref{#1})}
\newrefformat{chap}{Chapter~\ref{#1}}
\newrefformat{sec}{Section~\ref{#1}}
\newrefformat{algo}{Algorithm~\ref{#1}}
\newrefformat{fig}{Fig.~\ref{#1}}
\newrefformat{tab}{Table~\ref{#1}}
\newrefformat{rmk}{Remark~\ref{#1}}
\newrefformat{clm}{Claim~\ref{#1}}
\newrefformat{def}{Definition~\ref{#1}}
\newrefformat{cor}{Corollary~\ref{#1}}
\newrefformat{lmm}{Lemma~\ref{#1}}
\newrefformat{lemma}{Lemma~\ref{#1}}
\newrefformat{prop}{Proposition~\ref{#1}}
\newrefformat{app}{Appendix~\ref{#1}}
\newrefformat{ex}{Example~\ref{#1}}
\newrefformat{exer}{Exercise~\ref{#1}}
\newrefformat{soln}{Solution~\ref{#1}}
\newrefformat{cond}{Condition~\ref{#1}}

\def\P{{\mathbb P}}



\def\text#1{\mbox{\rm #1}}

\newcommand{\floor}[1]{{\left\lfloor {#1}\right\rfloor}}

\newcommand{\RR}{\mathbb{R}}
\newcommand{\wh}{\widehat}

\newcommand{\supp}{{\rm supp}}

\newcommand{\TV}{{\sf TV}}

\newcommand{\G}{\mathbb{G}}

\newcommand{\E}{\mathbb{E}}
\renewcommand{\P}{\mathbb{P}}
\def\Q{\mathbb{Q}}

\newcommand{\XX}{\widetilde{X}_{ij}}
\newcommand{\bXX}{\mbf{\widetilde{X}}_{ij}}

\newcommand{\cov}{\text{Cov}}
\newcommand{\var}{\text{Var}}

\newcommand{\tr}{\text{Tr}}

\renewcommand{\exp}{\textup{exp}}
\renewcommand{\cal}{\mathcal}
\newcommand{\define}{:=}

\newcommand{\ms}{\quad~}

\newcommand{\td}{\widetilde}
\newcommand{\under}{\underline}
\newcommand{\mbf}{\boldsymbol}
\newcommand{\LP}{\text{LP}}
\newcommand{\indc}{\mathbbm{1}_{\Omega_n}}
\newcommand{\indcc}{\mathbbm{1}_{\Omega_n^c}}
\DeclarePairedDelimiter{\abs}{\lvert}{\rvert}
\DeclarePairedDelimiter{\bbrace}{\lbrace}{\rbrace}
\DeclarePairedDelimiter{\parr}{(}{)}
\DeclarePairedDelimiter{\fence}{[}{]}
\DeclarePairedDelimiter{\ceil}{\lceil}{\rceil}
\DeclarePairedDelimiter{\nm}{\|}{\|}

\let\hat\widehat
\let\bar\overline


\bibliographystyle{apalike}


\makeatletter
\newcommand{\leqnomode}{\tagsleft@true}
\newcommand{\reqnomode}{\tagsleft@false}
\makeatother

\begin{document}
\selectlanguage{english}

\setlength{\abovedisplayskip}{5pt}
\setlength{\belowdisplayskip}{5pt}
\setlength{\abovedisplayshortskip}{5pt}
\setlength{\belowdisplayshortskip}{5pt}

\title{Optimal estimation of variance in nonparametric regression with random design} 

\author[1]{Yandi Shen}
\author[2]{Chao Gao}
\author[3]{Daniela Witten}
\author[4]{Fang Han}
\affil[1]{
University of Washington
 \\
ydshen@uw.edu
}
\affil[2]{
University of Chicago
 \\
chaogao@galton.uchicago.edu
}
\affil[3]{
University of Washington
 \\
dwitten@uw.edu
}
\affil[4]{
University of Washington
 \\
fanghan@uw.edu
}

\date{}

\maketitle

\begin{abstract}
Consider the heteroscedastic nonparametric regression model with random design
\begin{align*}
Y_i = f(X_i) + V^{1/2}(X_i)\varepsilon_i, \quad i=1,2,\ldots,n,
\end{align*}
with $f(\cdot)$ and $V(\cdot)$ $\alpha$- and $\beta$-H\"older smooth, respectively. 
We show that the minimax rate of estimating $V(\cdot)$ under both local and global squared risks is of the order
\begin{align*}
n^{-\frac{8\alpha\beta}{4\alpha\beta + 2\alpha + \beta}} \vee n^{-\frac{2\beta}{2\beta+1}},
\end{align*}
where $a\vee b\define \max\{a,b\}$ for any two real numbers $a,b$. This result extends the fixed design rate $n^{-4\alpha} \vee n^{-2\beta/(2\beta+1)}$ derived in \cite{wang2008effect} in a non-trivial manner, as indicated by the appearances of both $\alpha$ and $\beta$ in the first term. 
In the special case of constant variance, we show that the minimax rate is $n^{-8\alpha/(4\alpha+1)}\vee n^{-1}$ for variance estimation, which further implies the same rate for quadratic functional estimation and thus unifies the minimax rate under the nonparametric regression model with those under the density model and the white noise model. To achieve the minimax rate, we develop a U-statistic-based local polynomial estimator and a lower bound that is constructed over a specified distribution family of randomness designed for both $\varepsilon_i$ and $X_i$.
\end{abstract}

{\bf Keywords:} variance estimation, nonparametric regression, random design, minimax rate, U-statistics.

\section{Introduction}
\label{sec:intro}

Consider the model 
\begin{align}
\label{model:var_function}
Y_i = f(X_i) + V^{1/2}(X_i)\varepsilon_i,\quad i=1,2,\ldots,n,
\end{align}
where $\{X_i\}_{i=1}^n$ are independent and identically distributed (i.i.d.) univariate random design points, and $\{\varepsilon_i\}_{i=1}^n$ are i.i.d. with zero mean, unit variance, and are independent of $\{X_i\}_{i=1}^n$. In this paper, we study the optimal estimation of $V(\cdot)$ under both local and global squared risks. 
 Variance estimation is a fundamental statistical problem \citep{von1941distribution,von1942further,rice1984bandwidth,hall1990asymptotically} with wide applications. It is useful in, for example, construction of confidence bands for the mean function, estimation of the signal-to-noise ratio \citep{verzelen2018adaptive}, and selection of the optimal kernel bandwidth \citep{fan1992design}.
 
When $\{X_i\}_{i=1}^n$ are fixed, estimation of $V(\cdot)$ in \eqref{model:var_function} has been studied extensively in the literature via residual-based methods \citep{hall1989variance,ruppert1997local,hardle1997local,fan1998efficient} and difference-based methods \citep{muller1987estimation,muller2003estimating,brown2007variance,wang2008effect}. One important heuristic from previous studies is that, compared to residual-based methods, difference-based methods are able to achiever a smaller bias and subsequently a smaller mean squared error by avoiding direct estimation of the mean function. More precisely, when $X_i = i/n$, $i=1,\ldots,n$ and $f(\cdot)$ and $V(\cdot)$ in \eqref{model:var_function} are $\alpha$- and $\beta$-H\"older smooth, respectively, \cite{wang2008effect} proposed a difference estimator which achieved the optimal rate of the order $n^{-4\alpha}\vee n^{-\frac{2\beta}{2\beta + 1}}$ under both local and global squared risks. 

In contrast, our study focuses on the case where $\{X_i\}_{i=1}^n$ are i.i.d. random design points on the real line. For this, we show that when $f(\cdot)$ and $V(\cdot)$ in \eqref{model:var_function} are $\alpha$- and $\beta$-H\"older smooth, respectively, the minimax rate of estimating $V(\cdot)$ is of the order $n^{-\frac{8\alpha\beta}{4\alpha\beta + 2\alpha + \beta}}\vee n^{-\frac{2\beta}{2\beta+1}}$ under both local and global squared risks. This result has several noteworthy implications:
\begin{itemize}
\item The minimax rates in random and fixed design settings share a common component, $n^{-\frac{2\beta}{2\beta+1}}$, as well as the same transition boundary $\alpha = \beta/(4\beta + 2)$.
\item For $\alpha < \beta/(4\beta + 2)$, a faster rate is achievable with a random design.
\item Unlike the fixed design setting, for $\alpha < \beta/(4\beta + 2)$, $\alpha$ and $\beta$ are now both present in the first term of the minimax rate in the random design case.
\end{itemize}

We now discuss in more detail this minimax rate. 
The upper bound of the minimax rate is achieved by smoothing pairwise differences via local polynomial regression, the former of which is formulated via U-statistics. Our analysis of this estimator hence relies on the four-term Bernstein inequality in \cite{gine2000exponential}, and unlike classic kernel methods, requires no smoothness assumption on the design density. 

For the lower bound, due to the appearances of both $\alpha$ and $\beta$ in the non-trivial $n^{-\frac{8\alpha\beta}{4\alpha\beta + 2\alpha + \beta}}$ part of the minimax rate and the additional randomness of $\{X_i\}_{i=1}^n$, the derivation is much more involved than its counterpart in the fixed design setting. We tackle the first difficulty of entangled $\alpha$ and $\beta$ via a proper localization technique in the construction of the mean function $f(\cdot)$, depicted in Figure \ref{fig:v_point_lower} in Section \ref{sec:lb3}. The second difficulty caused by the randomness of $\{X_i\}_{i=1}^n$ is resolved with a new trapezoid-shaped construction of the mean $f(\cdot)$, aided by a result due to \cite{kolchin1978random} on the sparse multinomial distribution. This result helps characterize the asymptotic behavior of the locations of $\{X_i\}_{i=1}^n$ and plays a key role in our proof, but to our knowledge has not been well used in the nonparametric statistics literature. 

In the special case of constant variance, \eqref{model:var_function} is reduced to
\begin{align}
\label{model:nonpar}
Y_i = f(X_i) + \sigma\varepsilon_i, \quad i=1,2,\ldots,n,
\end{align}
and the goal becomes estimation of $\sigma^2$. 
In this case, the problem is linked to estimation of a quadratic functional, which has been studied in depth in the other two benchmark nonparametric models, the density model \citep{bickel1988estimating, laurent1996efficient, gine2008simple} and the white noise model \citep{donoho1990minimax, fan1991estimation, laurent2000adaptive}. In the density model, one observes an i.i.d. univariate sequence $\{X_i\}_{i=1}^n$ from some unknown density $f(\cdot)$, and the goal is to estimate $\int f^2(x)dx$. In the white noise model, one observes a continuous-time process from $dY_t = f(t)dt + n^{-1/2} dW_t$ for $t\in[0,1]$ with $W_t$  a standard Wiener process. The goal is to estimate $\int_0^1 f^2(t)dt$. Under an $\alpha$-smoothness condition on $f(\cdot)$, the minimax rate in both of the aforementioned two cases is $n^{-8\alpha/(4\alpha+1)}\vee n^{-1}$ (cf. Theorem 1(ii) and 2(ii) in \cite{bickel1988estimating}, Theorem 4 in \cite{fan1991estimation}). 

Following \cite{doksum1995nonparametric}, a quadratic functional of interest under \eqref{model:nonpar} with random design is
\begin{align}
\label{eq:Q}
Q \define \int f^2(x)p_X(x)w(x)dx,
\end{align}
where $p_X(\cdot)$ is the unknown design density and $w(\cdot)\geq 0$ is some known weight function. Assuming in \eqref{model:nonpar} that $f$ is $\alpha$-H\"older smooth, we show that 
the minimax rate of estimating $\sigma^2$ and $Q$ (when $\sigma^2$ is unknown) is $n^{-8\alpha/(4\alpha+1)}\vee n^{-1}$, thereby unifying the minimax rate of quadratic functional estimation in all three benchmark nonparametric models.




In this paper, we also provide extensions of \eqref{model:nonpar} to multivariate cases, with a focus on the multivariate nonparametric regression model
\begin{align}
\label{model:multivariate_nonpar}
Y_i &= f(\mbf{X}_i) + \sigma\varepsilon_i, \quad i=1,2,\ldots, n,
\end{align}
and the nonparametric additive model
\begin{align}
\label{model:additive}
Y_i &= \sum_{k=1}^d f_k(X_{i,k}) + \sigma\varepsilon_i, \quad i=1,2,\ldots,n,
\end{align}
in both fixed and random designs. Here, $\mbf{X}_i \define (X_{i,1},\ldots,X_{i,d})^\top$, $i=1,\ldots,n$, for some fixed positive integer $d$. Regarding the fixed design, we consider two types, namely, the grid design (GD) and the diagonal design (DD). With a total of $n$ design points, the former places them on a regular grid in the $d$-dimensional cube $[0,1]^d$ while the latter only places design points on the diagonal. Details are given in Sections \ref{subsec:multivariate_nonpar} and \ref{subsec:additive}.

\begin{table}[t!]
\centering
\caption{Summary of minimax rates in \eqref{model:var_function}, \eqref{model:nonpar}, \eqref{model:multivariate_nonpar} and \eqref{model:additive}. The two types of fixed design considered, (GD) and (DD), are defined in \eqref{eq:GD} and \eqref{eq:DD}, respectively. For a $d$-dimensional smoothness index $\mbf{\alpha} = (\alpha_1,\ldots,\alpha_d)^\top$, $\under{\alpha}\define d/(\sum_{k=1}^d 1/\alpha_k)$, $\alpha_{\min} \define \min_{1\leq k\leq d}\alpha_k$, and $\alpha_{\max} \define \max_{1\leq k\leq d}\alpha_k$. The respective sections contain the definition of the distribution class of $\{(X_i,\varepsilon_i)\}_{i=1}^n$ in the random design setting and distribution class of $\{\varepsilon_i\}_{i=1}^n$ in the fixed design setting. Our results include all of the random design rates and fixed design rates in \eqref{model:multivariate_nonpar} and \eqref{model:additive}. Note results for \eqref{model:multivariate_nonpar} and \eqref{model:additive} have additional requirements; see Sections \ref{subsec:multivariate_nonpar} and \ref{subsec:additive} for details.}
\begin{tabular}{| l | c | c | c |} 
 \hline
   & stated in & minimax rate & boundary\\ [0.5ex] 
 \hline
 \eqref{model:var_function}, fixed & \cite{wang2008effect} & $n^{-4\alpha}\vee n^{-2\beta/(2\beta+1)}$ & \multirow{2}{*}{$\alpha = \beta/(4\beta + 2)$} \\
 \eqref{model:var_function}, random & Theorems \ref{thm:v_function_upper}, \ref{thm:v_lower_point}, \ref{thm:v_lower_int} & $n^{-\frac{8\alpha\beta}{4\alpha\beta + \beta + 2\alpha}} \vee n^{-\frac{2\beta}{2\beta+1}}$ & \\[1ex] 
 \hline
 \eqref{model:nonpar}, fixed  & \cite{wang2008effect} & $n^{-4\alpha}\vee n^{-1}$ & \multirow{2}{*}{$\alpha = 1/4$} \\ 
 \eqref{model:nonpar}, random & Theorems \ref{thm:nonpar_upper}, \ref{thm:nonpar_lower} & $n^{-8\alpha/(4\alpha+1)}\vee n^{-1}$ & \\
 \hline
 \eqref{model:multivariate_nonpar}, fixed (GD) & Proposition \ref{prop:multivariate_fixed_GD} & $n^{-4\alpha_{\max}/d}\vee n^{-1}$ & $\alpha_{\max} = d/4$\\
 \eqref{model:multivariate_nonpar}, fixed (DD) & Proposition \ref{prop:multivariate_fixed_DD} & $n^{-4\alpha_{\min}}\vee n^{-1}$ & $\alpha_{\min} = 1/4$ \\  
 \eqref{model:multivariate_nonpar}, random & Propositions \ref{prop:multivariate_upper}, \ref{prop:multivariate_lower} & $n^{-8\under{\alpha}/(4\under{\alpha} + d)}\vee n^{-1}$ & $\under{\alpha} = d/4$ \\
 \hline
 \eqref{model:additive}, fixed (GD) & Proposition \ref{prop:additive_fixed_GD} & $n^{-1}$ &\diagbox[height = 0.6cm, width=4cm]{}{} \\\cline{4-4}
 \eqref{model:additive}, fixed (DD) & Proposition \ref{prop:additive_fixed_DD} & $n^{-4\alpha_{\min}} \vee n^{-1}$ & \multirow{2}{*}{$\alpha_{\min} = 1/4$} \\
 \eqref{model:additive}, random & Propositions \ref{prop:additive_random_lower}, \ref{prop:additive_random_independent} & $n^{-8\alpha_{\min}/(4\alpha_{\min}+1)} \vee n^{-1}$ & \\
 \hline
\end{tabular}
\label{table:1}
\end{table}

We summarize \!the \!minimax \!rates in all of the \!aforementioned \!models in \!Table \!\ref{table:1}. 

The rest of the paper is organized as follows. Section \ref{sec:nonpar} presents the simple model \eqref{model:nonpar} with constant variance. Section \ref{sec:v_function} discusses its heteroscedastic extension   \eqref{model:var_function}. Section \ref{sec:discussion} discusses the multivariate nonparametric regression model \eqref{model:multivariate_nonpar}, the additive model \eqref{model:additive}, and several other extensions of our main results. The essential lower bound proof of the minimax rate $n^{-8\alpha/(4\alpha+1)}\vee n^{-1}$ under model \eqref{model:nonpar} is presented in Section \ref{sec:proof}, with the rest of the proofs given in a supplement.

The notation used throughout the paper is as follows. For any positive integer $n$, $[n]$ denotes the set $\{1,2,\ldots,n\}$. For any real number $a$, we use $\ceil{a}$ to denote the smallest integer greater than or equal to $a$, and $\floor{a}$ the largest integer strictly smaller than $a$. For any positive integer $d$, $\mbf{0}_d$ denotes the zero vector of dimension $d$ and $\mathbf{I}_d$ denotes the identity matrix of dimension $d$. For a real vector $x$, $\|x\|$ and $\|x\|_\infty$ denote its Euclidean and infinity norms, respectively. For a real matrix $\mathbf{A}$, we use $\|\mathbf{A}\|$, $\|\mathbf{A}\|_F$, and $|\mathbf{A}|$ to denote its spectral norm, Frobenius norm, and determinant, respectively. For an $m$-times differentiable function $f:\RR\rightarrow\RR$ with some positive integer $m$, we use $f^{(k)}$ to denote its $k$th derivative for $k=1,2,\ldots,m$. For identically distributed random variables $X_i$ and $X_j$, we use $\P_{X_i}(\cdot)$ and $p_{X_i}(\cdot)$ to denote the distribution and density of $X_i$, $\XX$ to denote $X_i - X_j$, and $p_{\XX}(\cdot)$ to denote the density of $X_i - X_j$. Similar notation $\P_{\mbf{X}_i}(\cdot), p_{\mbf{X}_i}(\cdot), \bXX, p_{\bXX}(\cdot)$ applies to identically distributed random vectors $\mbf{X}_i$ and $\mbf{X}_j$. For a positive integer $d$ and $\mbf{\mu}\in\RR^d,\mbf{\Sigma}\in\RR^{d\times d}$, $\cal{N}_d(\mbf{\mu},\mbf{\Sigma})$ stands for the $d$-dimensional normal distribution with mean $\mbf{\mu}$ and covariance $\mbf{\Sigma}$. We will drop the subscript $d$ for simplicity when $d=1$. $\Phi(\cdot)$ and $\varphi(\cdot)$ represent the standard normal distribution and density. More generally, we will write $\varphi_{\mu,\sigma^2}(\cdot)$ as the density for the normal distribution with mean $\mu$ and variance $\sigma^2$. For two probability measures $\P,\Q$ defined on a common space $(\Omega,\cal{A})$, $\TV(\P,\Q)$ denotes their total variation distance, that is, $\TV(\P,\Q) \define \sup_{A\in\cal{A}}\abs*{\P(A)-\Q(A)}$. For two real sequences $\{a_n\}$ and $\{b_n\}$, $a_n\lesssim b_n$ if $|a_n|\leq C|b_n|$ for some positive absolute constant $C$. We say $a_n\asymp b_n$ if $a_n\lesssim b_n$ and $b_n\lesssim a_n$. 


%

\section{Homoscedastic case}
\label{sec:nonpar}

To illustrate some of the main ideas developed in this paper, we begin with a discussion of the elementary univariate homoscedastic nonparametric regression model \eqref{model:nonpar}:
\begin{align*}
Y_i = f(X_i) + \sigma\varepsilon_i, \quad i=1,2,\ldots,n.
\end{align*}
Here, $\{X_i\}_{i=1}^n$ are i.i.d. copies of a univariate random variable $X$, $f(\cdot)$ belongs to an $\alpha$-H\"older class that will be specified soon, and $\{\varepsilon_i\}_{i=1}^n$ are i.i.d. copies of a variable $\varepsilon$ with zero mean and unit variance and are independent of $\{X_i\}_{i=1}^n$. Both the mean function $f(\cdot)$ and the distribution of $\{X_i\}_{i=1}^n$ are assumed unknown. 

Model \eqref{model:nonpar} has been extensively studied using residual-based and difference-based methods; see, among many others, \cite{von1941distribution}, \cite{von1942further}, \cite{rice1984bandwidth}, \cite{gasser1986residual}, \cite{hall1990asymptotically}, \cite{hall1990variance}, \cite{thompson1991noise}, \cite{muller2003estimating}, \cite{wang2008effect}. A related functional estimation problem has also been studied in semiparametric models \citep{robins2008higher,robins2009semiparametric}. Most of the previous studies focus on the case of fixed design, especially the equidistant design with $X_i=i/n$, $i\in[n]$, for which the minimax rate of estimating $\sigma^2$ under an $\alpha$-H\"older smoothness constraint on $f(\cdot)$ is known to be $n^{-4\alpha}\vee n^{-1}$ (cf. Theorems 1 and 2 in \cite{wang2008effect}).



In detail, let $I$ be a fixed (possibly infinite) interval on the real line. Define the H\"older class $\Lambda_{\alpha,I}(C_\cal{F})$ on $I$ as follows:
\begin{equation}
\begin{aligned}
\label{eq:holder}
\Lambda_{\alpha,I}(C_\cal{F}) \define \big\{f: &\text{ for all } x, y\in I\text{ and } k = 0,\ldots, \floor{\alpha},\\
& \abs*{f^{(k)}(x)}\leq C_\cal{F} \text{ and } \abs*{f^{(\floor{\alpha})}(x) - f^{(\floor{\alpha})}(y)}\leq C_\cal{F}|x-y|^{\alpha^\prime}\big\},
\end{aligned}
\end{equation}
where $\alpha^\prime \define \alpha - \floor{\alpha}$. Denote the support of $X$ as $\supp(X)$.

Define 
the joint distribution class $\cal{P}_{\tiny{\text{cv}},(X,\varepsilon)}$ (where ``cv" stands for ``constant variance") with the following conditions:
\begin{itemize}
\item[(a)] $X$ satisfies $\supp(X)\subset I$.
\item[(b)] $X$ has density $p_X(\cdot)$ and there exists a fixed positive constant $C_0$ such that 
\begin{align*}
\sup_{x\in\RR} p_{X}(x) \leq C_0.
\end{align*}
\item[(c)]There exist two fixed constants $\delta_0 > 0$ and $c_0 > 0$ such that for any  $0 < \delta < \delta_0$, there exists a set $\cal{U}_\delta\subset[-1,1]$ such that
\begin{align*}
\lambda(\cal{U}_\delta) \geq c_0 \quad \text{ and } \quad\inf_{u\in\cal{U}_\delta}p_{\XX}(u\delta) \geq c_0,
\end{align*}
where $\lambda(\cdot)$ represents the Lebesgue measure on the real line, and $\XX = X_i - X_j$.
\item[(d)]$\E\varepsilon^4 \leq C_\varepsilon$ for some fixed positive constant $C_\varepsilon$.
\end{itemize}

Note that no smoothness condition is placed on the density of $X$. Condition (c) essentially requires the density $p_{\XX}$ to be ``dense" around $0$, and is strictly weaker than a uniform lower bound of $p_{\XX}$ over a fixed neighborhood of $0$. It also follows from the following sufficient condition on the marginal density $p_X(\cdot)$ (see Lemma \ref{lemma:marginal_condition} in the supplement for the justification):
\begin{itemize}
\item[(c$^\prime$)] $X$ is compactly supported (taken to be $[0,1]$ without loss of generality). There exists some positive constant $c_0$ and subset $S\subset[-1,1]$ with Lebesgue measure $\lambda(S)\geq 3/4$ such that $p_X(t)\geq c_0$ uniformly over $t\in S$.
\end{itemize}
In particular, (c$^\prime$) covers the uniform distribution on $[0,1]$ and the distribution of $X$ in the lower bound construction in the proof of Theorem \ref{thm:nonpar_lower}.

The rest of the section is devoted to proving, for any fixed positive constants $C_\cal{F}$ and $C_\sigma$, the following minimax rate: 
\begin{align}
\label{eq:nonpar_minimax}
\inf_{\td{\sigma}^2}\sup_{f\in\Lambda_{\alpha,I}(C_\cal{F})}\sup_{\sigma^2 \leq C_\sigma}\sup_{\P_{(X,\varepsilon)}\in\cal{P}_{\tiny{\text{cv}},(X,\varepsilon)}} \E\parr*{\td{\sigma}^2 - \sigma^2}^2 \asymp n^{-8\alpha/(4\alpha+1)}\vee n^{-1},
\end{align}
where $\P_{(X,\varepsilon)}$ denotes the joint distribution of $(X,\varepsilon)$, and $\td{\sigma}^2$ ranges over all estimators of $\sigma^2$.

\subsection{Upper bound}
\label{subsec:nonpar_upper}
The upper bound is achieved by a difference estimator based on U-statistics (with convention $0/ 0 =0$):
\begin{align}
\label{eq:nonpar_estimator}
\hat{\sigma}^2 \define \frac{{n\choose 2}^{-1}\sum_{i<j}K_h(X_i - X_j)(Y_i-Y_j)^2/2}{{n\choose 2}^{-1}\sum_{i<j}K_h(X_i-X_j)}.
\end{align}
Here, $K_h(\cdot) \define K(\cdot/h)/h$, where $h = h_n$ is a bandwidth parameter satisfying $h_n\downarrow 0$ as $n\rightarrow \infty$, and $K(\cdot)$ is a symmetric density kernel supported on $[-1,1]$ that satisfies 
\begin{align}
\label{eq:kernel}
\under{M}_K \leq \inf_{|u|\leq 1}K(u) \leq \sup_{|u|\leq 1} K(u) \leq \overline{M}_K
\end{align}
for two fixed constants $\overline{M}_K$ and $\under{M}_K$; one example is the box kernel $K(u) = \mathbbm{1}\bbrace*{|u|\leq 1}/2$ which satisfies \eqref{eq:kernel} with $\overline{M}_K = \under{M}_K = 1/2$. 

The following error bound is derived via the exponential inequality for degenerate U-statistics due to \cite{gine2000exponential}.

\begin{thm}
\label{thm:nonpar_upper}
Suppose the kernel $K(\cdot)$ in $\widehat{\sigma}^2$ is chosen such that \eqref{eq:kernel} is satisfied with constants $\overline{M}_K$ and $\under{M}_K$, and the bandwidth $h_n$ is chosen as 
\begin{align}
\label{eq:h_opt_nonpar}
h_n\asymp
\begin{cases}
n^{-2/(4\alpha+1)}, & 0 < \alpha < 1/4,\\
n^{-1}, & \alpha \geq 1/4.
\end{cases}
\end{align}
Then, under \eqref{model:nonpar} with random design, it holds that
\begin{align*}
\sup_{f\in\Lambda_{\alpha,I}(C_\cal{F})}\sup_{\sigma^2 \leq C_\sigma}\sup_{\P_{(X,\varepsilon)}\in\cal{P}_{\tiny{\text{cv}},(X,\varepsilon)}}\E\parr*{\hat{\sigma}^2 - \sigma^2}^2\leq C\parr*{n^{-8\alpha/(4\alpha + 1)}\vee n^{-1}},
\end{align*}
where $C$ is some fixed positive constant that only depends on $\overline{M}_K, \under{M}_K$, $\alpha,C_\cal{F},C_\sigma$ and $C_0,c_0,C_\varepsilon$ in $\cal{P}_{\tiny{\text{cv}},(X,\varepsilon)}$.
\end{thm}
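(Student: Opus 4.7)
The plan is to write $\hat\sigma^2-\sigma^2 = (N-\sigma^2 D)/D$ where $N = \binom{n}{2}^{-1}\sum_{i<j} K_h(X_i-X_j)(Y_i-Y_j)^2/2$ and $D = \binom{n}{2}^{-1}\sum_{i<j} K_h(X_i-X_j)$ are the numerator and denominator of $\hat\sigma^2$, and to control these two pieces separately. Expanding $(Y_i-Y_j)^2$ and using $\E(\varepsilon_i-\varepsilon_j)^2 = 2$ gives $N - \sigma^2 D = T_A + T_B + T_C$, where
\begin{align*}
T_A &= \binom{n}{2}^{-1}\sum_{i<j} K_h(X_i-X_j)\tfrac{1}{2}(f(X_i)-f(X_j))^2,\\
T_B &= \sigma\binom{n}{2}^{-1}\sum_{i<j} K_h(X_i-X_j)(f(X_i)-f(X_j))(\varepsilon_i-\varepsilon_j),\\
T_C &= \tfrac{\sigma^2}{2}\binom{n}{2}^{-1}\sum_{i<j} K_h(X_i-X_j)\bigl[(\varepsilon_i-\varepsilon_j)^2 - 2\bigr].
\end{align*}

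The bias term $T_A$ admits a deterministic pointwise bound: the kernel forces $|X_i-X_j| \le h$ and H\"older continuity yields $(f(X_i)-f(X_j))^2 \lesssim h^{2(\alpha\wedge 1)}$ on that event, so $T_A \le C h^{2(\alpha\wedge 1)} D$ and hence $(T_A/D)^2 \lesssim h^{4(\alpha\wedge 1)}$. The mean-zero U-statistics $T_B$ and $T_C$ are non-degenerate; I would apply Hoeffding's decomposition to each, splitting it into a linear i.i.d.\ sum plus a completely degenerate order-2 U-statistic, and bound the two pieces by direct moment computations to obtain
\[
\var(T_B) \;\lesssim\; \frac{h^{2\alpha}}{n}+\frac{h^{2\alpha-1}}{n^2},\qquad \var(T_C) \;\lesssim\; \frac{1}{n}+\frac{1}{n^2 h}.
\]
The key second-moment estimate $\E K_h^2(X_i-X_j) \lesssim 1/h$ uses only $\sup_x p_X(x) \le C_0$, so no smoothness on $p_X$ enters; the $1/(n^2 h)$ summand for $T_C$ is exactly the contribution of the degenerate part.

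For the denominator, a change of variables combined with condition (c) gives $\mu_D := \E D \ge c_0^2 \underline{M}_K > 0$ uniformly in small $h$. Hoeffding decomposition of $D$, together with classical Bernstein for the linear part and the four-term Bernstein of \cite{gine2000exponential} for the degenerate part, shows that $\Omega := \{D \ge \mu_D/2\}$ has exponentially small complement. On $\Omega$ we have $(\hat\sigma^2-\sigma^2)^2 \le 4(T_A+T_B+T_C)^2/\mu_D^2$, and integrating the bounds above gives MSE at most $C\bigl[h^{4(\alpha\wedge 1)} + 1/n + 1/(n^2 h)\bigr]$; on $\Omega^c$ the crude bound $\hat\sigma^2 \le \tfrac{1}{2}\max_{i,j}(Y_i-Y_j)^2$ has polynomial-in-$n$ moments under $\E\varepsilon^4 \le C_\varepsilon$ and is killed by the exponentially small $\P(\Omega^c)$. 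Taking $h \asymp n^{-2/(4\alpha+1)}$ when $\alpha < 1/4$ balances the bias $h^{4\alpha}$ against the degenerate-noise variance $1/(n^2 h)$, and $h \asymp 1/n$ when $\alpha \ge 1/4$ makes every term of order at most $1/n$. Either way the final MSE is $n^{-8\alpha/(4\alpha+1)} \vee n^{-1}$, as claimed.

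The main obstacle is simultaneously obtaining the sharp $1/(n^2 h)$ variance for the degenerate part of $T_C$ and a strong enough tail for $D$ to defeat the ratio structure: cruder moment methods inflate both quantities and break the match with the lower bound, which is exactly why the four-term Bernstein of \cite{gine2000exponential} is essential.
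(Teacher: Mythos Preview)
Your proposal is correct and follows essentially the same architecture as the paper: lower-bound $\E D$ via condition (c), control the denominator by the four-term Bernstein inequality of \cite{gine2000exponential} to get an exponentially small bad event, bound the numerator on the good event, and kill the bad event with a crude polynomial-moment estimate. The only real difference is in how you split the numerator. The paper writes $U_1-\sigma^2 U_2 = (U_1-\theta_1)-\sigma^2(U_2-\theta_2)+(\theta_1-\sigma^2\theta_2)$ and bounds the variances of $U_1$ and $U_2$ separately (Lemmas in the appendix) plus the scalar bias $\theta_1-\sigma^2\theta_2$; you instead expand $(Y_i-Y_j)^2$ directly into the three pieces $T_A,T_B,T_C$. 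Your decomposition is slightly more transparent---in particular the deterministic inequality $T_A\le C h^{2(\alpha\wedge 1)}D$ handles the bias without computing any expectation, and $T_B$ is seen to be of smaller order than $T_C$---whereas the paper's decomposition is a bit more modular since the same $U_1,U_2$ variance lemmas are reused later in the multivariate and heteroscedastic sections. Both routes produce the identical rate $h^{4(\alpha\wedge 1)}+n^{-1}+n^{-2}h^{-1}$, and nothing substantive changes.
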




\begin{remark}
\label{remark:bias_var_nonpar}
The error rate in Theorem \ref{thm:nonpar_upper} is achieved by choosing the optimal bandwidth $h_n$ to balance the ``bias-variance" decomposition:
\begin{align}
\label{eq:bias_var_nonpar}
\bbrace*{\E\parr*{\hat{\sigma}^2 - \sigma^2}^2}^{1/2} \lesssim h_n^{2(\alpha\wedge 1)} + \frac{1}{nh_n^{1/2}},
\end{align} 
where $a\wedge b\define \min\{a,b\}$ for any two real numbers $a,b$. The bias term $h_n^{2(\alpha\wedge 1)}$ reflects the second-order effect of the unknown mean on variance estimation, which has been noted by \cite{hall1989variance} and \cite{wang2008effect}. The variance part follows from the fact that there is an average number of $n^2h_n$ pairs of $(i,j)$ such that $|X_i - X_j|\leq h_n$. We note that the same ``bias-variance" decomposition has appeared in quadratic functional estimation in the density model and Gaussian sequence model \citep{bickel1988estimating, fan1991estimation, gine2008simple}. See Section \ref{subsec:quad_est} for a more detailed discussion.     
\end{remark}

\begin{remark}
\label{remark:compare_muller}
While most of the previous works are in the context of fixed design, \cite{muller2003estimating} considered constant variance estimation with random design, and their estimator (formula (1.4) therein) is almost identical to our $\widehat{\sigma}^2$. Under certain assumptions (Assumptions 1 and 2 and (2.4) - (2.7) therein), they show that their estimator is root-n consistent and asymptotically normal. However, as commented in the first paragraph on p. 184 of their paper, their condition (2.7) is only satisfied when the mean function smoothness $\alpha$ is strictly larger than $1/4$, and no analysis is provided below this threshold. Our minimax rate $n^{-8\alpha/(4\alpha+1)}\vee n^{-1}$ therefore confirms that $\alpha\geq 1/4$ is indeed the minimal requirement for any variance estimator to be root-n consistent and we also demonstrate the optimality of $\widehat{\sigma}^2$ for $0 < \alpha < 1/4$.
\end{remark}


Finally, in \eqref{model:nonpar}, we have assumed that the smoothness index $\alpha$ is known. If it is unknown, then the variance can be estimated adaptively via Lepski-type methods \citep{lepskii1991problem,lepskii1992asymptotically}. This is discussed in more detail in Section \ref{subsec:adaptive}.

\subsection{Lower bound}
\label{subsec:nonpar_lower}
The derivation of the lower bound in \eqref{eq:nonpar_minimax} is much more involved. In particular, the construction in the fixed design setting (cf. Theorem 2 in \cite{wang2008effect}) cannot be extended to the random design case, since the spike-type construction of $f(\cdot)$ located at each deterministic design point leads to a sub-optimal rate in the random design setting. To achieve a sharp rate, we have to exploit the randomness of $\{X_i\}_{i=1}^n$; this requires us to handle a highly convoluted alternative hypothesis  that no longer leads to a product measure of $\{Y_i\}_{i=1}^n$ given each realization of $\{X_i\}_{i=1}^n$ in LeCam's two-point method. This calls for a careful analysis of the locations of $\{X_i\}_{i=1}^n$.

We now sketch a proof of the $n^{-8\alpha/(4\alpha+1)}$ component in \eqref{eq:nonpar_minimax} for $0 < \alpha < 1/4$, with a particular emphasis on where the difference arises with the fixed design setting. 
The proof can be roughly divided into two steps. In the first step, we construct a two-point testing problem with the null being a Gaussian ($H_0$) and the alternative a Gaussian location mixture ($\td{H}_1$). In the second step, we approximate the Gaussian location mixture ($\td{H}_1$) by a location mixture with compact support ($H_1$), which, unlike the alternative in the first step, belongs to the considered model class. 

We start by introducing the construction of $f(\cdot)$, $\sigma^2$, $\varepsilon$, and $X$ under the null $H_0$ and the alternative $\td{H}_1$ in the first step. For each $n$, let
\begin{align*}
h_n \asymp n^{-2/(4\alpha + 1)}, \quad \theta_n^2 \asymp h_n^{2\alpha},\quad \text{ and }\quad N\define 1/(6h_n),
\end{align*}
and divide the unit interval $[0,1]$ into $N$ intervals of length $6h_n$, with $n$ large enough and $h_n$ chosen such that $N$ is a positive integer. 
\begin{itemize}
\item[]\emph{Choice of $f(\cdot)$}: Under $H_0$, let $f\equiv 0$. Under $\td{H}_1$, let $f(\cdot)$ be a piecewise trapezoidal function on the $N$ intervals. That is, for each $i\in[N]$, $f$ takes on a value of $h_n^\alpha\td{r}_i$ on the intervals $[(6i-5)h_n, (6i-1)h_n]$ and then linearly decreases to zero on the two endpoints $6(i-1)h_n$ and $6ih_n$, with $\{\td{r}_i\}_{i=1}^N$ i.i.d. standard normal variables.
\item[]\emph{Choice of $\sigma^2$}: Under $H_0$, let $\sigma^2 = 1+\theta_n^2$. Under $\td{H}_1$, let $\sigma^2 = 1$.
\item[]\emph{Choice of $\varepsilon$}: Under both $H_0$ and $\td{H}_1$, let $\varepsilon\sim\cal{N}(0,1)$.
\item[]\emph{Choice of $X$}: Under both $H_0$ and $\td{H}_1$, let $\{X_i\}_{i=1}^n$ be uniformly distributed over the union of the upper bases of the trapezoids, that is, over $\bigcup_{i=1}^N[(6i-5)h_n, (6i-1)h_n]$.
\end{itemize}
See Figure \ref{fig:nonpar_lower} for an illustration of the construction.

\begin{figure}[t!]
\centering
\includegraphics[width = 0.7\textwidth]{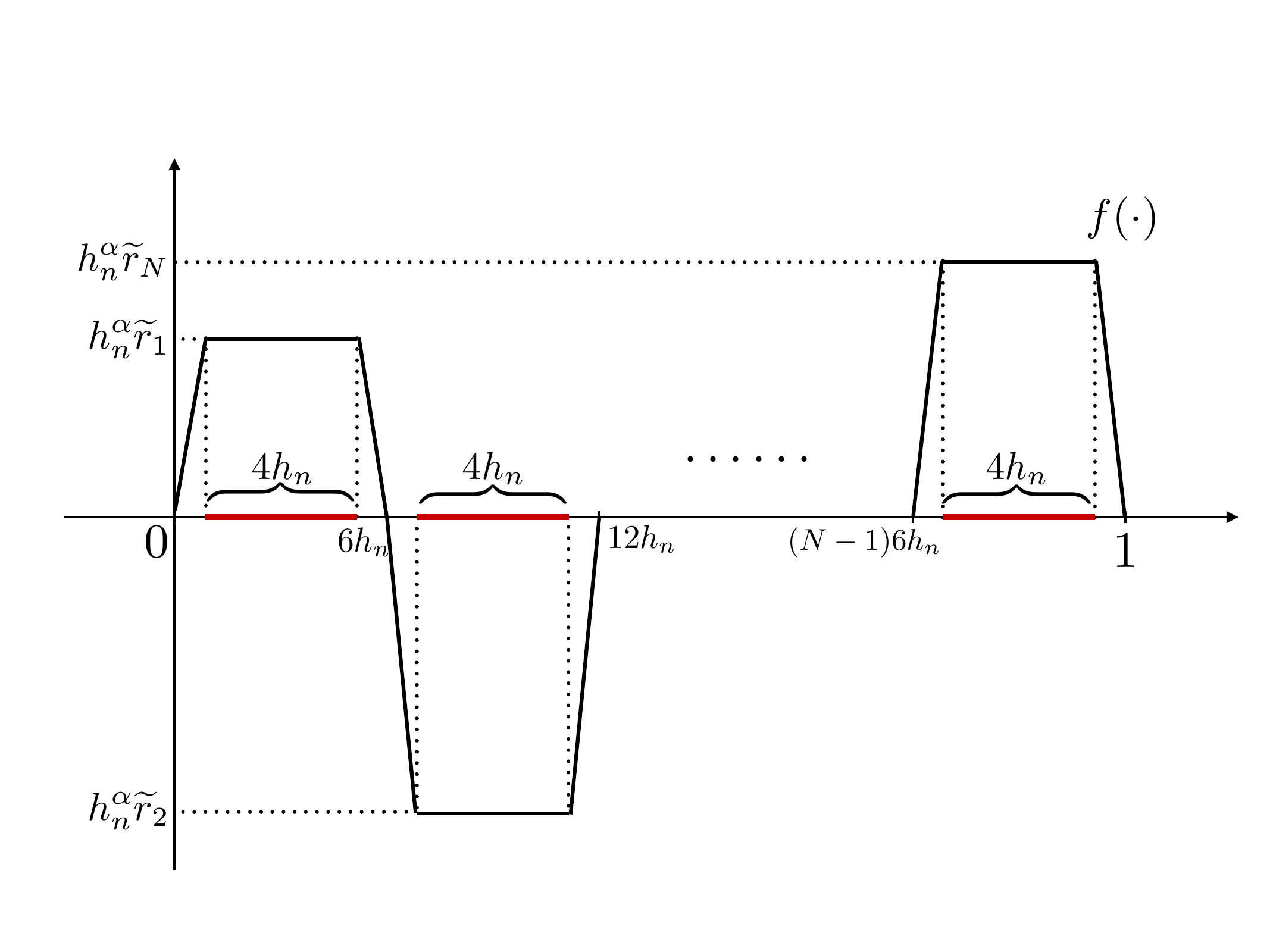}
\caption{The black solid line represents the construction of $f(\cdot)$ under the alternative hypothesis $\td{H}_1$. The thick red segments indicate the support of $X$ under both $H_0$ and $\td{H}_1$, on which $X$ is uniformly distributed. Here, $h_n\asymp n^{-2/(4\alpha + 1)}$ and is chosen such that $N\define 1/(6h_n)$ is a positive integer. $\{\td{r}_i\}_{i=1}^N$ are $N$ i.i.d. standard normal variables.}
\label{fig:nonpar_lower}
\end{figure}

 In contrast to the spike-type construction of $f(\cdot)$ in the fixed design setting, our construction is trapezoid-shaped, 
which guarantees a maximal variation in the mean to compensate for the difference in the variance under the null and alternative. 
This is unnecessary in the fixed design setting since the point of \!maximal variation in \!the mean (center of each spike) can be directly placed at each fixed $X_i = i/n$, resulting in $n$ \!evenly spaced spikes in $f(\cdot)$. 

Denote the joint distribution of $\{(X_i, Y_i)\}_{i=1}^n$ under $H_0$ and $\td{H}_1$ by $\P_0$ and $\td{\P}_1$ with respective density $p_0$ and $\td{p}_1$.  Under the above construction, conditional on $\{X_i\}_{i=1}^n$, $\{Y_i\}_{i=1}^n$ are distributed as 
\begin{align*}
H_0: p_0(\{Y_i\}_{i=1}^n\mid\{X_i\}_{i=1}^n) = \prod_{i=1}^n\varphi_{0,1+\theta_n^2}(Y_i)
\end{align*}
and 
\begin{align*}
\quad \td{H}_1: \td{p}_1(\{Y_i\}_{i=1}^n\mid \{X_i\}_{i=1}^n) = \prod_{j=1}^N \int \parr*{\prod_{\{i:b_i = j\}}\varphi_{h_n^\alpha v, 1}(Y_i)}\varphi(v)dv,
\end{align*}
where $\{b_i\}_{i=1}^n$ is the location index sequence of $\{X_i\}_{i=1}^n$ defined as 
\[
b_i := j~~~{\rm if}~~X_i\in[(6j-5)h_n, (6j-1)h_n], 
\]
which characterizes which trapezoid each $X_i$ falls into. Using Lemma \ref{lemma:gaussian_tv} that will be stated in Section \ref{sec:proof}, one can then upper bound 
\[
\TV(\P_0, \td{\P}_1)  = \E\TV(\P_0(\{Y_i\}_{i=1}^n\mid\{X_i\}_{i=1}^n), \td{\P}_1(\{Y_i\}_{i=1}^n\mid \{X_i\}_{i=1}^n)) \lesssim \theta_n^2nh_n^{1/2},
\]
which can be made smaller than a sufficiently small constant $c$ by choosing $h_n$ sufficiently small.

The second step of the proof aims to find a sequence of bounded random variables $\{r_i\}_{i=1}^N$ to replace the standard normal sequence $\{\td{r}_i\}_{i=1}^N$ in $\td{\P}_1$, so that for each realization of $\{r_i\}_{i=1}^N$, the corresponding $f(\cdot)$ in the alternative is $\alpha$-H\"older smooth with a fixed constant. Then, denoting the distribution of $\{r_i\}_{i=1}^N$ as $\G$, one wishes to approximate the conditional distribution $\td{\P}_1(\{Y_i\}_{i=1}^n\mid \{X_i\}_{i=1}^n)$ in $\td{H}_1$ by $\P_1(\{Y_i\}_{i=1}^n\mid \{X_i\}_{i=1}^n)$ with density 
\begin{align*}
p_1(\{Y_i\}_{i=1}^n\mid \{X_i\}_{i=1}^n) = \prod_{j=1}^N \int \parr*{\prod_{\{i:b_i = j\}}\varphi_{h_n^\alpha v, 1}(Y_i)}\G(dv)
\end{align*}
in $H_1$. 
Even with the aid of moment matching techniques already established in the literature, upper bounding $\TV(\P_1,\td{\P}_1)$ is still nontrivial. 
Specifically, unlike in the fixed design setting, now with high probability the conditional distribution of $\{Y_i\}_{i=1}^n$ given $\{X_i\}_{i=1}^n$ is no longer a product measure. This is because multiple $X_i$'s could fall into the same trapezoid in the construction of $f(\cdot)$. This can be handled relatively easily in the first step since there we only have to analyze the pairwise correlation of $Y_i\mid X_i$ and $Y_j \mid X_j$ depending on whether $X_i$ and $X_j$ fall into the same trapezoid, but it is much less tractable in the second step. More specifically, in order to match moments, we now have to divide the $X_i$'s into groups based on their memberships among the trapezoids, which naturally requires us to
monitor the locations of $\{X_i\}_{i=1}^n$, and in particular the number of $X_i$'s that fall into the same trapezoid. This is possible by observing that the memberships of $\{X_i\}_{i=1}^n$ now follow a sparse multinomial distribution ($n^{2/(4\alpha + 1)}$ bins, $n$ balls) so that a result in \cite{kolchin1978random} can be applied.
This allows us to show that with high probability the maximum number of $X_i$'s in each trapezoid is bounded by a fixed constant,
which, along with Lemma \ref{lemma:moment_matching} in Section \ref{sec:proof}, allows us to calculate 
\[
\TV(\P_1,\td{\P}_1)\lesssim n\theta_n^{2p}
\]
for $p \define 1 + \ceil{1/4\alpha}$. This indicates that $\TV(\P_1,\td{\P}_1)$ is smaller than some sufficiently small constant $c$. Then, by the triangle inequality,
\[
\TV(\P_0, \P_1) \leq \TV(\P_0, \td{\P}_1) + \TV(\P_1, \td{\P}_1) \leq 2c.
\]

Details of the above derivation will be given in Section \ref{sec:proof}. The resulting lower bound is as follows.

\begin{thm}
\label{thm:nonpar_lower}
Under \eqref{model:nonpar} with random design, it holds that
\begin{align*}
\inf_{\td{\sigma}^2}\sup_{f\in\Lambda_{\alpha,I}(C_\cal{F})}\sup_{\sigma^2 \leq C_\sigma}\sup_{\P_{(X,\varepsilon)}\in\cal{P}_{\tiny{\text{cv}},(X,\varepsilon)}}\E\parr*{\td{\sigma}^2 - \sigma^2}^2 \geq c\parr*{n^{-8\alpha/(4\alpha+1)}\vee n^{-1}},
\end{align*}
where $c$ is some fixed positive constant that only depends on $\alpha,C_\cal{F}, C_\sigma$ and $C_0, c_0,C_\varepsilon$ in $\cal{P}_{\tiny{\text{cv}},(X,\varepsilon)}$, and $\td{\sigma}^2$ ranges over all estimators of $\sigma^2$.
\end{thm}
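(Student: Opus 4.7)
The plan is to establish the two components of the lower bound separately via LeCam's two-point method. The parametric rate $n^{-1/2}$ can be extracted from a standard reduction between two constant-variance Gaussian models (say $\sigma_0^2$ versus $\sigma_0^2 + c n^{-1/2}$ with $f\equiv 0$, $X$ uniform on $[0,1]$, and $\varepsilon$ Gaussian), for which the product-Gaussian KL is $O(1)$ and the squared gap in $\sigma^2$ is of order $n^{-1}$, giving a bound of order $n^{-1/2}$ after taking square roots in LeCam's inequality. The substantive content is the $n^{-8\alpha/(4\alpha+1)}$ component in the regime $0<\alpha<1/4$, which I tackle following the two-step roadmap already outlined: first oppose a Gaussian null $\P_0$ (where $f\equiv 0$ and $\sigma^2 = 1+\theta_n^2$) against a Gaussian-location-mixture alternative $\td\P_1$ whose $f$ is the piecewise-trapezoidal function with i.i.d.\ $\cal{N}(0,1)$ amplitudes $\{\td r_i\}_{i=1}^N$ over $N\asymp 1/h_n$ trapezoids and $\sigma^2 = 1$; then substitute the Gaussian amplitudes by a compactly supported law $\G$ obtained via moment matching, producing an alternative $\P_1$ whose realized $f$ lies in $\Lambda_{\alpha,I}(C_f)$.

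For the first step, the trapezoid construction and the uniform design on the union of upper bases make the conditional law of $\{Y_i\}$ given $\{X_i\}$ a product over the $N$ trapezoid groups, with a shared Gaussian shift $h_n^\alpha v$ within each group. I would apply Lemma \ref{lemma:gaussian_tv} group by group conditionally on $\{X_i\}$, then take expectation over the design, to obtain $\TV(\P_0,\td\P_1) \lesssim \theta_n^2\, n\, h_n^{1/2}$. With the scaling $\theta_n^2 \asymp h_n^{2\alpha}$ and $h_n \asymp n^{-2/(4\alpha+1)}$ this is $O(1)$ and can be driven below any fixed small constant by tuning the proportionality constants.

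The second step is where the main obstacle lies. I need $\G$ to be supported on a fixed compact interval, to match the first $2p$ moments of $\cal{N}(0,1)$ with $p \define 1+\ceil{1/(4\alpha)}$, and to make every realized trapezoidal $f$ belong to $\Lambda_{\alpha,I}(C_f)$. Because $0<\alpha<1/4$ forces $\floor{\alpha}=0$, only a Hölder-$\alpha$ increment bound on $f$ is required, which reduces to bounding $h_n^\alpha r_i$ on the linear tapers and is immediate once $\G$ is supported in a fixed compact set. Conditioning on $\{X_i\}$ and applying Lemma \ref{lemma:moment_matching} separately within each trapezoid group gives a per-group TV bound that depends on the group size $m_j \define |\{i: b_i = j\}|$; aggregating produces $\TV(\P_1,\td\P_1)$ bounded, on the event $\max_j m_j \leq M$, by $N h_n^{2p\alpha}$ times a constant depending on $M$, i.e.\ $\lesssim n\theta_n^{2p}$. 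The delicate point is to control $\max_j m_j$: since $n$ points are uniformly allocated across $N \asymp n^{2/(4\alpha+1)}$ bins with $n/N = n^{(4\alpha-1)/(4\alpha+1)} \to 0$ for $\alpha<1/4$, this is a sparse-multinomial regime, and I would invoke the result of \cite{kolchin1978random} to show that $\P(\max_j m_j \leq M) \to 1$ for a sufficiently large fixed $M$. The complementary event contributes only a vanishing additive term to $\TV$ and is absorbed into the constant.

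Combining the two steps via the triangle inequality yields $\TV(\P_0,\P_1)\leq 2c<1$. The induced gap in $\sigma^2$ between the two hypotheses is $\theta_n^2 \asymp n^{-4\alpha/(4\alpha+1)}$, so LeCam's two-point inequality delivers a squared-risk lower bound of order $(\theta_n^2/2)^2 \asymp n^{-8\alpha/(4\alpha+1)}$. Together with the parametric $n^{-1/2}$ reduction this proves the theorem. The technical part I expect to be the hardest is the clean coupling of the moment-matching argument with the sparse-multinomial occupancy control, since the per-group TV bound is useful only once $\max_j m_j$ is known to be bounded, and a careful accounting of the $X$-randomness on and off the occupancy event is required so that the contribution of the exceptional event does not pollute the overall TV estimate.
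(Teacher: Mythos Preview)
Your overall plan mirrors the paper's proof closely: the two-step triangulation through an intermediate Gaussian-mixture alternative $\td\P_1$, the trapezoidal construction of $f$, moment matching for $\G$ via Lemma~\ref{lemma:moment_matching}, and the sparse-multinomial occupancy control via \cite{kolchin1978random} are all exactly what the paper does, and your Step~2 is essentially correct as sketched.

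There is, however, one place where your description would not deliver the bound you claim. In Step~1 you say you will ``apply Lemma~\ref{lemma:gaussian_tv} group by group.'' If this means applying the lemma to each trapezoid block separately and then summing via the subadditivity of TV over product measures, you get
\[
\TV(\P_0,\td\P_1)\ \lesssim\ \theta_n^2\,\E\Bigl[\sum_{j=1}^N \sqrt{m_j(m_j-1)}\Bigr]\ \leq\ \theta_n^2\sum_{j=1}^N\sqrt{\E\,m_j(m_j-1)}\ \asymp\ \theta_n^2\, n\ \asymp\ n^{1/(4\alpha+1)},
\]
which diverges. The paper instead applies Lemma~\ref{lemma:gaussian_tv} \emph{once} to the full $n$-dimensional Gaussians $\cal{N}_n(0,\mbf{\Sigma}_0)$ and $\cal{N}_n(0,\mbf{\Sigma}_1)$. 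Because the Frobenius norm squared of $\mbf{\Sigma}_0^{-1}\mbf{\Sigma}_1-\mathbf{I}_n$ is additive over the blocks, a single application gives $\TV\lesssim\theta_n^2\sqrt{N_0}$ with $N_0=\sum_{i\neq j}\mathbbm{1}\{b_i=b_j\}=\sum_{j}m_j(m_j-1)$, and then Jensen's inequality yields $\E\sqrt{N_0}\leq\sqrt{\E N_0}\lesssim n\,h_n^{1/2}$, which is the bound you write down. The gap between $\sum_j\sqrt{m_j(m_j-1)}$ and $\sqrt{\sum_j m_j(m_j-1)}$ is precisely the factor $\sqrt{N}\asymp h_n^{-1/2}$ that you cannot afford to lose here.

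A minor side remark: your parametric two-point argument actually yields a squared-risk lower bound of order $n^{-1}$; the ``taking square roots'' step is spurious, since LeCam's inequality already bounds the squared loss directly. The $n^{-1/2}$ in the displayed statement appears to be a typo for $n^{-1}$, consistent with the paper's upper bound and its own proof.
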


\begin{remark}
\label{remark:uniform_design}
It remains an open problem to prove a lower bound rate that is strictly slower than $n^{-1}$ over the sub-class of $\cal{P}_{\tiny{\text{cv}},(X,\varepsilon)}$ with more regular designs, which includes in particular the uniform design on $[0,1]$. We conjecture that in this case, $n^{-8\alpha/(4\alpha+1)}\vee n^{-1}$ is still the minimax rate in view of analogous results in quadratic functional estimation \citep{bickel1988estimating, fan1991estimation}.
\end{remark}

\section{Heteroscedastic case}
\label{sec:v_function}

We now study the heteroscedastic model \eqref{model:var_function},
\begin{align*}
Y_i = f(X_i)  + V^{1/2}(X_i)\varepsilon_i ,\quad i=1,2,\ldots,n,
\end{align*}
where $\{X_i\}_{i=1}^n$ are i.i.d. copies of $X$ on the real line, $f(\cdot)$ and $V(\cdot)$ are $\alpha$- and $\beta$-H\"older smooth on the fixed (possibly infinite) interval $I$, respectively, and $\{\varepsilon_i\}_{i=1}^n$ are i.i.d. copies of $\varepsilon$ with zero mean and unit variance and are independent of $\{X_i\}_{i=1}^n$. As in Section \ref{sec:nonpar}, smoothness indices $\alpha$ and $\beta$ are assumed known, while $f(\cdot),V(\cdot)$, and the distribution of $X$ are unknown. 
For any estimator $\td{V}(\cdot)$, the estimation accuracy is measured both locally via 
\begin{align}
\label{eq:point_loss}
R_1(\td{V}, V;x^*) \define \parr*{\td{V}(x^*) - V(x^*)}^2
\end{align}
at a point $x^*$ in the support of $X$, $\supp(X)$, and globally via 
\begin{align}
\label{eq:int_loss}
R_2(\td{V}, V) \define \int \parr*{\td{V}(x) - V(x)}^2\P_X(dx)
\end{align}
with $\P_X$ the distribution of $X$. 

Model \eqref{model:var_function} has been studied in, for example, \cite{muller1987estimation}, \cite{hall1989variance}, \cite{ruppert1997local}, \cite{hardle1997local}, \cite{fan1998efficient}, \cite{munk2002minimax}, \cite{brown2007variance}, \cite{wang2008effect}, with a focus mainly on the fixed design case. An exception is \cite{munk2002minimax}, with which we draw a detailed comparison in Remark \ref{remark:comparison_munk} below. Theorems 1 and 2 in \cite{wang2008effect} established a minimax rate of the order $n^{-4\alpha}\vee n^{-2\beta/(2\beta+1)}$ under equidistance design $X_i = i/n$, $i\in[n]$ when $f(\cdot)$ and $V(\cdot)$ are $\alpha$- and $\beta$-H\"older smooth on [0,1]. 

Define $\cal{P}_{\tiny{\text{vf}},(X,\varepsilon)}$ (where ``vf" stands for ``variance function") as follows:
\begin{itemize}
\item[(a)] $X$ satisfies $\supp(X)\subset I$.
\item[(b)] $X$ has density $p_X(\cdot)$, and there exists a fixed positive constant $C_0$ such that 
\begin{align*}
\sup_{x\in\RR} p_X(x) \leq C_0.
\end{align*}
\item[(c)] There exist fixed positive constants $c_0$ and $\delta_0$ such that
\begin{align*}
&\inf_{x^*\in\supp(X)}p_X(x^*) \geq c_0 \quad \text{ and } \\
&\inf_{0<\delta<\delta_0}\inf_{x^*\in\supp(X)}\lambda\parr*{\bbrace*{u\in[-1,1]: x^* + \delta u\in\supp(X)}} \geq c_0,
\end{align*}
where $\lambda(\cdot)$ is the Lebesgue measure on the real line.
\item[(d)]$\E\varepsilon^4 \leq C_\varepsilon$ for some fixed positive constant $C_\varepsilon$.
\end{itemize}
One can readily verify that $\cal{P}_{\tiny{\text{vf}},(X,\varepsilon)}\subset \cal{P}_{\tiny{\text{cv}},(X,\varepsilon)}$, with the latter defined in the beginning of Section \ref{sec:nonpar}. Compared to $\cal{P}_{\tiny{\text{cv}},(X,\varepsilon)}$, Condition (c) in $\cal{P}_{\tiny{\text{vf}},(X,\varepsilon)}$ is posed on the marginal density and support of $X$, 
since in the variance function case we require a sufficient number of close pairs $(X_i, X_j)$ around each target $x^*$. We also note that, as in $\cal{P}_{\tiny{\text{cv}},(X,\varepsilon)}$, no smoothness assumption is posed on the design density in $\cal{P}_{\tiny{\text{vf}},(X,\varepsilon)}$. 

The rest of the section is devoted to proving, for any fixed positive constants $C_\cal{F}$ and $C_\cal{V}$, the following minimax rates
\begin{equation}
\begin{aligned}
\label{eq:intro_m1}
\!\inf_{\td{V}}\!\!\sup_{f\in\Lambda_{\alpha,I}(C_\cal{F})}\!\sup_{V\in\Lambda_{\beta,I}(C_\cal{V})}\!\sup_{\P_{(X,\varepsilon)}\in\cal{P}_{\tiny{\text{vf}},(X,\varepsilon)}}\sup_{x^*\in\supp(X)}\!\!\E R_1(\td{V},V;x^*) &\!\asymp\! n^{-\frac{8\alpha\beta}{4\alpha\beta + 2\alpha + \beta}}\!\vee \!n^{-\frac{2\beta}{2\beta+1}},\\
\inf_{\td{V}}\sup_{f\in\Lambda_{\alpha,I}(C_\cal{F})}\sup_{V\in\Lambda_{\beta,I}(C_\cal{V})}\sup_{\P_{(X,\varepsilon)}\in\cal{P}_{\tiny{\text{vf}},(X,\varepsilon)}}\E R_2(\td{V},V) &\!\asymp\! n^{-\frac{8\alpha\beta}{4\alpha\beta + 2\alpha + \beta}}\!\vee\! n^{-\frac{2\beta}{2\beta+1}},
\end{aligned}
\end{equation}
where $\P_{(X,\varepsilon)}$ denotes the joint distribution of $(X,\varepsilon)$, and $\td{V}(\cdot)$ ranges over all estimators of $V(\cdot)$.

\subsection{Upper bound}
\label{subsec:v_upper}
We now propose an estimator of $V(x^*)$ for some fixed $x^*\in\supp(X)$ by combining pairwise differences with local polynomial regression. We first introduce some notation. Let $\ell$ be the largest integer strictly smaller than $\beta$ and 
\[
\mbf{q}(u) \define (1,u,u^2/2!,\ldots, u^\ell/\ell!)^\top.
\] 
For any $1 \leq i < j\leq n$, define 
\[
D_{ij}\define (Y_i-Y_j)^2/2, ~X_{ij}\define (X_i +X_j)/2,~ {\rm and}~ K_{ij}\define K_{h_1}(X_i - X_j)K_{h_2}(X_{ij} - x^*), 
\]
where $h_1,h_2$ are two bandwidths. Define an $(\ell+1)\times(\ell+1)$ matrix
\begin{align*}
\mathbf{B}_n \define {n\choose 2}^{-1}\sum_{i<j} \mbf{q}\parr*{\frac{X_{ij} - x^*}{h_2}}\mbf{q}^\top\parr*{\frac{X_{ij} - x^*}{h_2}}K_{ij}
\end{align*}
and $\mathbf{B}_n^*$ as its adjugate such that $\mathbf{B}_n\mathbf{B}_n^* = \mathbf{B}_n^*\mathbf{B}_n = |\mathbf{B}_n|\mathbf{I}_{\ell+1}$. For example, when $\ell = 1$, we have
\begin{align*}
\mathbf{B}_n = 
\begin{bmatrix}
s_0 & s_1\\
s_1 & s_2
\end{bmatrix}
,\quad
\mathbf{B}_n^* =
\begin{bmatrix}
s_2 & -s_1\\
-s_1 & s_0
\end{bmatrix}, \quad \text{ and }\quad |\mathbf{B}_n| = s_0s_2 - s_1^2,
\end{align*}
where
\begin{align*}
s_k \define {n\choose 2}^{-1}\sum_{i<j} \parr*{\frac{X_{ij} - x^*}{h_2}}^kK_{ij}, \qquad k=0,1,2.
\end{align*}


Following \cite{fan1993local}, we propose a robust local polynomial estimator:
\begin{align}
\label{eq:v_estimator}
\widehat{V}_{\tiny{\LP}}(x^*) \define {n\choose 2}^{-1}\sum_{i<j} D_{ij}(|\mathbf{B}_n| + \tau_n)^{-1}\mbf{q}^\top(0)\mathbf{B}_n^*\mbf{q}\parr*{\frac{X_{ij} - x^*}{h_2}}K_{ij},
\end{align}
where $\tau_n$ is some sufficiently small positive constant that decays to $0$ polynomially with $n$. 
Let 
\[
w_{ij} \define {n\choose 2}^{-1}\mbf{q}^\top(0)\mathbf{B}_n^*\mbf{q}\parr*{\frac{X_{ij} - x^*}{h_2}}K_{ij} \quad {\rm and}\quad \td{w}_{ij}\define w_{ij}/(|\mathbf{B}_n| + \tau_n). 
\]
Then, it holds that $\widehat{V}_{\tiny{\LP}}(x^*) = \sum_{i<j} \td{w}_{ij}D_{ij}$, $\sum_{i<j}w_{ij} = |\mathbf{B}_n|$, and
\begin{align}
\label{eq:rep_lp}
\sum_{i<j}w_{ij}(X_{ij} - x^*)^k = \sum_{i<j}\td{w}_{ij}(X_{ij} - x^*)^k = 0, \qquad k=1,2,\ldots,\ell.
\end{align}
The last property \eqref{eq:rep_lp} is referred to as the \emph{reproducing property} of local polynomial estimators (cf. Proposition 1.12 in \cite{tsybakov2009introduction}).

\begin{thm}
\label{thm:v_function_upper}
Suppose the kernel $K(\cdot)$ in $\widehat{V}_{\tiny{\LP}}$ is chosen such that \eqref{eq:kernel} holds with constants $\overline{M}_K$ and $\under{M}_K$, $\tau_n \asymp n^{-\kappa}$ for some fixed constant $
\kappa \geq 1$, and the bandwidths $h_1,h_2$ are chosen as
\begin{align}
\label{eq:v_h_choice}
(h_1,h_2) \asymp 
\begin{cases}
\parr*{n^{-\frac{2\beta}{4\alpha\beta + \beta + 2\alpha}}, n^{-\frac{4\alpha}{4\alpha\beta + \beta + 2\alpha}}}, & 0< \alpha < \frac{\beta}{4\beta +2},\\
\parr*{n^{-1}, n^{-\frac{1}{2\beta + 1}}}, & \alpha \geq \frac{\beta}{4\beta +2}.
\end{cases}
\end{align}
Then, under \eqref{model:var_function} with random design, it holds that
\begin{align*}
\sup_{f\in\Lambda_{\alpha,I}(C_\cal{F})}\!\sup_{V\in\Lambda_{\beta,I}(C_\cal{V})}\!\sup_{\P_{(X,\varepsilon)}\in\cal{P}_{\tiny{\text{vf}},(X,\varepsilon)}}\!\sup_{x^*\in\supp(X)} \!\!\!\E R_1(\widehat{V}_{\tiny{\LP}}, V;x^*) \!\leq \!C \parr*{n^{-\frac{8\alpha\beta}{4\alpha\beta + \beta + 2\alpha}} \!\vee\! n^{-\frac{2\beta}{2\beta +1}}}
\end{align*}
and
\begin{align*}
\sup_{f\in\Lambda_{\alpha,I}(C_\cal{F})}\sup_{V\in\Lambda_{\beta,I}(C_\cal{V})}\sup_{\P_{(X,\varepsilon)}\in\cal{P}_{\tiny{\text{vf}},(X,\varepsilon)}} \E R_2(\widehat{V}_{\tiny{\LP}}, V) \leq C\parr*{n^{-\frac{8\alpha\beta}{4\alpha\beta + \beta + 2\alpha}} \vee n^{-\frac{2\beta}{2\beta +1}}},
\end{align*}
where $C$ is some fixed positive constant that only depends on $\overline{M}_K, \under{M}_K,\alpha,\beta,C_\cal{F},C_\cal{V}$ and $C_0,c_0,C_\varepsilon$ in $\cal{P}_{\tiny{\text{vf}},(X,\varepsilon)}$.
\end{thm}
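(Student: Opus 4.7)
The plan is a conditional bias--variance decomposition given the design $\mathbf{X}=\{X_i\}_{i=1}^n$, followed by integration over $\mathbf{X}$. The key identity is
\[
\E[D_{ij}\mid X_i,X_j]=\tfrac{1}{2}(f(X_i)-f(X_j))^2+\tfrac{1}{2}(V(X_i)+V(X_j)),
\]
so the conditional mean of $\widehat V_{\LP}(x^*)$ splits naturally into an $f$-part (supported on $|X_i-X_j|\leq h_1$) and a $V$-part (supported on $|X_{ij}-x^*|\leq h_2$). A preliminary step is to show that the denominator $|\mathbf{B}_n|$ concentrates around a strictly positive quantity of order $p_X(x^*)^{2(\ell+1)}|\mathbf{S}^*|$, where $\mathbf{S}^*$ is the limiting moment matrix of the kernel. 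This concentration follows from the four-term Bernstein inequality for $U$-statistics of \cite{gine2000exponential} and uses only condition~(c) on $\cal{P}_{\text{vf},(X,\varepsilon)}$, with no smoothness imposed on $p_X$. On the resulting good event, $\tilde w_{ij}$ is comparable to $w_{ij}$ divided by this positive limit; on its complement, $|\widehat V_{\LP}(x^*)|\lesssim \tau_n^{-1}\cdot\mathrm{poly}(n)$, and since the bad event has exponentially small probability and $\kappa\geq 1$, its MSE contribution is negligible.

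For the bias, the $f$-contribution is bounded uniformly by $Ch_1^{2\alpha}$ using $\alpha$-H\"older smoothness and the support constraint $|X_i-X_j|\leq h_1$, so after weighting by $\tilde w_{ij}$ it adds $O(h_1^{2\alpha})$ to the conditional bias. For the $V$-contribution I first Taylor-expand $V(X_i)+V(X_j)$ about the midpoint $X_{ij}$: odd-order terms cancel by symmetry, and the remaining even-order and H\"older remainders are $O(h_1^{2})+O(h_1^{\beta})$, both $\leq O(h_2^\beta)$ under the choices \eqref{eq:v_h_choice} (since $h_1\leq h_2$ holds in both regimes). Then I Taylor-expand $V(X_{ij})$ about $x^*$ to order $\ell=\lfloor\beta\rfloor$; the reproducing property \eqref{eq:rep_lp} annihilates the polynomial terms, leaving a H\"older remainder of order $h_2^\beta$. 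Squaring, the conditional bias squared is $O(h_1^{4\alpha}+h_2^{2\beta})$, which exactly matches the target rate under \eqref{eq:v_h_choice}.

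For the variance I expand $D_{ij}-\E[D_{ij}\mid\mathbf{X}]$ as a sum of linear-in-$\varepsilon$ terms together with degenerate quadratic pieces ($\varepsilon_i\varepsilon_j$ and centered squares $\varepsilon_i^2-1$), so that $\widehat V_{\LP}(x^*)-\E[\widehat V_{\LP}(x^*)\mid\mathbf{X}]$ decomposes into a Hoeffding projection indexed by single indices and a degenerate $U$-statistic indexed by pairs. Reorganizing the projection and exploiting that only $X_i$ within $O(h_2)$ of $x^*$ yield a nonvanishing coefficient gives a variance of order $(nh_2)^{-1}$; the degenerate part, controlled by the four-term Bernstein inequality of \cite{gine2000exponential}, contributes $(n^2h_1h_2)^{-1}$. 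Plugging in the choices \eqref{eq:v_h_choice} shows both terms attain the target rate $n^{-8\alpha\beta/(4\alpha\beta+\beta+2\alpha)}\vee n^{-2\beta/(2\beta+1)}$.

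The principal obstacle I anticipate is simultaneously controlling the Hoeffding-projection variance and the concentration of $|\mathbf{B}_n|$ under the weak design condition~(c) of $\cal{P}_{\text{vf},(X,\varepsilon)}$, which imposes neither smoothness nor a uniform lower bound on $p_X$: the effective number of admissible partner indices for each $X_i$ must be quantified purely through the local lower bound on $p_{\widetilde X_{ij}}$ near zero combined with the marginal lower bound on $p_X$ over $\supp(X)$. Once the local bound is established uniformly over $x^*\in\supp(X)$ with constants depending only on the problem parameters, the global bound on $\E R_2(\widehat V_{\LP},V)$ follows by Fubini and integration of the pointwise bound against $\P_X$.
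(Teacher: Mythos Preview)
Your proposal is essentially correct and follows the paper's strategy: a good-event/bad-event split based on concentration of $\mathbf{B}_n$, the reproducing property \eqref{eq:rep_lp} for the $V$-bias, the kernel support for the $f$-bias, and a U-statistic moment count yielding the $(nh_2)^{-1}$ and $(n^2h_1h_2)^{-1}$ variance pieces. Two points of refinement are worth flagging. First, your claim that $|\mathbf{B}_n|$ concentrates around $p_X(x^*)^{2(\ell+1)}|\mathbf{S}^*|$ presumes continuity of $p_X$ at $x^*$, which is \emph{not} assumed in $\cal{P}_{\text{vf},(X,\varepsilon)}$; the paper instead shows directly that $\mathbf{B}=\E\mathbf{B}_n$ satisfies $\lambda_{\min}(\mathbf{B})\geq c>0$ via Condition~(c), without identifying a limit. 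Second, for the variance the paper does not condition on $\mathbf{X}$ and decompose in $\varepsilon$ as you propose, but instead writes $\widehat V_{\LP}(x^*)\indc=\mbf{q}^\top(0)\mathbf{B}^{-1}\mbf{U}_n\indc$ plus a correction involving $\|\mathbf{B}_n-\mathbf{B}\|$, then computes the \emph{unconditional} variance of the U-statistic $\mbf{U}_n$ by counting coincident indices. This linearization around the deterministic $\mathbf{B}^{-1}$ sidesteps the need to control the random weights $\tilde w_{ij}$ pathwise on $\Omega_n$ and makes the Gin\'e--Zinn bound enter only through the exponential tail of $\|\mathbf{B}_n-\mathbf{B}\|$, not through the variance rate itself. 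Your conditional route is workable but slightly more delicate, since bounding $\sum_i d_i^2$ and $\sum_{i<j}\tilde w_{ij}^2$ pathwise requires uniform control of the local counts of $X_i$ near $x^*$.
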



\begin{remark}
\label{remark:comparison_v}
Variance function estimation in \eqref{model:var_function} with fixed design $X_i = i/n$, $i\in[n]$, has been studied in \cite{wang2008effect}. There the minimax rate is 
\begin{align*}
\inf_{\td{V}}\sup_{f\in\Lambda_{\alpha,[0,1]}(C_\cal{F})}\sup_{V\in\Lambda_{\beta,[0,1]}(C_\cal{V})}\sup_{\E\varepsilon^4 \leq C_\varepsilon}\sup_{x^*\in[0,1]} \E R_1(\td{V}, V;x^*) \asymp n^{-4\alpha}\vee n^{-2\beta/(2\beta + 1)},\\
\inf_{\td{V}}\sup_{f\in\Lambda_{\alpha,[0,1]}(C_\cal{F})}\sup_{V\in\Lambda_{\beta,[0,1]}(C_\cal{V})}\sup_{\E\varepsilon^4 \leq C_\varepsilon} \E R_2(\td{V}, V) \asymp n^{-4\alpha}\vee n^{-2\beta/(2\beta + 1)},
\end{align*}
with the integral in $R_2$ under the Lebesgue measure on $[0,1]$. Comparing the above result with the error rate in Theorem \ref{thm:v_function_upper}, we see that the transition boundary in both the fixed and random design settings is $\alpha = \beta/(4\beta+2)$. When $\alpha \geq \beta/(4\beta + 2)$, $V(\cdot)$ under both $R_1$ and $R_2$ can be estimated at the classic nonparametric rate $n^{-2\beta/(2\beta + 1)}$ as if the mean function $f(\cdot)$ were known. When $\alpha < \beta/(4\beta + 2)$, a faster rate can be achieved in the random design case. This can be intuitively understood by the fact that, by constrast to the fixed design case, a significant portion of pairs have distance smaller than $1/n$ in the random design setting.
\end{remark}

\begin{remark}
\label{remark:compare_preest}
As has been noted in \cite{wang2008effect}, in the fixed design setting, estimating the variance (function) by smoothing the squared residuals obtained from pre-estimation of the mean function $f(\cdot)$ is sub-optimal. The same conclusion also applies to the random design setting. Since the design being fixed or random has no first-order effect on the estimation of the mean, the above method only achieves the rates $n^{-4\alpha/(2\alpha+1)}\vee n^{-1}$ in variance estimation and $n^{-4\alpha/(2\alpha+1)}\vee n^{-2\beta/(2\beta+1)}$ in variance function estimation, neither of which is minimax optimal. 
\end{remark}

\begin{remark}
Unlike in the fixed design case, once below the threshold $\alpha = \beta/(4\beta + 2)$, $\alpha$ and $\beta$ are now both present in the minimax rate in the random design case, suggesting that the smoothness of $V(\cdot)$ always has an effect on its estimation. This is because variance function estimation in the random design setting is essentially a ``two-dimensional" problem, where we have to jointly choose two optimal neighborhood sizes to characterize the closeness between (i) each $X_i$ and $X_j$;  and (ii) every pair $(X_i,X_j)$ and each target point $x^*$. By contrast, in the fixed design setting, the distance between $X_i$ and $X_j$ is constrained to be no smaller than $1/n$, and thus cannot be jointly optimized with the distance between $(X_i,X_j)$ and $x^*$.
\end{remark}

\begin{remark}
One might wonder whether the following Nadaraya-Watson type estimator can be used to establish the upper bound in Theorem \ref{thm:v_function_upper}:
\begin{align}
\label{eq:v_nw}
\widehat{V}_{\tiny{\text{NW}}}(x^*) \define \frac{\sum_{i<j} K_{h_1}(X_i - X_j)K_{h_2}(X_{ij} - x^*)D_{ij}}{\sum_{i<j} K_{h_1}(X_i - X_j)K_{h_2}(X_{ij} - x^*)},
\end{align}
where $K(\cdot)$ is now chosen to be a higher-order kernel to further reduce bias when $\beta > 1$. It turns out that the analysis of $\widehat{V}_{\tiny{\text{NW}}}$ requires an extra assumption on the smoothness of the density $p_X(\cdot)$ which can be completely avoided with $\widehat{V}_{\tiny{\LP}}$. Moreover, it is well-known that local polynomial estimators have good finite sample properties and boundary performances when $X$ is compactly supported \citep{fan2018local}.
\end{remark}

\begin{remark}
\label{remark:comparison_munk}
\cite{munk2002minimax} considered minimax estimation of the variance function (and more generally, its derivatives) in the context of nonparametric regression with random design. We focus on the comparison of their results on variance function estimation with ours. Their lower bound (Theorem 1 therein) is proved independent of the smoothness level of the mean function and upper bound (Theorem 4 therein) is proved under sufficient smoothness on the mean function. Therefore their minimax rate is only comparable to the $n^{-2\beta/(2\beta + 1)}$ component in ours.  In this case, their lower bound of the order $n^{-(2\beta - 1)/(2\beta)}$ is proved over the following class of variance function:
\begin{align*}
\cal{S}_\beta \define \bbrace*{1 + \sum_{k=1}^\infty \delta_ke_k: |\delta_k|\lesssim k^{-\beta}}
\end{align*}
for any $\beta > 1$, where $\{e_k\}_{k=1}^\infty$ is an arbitrary basis on $L^2([-\pi,\pi])$. Moreover, continuous differentiability of the error density is required in their paper. In contrast, we pose no smoothness conditions on the error density, and neither $\cal{S}_\beta$ nor $\cal{S}_{\beta+1/2}$ can be embedded in the $\beta$-H\"older class $\Lambda_{\beta}$ considered in our setting (e.g., $f(x)=|x|$ with domain $[-\pi,\pi]$ belongs to $\cal{S}_{2}$ but is not $1.5$- or 2-H\"older smooth since it is not differentiable at the origin). In summary, the results in \cite{munk2002minimax} neither imply nor contradict the $n^{-2\beta/(2\beta+1)}$ part in our minimax rate, and our results are more refined since they characterize the exact elbow $\alpha = \beta/(4\beta+2)$ and also the minimax rate below this threshold.
\end{remark}



\subsection{Lower bound}\label{sec:lb3}

The following are matching lower bounds to Theorem \ref{thm:v_function_upper}.

\begin{thm}
\label{thm:v_lower_point}
Under \eqref{model:var_function} with random design, for any $x^*\in\supp(X)$, 
\begin{align*}
\inf_{\td{V}}\sup_{f\in\Lambda_{\alpha,I}(C_\cal{F})}\sup_{V\in\Lambda_{\beta,I}(C_\cal{V})}\sup_{\P_{(X,\varepsilon)}\in\cal{P}_{\tiny{\text{vf}},(X,\varepsilon)}}\E R_1(\td{V}, V;x^*) \geq c \parr*{n^{-\frac{8\alpha\beta}{4\alpha\beta + \beta + 2\alpha}} \vee n^{-\frac{2\beta}{2\beta +1}}},
\end{align*}
where $c$ is some fixed positive constant that only depends on $\alpha,\beta, C_\cal{F}, C_\cal{V}$ and $C_0,c_0,C_\varepsilon$ in $\cal{P}_{\tiny{\text{vf}},(X,\varepsilon)}$, and $\td{V}$ ranges over all estimators of $V$.
\end{thm}

\begin{thm}
\label{thm:v_lower_int}
Under \eqref{model:var_function} with random design, 
\begin{align*}
\inf_{\td{V}}\sup_{f\in\Lambda_{\alpha,I}(C_\cal{F})}\sup_{V\in\Lambda_{\beta,I}(C_\cal{V})}\sup_{\P_{(X,\varepsilon)}\in\cal{P}_{\tiny{\text{vf}},(X,\varepsilon)}} \E R_2(\td{V}, V) \geq c\parr*{n^{-\frac{8\alpha\beta}{4\alpha\beta + \beta + 2\alpha}} \vee n^{-\frac{2\beta}{2\beta +1}}},
\end{align*}
where $c$ is some fixed positive constant that only depends on $\alpha,\beta, C_\cal{F}, C_\cal{V}$ and $C_0,c_0,C_\varepsilon$ in $\cal{P}_{\tiny{\text{vf}},(X,\varepsilon)}$, and $\td{V}$ ranges over all estimators of $V$.
\end{thm}


\begin{figure}[t!]
\centering
\includegraphics[width = 0.7\textwidth]{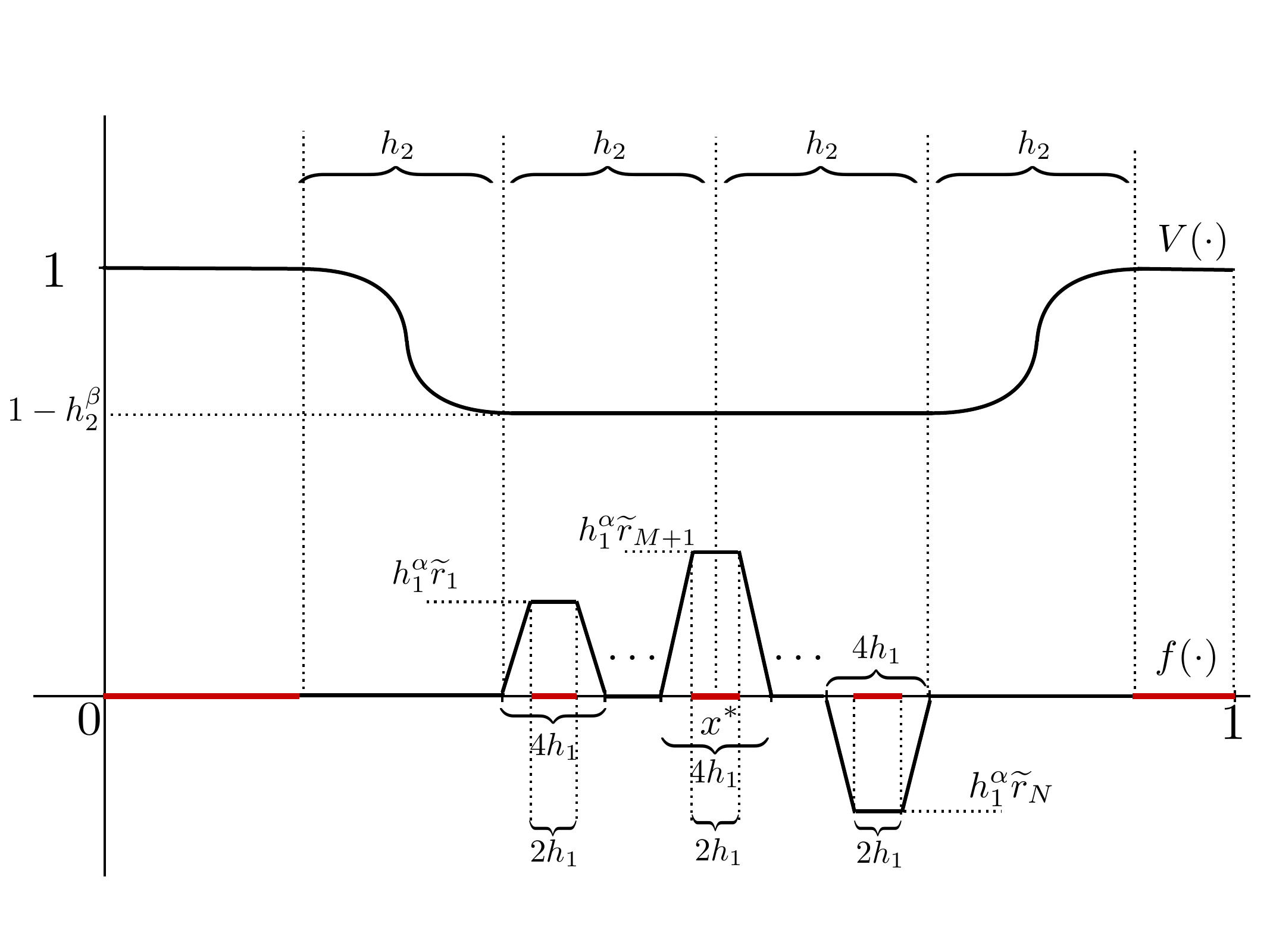}
\caption{The black solid line on the top represents the variance function $V(\cdot)$ in the alternative $\td{H}_1$, and the black solid line on the bottom represents the mean function $f(\cdot)$. The thick red segments mark the support of $X$ under both $H_0$ and $\td{H}_1$. Here, $h_1 \asymp n^{-\frac{2\beta}{4\alpha\beta + \beta + 2\alpha}}$, $h_2 \asymp n^{-\frac{4\alpha}{4\alpha\beta + \beta + 2\alpha}}$, and are chosen such that both $M\define h_2/(4h_1) - 1/2$ and $N \define 2M + 1$ are positive integers. $\{\td{r}_i\}_{i=1}^N$ are $N$ i.i.d. standard normal variables.}
\label{fig:v_point_lower}
\end{figure}

Due to the appearances of both $\alpha$ and $\beta$ in the nontrivial $n^{-\frac{8\alpha\beta}{4\alpha\beta + \beta + 2\alpha}}$ part of the minimax rate, proving the above two results is more involved than proving Theorem \ref{thm:nonpar_lower}. In particular, it takes an extra step of localization in the construction of the mean function $f(\cdot)$ as well as $V(\cdot)$. More precisely, for the lower bound at a target point $x^*$ in Theorem \ref{thm:v_lower_point}, our construction of both $f(\cdot)$ and $V(\cdot)$ only has variation within a small neighborhood of $x^*$. Such localized construction is not necessary in the fixed design setting, since when proving the $n^{-4\alpha}$ component therein (see Remark \ref{remark:comparison_v}), the variance function can simply be taken as a constant. 

In what follows, we give a proof sketch of the nontrivial $n^{-8\alpha\beta/(4\alpha\beta + \beta + 2\alpha)}$ component of the lower bound in Theorem \ref{thm:v_lower_point} for $\alpha < \beta/(4\beta + 2)$; the proof of Theorem \ref{thm:v_lower_int} can be seen as an extension of Theorem \ref{thm:v_lower_point} via a standard construction of multiple hypotheses. We assume the support of $X$ is contained in $I = [0,1]$, and for clarity of illustration, here we present the construction for an interior point $x^*\in (0,1)\bigcap\supp(X)$. The proof works for boundary points as well.

We continue to adopt the two-step approach introduced in the proof sketch of Theorem \ref{thm:nonpar_lower} in Section \ref{subsec:nonpar_lower}. The second step is very similar with the help of Lemmas \ref{lemma:moment_matching} and \ref{lemma:multinomial}, so we will focus on the construction under the null $H_0$ and alternative $\td{H}_1$ in the first step. Choose the parameters
\begin{align*}
h_1 \asymp n^{-\frac{2\beta}{4\alpha\beta + \beta + 2\alpha}}, \quad h_2 \asymp n^{-\frac{4\alpha}{4\alpha\beta + \beta + 2\alpha}}, \quad \text{ and } \quad \theta_n^2 =  h_1^{2\alpha} = h_2^\beta
\end{align*}
so that $h_2/h_1\rightarrow\infty$ as $n\rightarrow\infty$.
\begin{itemize}
\item[]\emph{Choice of $V(\cdot)$}: Under $H_0$ let $V\equiv 1$. Under $\td{H}_1$, let $V(\cdot)$ be one minus a smooth bump function around $x^*$ with width $h_2$ and height $h_2^\beta$ so that $V(x^*) = 1-\theta_n^2$.
\item[]\emph{Choice of $f(\cdot)$}: Under $H_0$ let $f\equiv 0$. Under $\td{H}_1$, let $f(\cdot)$ be a ``local" version of the design in Theorem \ref{thm:nonpar_lower}. That is, $f$ takes on a value of 0 outside of $[x^* - h_2, x^* + h_2]$, and inside that $h_2$-neighborhood of $x^*$, $f$ is piecewise trapezoidal with upper base length $2h_1$, lower base length $4h_1$ and height $\{h_1^\alpha \td{r}_i\}_{i=1}^N$ for a standard normal sequence $\{\td{r}_i\}_{i=1}^N$ with $N\define h_2/(2h_1)$ a positive integer. 
\item[]\emph{Choice of $\varepsilon$}: Under both $H_0$ and $\td{H}_1$, let $\varepsilon \sim \cal{N}(0,1)$. 
\item[]\emph{Choice of $X$}: Under both $H_0$ and $\td{H}_1$, let $X$ be uniformly distributed on the union of $[0,1]\backslash [x^*-h_2,x^* + h_2]$ and the upper bases of all the trapezoids inside $[x^* - h_2, x^* + h_2]$. 

\end{itemize}
See Figure \ref{fig:v_point_lower} for an illustration of $\td{H}_1$. 

Under the above construction, the squared distance between the null and alternative hypotheses $(1 - (1 - \theta_n^2))^2 = \theta_n^4 \asymp n^{-\frac{8\alpha\beta}{4\alpha\beta + \beta + 2\alpha}}$ is the desired minimax rate. Using Lemma \ref{lemma:gaussian_tv}, we can show that
\begin{align*}
\TV(\P_0, \td{\P}_1) \lesssim \theta_n^2nh_1^{1/2}h_2^{1/2} \leq c
\end{align*}
for some sufficiently small $c$, where $\P_0$ and $\td{\P}_1$ represent the joint distribution of $\{(X_i, Y_i)\}_{i=1}^n$ under $H_0$ and $\td{H}_1$, respectively. The detailed proof is presented in the supplement.

\section{Discussion}
\label{sec:discussion}
The two univariate models \eqref{model:var_function} and \eqref{model:nonpar} discussed in the previous two sections raise natural questions about possible extensions to the multivariate setting. In what follows, we first present some partial results in this direction in the sense of \eqref{model:multivariate_nonpar} and \eqref{model:additive}. We then establish some connections between our study and quadratic functional estimation and variance estimation in the linear model. Lastly, we discuss two more extensions of \eqref{model:nonpar} in the direction of adaptive estimation and mean function with inhomogeneous smoothness. Throughout, consider $C_\cal{F}, C_\sigma, C_0,c_0,C_\varepsilon$ to be fixed positive constants.

\subsection{Multivariate nonparametric regression}
\label{subsec:multivariate_nonpar}
Consider the following multivariate version of \eqref{model:nonpar}:
\begin{align*}
Y_i = f(\mbf{X}_i) + \sigma\varepsilon_i, \quad i=1,2,\ldots, n,
\end{align*}
where $\{\mbf{X}_i\}_{i=1}^n = \{(X_{i,1},\ldots,X_{i,d})^\top\}_{i=1}^n$ are i.i.d. copies of $\mbf{X} = (X_1,\ldots,X_d)^\top$ in $\RR^d$ for some fixed positive integer $d$, $\{\varepsilon_i\}_{i=1}^n$ are i.i.d. copies of $\varepsilon$ with zero mean and unit variance and are independent of $\{\mbf{X}_i\}_{i=1}^n$, and $f:\RR^d\rightarrow\RR$ belongs to a $d$-dimensional anisotropic H\"older class with smoothness index $\mbf{\alpha} = (\alpha_1,\ldots,\alpha_d)^\top$ defined below. The goal is to estimate $\sigma^2$ with $f(\cdot)$ and the distribution of $\mbf{X}$ as nuisance parameters. This problem has been studied in \cite{spokoiny2002variance}, \cite{munk2005difference}, \cite{cai2009variance}, to name a few, again with a focus on the fixed design setting. 

Let $I_1,\ldots,I_d$ be $d$ fixed (possibly infinite) intervals on $\RR$ and let $\mbf{I}$ be their Cartesian product $I_1\times\ldots\times I_d\subset \RR^d$. Following \cite{barron1999risk} and \cite{bhattacharya2014anisotropic},  we define an anisotropic H\"older class $\Lambda_{\mbf{\alpha},\mbf{I}}(C_\cal{F})$ on $\mbf{I}$ as follows. For any $\mbf{x}\in\mbf{I}$ and $k\in[d]$, let $f_k(\cdot\mid \mbf{x}_{-k})$ denote the univariate function $y\mapsto f(x_1,\ldots,x_{k-1},y,x_{k+1},\ldots,x_d)$, with $\mbf{x}_{-k}$ defined as $\mbf{x}$ without the $k$th component. Then, $\Lambda_{\mbf{\alpha},\mbf{I}}(C_\cal{F})$ is defined as all $f:\mbf{I}\mapsto \RR$ such that
\begin{align*}
\max_{1\leq k\leq d}\max_{0\leq j\leq \floor{\alpha_k}}\sup_{\mbf{x}\in\mbf{I}} \nm*{f_k^{(j)}(\cdot\mid \mbf{x}_{-k})}_{\infty} \leq C_\cal{F} 
\end{align*}
and
\begin{align*}
\max_{1\leq k\leq d}\sup_{x\in\mbf{I}}\sup_{y_1,y_2\in I_k}\frac{\abs*{f_k^{(\floor{\alpha_k})}(y_1\mid \mbf{x}_{-k}) - f_k^{(\floor{\alpha_k})}(y_2\mid \mbf{x}_{-k})}}{|y_1 - y_2|^{\alpha_k^\prime}} \leq C_\cal{F},
\end{align*}
where again $\floor{\alpha_k}$ is the largest integer strictly smaller than $\alpha_k$ and $\alpha_k^\prime \define \alpha_k - \floor{\alpha_k}$. Let $\supp(\mbf{X})$ be the support of $\mbf{X}$.

Define $\cal{P}_{\text{\tiny{mcv}},(\mbf{X},\varepsilon)}$ (where ``mcv" stands for ``multivariate constant variance") as the multivariate counterpart of $\cal{P}_{\tiny{\text{cv}},(X,\varepsilon)}$:
\begin{itemize}
\item[(a)] $\mbf{X}$ satisfies $\supp(\mbf{X})\subset \mbf{I}$.
\item[(b)] $\mbf{X}$ has density $p_{\mbf{X}}(\cdot)$ and there exists a fixed positive constant $C_0$ such that  
\begin{align*}
\sup_{\mbf{u}\in\RR^d} p_{\mbf{X}}(\mbf{u}) \leq C_0 .
\end{align*}
\item[(c)]There exist two fixed constants $\delta_0 > 0$ and $c_0>0$ such that for any $\mbf{\delta}\in\RR^{d}$ that satisfies $\|\mbf{\delta}\|_\infty < \delta_0$, there exists a set $\cal{U}\define\cal{U}_{\mbf{\delta}}\subset[-1,1]^d$ such that
\begin{align*}
&\mbf{\lambda}(\cal{U}_{\mbf{\delta}})\geq c_0 \quad \text{ and } \quad \inf_{\mbf{u}\in\cal{U}_{\mbf{\delta}}}p_{\bXX}(u_1\delta_1,\ldots,u_d\delta_d)\geq c_0,
\end{align*}
where $\mbf{\lambda}(\cdot)$ represents the Lebesgue measure on $\RR^d$. 
\item[(d)]$\E\varepsilon^4 \leq C_\varepsilon$ for some fixed positive constant $C_\varepsilon$.
\end{itemize}

For an upper bound on the minimax risk, we propose the following multivariate extension of \eqref{eq:nonpar_estimator} via a product kernel (again with convention $0/0 = 0$):
\begin{align}
\label{eq:est_d}
\widehat{\sigma}_d^2 \define \frac{{n\choose 2}^{-1}\sum_{i<j} \parr*{\prod_{k=1}^d K_{h_k}(X_{i,k} - X_{j,k})}(Y_i - Y_j)^2/2}{{n\choose 2}^{-1}\sum_{i<j} \parr*{\prod_{k=1}^d K_{h_k}(X_{i,k}-X_{j,k})}},
\end{align}
where $K(\cdot)$ is a kernel chosen to satisfy \eqref{eq:kernel}, and $\{h_k\}_{k=1}^d$ is a kernel bandwidth sequence.


%


%

In the following results, we will use $\under{\alpha}$ to denote the harmonic mean of the $d$-dimensional smoothness index $\mbf{\alpha}$, i.e. $\under{\alpha}\define d/(\sum_{k=1}^d 1/\alpha_k)$. This quantity is known as the \emph{effective smoothness} in classical problems such as anisotropic density estimation \citep{ibragimov1981more, birge1986estimating} and anisotropic function estimation \citep{nussbaum1986on,hoffman2002random}.  

\begin{proposition}
\label{prop:multivariate_upper}
Suppose $0< \alpha_k \leq 1$, $k\in[d]$. Suppose the kernel $K(\cdot)$ in $\widehat{\sigma}_d^2$ is chosen such that \eqref{eq:kernel} is satisfied with constants $\overline{M}_K$ and $\under{M}_K$, and the bandwidth sequence is chosen as $h_k \asymp n^{-2\under{\alpha}/(\alpha_k(4\under{\alpha}+d))}$ for all $k\in[d]$. Then, under \eqref{model:multivariate_nonpar} with random design, it holds that
\begin{align*}
\sup_{f\in\Lambda_{\mbf{\alpha},\mbf{I}}(C_\cal{F})}\sup_{\sigma^2\leq C_\sigma}\sup_{\P_{(\mbf{X},\varepsilon)}\in\cal{P}_{\text{\tiny{mcv}},(\mbf{X},\varepsilon)}}\E\parr*{\widehat{\sigma}_d^2 - \sigma^2}^2 \leq C\parr*{n^{-8\under{\alpha}/(4\under{\alpha} + d)}\vee n^{-1}},
\end{align*} 
where $C$ is some fixed positive constant that only depends on $\overline{M}_K, \under{M}_K,\mbf{\alpha},C_\cal{F},C_\sigma$ and $C_0,c_0,C_\varepsilon$ in $\cal{P}_{\text{\tiny{mcv}},(\mbf{X},\varepsilon)}$. 
\end{proposition}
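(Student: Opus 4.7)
The plan is to follow the proof of Theorem~\ref{thm:nonpar_upper} essentially verbatim, treating the numerator $N_n$ and denominator $D_n$ of $\widehat{\sigma}_d^2$ in \eqref{eq:est_d} as second-order $U$-statistics with product kernel $K^*(\mbf{x},\mbf{y})\define\prod_{k=1}^d K_{h_k}(x_k-y_k)$. Writing $\widehat{\sigma}_d^2-\sigma^2=(N_n-\sigma^2 D_n)/D_n$, it suffices to show that $D_n$ is bounded away from $0$ with high probability and that $\E(N_n-\sigma^2 D_n)^2$ is of order $n^{-8\underline{\alpha}/(4\underline{\alpha}+d)}\vee n^{-1}$. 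Only two ingredients change from the one-dimensional argument: the kernel mass computations, which now pick up the factor $\prod_k h_k$, and the H\"older bias bound, which becomes a sum over the $d$ coordinate directions.

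For the denominator, substituting $v_k=(X_{i,k}-X_{j,k})/h_k$ yields $\E D_n=\int \prod_k K(v_k)\, p_{\bXX}(h_1 v_1,\ldots,h_d v_d)\, d\mbf{v}$, which Condition~(b) of $\cal{P}_{\text{\tiny{mcv}},(\mbf{X},\varepsilon)}$ bounds above by $C_0$ and Condition~(c) (invoked with $\mbf{\delta}=(h_1,\ldots,h_d)^\top$) bounds below by $c_0^2\,\underline{M}_K^d$ once $n$ is large enough that $\max_k h_k<\delta_0$. A Hoeffding decomposition of $D_n$ then gives a variance of order $1/n+\prod_k h_k^{-1}/n^2$, so $D_n$ concentrates around its positive mean at the required rate.

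For the numerator, I would expand $(Y_i-Y_j)^2/2$ into mean, cross, and noise pieces. The noise piece $\sigma^2(\varepsilon_i-\varepsilon_j)^2/2$ contributes $\sigma^2\E D_n$ in mean and cancels $\sigma^2 D_n$ up to a centred $U$-statistic whose variance is again $O(1/n+\prod_k h_k^{-1}/n^2)$; the cross piece has zero mean and is handled identically using $\E\varepsilon^4\leq C_\varepsilon$. For the bias piece, whenever $K^*(\mbf{X}_i,\mbf{X}_j)>0$ one has $|X_{i,k}-X_{j,k}|\leq h_k$ for every $k$, so the anisotropic H\"older constraint with $\alpha_k\leq 1$ yields $|f(\mbf{X}_i)-f(\mbf{X}_j)|\lesssim\sum_k h_k^{\alpha_k}$. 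Inserting the prescribed $h_k\asymp n^{-2\underline{\alpha}/(\alpha_k(4\underline{\alpha}+d))}$ equalises all coordinates to $h_k^{\alpha_k}\asymp n^{-2\underline{\alpha}/(4\underline{\alpha}+d)}$ and produces a squared bias of order $n^{-8\underline{\alpha}/(4\underline{\alpha}+d)}$, while $\prod_k h_k^{-1}=n^{2d/(4\underline{\alpha}+d)}$ forces the degenerate variance contribution to be of the same order; the two rates balance by design.

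The main step is converting these moment bounds on $N_n-\sigma^2 D_n$ into one for the ratio $\widehat{\sigma}_d^2-\sigma^2$. I would split the expectation on $\Omega_n=\{|D_n-\E D_n|\leq \E D_n/2\}$: on $\Omega_n$ the denominator is bounded away from $0$ and the ratio bound follows directly, while on $\Omega_n^c$ the four-term exponential inequality of \cite{gine2000exponential} applied to the degenerate part of $D_n-\E D_n$ gives super-polynomial tail decay that easily absorbs the a priori polynomial bound on $\widehat{\sigma}_d^2$ coming from Condition~(d) and $\sigma^2\leq C_\sigma$. The delicate piece is this four-term inequality for the anisotropic, diverging kernel $K^*$, but since the argument only uses the second moment of $K^*$ together with its uniform bound $\prod_k h_k^{-1}$, no smoothness on $p_{\mbf{X}}$ is needed, exactly paralleling the one-dimensional proof.
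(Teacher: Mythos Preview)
Your proposal is correct and mirrors the paper's own proof almost exactly: the paper also splits on the good event $\{D_n\geq \E D_n/2\}$, lower-bounds $\E D_n$ via Condition~(c) of $\cal{P}_{\text{\tiny{mcv}},(\mbf{X},\varepsilon)}$, bounds the bias $\theta_1-\sigma^2\theta_2$ by $\sum_k h_k^{2\alpha_k}$ using the anisotropic H\"older condition with $\alpha_k\leq 1$, and invokes multivariate analogues of Lemmas~\ref{lemma:concentrate_U1} and~\ref{lemma:concentrate_U2} (via Lemma~\ref{lemma:Ubernstein}) to obtain the variance bound $n^{-1}+n^{-2}\prod_k h_k^{-1}$ and the exponential tail on $\Omega_n^c$. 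The only cosmetic difference is that the paper writes out the case $d=2$ explicitly before stating the general bound $n^{-1}+n^{-2}(\prod_k h_k)^{-1}+\sum_k h_k^{4\alpha_k}$, whereas you work in general $d$ throughout.
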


\begin{proposition}
\label{prop:multivariate_lower}
Under \eqref{model:multivariate_nonpar} with random design, it holds that
\begin{align*}
\inf_{\td{\sigma}^2}\sup_{f\in\Lambda_{\mbf{\alpha},\mbf{I}}(C_\cal{F})}\sup_{\sigma^2\leq C_\sigma}\sup_{\P_{(\mbf{X},\varepsilon)}\in\cal{P}_{\text{\tiny{mcv}},(\mbf{X},\varepsilon)}}\E\parr*{\td{\sigma}^2 - \sigma^2}^2 \geq c\parr*{n^{-8\under{\alpha}/(4\under{\alpha} + d)}\vee n^{-1}},
\end{align*} 
where $c$ is some fixed positive constant that only depends on $\mbf{\alpha},C_\cal{F}, C_\sigma$ and $C_0,c_0,C_\varepsilon$ in $\cal{P}_{\text{\tiny{mcv}},(\mbf{X},\varepsilon)}$, and $\td{\sigma}^2$ ranges over all estimators of $\sigma^2$. 
\end{proposition}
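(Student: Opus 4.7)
The plan is to adapt the two-step Le Cam construction from the proof sketch of Theorem \ref{thm:nonpar_lower} to the anisotropic multivariate setting, with the univariate $\alpha$-dependent scales replaced by direction-specific scales that calibrate to the harmonic mean $\under{\alpha}$. The parametric $n^{-1}$ floor is standard: take $f \equiv 0$, $\mbf{X}$ uniform on a cube in $\mbf{I}$, $\varepsilon \sim \cal{N}(0,1)$, and contrast $\sigma^2 = 1$ with $\sigma^2 = 1 + cn^{-1/2}$. The harder $n^{-8\under{\alpha}/(4\under{\alpha}+d)}$ rate dominates exactly when $\under{\alpha} < d/4$, which is the focus of what follows.

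Choose $\theta_n^2 \asymp n^{-4\under{\alpha}/(4\under{\alpha}+d)}$ and anisotropic bandwidths $h_k = \theta_n^{1/\alpha_k}$, so that $h_k^{\alpha_k} = \theta_n$ is common across $k$; this equalization is what permits a single element of $\Lambda_{\mbf{\alpha},\mbf{I}}(C_f)$ to be assembled from product-trapezoidal bumps of common height $\theta_n$. Partition a cube $[0,1]^d \subset \mbf{I}$ into $N = \prod_k(6h_k)^{-1} \asymp \theta_n^{-d/\under{\alpha}}$ cells of sides $(6h_1,\ldots,6h_d)$. Under $H_0$ take $f \equiv 0$ and $\sigma^2 = 1+\theta_n^2$; under $\tilde{H}_1$ take $\sigma^2 = 1$ and let $f$ equal, on the $j$th cell, the product-trapezoidal bump that is $\theta_n \tilde{r}_j$ on the central cuboid of sides $(4h_1,\ldots,4h_d)$ with $\{\tilde{r}_j\}_{j=1}^N$ i.i.d.\ standard normal, and decays linearly to zero on the marginal slab of width $h_k$ in each coordinate. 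Under both hypotheses let $\varepsilon \sim \cal{N}(0,1)$ and $\mbf{X}$ be uniform on the union of the central cuboids. Conditional on $\{\mbf{X}_i\}_{i=1}^n$, the sample law is Gaussian under $H_0$ and a Gaussian location mixture under $\tilde{H}_1$, so Lemma \ref{lemma:gaussian_tv} applied cell-wise as in Section \ref{subsec:nonpar_lower}, together with $\E\sum_j b_j^2 \lesssim n^2 \prod_k h_k$ for cell occupancies $b_j$, yields $\TV(\P_0, \tilde{\P}_1) \lesssim \theta_n^2 n \sqrt{\prod_k h_k} = \theta_n^{2+d/(2\under{\alpha})} n$, which is $O(1)$ by the choice of $\theta_n$.

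In the second step, replace the Gaussian mixing measure by a compactly supported $\G$ matching moments with $\cal{N}(0,1)$ up to a sufficiently large order $p$ (concretely $p = 1 + \ceil{d/(4\under{\alpha})}$, with an integer tiebreak if needed), so that each realization of $f$ lies in $\Lambda_{\mbf{\alpha},\mbf{I}}(C_f)$; to invoke Lemma \ref{lemma:moment_matching} cell-by-cell one must bound the maximum cell occupancy. Since $N \gg n$ whenever $\under{\alpha} < d/4$, the sparse multinomial result of \cite{kolchin1978random} used in Section \ref{subsec:nonpar_lower} bounds this maximum by a fixed integer with high probability, yielding $\TV(\P_1, \tilde{\P}_1) \lesssim n \theta_n^{2p}$, again $O(1)$. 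The triangle inequality and LeCam's two-point lemma with $(\sigma_0^2-\sigma_1^2)^2 = \theta_n^4 \asymp n^{-8\under{\alpha}/(4\under{\alpha}+d)}$ then deliver the claim. The principal obstacle is the anisotropic calibration itself: the common-height constraint $h_k^{\alpha_k} = \theta_n$ forces the product $\prod_k h_k = \theta_n^{d/\under{\alpha}}$ into the Gaussian TV bound, and balancing $\theta_n^{2+d/(2\under{\alpha})} n \asymp 1$ is precisely what produces the exponent $8\under{\alpha}/(4\under{\alpha}+d)$ through the harmonic mean; verifying the anisotropic H\"older norm of the product bump, fixing the moment-matching order, and controlling cell occupancies via the $d$-dimensional sparse multinomial are direct extensions of the univariate proof and introduce no new phenomena beyond the product structure.
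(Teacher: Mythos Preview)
Your proposal follows essentially the same two-step Le Cam construction as the paper's proof: the same anisotropic calibration $h_k^{\alpha_k}=\theta_n$, the same product-cell partition with $N\asymp\theta_n^{-d/\under{\alpha}}$ cells, the Gaussian-TV bound via Lemma~\ref{lemma:gaussian_tv}, and the moment-matching/sparse-multinomial step to pass from $\td{\P}_1$ to $\P_1$. One technical caveat: your product-trapezoidal bump with \emph{linear} decay in each coordinate only belongs to $\Lambda_{\mbf{\alpha},\mbf{I}}(C_f)$ when every $\alpha_k\leq 1$, whereas the regime $\under{\alpha}<d/4$ permits some $\alpha_k>1$; the paper therefore uses a smooth bump (a mollified indicator) that ``smoothly decays to $0$'' so that the required directional derivatives exist---replacing your piecewise-linear profile by such a smooth one is the only change needed to cover the full statement.
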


We note that Proposition \ref{prop:multivariate_upper} is only proved for $\alpha_k\in(0,1]$, $k\in[d]$. The general case when $\alpha_k$ is possibly larger than 1 is much more involved due to the difficulty in the random design analysis. Propositions \ref{prop:multivariate_upper} and \ref{prop:multivariate_lower}, combined, imply that the minimax rate is $n^{-8\under{\alpha}/(4\under{\alpha} + d)}\vee n^{-1}$ for $\alpha_k\in(0,1]$, $k\in[d]$. In particular, when $f$ is in an isotropic $\alpha$-H\"older class ($0<\alpha\leq 1$), this rate becomes $n^{-8\alpha/(4\alpha + d)}\vee n^{-1}$. We also remark that a different estimator achieving the rate $n^{-8\alpha/(4\alpha + d)}\vee n^{-1}$ over an isotropic $\alpha$-H\"older class has been briefly sketched in \cite{robins2008higher}.

For completeness, we also state without proof some results for model \eqref{model:multivariate_nonpar} in the fixed design setting. In particular, we consider the following two types of fixed designs in the $d$-dimensional unit cube $[0,1]^d$, namely, the grid design (GD):
\begin{align}
\label{eq:GD}
\quad(X_{(i_1,\ldots,i_d),1},\ldots, X_{(i_1,\ldots,i_d),d}) = (i_1/n^{1/d}, \ldots, i_d/n^{1/d}), \\ 
(i_1,\ldots,i_d) \in [n^{1/d}]\times \ldots\times[n^{1/d}] \notag
\end{align}
assuming $n^{1/d}$ is an integer, and the diagonal design (DD):
\begin{align}
\label{eq:DD}
(X_{i,1},\ldots,X_{i,d}) = (i/n, \ldots, i/n),\quad i\in[n].
\end{align}
Here for any positive integer $n$, $[n]$ denotes the set $\{1,2,\ldots,n\}$. Let $\alpha_{\max} \define \max_{k\in[d]}\alpha_k$ and $\alpha_{\min} \define \min_{k\in[d]}\alpha_k$. The first result for (GD) is a simple modification of the isotropic result in \cite{cai2009variance} by taking differences along the smoothest direction with index $\alpha_{\max}$. The second result can be readily deduced from the fact that $Y_i = \td{f}(i/n) + \sigma\varepsilon_i$, $i\in[n]$, where $\td{f}(x)\define f(x,\ldots,x)$ is $\alpha_{\min}$-H\"older smooth.

\begin{proposition}
\label{prop:multivariate_fixed_GD}
Under \eqref{model:multivariate_nonpar} with fixed design (GD), it holds that
\begin{align*}
\inf_{\td{\sigma}^2}\sup_{f\in\Lambda_{\mbf{\alpha},[0,1]^d}(C_\cal{F})}\sup_{\sigma^2\leq C_\sigma}\sup_{\E\varepsilon^4\leq C_\varepsilon} \E\parr*{\td{\sigma}^2 - \sigma^2}^2 \asymp n^{-4\alpha_{\max}/d}\vee n^{-1}
\end{align*}
up to some fixed positive constant that only depends on $\mbf{\alpha}, C_\cal{F},C_\sigma,C_\varepsilon$, where $\td{\sigma}^2$  ranges over all estimators of $\sigma^2$.
\end{proposition}

\begin{proposition}
\label{prop:multivariate_fixed_DD}
Under \eqref{model:multivariate_nonpar} with fixed design (DD), it holds that
\begin{align*}
\inf_{\td{\sigma}^2}\sup_{f\in\Lambda_{\mbf{\alpha},[0,1]^d}(C_\cal{F})}\sup_{\sigma^2\leq C_\sigma}\sup_{\E\varepsilon^4\leq C_\varepsilon} \E\parr*{\td{\sigma}^2 - \sigma^2}^2 \asymp n^{-4\alpha_{\min}}\vee n^{-1}
\end{align*}
up to some fixed positive constant that only depends on $\mbf{\alpha}, C_\cal{F},C_\sigma,C_\varepsilon$, where $\td{\sigma}^2$ ranges over all estimators of $\sigma^2$.
\end{proposition}

When $f(\cdot)$ belongs to an isotropic $\alpha$-H\"older class, Proposition \ref{prop:multivariate_fixed_GD} implies the minimax rate $n^{-4\alpha/d} \vee n^{-1}$ derived in \cite{cai2009variance}. Comparison with the random design rate $n^{-8\alpha/(4\alpha + d)}\vee n^{-1}$ thus shows that, for $0 <\alpha \leq 1$, a faster rate is again achievable in the random design setting for $\alpha < d/4$. 

\subsection{Nonparametric additive model}
\label{subsec:additive}

Consider variance estimation in the additive model \eqref{model:additive}:
\begin{align*}
Y_i = \sum_{k=1}^d f_k(X_{i,k}) + \sigma\varepsilon_i, \quad i=1,2,\ldots,n,
\end{align*}
for some fixed integer $d \geq 2$, where $\{\varepsilon_i\}_{i=1}^n$ are i.i.d. with zero mean and unit variance and are independent from $\{\mbf{X}_i\}_{i=1}^n = \{(X_{i,1},\ldots,X_{i,d})^\top\}_{i=1}^n$ in the random design setting. Unlike Section \ref{subsec:multivariate_nonpar}, we specify $d\geq 2$, since the minimax rate in the fixed design (GD) has completely different behavior for $d=1$ and $d\geq 2$ (see Proposition \ref{prop:additive_fixed_GD} below).

\subsubsection{Fixed design}
We first consider the two fixed designs (GD) and (DD) defined in \eqref{eq:GD} and \eqref{eq:DD}. For both designs, we consider an error distribution class with only a finite fourth moment condition. We start with (GD), where by iteratively taking pairwise differences, one is able to estimate the variance at the parametric rate $n^{-1}$ without any smoothness assumption on the additive components $\{f_k\}_{k=1}^d$. For simplicity, we illustrate this idea with $d = 2$ with two additive components $f(\cdot)$ and $g(\cdot)$, and assume that $\sqrt{n}$ is an even number. In this case,
\begin{align*}
Y_{i,j} = f\parr*{\frac{i}{\sqrt{n}}} + g\parr*{\frac{j}{\sqrt{n}}} + \sigma\varepsilon_{i,j}, \quad (i,j)\in[\sqrt{n}]\times [\sqrt{n}],
\end{align*}  
where $\{\varepsilon_{i,j}\}_{i,j\in[\sqrt{n}]}$ are i.i.d. with zero mean and unit variance. By taking the pairwise difference in the first dimension, we have
\begin{align*}
Y_{(i_1,i_2), j} \define Y_{i_1,j} - Y_{i_2,j} = f\parr*{\frac{i_1}{\sqrt{n}}} - f\parr*{\frac{i_2}{\sqrt{n}}} + \sigma(\varepsilon_{i_1,j} - \varepsilon_{i_2,j})
\end{align*}
for all $j\in[\sqrt{n}]$ and $(i_1,i_2)\in[\sqrt{n}]\times [\sqrt{n}]$ such that $i_1\neq i_2$. Taking again the pairwise difference in the second dimension, we have
\begin{align*}
Y_{(i_1,i_2),(j_1,j_2)}\define Y_{(i_1,i_2),j_1} - Y_{(i_1,i_2),j_2} = \sigma(\varepsilon_{i_1,j_1} - \varepsilon_{i_2,j_1} - \varepsilon_{i_1,j_2} + \varepsilon_{i_2,j_2})
\end{align*} 
for all $(i_1,i_2,j_1,j_2)\in[\sqrt{n}]\times [\sqrt{n}]\times[\sqrt{n}]\times[\sqrt{n}]$ such that $i_1\neq i_2$ and $j_1\neq j_2$. Clearly, we have $\E Y_{(i_1,i_2),(j_1,j_2)} = 0$ and $\var(Y_{(i_1,i_2),(j_1,j_2)}) = 4\sigma^2$. Let $m \define \sqrt{n}/2$ and define $\cal{I}\define \{(1,2), (3,4), \ldots, (2m-1, 2m)\}$ with cardinality $m$. Then, for the set of data points $\{Y_{(i_1,i_2),(j_1,j_2)}\}_{(i_1,i_2),(j_1,j_2)\in \cal{I}}$ with cardinality $m^2 = n/4$, it can be readily verified that they are i.i.d. with mean $0$ and variance $4\sigma^2$. Therefore, with $\bar{Y}$ defined as the sample average of $\{Y_{(i_1,i_2), (j_1,j_2)}\}_{(i_1,i_2),(j_1,j_2)\in\cal{I}}$, the sample variance estimator,
\begin{align*}
\widehat{\sigma}_{\tiny{\text{add, GD}}}^2 \define \frac{1}{n}\sum_{(i_1,i_2), (j_1,j_2)\in\cal{I}} \parr*{Y_{(i_1,i_2),(j_1,j_2)} - \bar{Y}}^2, 
\end{align*}
achieves the parametric rate $n^{-1}$. A similar derivation holds for general $d$.


\begin{proposition}
\label{prop:additive_fixed_GD}
Suppose $d\geq 2$. Under \eqref{model:additive} with fixed design (GD), it holds that
\begin{align*}
\inf_{\td{\sigma}^2}\sup_{f_k,k\in[d]}\sup_{\sigma^2\leq C_\sigma}\sup_{\E\varepsilon^4\leq C_\varepsilon} \E\parr*{\td{\sigma}^2 - \sigma^2}^2 \asymp n^{-1}
\end{align*}
up to some fixed positive constant that only depends on $C_\sigma$ and $C_\varepsilon$, where $\td{\sigma}^2$  ranges over all estimators of $\sigma^2$, and the first supremum is taken over all functions defined on $[0,1]$ for each $k\in[d]$.
\end{proposition}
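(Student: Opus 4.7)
The plan is to prove the matching upper and lower bounds separately. The upper bound extends the iterated pairwise-differencing estimator sketched above for $d=2$ to arbitrary $d\geq 2$, exploiting the fact that $d$ successive differencings along the $d$ distinct coordinate axes annihilate every additive component $f_k$. The lower bound is purely parametric since $\sigma^2$ is a scalar: a two-point argument under $f_k\equiv 0$ and Gaussian $\varepsilon$ already gives rate $n^{-1/2}$ for $\sigma^2$, hence $n^{-1}$ for squared loss.

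For the upper bound, assume $n^{1/d}$ is even (otherwise discard one slice per axis, affecting only constants), and set $m\define n^{1/d}/2$ together with the pair partition $\cal{I}\define\{(1,2),(3,4),\ldots,(2m-1,2m)\}$ of $[n^{1/d}]$. For each $d$-tuple $\mbf{e}=((i_1,i_1^\prime),\ldots,(i_d,i_d^\prime))\in\cal{I}^d$ and $S\subseteq[d]$, let $\mbf{i}(S)$ denote the multi-index whose $k$th coordinate equals $i_k^\prime$ if $k\in S$ and $i_k$ otherwise, and define
\begin{align*}
Z_{\mbf{e}}\define\sum_{S\subseteq[d]}(-1)^{|S|}Y_{\mbf{i}(S)}=\sigma\sum_{S\subseteq[d]}(-1)^{|S|}\varepsilon_{\mbf{i}(S)}.
\end{align*}
The cancellation of each $f_k$ follows because the $k$th additive component depends only on the $k$th coordinate, and for $d\geq 2$ the sum $\sum_{S\subseteq[d]\setminus\{k\}}(-1)^{|S|}$ vanishes. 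For distinct $\mbf{e},\mbf{e}^\prime\in\cal{I}^d$ the $2^d$-element index sets $\{\mbf{i}(S):S\subseteq[d]\}$ are disjoint, since $\cal{I}$ partitions $[n^{1/d}]$ into disjoint pairs; consequently $\{Z_{\mbf{e}}\}_{\mbf{e}\in\cal{I}^d}$ are i.i.d.\ with mean $0$, variance $2^d\sigma^2$, and fourth moment bounded via $\E\varepsilon^4\leq C_\varepsilon$. Since $|\cal{I}^d|=n/2^d$, the sample-variance estimator $\wh{\sigma}^2_{\text{add,GD}}\define 2^{-d}|\cal{I}^d|^{-1}\sum_{\mbf{e}\in\cal{I}^d}(Z_{\mbf{e}}-\bar Z)^2$ has MSE of order $n^{-1}$ by a standard sample-variance computation.

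For the matching lower bound, fix $f_k\equiv 0$ for all $k\in[d]$, take $\varepsilon\sim\cal{N}(0,1)$, and compare the two hypotheses $\sigma^2=\sigma_0^2$ and $\sigma^2=\sigma_0^2+c_1 n^{-1/2}$ for a sufficiently small $c_1>0$. Under these hypotheses the $\{Y_i\}_{i=1}^n$ are i.i.d.\ centered Gaussians whose variance alone changes, so the KL divergence between the two $n$-fold product measures equals $(n/2)\qth*{\log(\sigma_2^2/\sigma_1^2)+\sigma_1^2/\sigma_2^2-1}=O(c_1^2)$, which by Pinsker's inequality bounds the total variation distance away from $1$ uniformly in $n$. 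Le Cam's two-point method then yields a minimax squared-risk lower bound of order $(c_1 n^{-1/2})^2\asymp n^{-1}$.

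No serious obstacle arises. The only point worth verifying is the independence of $\{Z_{\mbf{e}}\}_{\mbf{e}\in\cal{I}^d}$, which is mechanical once the pair-partition structure of $\cal{I}$ is in place; the construction succeeds without any smoothness assumption on $f_1,\ldots,f_d$, as required by the statement of Proposition~\ref{prop:additive_fixed_GD}.
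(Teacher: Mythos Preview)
Your proposal is correct and follows essentially the same approach as the paper: the upper bound uses the identical iterated-differencing construction over the pair partition $\cal{I}^d$ to produce i.i.d.\ variables of variance $2^d\sigma^2$, and the lower bound is the standard parametric two-point argument that the paper simply asserts as ``clearly not improvable.'' Your explicit verification of the cancellation via $\sum_{S\subseteq[d]\setminus\{k\}}(-1)^{|S|}=0$ and of the disjointness of the index sets is a welcome elaboration but not a departure from the paper's method.
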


Now we move on to the design (DD), where we assume each additive component $f_k$ in \eqref{model:additive} is $\alpha_k$-H\"older smooth on $[0,1]$ with some fixed constant $C_\cal{F}$. In this case, the model can equivalently be written as 
\begin{align*}
Y_i = \td{f}(i/n) + \sigma\varepsilon_i, \quad i=1,2,\ldots,n,
\end{align*}
where $\td{f}\define \sum_{k=1}^d f_k$ is $\alpha_{\min}$-H\"older smooth. Therefore, the univariate estimator and lower bound in \cite{wang2008effect} can be directly applied. 
\begin{proposition}
\label{prop:additive_fixed_DD}
Under \eqref{model:additive} with fixed design (DD), it holds that
\begin{align*}
\inf_{\td{\sigma}^2}\sup_{f_k\in\Lambda_{\alpha_k,[0,1]}(C_\cal{F}),k\in[d]}\sup_{\sigma^2\leq C_\sigma}\sup_{\E\varepsilon^4\leq C_\varepsilon} \E\parr*{\td{\sigma}^2 - \sigma^2}^2 \asymp n^{-4\alpha_{\min}}\vee n^{-1}
\end{align*}
up to some fixed positive constant that only depends on $C_\cal{F},C_\sigma,C_\varepsilon$, where $\td{\sigma}^2$  ranges over all estimators of $\sigma^2$.
\end{proposition}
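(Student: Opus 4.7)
The plan is to reduce Proposition \ref{prop:additive_fixed_DD} to the univariate minimax result of \cite{wang2008effect} cited in the paper, since the diagonal design collapses the additive model into a one-dimensional nonparametric regression. Under (DD), $\mbf{X}_i = (i/n,\ldots,i/n)$, so model \eqref{model:additive} becomes
\begin{align*}
Y_i = \td{f}(i/n) + \sigma\varepsilon_i, \quad i\in[n], \qquad \td{f}(x) \define \sum_{k=1}^d f_k(x).
\end{align*}
This is exactly the univariate fixed-design model \eqref{model:nonpar} with $X_i = i/n$. Hence the whole proof consists of showing that (i) the induced class of $\td{f}$'s sits inside an $\alpha_{\min}$-H\"older ball, and (ii) the reverse containment holds at least over a subclass rich enough to drive the lower bound.

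For the upper bound, I would first verify that if $f_k\in\Lambda_{\alpha_k,[0,1]}(C_f)$ for each $k\in[d]$, then $\td{f}\in\Lambda_{\alpha_{\min},[0,1]}(C')$ for some $C'$ depending only on $d$ and $C_f$. This is a routine check: derivatives up to order $\floor{\alpha_{\min}}$ are bounded termwise by $dC_f$, and the H\"older modulus of the $\floor{\alpha_{\min}}$-th derivative of each $f_k$ over $[0,1]$ is controlled by $C_f$ (using the standard fact that $\alpha_k\geq\alpha_{\min}$ yields $\alpha_{\min}$-smoothness with a constant depending on $C_f$ and the diameter of $[0,1]$). Then I directly apply Theorem~1 of \cite{wang2008effect} to $\td{f}$ to obtain an estimator $\hat{\sigma}^2$ satisfying $\E(\hat{\sigma}^2-\sigma^2)^2 \lesssim n^{-4\alpha_{\min}}\vee n^{-1}$, uniformly over the class of interest.

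For the matching lower bound, I would carry out a sub-class reduction. Fix any index $k^*\in\argmin_{k\in[d]}\alpha_k$ and consider the restricted parameter family in which $f_k\equiv 0$ for all $k\neq k^*$ and $f_{k^*}$ ranges over $\Lambda_{\alpha_{\min},[0,1]}(C_f)$. On this sub-class the additive model becomes $Y_i = f_{k^*}(i/n) + \sigma\varepsilon_i$, which is precisely the univariate equidistant-design model to which Theorem~2 of \cite{wang2008effect} applies, yielding the lower bound $n^{-4\alpha_{\min}}\vee n^{-1}$. Since the supremum over a sub-class lower bounds the supremum over the full class, this suffices.

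The only genuine content is the smoothness bookkeeping for $\td{f}$; everything else is a direct citation. There is no real obstacle, because the diagonal design entirely removes the additive structure: the estimator never needs to separate the components, and the lower bound is already witnessed by a single component. I would include a brief remark that Proposition~\ref{prop:additive_fixed_DD} therefore inherits the same $\alpha=1/4$ phase transition as in the univariate case, consistent with Table~\ref{table:1}.
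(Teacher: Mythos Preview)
Your proposal is correct and matches the paper's approach exactly: the paper states right before the proposition that under (DD) the model reduces to $Y_i=\td{f}(i/n)+\sigma\varepsilon_i$ with $\td{f}=\sum_k f_k$ being $\alpha_{\min}$-H\"older, so that ``the univariate estimator and lower bound in \cite{wang2008effect} can be directly applied,'' and it lists the proof as straightforward and omitted. Your added details---the H\"older bookkeeping for the sum and the explicit sub-class reduction for the lower bound---are the natural way to flesh this out.
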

Comparison of Propositions \ref{prop:additive_fixed_DD} and \ref{prop:multivariate_fixed_DD} shows that, in contrast to grid design (GD) and random design below, there is no gain from an additive structure in the mean function for the diagonal design (DD). 
 
\subsubsection{Random design}
We now discuss \eqref{model:additive} with a random design for $\{\mbf{X}_i\}_{i=1}^n$ when $f_k$ is $\alpha_k$-H\"older smooth on some fixed set $I_k$ for each $k\in[d]$. Since a shift in the mean does not affect the estimation of variance, we assume $\E f_k(X_{1,k}) = 0$ for each $k\in[d]$ for simplicity. 
Recall the definition of $\cal{P}_{\tiny{\text{cv}},(X,\varepsilon)}$ in the beginning of Section \ref{sec:nonpar}. Define the joint distribution class $\cal{P}_{\text{\tiny{add}},(\mbf{X},\varepsilon)}$ (where ``add" stands for ``additive") as:
\begin{itemize}
\item[] For each $k\in[d]$, the joint distribution of $(X_k,\varepsilon)$ belongs to $\cal{P}_{\tiny{\text{cv}},(X,\varepsilon)}$ and the components of $\mbf{X}$ are mutually independent.
\end{itemize}

In view of Theorem \ref{thm:nonpar_lower}, the following lower bound is immediate.
\begin{proposition}
\label{prop:additive_random_lower}
Under \eqref{model:additive} with random design, it holds that
\begin{align*}
\inf_{\td{\sigma}^2}\sup_{f_k\in\Lambda_{\alpha_k,I_k}(C_\cal{F}),k\in[d]}\sup_{\sigma^2\leq C_\sigma}\sup_{\P_{(\mbf{X},\varepsilon)}\in\cal{P}_{\text{\tiny{add}},(\mbf{X},\varepsilon)}}\E\parr*{\td{\sigma}^2 - \sigma^2}^2 \geq c\parr*{n^{-\frac{8\alpha_{\min}}{4\alpha_{\min}+1}}\vee n^{-1}},
\end{align*}
where $c$ is a fixed positive constant that only depends on $\mbf{\alpha}, C_\cal{F},C_\sigma$ and $C_0,c_0,C_\varepsilon$ in $\cal{P}_{\text{\tiny{add}},(\mbf{X},\varepsilon)}$, and $\td{\sigma}^2$  ranges over all estimators of $\sigma^2$.
\end{proposition}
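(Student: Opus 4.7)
The plan is to reduce the lower bound to the univariate case of Theorem~\ref{thm:nonpar_lower} by restricting to a submodel with only one active additive component. Let $k_0 \in \argmin_{k\in[d]}\alpha_k$, so $\alpha_{k_0} = \alpha_{\min}$. Consider the submodel of the additive parameter space with $f_k \equiv 0$ for all $k \neq k_0$, $f_{k_0}$ ranging over $\Lambda_{\alpha_{\min}, I_{k_0}}(C_f)$, and a design in which $(X_{k_0},\varepsilon)$ is drawn from an arbitrary joint law in $\cal{P}_{\tiny{\text{cv}},(X,\varepsilon)}$ while $\{X_k\}_{k\neq k_0}$ are mutually independent, independent of $(X_{k_0}, \varepsilon)$, and (say) uniform on $I_k$. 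By the definition of $\cal{P}_{\text{\tiny{add}},(\mbf{X},\varepsilon)}$ (componentwise independence of $\mbf{X}$ together with each $(X_k,\varepsilon)\in\cal{P}_{\tiny{\text{cv}},(X,\varepsilon)}$), this joint law lies in $\cal{P}_{\text{\tiny{add}},(\mbf{X},\varepsilon)}$; and since $0 \in \Lambda_{\alpha_k, I_k}(C_f)$ trivially, the submodel is contained in the parameter space on the left-hand side of the proposition.

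Under this submodel the data becomes $Y_i = f_{k_0}(X_{i,k_0}) + \sigma\varepsilon_i$, which is exactly the univariate model \eqref{model:nonpar} at smoothness $\alpha_{\min}$, while the nuisance coordinates $\{X_{i,k}\}_{k\neq k_0}$ are independent of the informative triple $(X_{i,k_0}, \varepsilon_i, Y_i)$ and therefore carry no information about $\sigma^2$. Any additive estimator $\td{\sigma}^2$ thus descends by marginalization to an estimator in the univariate model with identical squared risk, so the additive minimax risk is bounded below by the univariate minimax risk at $\alpha = \alpha_{\min}$. Invoking Theorem~\ref{thm:nonpar_lower} with this choice of $\alpha$ yields the nonparametric component $n^{-8\alpha_{\min}/(4\alpha_{\min}+1)}$. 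The parametric floor $n^{-1}$ follows from a separate, standard two-point Gaussian argument within the same submodel (take $f_k \equiv 0$ throughout, $\varepsilon \sim \cal{N}(0,1)$, and compare $\sigma^2 \in \{1, 1 + cn^{-1/2}\}$, whose $n$-fold product TV is bounded away from $1$ for small $c$), delivering the claimed $n^{-8\alpha_{\min}/(4\alpha_{\min}+1)} \vee n^{-1}$ bound after combining the two subfamilies.

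I expect the only genuine subtlety to be the normalization $\E f_k(X_{1,k}) = 0$ adopted in the additive setup: the two-point construction in Theorem~\ref{thm:nonpar_lower} uses a trapezoidal $f_{k_0}$ with i.i.d.\ standard-normal heights $\{\td{r}_i\}$, whose marginal mean under the uniform design is nonzero with positive probability. This is easily repaired either by conditioning the prior on $\sum_i \td{r}_i = 0$, which does not disturb the leading-order TV bounds or the separation $\theta_n^2$ used in the proof of Theorem~\ref{thm:nonpar_lower}, or, more simply, by the observation that variance estimation is invariant under constant shifts of $\{Y_i\}$, so the random marginal mean of $f_{k_0}$ can be absorbed without weakening the lower bound. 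This renormalization is the only nonroutine element; otherwise the result is a direct subsetting of the parameter space and requires no new technical machinery.
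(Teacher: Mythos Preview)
Your proposal is correct and follows essentially the same route as the paper, which simply states that the result is ``immediate'' from Theorem~\ref{thm:nonpar_lower} via the embedding of the univariate model as the submodel with a single active component at smoothness $\alpha_{\min}$. One remark: the centering subtlety you raise is not actually needed here, since the supremum in the proposition is over $f_k\in\Lambda_{\alpha_k,I_k}(C_f)$ without any mean-zero constraint (the paper's centering assumption is introduced only as a WLOG convenience for the upper bound), so the construction of Theorem~\ref{thm:nonpar_lower} applies verbatim; likewise the parametric $n^{-1}$ floor is already contained in Theorem~\ref{thm:nonpar_lower} and need not be argued separately.
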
 


We now describe a procedure that matches the lower bound in Proposition \ref{prop:additive_random_lower}, but depends crucially on mutual independence. For illustrative purposes, we again consider the case of only two additive components $f(\cdot)$ and $g(\cdot)$, which are $\alpha$- and $\beta$-H\"older smooth, respectively. Let $X$ and $W$ denote the two covariates. For each $i\in[n]$, define
\begin{align*}
\varepsilon^{X}_i\define f(X_i) + \sigma\varepsilon_i \quad \text{ and }\quad  \varepsilon^{W}_i \define g(W_i) + \sigma\varepsilon_i,
\end{align*}
and their corresponding variances
\begin{align*}
\sigma_{X}^2 \define \E f^2(X) + \sigma^2\quad  \text{ and } \quad \sigma_{W}^2 \define \E g^2(W) + \sigma^2.
\end{align*}
Clearly, we have $\E \varepsilon^{X}_i = 0$ and $\E \varepsilon^{W}_i = 0$, and $\varepsilon^{X}_i$ and $\varepsilon^{W}_i$ are independent of $g(W_i)$ and $f(X_i)$, respectively. Now, notice that the additive model in \eqref{model:additive} can be equivalently viewed as $Y_i = f(X_i) + \varepsilon^{W}_i$. Thus by applying the univariate kernel estimator defined in \eqref{eq:nonpar_estimator} to $\{(Y_i,X_i)\}_{i=1}^n$, which we denote as $\widehat{\sigma}^2_W$, one obtains
\[
\E\parr*{\widehat{\sigma}^2_W - \sigma_W^2}^2 \leq C(n^{-8\alpha/(4\alpha + 1)}\vee n^{-1})
\]
for some fixed positive constant $C$. Similarly, defining $\widehat{\sigma}^2_X$ as $\widehat{\sigma}^2_W$, one has
\[
\E\parr*{\widehat{\sigma}^2_X - \sigma_X^2}^2  \leq C(n^{-8\beta/(4\beta + 1)}\vee n^{-1}).
\]
Lastly, under a finite fourth moment assumption on $\varepsilon$, a sample variance estimator of $\{Y_i\}_{i=1}^n$, denoted as $\widehat{\sigma}^2_{Y}$, achieves the parametric rate $n^{-1}$ in estimating the total variance $\var(Y)$, which can be decomposed as $\E f^2(X) + \E g^2(W) + \sigma^2$.
Consequently, we have shown that the method-of-moments estimator 
\begin{align}
\label{eq:mom_random}
\widehat{\sigma}_{\tiny{\text{moment}},2}^2\define \widehat{\sigma}_{X}^2 + \widehat{\sigma}_{W}^2 - \widehat{\sigma}^2_{Y}
\end{align}
achieves the optimal rate in Proposition \ref{prop:additive_random_lower}.
We summarize the above derivation for the natural extension $\widehat{\sigma}_{\tiny{\text{moment}},d}^2$ to general $d$.

\begin{proposition}
\label{prop:additive_random_independent}
Under \eqref{model:additive} with random design, it holds that 
\begin{align*}
\sup_{f_k\in\Lambda_{\alpha_k,I_k}(C_\cal{F}),k\in[d]}\sup_{\sigma^2\leq C_\sigma}\sup_{\P_{(\mbf{X},\varepsilon)}\in\cal{P}_{\text{\tiny{add}},(\mbf{X},\varepsilon)}}\E\parr*{\widehat{\sigma}_{\tiny{\text{moment}},d}^2 - \sigma^2}^2 \leq C\parr*{n^{-\frac{8\alpha_{\min}}{4\alpha_{\min}+1}}\vee n^{-1}},
\end{align*}
where $C$ is some fixed positive constant that only depends on $\mbf{\alpha},C_\cal{F},C_\sigma$ and $C_0,c_0,C_\varepsilon$ in $\cal{P}_{\text{\tiny{add}},(\mbf{X},\varepsilon)}$.
\end{proposition}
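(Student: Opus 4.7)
The plan is to define the natural $d$-dimensional analogue of the estimator \eqref{eq:mom_random} and then reduce the analysis of each of its ingredients to Theorem~\ref{thm:nonpar_upper} (for the $\widehat{\sigma}^2_k$'s) and a classical sample-variance calculation (for $\widehat{\sigma}^2_Y$). Concretely, for each $k\in[d]$, let $\widehat{\sigma}^2_k$ denote the univariate kernel estimator \eqref{eq:nonpar_estimator} applied to the data $\{(Y_i,X_{i,k})\}_{i=1}^n$ with bandwidth from \eqref{eq:h_opt_nonpar} corresponding to smoothness $\alpha_k$, and let $\widehat{\sigma}^2_Y$ be the usual sample variance of $\{Y_i\}_{i=1}^n$. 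Set
\begin{align*}
\widehat{\sigma}^2_{\tiny{\text{moment}},d} \define \sum_{k=1}^d \widehat{\sigma}^2_k - (d-1)\widehat{\sigma}^2_Y,
\end{align*}
which reduces to \eqref{eq:mom_random} when $d=2$. The key algebraic identity is as follows. Denote $\sigma_k^2\define\sum_{l\neq k}\E f_l^2(X_l)+\sigma^2$ and $T\define \var(Y)=\sum_{l=1}^d \E f_l^2(X_l)+\sigma^2$, where the last equality uses mutual independence of the components of $\mbf{X}$ together with $\E f_l(X_l)=0$. Summing gives $\sum_{k=1}^d \sigma_k^2-(d-1)T=\sigma^2$, so that
\begin{align*}
\widehat{\sigma}^2_{\tiny{\text{moment}},d}-\sigma^2 = \sum_{k=1}^d\bigl(\widehat{\sigma}^2_k-\sigma_k^2\bigr) - (d-1)\bigl(\widehat{\sigma}^2_Y - T\bigr).
\end{align*}

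To control each $\widehat{\sigma}^2_k-\sigma_k^2$, I would rewrite the additive model, for each fixed $k$, as $Y_i = f_k(X_{i,k}) + \sigma_k\td{\varepsilon}_{i,k}$, where
\begin{align*}
\td{\varepsilon}_{i,k} \define \sigma_k^{-1}\Bigl(\sum_{l\neq k} f_l(X_{i,l}) + \sigma\varepsilon_i\Bigr).
\end{align*}
By the assumed mutual independence of the components of $\mbf{X}$ and of $\varepsilon_i$ from $\mbf{X}_i$ built into $\cal{P}_{\text{\tiny{add}},(\mbf{X},\varepsilon)}$, the variable $\td{\varepsilon}_{i,k}$ is independent of $X_{i,k}$, has zero mean and unit variance, and its fourth moment is bounded uniformly by a fixed constant depending only on $d,C_f,C_\sigma,C_\varepsilon$, since each $f_l$ is bounded by $C_f$ on $I_l$ and $\E\varepsilon^4\leq C_\varepsilon$. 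The marginal distribution of $X_{i,k}$ lies in $\cal{P}_{\tiny{\text{cv}},(X,\varepsilon)}$ by the definition of $\cal{P}_{\text{\tiny{add}},(\mbf{X},\varepsilon)}$, and $\sigma_k^2\leq (d-1)C_f^2+C_\sigma$ is bounded by a fixed constant. Hence Theorem~\ref{thm:nonpar_upper} applies in this rewritten model with smoothness index $\alpha_k$ and yields
\begin{align*}
\E\bigl(\widehat{\sigma}^2_k-\sigma_k^2\bigr)^2 \leq C\bigl(n^{-8\alpha_k/(4\alpha_k+1)}\vee n^{-1}\bigr) \leq C\bigl(n^{-8\alpha_{\min}/(4\alpha_{\min}+1)}\vee n^{-1}\bigr),
\end{align*}
where the last inequality uses the monotonicity of the rate in $\alpha_k$. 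The term $\widehat{\sigma}^2_Y-T$ is handled by the classical sample-variance bound: $Y_i$ has finite fourth moment (again by boundedness of the $f_l$'s and $\E\varepsilon^4\leq C_\varepsilon$), so $\E(\widehat{\sigma}^2_Y - T)^2\lesssim n^{-1}$ uniformly over the classes.

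Combining the two pieces with $(\sum_{k=0}^d a_k)^2\leq (d+1)\sum_{k=0}^d a_k^2$ applied to the displayed decomposition of $\widehat{\sigma}^2_{\tiny{\text{moment}},d}-\sigma^2$ gives
\begin{align*}
\E\bigl(\widehat{\sigma}^2_{\tiny{\text{moment}},d}-\sigma^2\bigr)^2 \leq (d+1)\sum_{k=1}^d \E\bigl(\widehat{\sigma}^2_k-\sigma_k^2\bigr)^2 + (d+1)(d-1)^2\E\bigl(\widehat{\sigma}^2_Y - T\bigr)^2 \lesssim n^{-\frac{8\alpha_{\min}}{4\alpha_{\min}+1}}\vee n^{-1},
\end{align*}
which is the asserted bound. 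I expect the main obstacle to be the careful verification that Theorem~\ref{thm:nonpar_upper} applies to the rewritten univariate regression of $Y_i$ on $X_{i,k}$ for each $k$: this is precisely where the mutual-independence assumption in $\cal{P}_{\text{\tiny{add}},(\mbf{X},\varepsilon)}$ is essential, for without it $\td{\varepsilon}_{i,k}$ would typically fail to be independent of $X_{i,k}$, and the univariate theorem could not be invoked in a black-box manner; the rest of the argument is a bookkeeping exercise.
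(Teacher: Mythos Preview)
Your proposal is correct and follows essentially the same argument as the paper: the same method-of-moments estimator, the same algebraic identity $\sum_{k}\sigma_k^2-(d-1)\var(Y)=\sigma^2$, the same reduction of each coordinate to the univariate model $Y_i=f_k(X_{i,k})+\sigma_k\td{\varepsilon}_{i,k}$ followed by an application of Theorem~\ref{thm:nonpar_upper}, and the same sample-variance bound for $\widehat{\sigma}_Y^2$. If anything, you are slightly more explicit than the paper in normalizing $\td{\varepsilon}_{i,k}$ to unit variance and in checking the fourth-moment condition needed to invoke the univariate theorem.
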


Propositions \ref{prop:additive_random_lower} and \ref{prop:additive_random_independent} together imply the minimax rate over $\cal{P}_{\text{\tiny{add}},(\mbf{X},\varepsilon)}$, which further illustrates the fact that an additive structure in the mean function could possibly avoid the ``curse of dimensionality" in variance estimation. 
However, we note that our results crucially rely on the mutual independence condition. It is still largely unclear if the same minimax rate could apply to the general case without this condition, though a discussion of an interesting connection to variance estimation under linear models shall be made in Section \ref{subsec:linear_model}. 

\subsection{Connection to quadratic functional estimation}
\label{subsec:quad_est}

We now formally state the connection between quadratic functional estimation and variance estimation in \eqref{model:nonpar}, the first of which has been studied in, for example, \cite{doksum1995nonparametric}, \cite{ruppert1995effective}, \cite{huang1999nonparametric}, and \cite{robins2009semiparametric}.

Recall the definition of $Q$ in \eqref{eq:Q} with some non-negative weight function $w(\cdot)$. Squaring both sides of   \eqref{model:nonpar}, multiplying by $w(X_i)$, and then taking the expectation, one has
\begin{align*}
\E\parr*{Y_i^2w(X_i)} = \E\parr*{f^2(X_i)w(X_i)} + \sigma^2\E(w(X_i)\varepsilon_i^2) = Q + \sigma^2\E w(X_i).
\end{align*} 
Under a finite fourth moment assumption on $\varepsilon$, both $\E\parr*{Y_i^2w(X_i)}$ and $\E w(X_i)$ can be estimated at the parametric rate via the sample mean estimator, and $\sigma^2$ can be estimated via $\widehat{\sigma}^2$ in \eqref{eq:nonpar_estimator} with rate $n^{-8\alpha/(4\alpha + 1)}\vee n^{-1}$ under the quadratic risk. Therefore, the estimator
\begin{align*}
\widehat{Q}\define \frac{1}{n}\sum_{i=1}^n Y_i^2w(X_i) - \parr*{\frac{1}{n}\sum_{i=1}^n w(X_i)}\cdot \widehat{\sigma}^2
\end{align*}
achieves the same rate $n^{-8\alpha/(4\alpha + 1)}\vee n^{-1}$. In fact, it is not possible to improve upon this rate since if there exists an estimator $\td{Q}$ with a faster convergence rate, then the ``conjugate" estimator of $\sigma^2$ defined as
\begin{align*}
\td{\sigma}^2 \define \max\bbrace*{\frac{\frac{1}{n}\sum_{i=1}^n Y_i^2w(X_i) - \td{Q}}{\frac{1}{n}\sum_{i=1}^n w(X_i)}, 0}\cdot\mathbbm{1}\bbrace*{\frac{1}{n}\sum_{i=1}^n w(X_i) > 0}
\end{align*}
will also converge to $\sigma^2$ at a faster rate, violating the lower bound in Theorem \ref{thm:nonpar_lower}. 

The following result summarizes the derivation. Recall the definition of $\cal{P}_{\tiny{\text{cv}},(X,\varepsilon)}$ in the beginning of Section \ref{sec:nonpar}.

\begin{proposition}
\label{prop:quad_est}
Suppose the weight function $w(\cdot)$ in the definition of $Q$ is uniformly bounded on $\RR$. Then, it holds that
\begin{align*}
\inf_{\td{Q}}\sup_{f\in\Lambda_{\alpha,I}(C_\cal{F})}\sup_{\sigma^2\leq C_\sigma}\sup_{\P_{(X,\varepsilon)}\in\cal{P}_{\tiny{\text{cv}},(X,\varepsilon)}} \E\parr*{\td{Q} - Q}^2 \asymp n^{-8\alpha/(4\alpha + 1)}\vee n^{-1}
\end{align*}
up to some fixed positive constant that only depends on $w(\cdot)$, $\alpha, C_\cal{F}, C_\sigma$ and $C_0,c_0,C_\varepsilon$ in $\cal{P}_{\tiny{\text{cv}},(X,\varepsilon)}$, where $\td{Q}$  ranges over all estimators of $Q$.
\end{proposition}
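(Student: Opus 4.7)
The plan is to execute the two-sided reduction sketched just above the statement, centered on the identity
\begin{align*}
\E\parr*{Y_1^2 w(X_1)} = \int f^2(x) p_X(x) w(x)\,dx + \sigma^2 \E w(X_1) = Q + \sigma^2 \mu_w,
\end{align*}
where $\mu_w := \E w(X_1)$; this follows from squaring \eqref{model:nonpar} and using that $\varepsilon_1$ has zero mean, unit variance, and is independent of $X_1$. Thus $Q$ and $\sigma^2$ stand in one-to-one correspondence given the two sample-mean quantities $\mu_w$ and $\E(Y_1^2 w(X_1))$, each of which is estimable at the parametric rate thanks to $\E\varepsilon^4 \leq C_\varepsilon$ and the boundedness of $f, \sigma^2, w$.

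For the upper bound, I would analyze $\widehat{Q}$ via the decomposition $\widehat{Q} - Q = A_n - \sigma^2 B_n - T_n(\widehat{\sigma}^2 - \sigma^2)$, where $A_n := n^{-1}\sum_i Y_i^2 w(X_i) - \E(Y_1^2 w(X_1))$, $B_n := n^{-1}\sum_i w(X_i) - \mu_w$, and $T_n := n^{-1}\sum_i w(X_i)$. A direct second-moment computation yields $\E A_n^2 + \E B_n^2 \lesssim n^{-1}$, while $|T_n| \leq \|w\|_\infty$ combined with Theorem \ref{thm:nonpar_upper} supplies $\E T_n^2(\widehat{\sigma}^2 - \sigma^2)^2 \lesssim n^{-8\alpha/(4\alpha+1)} \vee n^{-1}$. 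The routine $(a+b+c)^2 \leq 3(a^2+b^2+c^2)$ bound then delivers the claimed upper bound uniformly over the parameter class.

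For the matching lower bound, I would argue by contradiction using (a mild truncation of) the conjugate estimator $\td{\sigma}^2$ displayed in the excerpt: replace $\td{\sigma}^2$ by $\min(\td{\sigma}^2, C_\sigma)$, which is at least as accurate since the true $\sigma^2 \leq C_\sigma$. Suppose some $\td{Q}$ attains a rate $r_n = o(n^{-8\alpha/(4\alpha+1)} \vee n^{-1})$ uniformly over the class. One may assume $\mu_w \geq \mu_0$ for some fixed positive $\mu_0$ under the construction used in Theorem \ref{thm:nonpar_lower} (otherwise $Q \equiv 0$ on that construction and the statement reduces to a vacuous instance, so we lose no generality by restricting attention to configurations with $\mu_w$ bounded below). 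On the event $\Omega_n := \{T_n \geq \mu_0/2\}$, which has probability $1 - O(n^{-1})$ by Chebyshev, the identity $T_n(\td{\sigma}^2 - \sigma^2) = A_n - \sigma^2 B_n - (\td{Q} - Q)$ (the $\max(\cdot,0)$ cap being harmless since $\sigma^2 \geq 0$) gives $\E[(\td{\sigma}^2 - \sigma^2)^2 \indc] \lesssim r_n + n^{-1}$, and the capping makes the $\Omega_n^c$ contribution to the MSE at most $O(\P(\Omega_n^c)) = O(n^{-1})$. Hence $\E(\td{\sigma}^2 - \sigma^2)^2 = o(n^{-8\alpha/(4\alpha+1)} \vee n^{-1})$, contradicting Theorem \ref{thm:nonpar_lower}. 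The main obstacle I anticipate is precisely this inversion: ensuring that $T_n$ stays bounded away from zero on an event of probability $1 - O(n^{-1})$ and that the truncation does not perturb the rate; apart from that, everything reduces directly to already-established rates for $\widehat{\sigma}^2$ and for i.i.d. sample means.
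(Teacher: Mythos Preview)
Your proposal mirrors the paper's argument exactly: the text preceding the proposition sketches both the $\widehat Q$ upper bound and the conjugate-$\td\sigma^2$ contradiction for the lower bound, and the appendix simply declares the proof straightforward. One small caveat worth tightening: the bound you actually derive is $\E(\td\sigma^2-\sigma^2)^2\lesssim r_n+n^{-1}$, which is $o(n^{-8\alpha/(4\alpha+1)})$ only when $\alpha<1/4$; in the parametric regime $\alpha\geq 1/4$ this is merely $O(n^{-1})$ and no contradiction with Theorem~\ref{thm:nonpar_lower} ensues, so the $n^{-1}$ lower bound for $Q$ needs its own routine two-point argument (e.g.\ $f\equiv 0$ versus $f\equiv cn^{-1/2}$ with the same $\sigma^2$ and design)---a detail the paper's informal sketch also elides.
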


\subsection{Connection to the linear model}
\label{subsec:linear_model}
Throughout this paper, we have treated the distribution of $\mbf{X}$ as a nuisance parameter. Interestingly, when we do know the distribution of $\mbf{X}$, variance estimation in nonparametric regression with random design becomes substantially easier with the aid of parallel work in the high-dimensional linear model \citep{verzelen2010goodness, dicker2014variance, kong2018estimating, verzelen2018adaptive}. We first elaborate on this point using the simple model \eqref{model:nonpar}, and then formulate corresponding results for \eqref{model:multivariate_nonpar} and \eqref{model:additive}.

By applying the inverse of the distribution function $F$ of $X$, \eqref{model:nonpar} can be equivalently written as
\begin{align*}
Y_i = \overline{f}(U_i) + \sigma\varepsilon_i, \quad i=1,2,\ldots,n,
\end{align*}
where $\{U_i\}_{i=1}^n = \{F(X_i)\}_{i=1}^n$ are i.i.d. uniform on $[0,1]$, and $\overline{f}(\cdot) \define f\circ F^{-1}(\cdot)$ is still $\alpha$-H\"older smooth under Lipschitz continuity on $F^{-1}$. Then, using a wavelet expansion for H\"older classes (cf. Proposition 2.5 in \cite{meyer1990ondelettes}), one has
\begin{align}
\label{eq:linear_approx}
Y_i = \overline{f}_1(U_i) + \sum_{j=1}^{2^J} \psi_{j}(U_i) + \sigma\varepsilon_i, \quad i=1,2,\ldots,n,
\end{align}
where $\{\psi_j\}_{j=1}^{\infty}$ is an $L_2$-orthonormal wavelet basis under the Lebesgue measure on $[0,1]$, and $\overline{f}_1(\cdot)$ is the remainder term after truncation at resolution $J = J_n$ which satisfies $\|\overline{f}_1\|_\infty = O(2^{-\alpha J_n})$. Let $\mbf{\psi}\define (\psi_1,\ldots, \psi_{2^J})$ and assume without loss of generality that $\E\mbf{\psi} = \mbf{0}_{2^J}$, since a mean shift does not affect the estimation of variance. Moreover, due to the orthonormality of $\{\psi_j\}_{j=1}^{\infty}$, we have $\cov(\mbf{\psi}) = \E(\mbf{\psi}\mbf{\psi}^\top) = \mathbf{I}_{2^J}$. Following \cite{verzelen2018adaptive} and \cite{kong2018estimating},  the estimator
\begin{align*}
\widehat{\sigma}^2_{\tiny{\text{proj}}}\define \frac{1}{n-1}\sum_{i=1}^n (Y_i - \bar{Y})^2 - {n\choose 2}^{-1}\sum_{i<j} Y_iY_j \mbf{\psi}^\top(U_i)\mbf{\psi}(U_j)
\end{align*}
has a variance term of the order $(2^{J_n} + n)/n^2$ and a bias term of the order $2^{-2\alpha J_n}$. Therefore, by choosing the optimal truncation level $2^{J_n} \asymp n^{2/(4\alpha+1)}$, $\widehat{\sigma}^2_{\tiny{\text{proj}}}$ recovers the optimal rate $n^{-8\alpha/(4\alpha+1)}\vee n^{-1}$ in Theorem \ref{thm:nonpar_upper}.

Define $\widehat{\sigma}^2_{\tiny{\text{proj}}, d}$ (with tensor wavelet basis) and $\widehat{\sigma}^2_{\tiny{\text{proj}}, \tiny{\text{add}}}$ as the natural extensions of $\widehat{\sigma}^2_{\tiny{\text{proj}}}$ under \eqref{model:multivariate_nonpar} and \eqref{model:additive}, respectively (see the proofs of Propositions \ref{prop:multivariate_random_known} and \ref{prop:additive_random_known} in the supplement for exact definitions). In the wavelet expansion, we will use $J_k$ to denote the truncation level for the $k$th component of $f(\cdot)$ in \eqref{model:multivariate_nonpar} and $f_k$ in \eqref{model:additive}, and we use $F_k$ to denote the marginal distribution of $X_{1,k}$. Recall that $\under{\alpha}= d/(\sum_{k=1}^d 1/\alpha_k)$ for $\mbf{\alpha} = (\alpha_1,\ldots,\alpha_d)^\top$.

\begin{proposition}[Multivariate nonparametric regression, design known]
\label{prop:multivariate_random_known}
Suppose the distribution of $\mbf{X}$ is known with $\supp(\mbf{X})\subset\mbf{I}$ for some fixed set $\mbf{I}\subset\RR^d$, and $F_k^{-1}(\cdot)$ is Lipschitz continuous for all $k\in[d]$ with some fixed positive constant. Then, when $2^{J_k}$ is chosen to be of the order $n^{2\under{\alpha}/(\alpha_k(4\under{\alpha}+d))}$ for $k\in[d]$ in $\widehat{\sigma}^2_{\tiny{\text{proj}},d}$, it holds that 
\begin{align*}
\sup_{f\in\Lambda_{\mbf{\alpha},\mbf{I}}(C_\cal{F})}\sup_{\sigma^2\leq C_\sigma}\sup_{\E\varepsilon^4\leq C_\varepsilon}\E\parr*{\widehat{\sigma}^2_{\tiny{\text{proj}},d} - \sigma^2}^2 \leq C\parr*{n^{-8\under{\alpha}/(4\under{\alpha} + d)}\vee n^{-1}},
\end{align*}
where $C$ is some fixed positive constant that only depends on $\mbf{\alpha},C_\cal{F},C_\sigma,C_\varepsilon$, and the distribution of $\mbf{X}$.
\end{proposition}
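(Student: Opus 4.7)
The plan is to follow the univariate projection argument outlined just before Proposition~\ref{prop:multivariate_random_known}, replacing the scalar wavelet expansion by its anisotropic tensor-product analogue and balancing bias and variance coordinate by coordinate. First I standardize the design via the component-wise quantile transform $\mbf{U}_i \define (F_1(X_{i,1}), \ldots, F_d(X_{i,d}))^\top$ and set $\bar{f}(\mbf{u}) \define f(F_1^{-1}(u_1), \ldots, F_d^{-1}(u_d))$, so that $Y_i = \bar{f}(\mbf{U}_i) + \sigma \varepsilon_i$. Lipschitz continuity of each $F_k^{-1}$ implies that $\bar{f}$ still belongs to an anisotropic H\"older class of smoothness $\mbf{\alpha}$ with a possibly enlarged constant. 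Since the joint law of $\mbf{X}$ is assumed known, the joint law of $\mbf{U}$ is known as well, and I orthonormalize a truncated tensor wavelet basis in $L_2(\P_{\mbf{U}})$ via a Gram-matrix whitening step that absorbs any residual dependence between the coordinates of $\mbf{U}$. Denote the resulting basis by $\mbf{\psi} = (\psi_{\mbf{j}})_{\mbf{j}}$, built from products $\prod_{k=1}^d \psi_{j_k}(u_k)$ truncated at per-coordinate levels $\mbf{J} = (J_1, \ldots, J_d)$, and let $p \define \prod_{k=1}^d 2^{J_k}$.

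With this basis in hand, decompose $\bar{f} = \mbf{\beta}^\top \mbf{\psi} + \bar{f}_1$ where $\mbf{\beta}$ are the $L_2(\P_{\mbf{U}})$-projection coefficients. The tensor-product version of Proposition~2.5 in \cite{meyer1990ondelettes} for anisotropic H\"older classes yields $\nm{\bar{f}_1}_\infty \lesssim \sum_{k=1}^d 2^{-\alpha_k J_k}$ together with $L_2(\P_{\mbf{U}})$-orthogonality of $\bar{f}_1$ against $\mbf{\psi}$. Define
\begin{align*}
\hat{\sigma}^2_{\tiny{\text{proj}}, d} \define \frac{1}{n-1}\sum_{i=1}^n (Y_i - \bar Y)^2 - \binom{n}{2}^{-1}\sum_{i<j} Y_i Y_j \, \mbf{\psi}^\top(\mbf{U}_i)\mbf{\psi}(\mbf{U}_j).
\end{align*}
Taking expectations, the sample variance is unbiased for $\var(\bar{f}(\mbf{U})) + \sigma^2 = \nm{\mbf{\beta}}^2 + \var(\bar{f}_1(\mbf{U})) + \sigma^2$ (by the orthogonality just stated), while the U-statistic is unbiased for $\nm{\mbf{\beta}}^2$ (by orthonormality of $\mbf{\psi}$). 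Hence the bias equals $\var(\bar{f}_1(\mbf{U})) \leq \nm{\bar{f}_1}_\infty^2$, so the squared bias is at most of order $\parr*{\sum_{k=1}^d 2^{-\alpha_k J_k}}^2$.

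For the variance I apply Hoeffding's ANOVA to the U-statistic with kernel $h(Z_1, Z_2) \define Y_1 Y_2 \mbf{\psi}^\top(\mbf{U}_1)\mbf{\psi}(\mbf{U}_2)$. The linear term has variance bounded by $\E[Y_1^2 (\mbf{\beta}^\top\mbf{\psi}(\mbf{U}_1))^2] = O(1)$, since $\mbf{\beta}^\top\mbf{\psi} = \bar{f} - \bar{f}_1$ is uniformly bounded by the H\"older assumption. For the degenerate term, the identity $(\mbf{\psi}^\top(\mbf{U}_1)\mbf{\psi}(\mbf{U}_2))^2 = \tr\parr*{\mbf{\psi}(\mbf{U}_1)\mbf{\psi}^\top(\mbf{U}_1)\mbf{\psi}(\mbf{U}_2)\mbf{\psi}^\top(\mbf{U}_2)}$ together with independence of $Z_1, Z_2$ gives $\E h^2 = \tr(\mbf{M}^2)$, where $\mbf{M} \define \E[Y^2 \mbf{\psi}(\mbf{U})\mbf{\psi}^\top(\mbf{U})] \preceq (\nm{\bar{f}}_\infty^2 + \sigma^2)\mathbf{I}_p$; hence $\tr(\mbf{M}^2) \leq p \nm{\mbf{M}}_{\text{op}}^2 = O(p)$. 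Combining with $\E\varepsilon^4 \leq C_\varepsilon$ and a Cauchy-Schwarz bound on the covariance with the sample variance piece, one obtains $\var(\hat{\sigma}^2_{\tiny{\text{proj}}, d}) \lesssim p/n^2 + 1/n$.

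Finally, balancing squared bias and variance by equating $\alpha_k J_k \equiv c$ across $k$ gives $p = 2^{cd/\under{\alpha}}$ and squared bias $\lesssim 2^{-4c}$; solving $2^{-4c} \asymp 2^{cd/\under{\alpha}}/n^2$ yields $2^c \asymp n^{2\under{\alpha}/(4\under{\alpha}+d)}$, equivalently $2^{J_k} \asymp n^{2\under{\alpha}/(\alpha_k(4\under{\alpha}+d))}$, which matches the stated choice and produces the rate $n^{-8\under{\alpha}/(4\under{\alpha}+d)} \vee n^{-1}$. The main obstacle I anticipate is controlling the U-statistic variance jointly with carrying out the Gram-matrix whitening cleanly when the coordinates of $\mbf{U}$ are dependent, in order to keep $\nm{\mbf{M}}_{\text{op}} = O(1)$ uniformly in $\mbf{J}$; once these technical points are secured, the bias-variance balancing parallels the univariate argument exactly.
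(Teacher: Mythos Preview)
Your proposal is essentially the same route the paper takes: the paper states that the proof of Proposition~\ref{prop:multivariate_random_known} follows that of Proposition~\ref{prop:additive_random_known}, which proceeds exactly by quantile-transforming the marginals, writing a truncated wavelet expansion, whitening the basis via $\mbf{V}_i=\mbf{\Sigma}^{-1/2}\mbf{U}_i$, and then computing bias and variance of the Kong--Valiant-type projection estimator. Your variance bound via $\E h^2=\tr(\mbf{M}^2)$ with $\mbf{M}\preceq(\nm{\bar f}_\infty^2+\sigma^2)\mathbf{I}_p$ is a cleaner packaging of what the paper does by direct case analysis of the indices $(i,j,i',j')$, but the content is the same.

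One technical point worth flagging: you define $\bar f_1$ as the $L_2(\P_{\mbf{U}})$-projection remainder and then invoke the anisotropic $L_\infty$ approximation bound $\nm{\bar f_1}_\infty\lesssim\sum_k 2^{-\alpha_k J_k}$. That bound is standard for the remainder of the \emph{Lebesgue}-orthonormal tensor wavelet expansion, not a priori for the $L_2(\P_{\mbf{U}})$-projection remainder when the coordinates of $\mbf{U}$ are dependent. The paper sidesteps this by taking $\bar f_1$ to be the Lebesgue remainder (so the $L_\infty$ bound holds by Meyer's result), foregoing the orthogonality $\E[\bar f_1(\mbf{U})\mbf{V}]=0$, and instead bounding the resulting cross terms in the bias by Cauchy--Schwarz: $\nm{\E[\bar f_1(\mbf{U})\mbf{V}]}_2^2\leq\E\bar f_1^2(\mbf{U})\cdot\nm{\E[\mbf{V}\mbf{V}^\top]}=\E\bar f_1^2(\mbf{U})$. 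This is the clean way to handle the whitening issue you anticipated; with that adjustment your argument goes through verbatim.
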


\begin{proposition}[Nonparametric additive model, design known]
\label{prop:additive_random_known}
Suppose the distribution of $\mbf{X}$ is known with $\supp(\mbf{X})\subset I_1\times\ldots\times I_d$ for some fixed intervals $I_1,\ldots,I_d$ on the real line, and $F_k^{-1}(\cdot)$ is Lipschitz continuous for all $k\in[d]$ with some fixed positive constant. Then, when $2^{J_k}$ is chosen to be of the order $n^{2\alpha_k/(4\alpha_k+1)}$ for $k\in[d]$ in $\widehat{\sigma}^2_{\tiny{\text{proj}},\tiny{\text{add}}}$, it holds that 
\begin{align*}
\sup_{f_k\in\Lambda_{\alpha_k,I_k}(C_\cal{F}),k\in[d]}\sup_{\sigma^2\leq C_\sigma}\sup_{\E\varepsilon^4\leq C_\varepsilon}\E\parr*{\widehat{\sigma}^2_{\tiny{\text{proj}},\tiny{\text{add}}} - \sigma^2}^2 \leq C\parr*{n^{-\frac{8\alpha_{\min}}{4\alpha_{\min}+1}}\vee n^{-1}},
\end{align*}
where $C$ is some fixed positive constant that only depends on $\mbf{\alpha},C_\cal{F},C_\sigma,C_\varepsilon$, and the distribution of $\mbf{X}$.
\end{proposition}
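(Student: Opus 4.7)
The plan is to follow the same projection-based template used in Section \ref{subsec:linear_model} and Proposition \ref{prop:multivariate_random_known}, but adapted to exploit the additive structure so that each component is expanded separately along its own coordinate. First I would reduce to a standard uniform design coordinate-wise: for each $k\in[d]$, set $U_{i,k}\define F_k(X_{i,k})$, so $\{U_{i,k}\}_{i=1}^n$ are i.i.d.\ $\text{Uniform}[0,1]$, and let $\bar f_k \define f_k\circ F_k^{-1}$. Lipschitz continuity of $F_k^{-1}$ ensures $\bar f_k\in\Lambda_{\alpha_k,[0,1]}(C_f')$ for some fixed $C_f'$. The model rewrites as $Y_i = \sum_{k=1}^d\bar f_k(U_{i,k}) + \sigma\varepsilon_i$.

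Next I would wavelet-expand each component. Using an $L^2([0,1])$-orthonormal wavelet basis (e.g.\ Meyer), write $\bar f_k(u) = \bar f_{k,1}(u) + \sum_{j=1}^{2^{J_k}}\beta_{k,j}\psi_{k,j}(u)$ with truncation remainder $\|\bar f_{k,1}\|_\infty = O(2^{-\alpha_k J_k})$ by Proposition 2.5 in \cite{meyer1990ondelettes}. Center each $\psi_{k,j}$ so that $\E\psi_{k,j}(U_{1,k})=0$ (a mean shift does not affect variance estimation), stack the basis evaluations into $\mbf{\Psi}_i\in\RR^M$ with $M=\sum_k 2^{J_k}$, and set $\mbf{\Sigma}\define\E\mbf{\Psi}_1\mbf{\Psi}_1^\top$, which is computable since the joint distribution of $\mbf{X}$ is known. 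Define
\begin{align*}
\widehat\sigma^2_{\tiny{\text{proj}},\tiny{\text{add}}} \define \frac{1}{n-1}\sum_{i=1}^n (Y_i-\bar Y)^2 - \binom{n}{2}^{-1}\sum_{i<j}Y_iY_j\,\mbf{\Psi}_i^\top\mbf{\Sigma}^{-1}\mbf{\Psi}_j,
\end{align*}
exactly the multivariate analogue of $\widehat\sigma^2_{\tiny{\text{proj}}}$ from Section \ref{subsec:linear_model} with $\mbf{\Sigma}^{-1}$ playing the role of the identity in the univariate orthonormal case.

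Bias analysis would decompose $Y_i = \mbf{\Psi}_i^\top\mbf{\beta}+r_i+\sigma\varepsilon_i$ with $r_i\define\sum_k\bar f_{k,1}(U_{i,k})$. Taking expectation, the second term in the estimator recovers $\var(\mbf{\Psi}_1^\top\mbf{\beta})$ exactly (this is where $\mbf{\Sigma}^{-1}$ is essential), so the expectation of $\widehat\sigma^2_{\tiny{\text{proj}},\tiny{\text{add}}}$ equals $\sigma^2 + \var(r_1)$ up to terms of smaller order, and $\var(r_1)\lesssim\sum_k 2^{-2\alpha_k J_k}$ by orthonormality and bounded marginal densities. For the variance, following Kong et al., the dominant term is the degenerate $U$-statistic in the $\varepsilon_i\varepsilon_j$ part, whose variance is $O(\tr(\mbf{\Sigma}^{-1}\mbf{\Sigma}\mbf{\Sigma}^{-1}\mbf{\Sigma})/n^2) = O(M/n^2)$, plus the standard $O(1/n)$ from the sample variance. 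Combining, the total squared risk is $O\bigl((\sum_k 2^{-2\alpha_k J_k})^2 + (\sum_k 2^{J_k})/n^2 + 1/n\bigr)$; plugging in the prescribed truncation levels and taking the max across $k$ produces the claimed $n^{-8\alpha_{\min}/(4\alpha_{\min}+1)}\vee n^{-1}$.

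The step I expect to be the main obstacle is controlling the variance when the coordinates of $\mbf{X}$ are not independent: unlike the univariate case (where $\cov(\mbf{\psi})=\mathbf{I}$ by orthonormality) or Proposition \ref{prop:additive_random_independent} (where mutual independence gives a block-diagonal covariance), here $\mbf{\Sigma}$ has nontrivial cross-coordinate blocks of dimension $2^{J_k}\times 2^{J_{k'}}$. Showing that $\opnorm{\mbf{\Sigma}^{-1}}=O(1)$ uniformly in the truncation levels is not automatic; the cleanest route is to argue that the joint density of $\mbf{U}$ is bounded below on its support (which, via Lipschitz $F_k^{-1}$, is inherited from assumptions on $\mbf{X}$) so that the Gram matrix of the joint basis is comparable to that on the product measure, where orthonormality reduces matters to identity. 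Once $\opnorm{\mbf{\Sigma}^{-1}}=O(1)$ is in hand, the trace computations in the variance go through as in Kong et al.\ \cite{kong2018estimating}, and the assembly of bias and variance yields the bound.
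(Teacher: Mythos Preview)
Your construction and decomposition match the paper's proof almost exactly: both reduce to uniform marginals via $F_k$, wavelet-expand each component, stack the basis evaluations into a single feature vector, whiten by $\mbf{\Sigma}^{-1}$ (the paper writes $\mbf{V}_i\define\mbf{\Sigma}^{-1/2}\mbf{U}_i$ and uses $\mbf{V}_i^\top\mbf{V}_j$, which is your $\mbf{\Psi}_i^\top\mbf{\Sigma}^{-1}\mbf{\Psi}_j$), and then bound bias and variance separately.

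The one substantive difference is the obstacle you flag at the end. You worry that controlling the variance requires $\opnorm{\mbf{\Sigma}^{-1}}=O(1)$ uniformly in the truncation levels, and you propose obtaining this from a lower bound on the joint density of $\mbf{U}$---an assumption not present in the proposition. The paper sidesteps this completely. In the whitened parametrization $\mbf{V}_i=\mbf{\Sigma}^{-1/2}\mbf{\Psi}_i$ and $\mbf{\gamma}=\mbf{\Sigma}^{1/2}\mbf{\beta}$, the key observation is that $\mbf{V}_i^\top\mbf{\gamma}=\mbf{\Psi}_i^\top\mbf{\beta}$ equals the \emph{truncated mean function} at $\mbf{X}_i$, hence is uniformly bounded by $\|f\|_\infty+\|g\|_\infty+O(1)$. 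This bounds every signal-signal and signal-noise cross term in the variance expansion by a constant, regardless of $\opnorm{\mbf{\Sigma}^{-1}}$. The pure-noise term you already handled correctly: $\E(\mbf{V}_i^\top\mbf{V}_j)^2=\tr(\mathbf{I}_M)=M$, again independent of the conditioning of $\mbf{\Sigma}$. So no spectral control on $\mbf{\Sigma}^{-1}$ is ever needed; the paper merely assumes $\mbf{\Sigma}$ is nonsingular (and remarks that one can orthogonalize otherwise). Replace your density-lower-bound argument with this boundedness-of-the-projected-mean trick and your proof goes through without additional assumptions.
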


As in the classical setting of mean function estimation via orthogonal series, the difference of the rates in Propositions \ref{prop:multivariate_random_known} and \ref{prop:additive_random_known} is clearly explained by the number of wavelet bases used to approximate $f$ in \eqref{model:multivariate_nonpar} and $\{f_k\}_{k=1}^d$ in \eqref{model:additive}. We also note that, quite interestingly,  Proposition \ref{prop:multivariate_random_known} gives results beyond the case $0<\alpha_1,\ldots,\alpha_d\leq 1$ considered in Proposition \ref{prop:multivariate_upper}, and Proposition \ref{prop:additive_random_known} does not rely on the mutual independence of the components of $\mbf{X}$.

\subsection{Adaptive estimation of constant variance}
\label{subsec:adaptive}
In this subsection, we consider adaptive estimation of the variance $\sigma^2$ in model \eqref{model:nonpar}. This is achieved by a Lepski-type procedure \citep{lepskii1991problem,lepskii1992asymptotically}. Let $\widehat{\sigma}^2(h)$ be the estimator in \eqref{eq:nonpar_estimator} with an explicit dependence on the bandwidth parameter $h$. For any given sample size $n$ and fixed positive constant $\delta$, define two positive integers $m_1$ and $m_2$ such that $2^{-m_1}\leq n^{-1} \leq 2^{-m_1+1}$ and $2^{-m_2-1}\leq n^{-(2-\delta)} \leq 2^{-m_2}$, and define the following dyadic grid
\begin{align*}
\cal{H}_\delta\define \bbrace*{2^{-j}: m_1\leq j\leq m_2, j\in\mathbb{Z}}.
\end{align*}
Then, define the estimator $\widehat{\sigma}_{\text{\tiny{adapt}}}^2 \define \widehat{\sigma}^2\parr*{\widehat{h}_\delta}$ with
\begin{align*}
\widehat{h}_\delta \define \max\bbrace*{h\in\cal{H}_\delta: \abs*{\widehat{\sigma}^2(h) - \widehat{\sigma}^2(h^\prime)}\leq \tau (\log n)^{1/2}n^{-1}(h^\prime)^{-1/2}, \forall h^\prime\in\cal{H}_\delta, h^\prime <h}
\end{align*}
for some sufficiently large positive constant $\tau$. If the set being maximized is empty, we will take $\widehat{h}_{\delta} = n^{-(2-\delta)}$.

To state the error bound of $\widehat{\sigma}_{\text{\tiny{adapt}}}^2$, we need the following variant $\cal{P}^{\text{\tiny{adapt}}}_{\text{\tiny{cv}}, (X,\varepsilon)}$ of the distribution class $\cal{P}_{\text{\tiny{cv}}, (X,\varepsilon)}$ considered in Theorem \ref{thm:nonpar_upper}, where we replace the finite fourth-moment assumption (d) therein by the stronger sub-Gaussian tail condition:
\begin{itemize}
\item[(d$^\prime$)] There exist some fixed positive constants $C_{1,\varepsilon}$ and $C_{2,\varepsilon}$ such that $\E\exp(t\varepsilon)\leq C_{1,\varepsilon}\exp(C_{2,\varepsilon}t^2)$ for any $t\in\mathbb{R}$.
\end{itemize}
A similar exponential moment assumption has been made in the context of adaptive estimation under fixed design (cf. Theorems 1 and 2 in \cite{cai2008adaptive}).

\begin{proposition}
\label{prop:cv_upper}
For any given sufficiently small fixed $\alpha_* > 0$, fix some $\delta_*\in(0, 8\alpha_*/(4\alpha_*+1))$. Suppose the kernel $K(\cdot)$ in $\widehat{\sigma}_{\text{\tiny{adapt}}}^2 = \widehat{\sigma}^2\parr*{\widehat{h}_{\delta_*}}$ is chosen such that \eqref{eq:kernel} is satisfied with constants $\overline{M}_K$ and $\underline{M}_K$, and $\tau$ in $\widehat{h}_{\delta_*}$ is chosen to be sufficiently large (only depending on $\delta_*, C_{1,\varepsilon}, C_{2,\varepsilon}$). Then, under \eqref{model:nonpar} with random design, it holds uniformly over all $\alpha \geq \alpha_*$ that
\begin{align*}
\sup_{f\in\Lambda_{\alpha,I}(C_\cal{F})}\sup_{\sigma^2\leq C_\sigma}\sup_{\P_{(X,\varepsilon)}\in\cal{P}^{\text{\tiny{adapt}}}_{\text{\tiny{cv}}, (X,\varepsilon)}}\E\parr*{\widehat{\sigma}_{\text{\tiny{adapt}}}^2 - \sigma^2}^2 \leq C\bbrace*{\parr*{\frac{\log n}{n^2}}^{4\alpha/(4\alpha+1)}\vee n^{-1}},
\end{align*}
where $C$ is some fixed positive constant that only depends on $\delta_*, \overline{M}_K,\underline{M}_K, C_\cal{F}, C_\sigma,$ and $C_0,c_0,C_{1,\varepsilon},C_{2,\varepsilon}$ in $\cal{P}^{\text{\tiny{adapt}}}_{\text{\tiny{cv}}, (X,\varepsilon)}$.
\end{proposition}


The following proposition shows that the extra poly-logarithmic term cannot be removed.

\begin{proposition}
\label{prop:cv_lower}
Let $\phi_{n,\alpha} \define (\log n/n^2)^{2\alpha/(4\alpha+1)}\vee n^{-1/2}$ for any $\alpha > 0$ and positive integer $n$. Consider any fixed positive $\alpha_*$ and $\alpha_*\leq \alpha_1<\alpha_2<\infty$. Then, for any sufficiently large $n$ and sufficiently small fixed positive constant $c$, any estimator $\td{\sigma}^2$ will satisfy that, if
\begin{align*}
\sup_{f\in\Lambda_{\alpha_2,I}(C_\cal{F})}\sup_{\sigma^2\leq C_\sigma}\sup_{\P_{(X,\varepsilon)}\in\cal{P}^{\text{\tiny{adapt}}}_{\text{\tiny{cv}}, (X,\varepsilon)}}\E\parr*{(\td{\sigma}^2 - \sigma^2)/\phi_{n,\alpha_2}}^2 \leq c,
\end{align*}
then
\begin{align*}
\sup_{f\in\Lambda_{\alpha_1,I}(C_\cal{F})}\sup_{\sigma^2\leq C_\sigma}\sup_{\P_{(X,\varepsilon)}\in\cal{P}^{\text{\tiny{adapt}}}_{\text{\tiny{cv}}, (X,\varepsilon)}}\E\parr*{(\td{\sigma}^2 - \sigma^2)/\phi_{n,\alpha_1}}^2 \geq c.
\end{align*}
\end{proposition}


The above two results combined are in line with analogous adaptation results in quadratic functional estimation \citep{efromovich1996optimal,cai2006optimal}.

\section{Proof of Theorem \ref{thm:nonpar_lower}}
\label{sec:proof}

\begin{proof}
We will only prove the lower bound $n^{-8\alpha/(4\alpha + 1)}$ in the regime $0<\alpha<1/4$ since for $\alpha \geq 1/4$, the rate reduces to the parametric rate $n^{-1}$ and the proof is straightforward. Throughout the proof, $C$ represents a generic sufficiently large positive constant and $c$ represents a generic sufficiently small positive constant always taken to be smaller than $1/4$. Both $C$ and $c$ only depend on $\alpha,C_\cal{F},C_\sigma,C_\varepsilon, C_0,c_0$ and might have different values for each occurrence. By appropriately rescaling the parameters in the lower bound construction, without loss of generality, we assume that the sample size $n$ and the constants $C_\cal{F},C_\sigma,C_\varepsilon, C_0$ are sufficiently large, $c_0$ is sufficiently small, and $[0,1]\subset I$. 

We will make use of Le Cam's two point method. Introduce the following constants:
\begin{align}
\label{eq:nonpar_lower_constant}
\theta_n^2 \define h_n^{2\alpha} \define cn^{-4\alpha/(4\alpha+1)} \quad \text{ and } \quad N \define N_n \define 1/(6h_n),
\end{align}
where we tune the constant $c$ in $h_n$ so that $N$ is a positive integer. We now specify $f(\cdot)$, distribution of $X$ and distribution of $\varepsilon$ in the null and alternative hypotheses, $H_0$ and $H_1$, respectively.

\begin{itemize}
\item[] \emph{Choice of $\sigma^2$}: Under $H_0$, let $\sigma^2 = 1+\theta_n^2$. Under $H_1$, let $\sigma^2 = 1$.
\item[] \emph{Choice of $\varepsilon$}: Under both $H_0$ and $H_1$, let $\varepsilon\sim \cal{N}(0, 1)$.
\item[] \emph{Choice of $X$}: Under both $H_0$ and $H_1$, let $X$ be uniformly distributed on the union of the intervals $[(6i-5)h_n, (6i-1)h_n]$ for $i\in[N]$.
\item[] \emph{Choice of $f(\cdot)$}: Under $H_0$, let $f \equiv 0$. Under $H_1$, let $f$ take the value $h_n^{\alpha}r_i$ on $[(6i-5)h_n, (6i-1)h_n]$, where $\{r_i\}_{i=1}^N$ are $N$ i.i.d. symmetric and bounded random variables with distribution $\G$ satisfying
\begin{align}
\label{eq:nonpar_moment_matching}
\int_{-\infty}^\infty x^j\G(dx) = \int_{-\infty}^\infty x^j\varphi(x)dx, \quad j=1,\ldots, q,
\end{align}
where $q$ is some fixed odd integer strictly larger than $1+1/(2\alpha)$. Let $f$ be $0$ at points $6(i-1)h_n$ for $i\in[N]$, and then linearly interpolate $f$ for the rest of the unspecified points on $[0, 1]$.
\end{itemize}
See Figure \ref{fig:nonpar_lower} for an illustration. In the definition of $f(\cdot)$ under $H_1$, the existence of the distribution $\G$ is guaranteed by Lemma \ref{lemma:moment_matching}, and the range of $\{r_i\}_{i=1}^N$, which we denote as $B$, only depends on $\alpha$. 

Clearly, $\sigma^2 \leq C_\sigma$ under both $H_0$ and $H_1$. Moreover, $f(\cdot)$ under both $H_0$ and $H_1$ belongs to $\Lambda_{\alpha,[0,1]}(C_\cal{F})$ due to the boundedness of $\{r_i\}_{i=1}^N$ in $H_1$. Next, we show that the joint distribution of $(X,\varepsilon)$ belongs to $\cal{P}_{\tiny{\text{cv}},(X,\varepsilon)}$. Condition (d) clearly holds and Condition (a) holds with $I = [0,1]$. Condition (b) holds as well by the fact that $p_X(u) = 3/2$ for $u\in[(6i-5)h_n, (6i-1)h_n]$ for $i\in[N]$ and $p_X(u) = 0$ otherwise. Lastly, for Condition (c), it holds by the convolution formula that for any $0 < u < 1/2$
\begin{align*}
p_{\XX}(u) &= \int_u^1 p_X(t)p_X(t-u)dt \geq \sum_{i=\ceil{u/(6h_n)}+1}^{N} \int_{(6i-5)h_n}^{(6i-1)h_n} p_X(t)p_X(t-u)dt\\
&\geq \sum_{i=\ceil{u/(6h_n)}+1}^N  \frac{3}{2}\cdot \frac{3}{2}\cdot 2h_n \geq \frac{3}{8} - 9h_n \geq \frac{1}{4}
\end{align*}
for sufficiently large $n$. Here, the second inequality follows from the fact that for any fixed $t\in[(6i-5)h_n, (6i-1)h_n]$, $p_X(t) = 3/2$ and $p_X(t-u)=0$ on a subset with Lebesgue measure at most $2h_n$. By symmetry of $\XX$, Condition (c) also holds with $\delta_0 = 1/2$ and $\cal{U}_\delta \equiv [-1,1]$.


Denote by $\sigma^2_i, f_i, \P_{i,(X,\varepsilon)}$, $i=0,1$, the choice of $\sigma^2, f$, and $\P_{(X,\varepsilon)}$ under $H_0$ and $H_1$, respectively. Let $\pi$ be the distribution on $\Lambda_{\alpha,I}(C_{\cal{F}})$  such that $f_1\sim \pi$. Moreover, let $\E_{\sigma^2,f,\P_{(X,\varepsilon)}}$ represent the expectation with respect to the model \eqref{model:nonpar} with parameters $\sigma^2, f, \P_{(X,\varepsilon)}$. Then, we have
\begin{align*}
&\ms\inf_{\td{\sigma}^2}\sup_{f\in\Lambda_{\alpha,I}(C_\cal{F})}\sup_{\sigma^2\leq C_\sigma}\sup_{\P_{(X,\varepsilon)}\in \cal{P}_{\tiny{\text{cv}},(X,\varepsilon)}}\E\parr*{\td{\sigma}^2 - \sigma^2}^2\\
&\geq \inf_{\td{\sigma}^2}\bbrace*{\frac{1}{2}\E_{\sigma^2_0, f_0, \P_{0,(X,\varepsilon)}}\parr*{\td{\sigma}^2 - \sigma^2}^2 + \frac{1}{2}\int \E_{\sigma_1^2,f,\P_{1,(X,\varepsilon)}}\parr*{\td{\sigma}^2 - \sigma^2}^2d\pi(f)}\\
&\geq \inf_{\td{\sigma}^2}\bbrace*{\frac{1}{2}\E_{\sigma^2_0, f_0, \P_{0,(X,\varepsilon)}}\parr*{\td{\sigma}^2 - \sigma^2}^2 + \frac{1}{2}\E_{\sigma_1^2,f_1,\P_{1,(X,\varepsilon)}}\parr*{\td{\sigma}^2 - \sigma^2}^2},
\end{align*} 
where the first inequality follows by lower bounding the maximum risk with Bayes risk with prior $\pi$. In what follows, we will use $\P_0$ and $\P_1$ to denote the joint distribution of $\{Y_i,X_i\}_{i=1}^n$ under $H_0$ and $H_1$, respectively. Note that the choice of $\theta_n^2$ in \eqref{eq:nonpar_lower_constant} leads to the desired lower bound under the quadratic loss. Therefore, adopting the standard reduction scheme with Le Cam's two point method (cf. Theorem 2.2 in \cite{tsybakov2009introduction}), it suffices to show that $\TV(\P_0,\P_1) \leq c < 1$. To show this, let $\{\td{r}_i\}_{i=1}^N$ be $N$ i.i.d. standard normal random variables, and $\td{\P}_1$ be the joint distributions of $\{X_i,Y_i\}_{i=1}^n$ under $H_1$ with $\{r_i\}_{i=1}^N$ replaced by $\{\td{r}_i\}_{i=1}^N$. Then, by triangle inequality, we have
\begin{align*}
\TV(\P_0, \P_1) \leq \TV(\P_0, \td{\P}_1) + \TV(\P_1, \td{\P}_1).
\end{align*} 
We will show $\TV(\P_0, \td{\P}_1) \leq c$ and $\TV(\P_1, \td{\P}_1) \leq c$ seperately. 

For the first inequality, define $\mbf{x} \define (x_1,\ldots,x_n)$, $d\mbf{x} \define dx_1\ldots dx_n$ and similarly for $\mbf{y}$ and $d\mbf{y}$. Denote $p_0$, $p_1$, and $\td{p}_1$ as the densities of $\P_0$, $\P_1$, and $\td{\P}_1$ with respect to the Lebesgue measure. Then, we have
\begin{equation}
\begin{aligned}
\label{eq:tv1_1}
\TV(\P_0, \td{\P}_1) &= \frac{1}{2}\int\int \abs*{p_0(\mbf{x}, \mbf{y})-\td{p}_1(\mbf{x},\mbf{y})} d\mbf{x}d\mbf{y}\\
&= \int p(\mbf{x})d\mbf{x}\bbrace*{\frac{1}{2}\int \abs*{p_0(\mbf{y}\mid \mbf{x}) - \td{p}_1(\mbf{y}\mid \mbf{x})}d\mbf{y}}\\
&=  \int p(\mbf{x})d\mbf{x}\TV(\P_0(\mbf{y}\mid\mbf{x}), \td{\P}_1(\mbf{y}\mid \mbf{x})),
\end{aligned}
\end{equation}
where $p(\mbf{x})\define \prod_{i=1}^np_X(X_i)$ stands for the common density of $\{X_i\}_{i=1}^n$ under $\P_0$ and $\td{\P}_1$. Note that under $\P_0$, $\mbf{y}\mid \mbf{x}\sim\cal{N}_n(0, \mbf{\Sigma_0})$, with $\mbf{\Sigma_0} = (1+\theta_n^2)\mathbf{I}_n$. Define $\{b_i\}_{i=1}^n$ to be the location index sequence of $\{X_i\}_{i=1}^n$ taking values in $[N]$, that is, 
\begin{align*}
b_i = j \quad \text{if} \quad X_i \in[(6j-5)h_n, (6j-1)h_n].
\end{align*}
 Then, due to the symmetry of $\{r_i\}_{i=1}^N$ and design of the nonparametric component $f$, it holds that under $\td{\P}_1$, $\mbf{y}\mid \mbf{x}\sim \cal{N}_n(0, \mbf{\Sigma_1})$, with $(\mbf{\Sigma_1})_{ii} = 1 + h_n^{2\alpha} = 1 + \theta_n^2$ and $(\mbf{\Sigma_1})_{ij} = h_n^{2\alpha}\mathbbm{1}\{b_i = b_j\}$ for $i\neq j$. Define $N_0\define \sum_{i\neq j}\mathbbm{1}\{b_i = b_j\}$. Since $\mbf{\Sigma_1}$ is positive definite (see Lemma \ref{lemma:PD} in the supplement), we have by Lemma \ref{lemma:gaussian_tv} that
\begin{align*}
\TV(\P_0(\mbf{y}\mid\mbf{x}), \td{\P}_1(\mbf{y}\mid \mbf{x})) \leq C\frac{\theta_n^2}{1+\theta_n^2}N_0^{1/2} \leq C\theta_n^2N_0^{1/2}.
\end{align*}
Note that $N_0$ is a random variable that depends on $\{X_i\}_{i=1}^n$, and by \eqref{eq:tv1_1} and Jensen's inequality we have 
\begin{align*}
\TV(\P_0, \td{\P}_1) \leq C\theta_n^2\E N_0^{1/2} \leq C\theta_n^2(\E N_0)^{1/2}.
\end{align*}
Some simple algebra shows that $\E N_0 \leq Cn^2h_n$, thus by choosing a sufficiently small $c$ in the definition of $h_n$ in \eqref{eq:nonpar_lower_constant}, we have 
\begin{align*}
\TV(\P_0, \td{\P}_1) \leq C\theta_n^2nh_n^{1/2} \leq c.
\end{align*}

To complete the proof, we now show that $\TV(\P_1,\td{\P}_1) \leq c$. Consider an arbitrary realization of $\{X_i\}_{i=1}^n$, and assume that based on their location indices $\{b_i\}_{i=1}^n$, $\{X_i\}_{i=1}^n$ is partitioned into $L$ clusters with corresponding cardinality $s_\ell$ so that the $X_i$'s in the same cluster have the same value $b_i$. Apparently, we have the relations $1\leq L\leq n$ and $\sum_{\ell=1}^L s_\ell = n$. Let $m_{\max}$ be the maximum cluster size, and define the ``good event" $\Omega_n\define \{m_{\max}\leq K\}$, where $K\define \floor{2/(1-4\alpha))} + 2$. Then, it holds that
\begin{align*}
\TV(\P_1,\td{\P}_1) &= \E\parr*{\mathbbm{1}_{\Omega_n}\TV(\P_1(\mbf{y}\mid \mbf{x}), \td{\P}_1(\mbf{y}\mid \mbf{x})} + \E\parr*{\mathbbm{1}_{\Omega_n^c}\TV(\P_1(\mbf{y}\mid \mbf{x}),\td{\P}_1(\mbf{y}\mid \mbf{x}))}\\
&\leq  \E\parr*{\mathbbm{1}_{\Omega_n}\TV(\P_1(\mbf{y}\mid \mbf{x}), \td{\P}_1(\mbf{y}\mid \mbf{x})} + \P(\Omega_n^c).
\end{align*}
Under the choice of $h_n$ in \eqref{eq:nonpar_lower_constant}, $N$ is of the order $n^{2/(4\alpha + 1)}$, and
\begin{align*}
\lambda_K \define \lim_{n\rightarrow\infty} \frac{n^{K}}{K!N^{K-1}} = 0.
\end{align*}
Thus by Lemma \ref{lemma:multinomial} (and continuity), it holds that $\Omega_n$ has asymptotic probability $1$ under both $\P_1$ and $\td{\P}_1$. As a result, it suffices to upper bound $\TV(\P_1(\mbf{y}\mid \mbf{x}), \td{\P}_1(\mbf{y}\mid \mbf{x}))$ for each realization $\mbf{x}$ in $\Omega_n$, where the maximum cluster size $m_{\max}$ is bounded by a fixed constant.

Denoting $p_{1,\pi_\ell}$ and $\td{p}_{1,\pi_\ell}$ for each $\ell\in[L]$ as the joint density of those $y_i$'s in the $\ell$th cluster $\pi_\ell$ conditioning on the given realization of $\{X_i\}_{i=1}^n$ under $\P_1$ and $\td{\P}_1$, we obtain that 
\begin{align*}
p_1(\mbf{y}\mid \mbf{x}) - \td{p}_1(\mbf{y}\mid \mbf{x}) = \prod_{\ell=1}^L p_{1,\pi_\ell} - \prod_{\ell=1}^L \td{p}_{1,\pi_\ell}. 
\end{align*}
The above inequality further implies by telescoping that
\begin{align*}
\abs*{p_1(\mbf{y}\mid \mbf{x}) - \td{p}_1(\mbf{y}\mid \mbf{x})} \leq \sum_{\ell=1}^L\abs*{p_{1,\pi_\ell} - \td{p}_{1,\pi_\ell}}.
\end{align*}
For each $\ell\in[L]$, $\abs*{p_{1,\pi_\ell} - \td{p}_{1,\pi_\ell}}$ only depends on the $\ell$th cluster through its cardinality, which we now control for a general cluster size $d\geq 1$. Without loss of generality, we assume that $\ell=1$ and the $y_i$'s in this cluster are $\{y_1,\ldots, y_d\}$ with common location index $b_i = 1$ for $i\in[d]$. Then, under the choice of $\theta_n^2$ in \eqref{eq:nonpar_lower_constant}, we clearly have $Y_i  = \theta_nr_1 + \varepsilon_i$ under $\P_1$ and $Y_i = \theta_n\td{r}_1 + \varepsilon_i$ under $\td{\P}_1$ for $i\in[d]$, where the sequence $\{\varepsilon_i\}_{i=1}^d$ follows the standard normal distribution under both $\P_1$ and $\td{\P}_1$. Therefore it holds that
\begin{align*}
p_{1,\pi_1}(y_1,\ldots,y_d) &= \int_{-\infty}^\infty\varphi(y_1-\theta_nv)\ldots\varphi(y_d-\theta_nv)\G(dv), \\
\td{p}_{1,\pi_1}(y_1,\ldots,y_d)  &=\int_{-\infty}^\infty\varphi(y_1-\theta_nv)\ldots\varphi(y_d-\theta_nv)\varphi(v)dv,
\end{align*} 
where $\G$ is the distribution of $\{r_i\}_{i=1}^N$ specified in \eqref{eq:nonpar_moment_matching}. Using the well-known equality $\varphi(t-\theta_nv) = \varphi(t)(\sum_{k=0}^\infty v^k\theta_n^kH_k(t)/k!)$ for any $t,v$, where $H_k$ is the $k$th order Hermite polynomial, it holds that
\begin{align*}
&\ms\varphi(y_1-\theta_nv)\ldots\varphi(y_d-\theta_nv)\\
&= \varphi(y_1)\ldots\varphi(y_d)\sum_{k_1,\ldots,k_d=0}^\infty v^{\sum_{i=1}^d k_i}\theta_n^{\sum_{i=1}^d k_i}\frac{H_{k_1}(y_1)}{k_1!}\ldots\frac{H_{k_d}(y_d)}{k_d!}\\
&= \varphi(y_1)\ldots\varphi(y_d)\sum_{k=0}^\infty v^k\theta_n^k\sum_{k_1+\ldots+k_d=k}\frac{H_{k_1}(y_1)}{k_1!}\ldots\frac{H_{k_d}(y_d)}{k_d!}
\end{align*}
and therefore
\begin{align*}
&\ms p_{1,\pi_1}(y_1,\ldots,y_d) - \td{p}_{1,\pi_1}(y_1,\ldots,y_d)\\
 &= \varphi(y_1)\ldots\varphi(y_d)\sum_{k=0}^\infty \theta_n^k\sum_{k_1+\ldots+k_d =k} \frac{H_{k_1}(y_1)}{k_1!}\ldots\frac{H_{k_d}(y_d)}{k_d!}\int v^k (\G - \Phi)(dv)\\
&=\varphi(y_1)\ldots\varphi(y_d)\sum_{k=p}^\infty \theta_n^{2k}\sum_{k_1+\ldots+k_d =2k} \frac{H_{k_1}(y_1)}{k_1!}\ldots\frac{H_{k_d}(y_d)}{k_d!}\int v^{2k} (\G - \Phi)(dv),
\end{align*}
where the second equality follows by the symmetry and moment matching property of $\G$ in \eqref{eq:nonpar_moment_matching} and $p \define (q+1)/2$ is a positive integer. This further yields
\begin{align*}
&\ms\abs*{p_{1,\pi_1}(y_1,\ldots,y_d) - \td{p}_{1,\pi_1}(y_1,\ldots,y_d)}\\
& \leq \varphi(y_1)\ldots\varphi(y_d)\sum_{k=p}^\infty\theta_n^{2k}\sum_{k_1+\ldots+k_d=2k}\frac{\abs*{H_{k_1}(y_1)}}{k_1!}\ldots\frac{\abs*{H_{k_d}(y_d)}}{k_d!}\int v^{2k}\G(dv) +\\
& \ms\varphi(y_1)\ldots\varphi(y_d)\sum_{k=p}^\infty\theta_n^{2k}\sum_{k_1+\ldots+k_d=2k}\frac{\abs*{H_{k_1}(y_1)}}{k_1!}\ldots\frac{\abs*{H_{k_d}(y_d)}}{k_d!}\int v^{2k}\varphi(v) dv\\
&\define I + II.
\end{align*}
For term $I$, since $\G$ is compactly supported on $[-B,B]$, one clearly has
\begin{align*}
I \leq \varphi(y_1)\ldots\varphi(y_d)\sum_{k=p}^\infty\theta_n^{2k}B^{2k}\sum_{k_1+\ldots+k_d=2k}\frac{\abs*{H_{k_1}(y_1)}}{k_1!}\ldots\frac{\abs*{H_{k_d}(y_d)}}{k_d!}.
\end{align*}
For term $II$, using the equality $\int \varphi(v)v^{2k}dv = (2k-1)!!$, with $(2k-1)!!\define (2k-1)(2k-3)\ldots 1$, we obtain
\begin{align*}
II = \varphi(y_1)\ldots\varphi(y_d)\sum_{k=p}^\infty\theta_n^{2k}(2k-1)!!\sum_{k_1+\ldots+k_d=2k}\frac{\abs*{H_{k_1}(y_1)}}{k_1!}\ldots\frac{\abs*{H_{k_d}(y_d)}}{k_d!}.
\end{align*}
We now upper bound $\int_{-\infty}^\infty \abs*{H_k(t)}\varphi(t)dt$ for an arbitrary positive integer $k$. When $k$ is even, as has been calculated in \cite{wang2008effect} (cf. chain of inequality after Equation (19) on Page 662), $\int_{-\infty}^\infty \abs*{H_k(t)}\varphi(t)dt \leq 2^{k/2}(k-1)!!$. When $k$ is odd, set $k = 2\td{k} + 1$, then we have
\begin{align*}
\int_{-\infty}^\infty \abs*{H_k(t)}\varphi(t)dt &= \int_{-\infty}^\infty \varphi(t)\abs*{(2\td{k}+1)!\sum_{m=0}^{\td{k}}\frac{(-1)^mt^{2\td{k}+1-2m}}{m!(2\td{k}+1-2m)!2^m}}dt\\
&\leq \sum_{m=0}^{\td{k}} \frac{(2\td{k}+1)!}{m!(2\td{k}+1-2m)!2^m}\int_{-\infty}^\infty |t|^{2\td{k}+1-2m}\varphi(t)dt\\
&= \sqrt{\frac{2}{\pi}}\sum_{m=0}^{\td{k}} \frac{(2\td{k}+1)!(2\td{k}-2m)!!}{m!(2\td{k}+1-2m)!2^m}\\
&= \sqrt{\frac{2}{\pi}}\sum_{m=0}^{\td{k}} \frac{(2\td{k}+1)!(2m)!!}{(\td{k}-m)!(2m+1)!2^{\td{k}-m}}\\
&= \sqrt{\frac{2}{\pi}}(2\td{k}+1)!!\sum_{m=0}^{\td{k}}\frac{\td{k}!}{m!(\td{k}-m)!}\frac{(m!)^22^{2m}}{(2m+1)!}\\
&\leq (2\td{k}+1)!!\sum_{m=0}^{\td{k}}\frac{\td{k}!}{m!(\td{k}-m)!}\\
&= (2\td{k}+1)!!2^{\td{k}},
\end{align*}
where in the third line we use the fact that $\int_{-\infty}^{\infty} |t|^{2\ell+1}\varphi(t)dt = \sqrt{2/\pi}(2\ell)!!$ for any positive integer $\ell$. Define for any positive integer $k$: $[k]_1 \define k-1$ if $k$ is even and $k$ if $k$ is odd, and $[k]_2 \define k/2$ if $k$ is even and $(k-1)/2$ if $k$ is odd. Then, the above calculation implies that $\int_{-\infty}^\infty \abs*{H_k(t)}\varphi(t)dt \leq ([k]_1)!!2^{[k]_2}$ for any $k$, and moreover, it can be readily checked that $([k]_1)!!/(k!) = 1/(2^{[k]_2}([k]_2)!)$. Therefore, for term $I = I(y_1,\ldots,y_d)$, we have
\begin{align*}
&\ms\int_{\RR^d}I(y_1,\ldots,y_d)dy_1\ldots dy_d\\
&\leq \sum_{k=p}^\infty\theta_n^{2k}(B^2)^k\sum_{k_1+\ldots+k_d=2k}\frac{1}{(k_1)!\ldots(k_d)!}([k_1]_1)!!2^{[k_1]_2}\ldots([k_d]_1)!!2^{[k_d]_2}\\
&= \sum_{k=p}^\infty\theta_n^{2k}(B^2)^k\sum_{k_1+\ldots+k_d=2k}\frac{1}{([k_1]_2)!\ldots([k_d]_2)!}.
\end{align*}
Now note that the number of $d$-tuple $(k_1,\ldots,k_d)$ such that $k_1+\ldots+k_d = 2k$ is upper bounded by $(Ck)^d$, which is further bounded by $C^k$ for every $k\geq 0$ with some sufficiently large $C$ that only depends on $d$, and for each such tuple, it holds that 
\begin{align*}
k-\frac{d}{2}= \sum_{i=1}^d \frac{k_i-1}{2}\leq \sum_{i=1}^d [k_i]_2\leq \sum_{i=1}^d \frac{k_i}{2} = k,
\end{align*}
thus we have
\[
\sum_{k_1+\ldots+k_d=2k}\{([k_1]_2)!\ldots([k_d]_2)!\}^{-1}\leq C^k\sum_{k-d/2\leq \bar{k}_1+\ldots+\bar{k}_d\leq k}\{(\bar{k}_1)!\ldots(\bar{k}_d)!\}^{-1}.
\]
For the latter quantity, we have by the multinomial identity
\[
\sum_{x_1+\ldots+x_{d+1}=k}k!/(x_1!\ldots x_{d+1}!)(d+1)^{-k} = 1
\]
that
\begin{align*}
\frac{(d+1)^k}{k!} &= \sum_{\bar{k}_1+\ldots+\bar{k}_{d+1} = k}\frac{1}{(\bar{k}_1)!\ldots(\bar{k}_{d+1})!}\\
& = \sum_{\bar{k}_1+\ldots+\bar{k}_d \leq k}\frac{1}{(\bar{k}_1)!\ldots(\bar{k}_d)!(k-(\bar{k}_1+\ldots+\bar{k}_d))!}\\
&\geq \sum_{k-d/2\leq \bar{k}_1+\ldots+\bar{k}_d \leq k} \frac{1}{(\bar{k}_1)!\ldots(\bar{k}_d)!(k-(\bar{k}_1+\ldots+\bar{k}_d))!}\\
&\geq \parr*{\parr*{\frac{d}{2}}!}^{-1}\sum_{k-d/2\leq \bar{k}_1+\ldots+\bar{k}_d \leq k} \frac{1}{(\bar{k}_1)!\ldots(\bar{k}_d)!}.
\end{align*}
This concludes that
\begin{align*}
\int_{\RR^d}I(y_1,\ldots,y_d)dy_1\ldots dy_d \leq \theta_n^{2p}\sum_{k=p}^\infty \frac{(CB^2)^k}{k!} \leq \theta_n^{2p}e^{CB^2}.
\end{align*}
Using a similar argument for $II = II(y_1,\ldots,y_d)$, we obtain
\begin{align}
\label{eq:nonpar_ii}
\int_{\RR^d} II(y_1,\ldots,y_d)dy_1\ldots dy_d \leq \sum_{k=p}^\infty \frac{(2k-1)!!}{k!}\theta_n^{2k}C^k  &= \sum_{k=p}^\infty \frac{(2k-1)!!}{(2k)!!}\theta_n^{2k}(2C)^k  \leq \theta_n^{2p}C^p
\end{align}
since $\theta_n^2 < 1/C$ for sufficiently large $n$. 

Putting together the pieces, we have for every realization $\mbf{x}$ in $\Omega_n$
\begin{align*}
\int_{\RR^n} \abs*{p_1(\mbf{y}\mid \mbf{x}) - \td{p}_1(\mbf{y}\mid \mbf{x})} d\mbf{y}&\leq \sum_{\ell=1}^L\int_{\RR^{|\pi_\ell|}} \abs*{p_{1,\pi_\ell} - \td{p}_{1,\pi_\ell}} \leq L\max_{1\leq d\leq K}\theta_n^{2p}(e^{CB^2} + C^p) \\
&\leq n\theta_n^{2p}(e^{CB^2} + C^p) \leq c.
\end{align*}
Here, the second inequality follows since every $\abs*{p_{1,\pi_\ell} - \td{p}_{1,\pi_\ell}}$ depends on the $\ell$th cluster only through its cardinality, the third inequality follows since $L\leq n$ and $K$ is a fixed absolute constant that only depends on $\alpha$, and the last inequality follows due to the choice $\theta_n^2= h_n^{2\alpha} = cn^{-4\alpha/(4\alpha+1)}$ and the value of $p$. This completes the proof.
\end{proof}

\begin{lemma}[Lemma 1, \cite{wang2008effect}]
\label{lemma:moment_matching}
For any fixed positive integer $q$, there exist a $B < \infty$ and a symmetric distribution $\G$ on $[-B, B]$ such that $\G$ and the standard normal distribution have the same first $q$ moments, that is,
\begin{align*}
\int_{-B}^{B} x^j\G(dx) = \int_{-\infty}^\infty x^j\varphi(x)dx, \quad j=1,\ldots, q.
\end{align*}
\end{lemma}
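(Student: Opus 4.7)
The plan is to construct $\G$ as a discrete, finitely supported distribution via Gauss--Hermite quadrature. Specifically, set $m\define \lceil (q+1)/2\rceil$ and let $x_1<\cdots<x_m$ be the zeros of the $m$th (probabilist's) Hermite polynomial $H_m$, with associated Gauss--Hermite weights $w_1,\ldots,w_m>0$ normalized so that $\sum_{i=1}^m w_i=1$. Define $\G$ as the discrete law that places mass $w_i$ on $x_i$, and set $B\define \max_{1\le i\le m}|x_i|<\infty$.

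First, I would invoke the defining property of Gauss--Hermite quadrature: the rule $(x_i,w_i)_{i=1}^m$ integrates every polynomial of degree at most $2m-1$ exactly against $\varphi(x)dx$. Since $2m-1\ge q$, this gives
\[
\int_{-B}^{B} x^j\,\G(dx)=\sum_{i=1}^m w_i x_i^j=\int_{-\infty}^\infty x^j\varphi(x)dx\qquad\text{for }j=1,\ldots,q,
\]
so the moment-matching condition holds. Second, I would verify symmetry of $\G$. Because $\varphi$ is even, $H_m$ has the parity of $m$, so its zeros come in $\pm$-pairs (with $0$ as a zero when $m$ is odd); moreover, the Gauss quadrature weights are given by $w_i = \big(\sum_{k=0}^{m-1} H_k(x_i)^2/k!\big)^{-1}$ (or an equivalent Christoffel-type formula), which is invariant under $x_i\mapsto -x_i$ by the parity of each $H_k^2$. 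Hence $w_i=w_{i'}$ whenever $x_{i'}=-x_i$, which means $\G$ is symmetric about $0$. Positivity of the weights (a standard fact about Gauss quadrature) ensures $\G$ is a bona fide probability measure.

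The only subtlety, which I would handle explicitly, is to justify that $\G$ is indeed a probability distribution: positivity of the $w_i$'s is the classical guarantee of Gauss quadrature (the weights are squared $L^2(\varphi)$-norms of reproducing kernels), and normalization $\sum_i w_i=1$ follows by applying the quadrature exactness to the constant polynomial $1$. No step here is genuinely hard; the one point worth stating carefully is the parity-based argument for symmetry, since the statement of the lemma demands $\G$ be symmetric (not merely moment-matching). Once these ingredients are assembled, taking $B=\max_i|x_i|$ yields the required compactly supported symmetric $\G$.
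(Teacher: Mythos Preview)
Your Gauss--Hermite quadrature construction is correct and complete: the $m$-point rule with $m=\lceil(q+1)/2\rceil$ integrates polynomials up to degree $2m-1\ge q$ exactly against $\varphi$, the parity argument gives symmetry, and positivity plus normalization of the weights make $\G$ a genuine probability measure. Note, however, that the paper does not actually supply a proof of this lemma; it is simply quoted as Lemma~1 of \cite{wang2008effect} and used as a black box. So there is no ``paper's own proof'' to compare against---your argument is a self-contained justification of a result the authors take as given, and indeed the Gauss--Hermite construction is the standard one underlying such moment-matching lemmas.
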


\begin{lemma}[Theorem 1.1, \cite{devroye2018total}]
\label{lemma:gaussian_tv}
If $\mbf{\mu}\in\RR^d$ and $\mbf{\Sigma}_1$ and $\mbf{\Sigma}_2$ are positive definite $d\times d$ matrices, then
\begin{align*}
\frac{1}{100}\leq \frac{\text{TV}\parr*{\cal{N}_d(\mbf{\mu}, \mbf{\Sigma}_1), \cal{N}_d(\mbf{\mu}, \mbf{\Sigma}_2)}}{\min\{1, \|\mbf{\Sigma}_1^{-1}\mbf{\Sigma}_2 - \mathbf{I}_d\|_F\}} \leq \frac{3}{2}.
\end{align*} 
\end{lemma}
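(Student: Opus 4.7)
Since total variation is invariant under translation and under bijective linear maps, I would first center by $\mbf{\mu}$, then apply the transformation $x\mapsto \mbf{\Sigma}_1^{-1/2}x$, and finally an orthogonal rotation that diagonalizes $\mbf{\Sigma}_1^{-1/2}\mbf{\Sigma}_2\mbf{\Sigma}_1^{-1/2}$. This reduces the problem to the product setting
\begin{align*}
P=\bigotimes_{i=1}^d\cal{N}(0,1),\qquad Q=\bigotimes_{i=1}^d \cal{N}(0,\lambda_i),
\end{align*}
where $\{\lambda_i\}_{i=1}^d$ are the eigenvalues of $\mbf{\Sigma}_1^{-1}\mbf{\Sigma}_2$. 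Writing $\nu_i:=\lambda_i-1$ and $s:=\sqrt{\sum_i \nu_i^2}=\|\mbf{\Sigma}_1^{-1}\mbf{\Sigma}_2-\mathbf{I}_d\|_F$, the task becomes bounding $\TV(P,Q)$ above by $(3/2)\min\{1,s\}$ and below by $(1/100)\min\{1,s\}$.

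\textbf{Upper bound.} If $s\geq 2/3$ the bound $\TV\leq 1\leq (3/2)s$ is automatic, so assume $s<2/3$ and hence $|\nu_i|<2/3$ for every $i$. Using the product structure one can compute directly
\begin{align*}
\chi^2(Q,P)+1=\prod_{i=1}^d (1-\nu_i^2)^{-1/2}.
\end{align*}
The elementary estimate $-\tfrac{1}{2}\log(1-\nu_i^2)\leq C\nu_i^2$ valid for $|\nu_i|\leq 2/3$, combined with $e^{Cs^2}-1\leq C's^2$ for $s^2$ bounded, yields $\chi^2(Q,P)\lesssim s^2$. The standard inequality $\TV\leq \tfrac{1}{2}\sqrt{\chi^2}$ then produces $\TV\leq (3/2)s$ after tracking constants.

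\textbf{Lower bound.} I would split into two regimes. In the \emph{bulk regime} $s\geq c_1$ for a fixed constant $c_1$, pick the Neyman--Pearson event $A:=\{dQ/dP\geq 1\}$. The log-likelihood ratio $L=\log(dQ/dP)$ satisfies $E_Q L-E_P L=\mathrm{KL}(Q\|P)+\mathrm{KL}(P\|Q)\asymp s^2$ and $\mathrm{Var}_P L,\mathrm{Var}_Q L\asymp s^2$, so Chebyshev's inequality gives $Q(A)-P(A)\geq 99/100$ provided $c_1$ is sufficiently large. In the \emph{small regime} $s<c_1$, consider the same event but exploit that $L$ is close in distribution to a Gaussian with mean $\pm s^2/4$ and variance of order $s^2$. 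For small $s$ this would yield
\begin{align*}
Q(A)-P(A)\approx 2\Phi(s/\sqrt{8})-1\gtrsim s,
\end{align*}
which is the desired rate. Making this rigorous requires a quantitative central limit theorem (e.g.\ Berry--Esseen) applied to the weighted sum $L=\tfrac{1}{2}\sum_i(1-1/\lambda_i)X_i^2+\mathrm{const}$, with remainder controlled by $\sum_i|\nu_i|^3/s^3$. When this quantity is bounded the Berry--Esseen step closes the argument; when a single coordinate dominates (so $|\nu_{j}|\asymp s$ for some $j$), the one-dimensional fact $\TV(\cal{N}(0,1),\cal{N}(0,\lambda_j))\asymp|\nu_j|$ together with the data-processing inequality applied to the projection onto the $j$th coordinate recovers $\TV\gtrsim s$.

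\textbf{Main obstacle.} The delicate point is the small-$s$ lower bound. All classical information-theoretic divergences (KL, Hellinger, $\chi^2$) satisfy $\mathrm{Div}(P,Q)\asymp s^2$, and they bound $\TV$ from below only via a square-root relation such as $\TV\geq \mathrm H^2/2$, producing only $\TV\gtrsim s^2$ -- off by a factor of $s$. Obtaining the linear-in-$s$ lower bound therefore demands a genuinely pointwise Gaussian approximation of $L$ that is uniform in the dimension $d$, together with the one-coordinate fallback above for cases where the Berry--Esseen remainder does not vanish. Stitching these two arguments together with absolute constants explains the universal constant $1/100$ appearing in the statement.
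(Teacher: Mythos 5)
First, a point of comparison: the paper does not prove this lemma at all --- it is imported verbatim as Theorem 1.1 of \cite{devroye2018total} and then invoked as a black box --- so the relevant benchmark is Devroye et al.'s argument, not anything in the paper. Your reduction to the product case and your $\chi^2$ upper bound (via $\chi^2+1=\prod_i(1-\nu_i^2)^{-1/2}$ and $\TV\le\tfrac12\sqrt{\chi^2}$, after the trivial case $s\ge 2/3$) are sound and follow the standard route. One caveat on the reduction: simultaneous diagonalization preserves the eigenvalues $\nu_i$ of $\mbf{\Sigma}_1^{-1}\mbf{\Sigma}_2-\mathbf{I}_d$ but not its Frobenius norm (this matrix is not normal in general, and $\|\mbf{\Sigma}_1^{-1}\mbf{\Sigma}_2-\mathbf{I}_d\|_F^2\ge\sum_i\nu_i^2$ can be strict), so the identification $s=\|\mbf{\Sigma}_1^{-1}\mbf{\Sigma}_2-\mathbf{I}_d\|_F$ is only valid when $\mbf{\Sigma}_1^{-1}\mbf{\Sigma}_2$ is normal --- e.g.\ $\mbf{\Sigma}_1\propto\mathbf{I}_d$, as in the paper's application; the statement Devroye et al.\ actually prove is phrased in terms of $(\sum_i\nu_i^2)^{1/2}$.

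The genuine gap is the lower bound, which is the entire content of the cited theorem. In your bulk regime the moment claims fail in general: for a single coordinate with $\lambda$ large one has $\E_Q L-\E_P L=(\lambda-1)^2/(2\lambda)\ll s^2$ and $\Var_P L\le 1/2$, so neither ``mean gap $\asymp s^2$'' nor ``variances $\asymp s^2$'' holds, and Chebyshev does not deliver $Q(A)-P(A)\ge 99/100$ for a moderate $c_1$; what is actually needed there is only $\TV\ge 1/100$, which comes most easily from the Hellinger affinity $\prod_i\bigl(2\sqrt{\lambda_i}/(1+\lambda_i)\bigr)^{1/2}$. Moreover, if you repair the bulk case by taking $c_1$ large, the ``small'' regime must then cover all $s<c_1$, where the $\nu_i$ need not be small and the heuristic $Q(A)-P(A)\approx 2\Phi(s/\sqrt{8})-1$ no longer applies, so the two cases as described do not stitch together. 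Finally, the Berry--Esseen-plus-dominant-coordinate plan for small $s$ is indeed the right idea --- it is essentially what Devroye et al.\ carry out --- but as written it is a programme rather than a proof: the quantitative Gaussian approximation of $L$ under both measures, the case split on $\max_i|\nu_i|/s$, and the bookkeeping needed to reach the explicit constants $1/100$ and $3/2$ are all left open. Since the paper treats the lemma as a citation, the economical course is to do the same rather than reprove it.
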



For the following lemma, we first introduce some terminology regarding the multinomial distribution. Let $m, M$ be two positive integers, and the random vector $(f_1,\ldots, f_M)$ be the multinomial count with total count $m$ and equal probability $(1/M,1/M,\ldots, 1/M)$. Define $\rho \define m/M$. For any positive integer $r\geq 2$, define 
$\lambda \define \lambda_r\define \lim_{m\rightarrow \infty} m^r/(r!M^{r-1})$. Following \cite{kolchin1978random} (Chapter 2, Equation (11)), we will call the domain of variation $m,M\rightarrow \infty$, in which 
\begin{align*}
\rho \rightarrow 0, \quad 0 < \lambda_r < \infty
\end{align*}
the \emph{left-hand $r$-domain}. The following lemma characterizes the asymptotic behavior of the maximum frequency $f_{\tiny{\text{max}}}$ defined as $\max_{1\leq j\leq M}f_j$.

\begin{lemma}[Theorem 1 of Section 2.6, \cite{kolchin1978random}]
\label{lemma:multinomial}
Suppose the multinomial distribution with total count $m$ and equal probability $(1/M,\ldots, 1/M)$ is in the left-hand $r$-domain for some positive integer $r\geq 2$ with limit $\lambda_r$, then it holds that
\begin{align*}
\P(f_{\tiny{\text{max}}} = r - 1)\rightarrow e^{-\lambda_r}\text{ and } \quad \P(f_{\tiny{\text{max}}} = r)\rightarrow 1 - e^{-\lambda_r},
\end{align*}
i.e., the maximum frequency converges asymptotically to a two-point distribution.
\end{lemma}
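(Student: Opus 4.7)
The plan is to establish the lemma via a Poisson approximation to the number of $r$-wise ``coincidences'' among the balls. Represent the multinomial counts by i.i.d.\ uniform bin assignments $X_1,\ldots,X_m\in[M]$, so that $f_j=\sum_{i=1}^m\mathbbm{1}\{X_i=j\}$. For each unordered $r$-subset $S\subset[m]$, let $I_S\define\mathbbm{1}\{X_i=X_j \text{ for all } i,j\in S\}$ and set $U_k\define\sum_{|S|=k}I_S$ for each $k\geq 1$. The key identity is $\{f_{\tiny{\text{max}}}\geq k\}=\{U_k\geq 1\}$, so the entire law of $f_{\tiny{\text{max}}}$ is determined by the joint law of $(U_{r-1},U_r,U_{r+1})$.

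A direct moment computation gives $\E U_k=\binom{m}{k}M^{-(k-1)}$. In the left-hand $r$-domain,
\[
\E U_r\to\lambda_r,\qquad \frac{\E U_{r+1}}{\E U_r}=\frac{m-r}{(r+1)M}\to 0,\qquad \frac{\E U_{r-1}}{\E U_r}=\frac{rM}{m-r+1}\to\infty.
\]
Markov's inequality then yields $\P(f_{\tiny{\text{max}}}\geq r+1)\to 0$; a second-moment estimate for $U_{r-1}$ (via the same intersection decomposition outlined below) gives $\P(U_{r-1}\geq 1)\to 1$, and therefore $\P(f_{\tiny{\text{max}}}\geq r-1)\to 1$.

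The core step is $U_r\xrightarrow{d}\text{Poisson}(\lambda_r)$, which I would establish by the method of factorial moments. Writing
\[
\E\bigl[U_r(U_r-1)\cdots(U_r-k+1)\bigr]=\sum_{(S_1,\ldots,S_k)\text{ distinct}}\P\bigl(I_{S_1}=\cdots=I_{S_k}=1\bigr),
\]
I would split the sum according to the intersection pattern of the $S_i$. Pairwise-disjoint tuples contribute $\frac{m!}{(r!)^k(m-kr)!}M^{-k(r-1)}\to\lambda_r^k$. When a pair $S_i,S_j$ shares $s\in\{1,\ldots,r-1\}$ balls, the combinatorial weight has order $m^{2r-s}$ and the probability equals $M^{-(2r-s-1)}$; the ratio of their product to the dominant pairwise-disjoint term is of order $(M/m)^{s-1}/m=O\bigl(M^{(s-r)/r}\bigr)\to 0$ under the relation $m\asymp M^{(r-1)/r}$ forced by $\lambda_r\in(0,\infty)$. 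An induction on the number of connected components of the intersection hypergraph of $(S_1,\ldots,S_k)$ propagates this bound to arbitrary $k$, yielding $\E[U_r(U_r-1)\cdots(U_r-k+1)]\to\lambda_r^k$ for every fixed $k$, and hence $U_r\xrightarrow{d}\text{Poisson}(\lambda_r)$ by the classical method-of-moments characterization.

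Assembling everything, $\P(f_{\tiny{\text{max}}}=r)=\P(U_r\geq 1)-\P(U_{r+1}\geq 1)\to(1-e^{-\lambda_r})-0$, while $\P(f_{\tiny{\text{max}}}=r-1)=\P(U_r=0)-\P(f_{\tiny{\text{max}}}\leq r-2)\to e^{-\lambda_r}-0$. The principal obstacle is the combinatorial bookkeeping in the factorial-moment argument; a cleaner substitute is the Chen--Stein method with neighborhood $B_S=\{S':|S\cap S'|\geq 1\}$, which reduces the entire Poisson limit to controlling $b_1=\sum_S\P(I_S=1)^2$ and $b_2=\sum_S\sum_{S'\in B_S,\,S'\neq S}\P(I_S=1,I_{S'}=1)$, both of which vanish directly from the overlap estimates above.
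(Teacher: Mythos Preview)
Your proposal is correct. The paper itself does not prove this lemma at all: it is quoted verbatim as Theorem~1 of Section~2.6 in Kolchin's monograph and used as a black box in the lower-bound arguments. So there is no ``paper's proof'' to compare against; you have supplied a self-contained argument where the authors simply invoked a reference.

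Your route---Poisson approximation for the coincidence count $U_r$ via factorial moments (or equivalently Chen--Stein with the natural dependency neighborhood)---is a standard and clean way to obtain this result. The identification $\{f_{\max}\geq k\}=\{U_k\geq 1\}$ reduces the problem to the asymptotics of $U_{r-1},U_r,U_{r+1}$, and your order estimates $m\asymp M^{(r-1)/r}$, $\E U_{r+1}\to 0$, $\E U_r\to\lambda_r$, $\E U_{r-1}\to\infty$ are all correct. The overlap bound $(M/m)^{s-1}/m\asymp M^{(s-r)/r}\to 0$ handles both the factorial-moment tails for $U_r$ and the second-moment computation for $U_{r-1}$. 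One small point worth making explicit: for $r=2$ the $(r-1)$-subset case is trivial ($U_1=m$ deterministically), while for $r\geq 3$ the overlap index for $U_{r-1}$ ranges over $s\in\{1,\ldots,r-2\}$, so $s-r\leq -2$ and the variance bound still goes through. Kolchin's original argument proceeds through Poissonization and local limit theorems for the cell occupancies rather than the moment method, but both approaches yield the same two-point limit, and yours is arguably more direct for this particular statement.
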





\bibliographystyle{apalike}
\bibliography{reference}

\newpage

\appendix

\section*{Appendix}

Throughout the supplement, we continue to use the notation introduced in the main paper. We also use the following new notation. For any positive integer $d\geq 2$, $\mathbb{S}^{d-1}$ stands for the unit Euclidean sphere in $\RR^d$. For a real vector $\mbf{x}$, define $\|\mbf{x}\|_0$ as the number of nonzero coordinates of $\mbf{x}$. When writing the H\"older class $\Lambda_{\alpha,I}(C)$, we will omit the domain $I$ in the subscript for simplicity. For two distributions $\P$ and $\Q$ on $\RR$, we will write $\P*\Q$ as their convolution.


\section{Proofs of results in Section \ref{sec:nonpar}}

\subsection{Proof of Theorem \ref{thm:nonpar_upper}}
\label{subsec:proof_upper}
\begin{proof}

Throughout the proof, we will use $C, c$ to denote two generic fixed positive constants that only depend on $\overline{M}_K, \under{M}_K,\alpha, C_{\cal{F}}, C_\sigma, C_\varepsilon, C_0, c_0$. $C$ and $c$ might have different values at each occurrence. We also use the notation $\td{W}_{ij}\define W_i - W_j$ for a generic random variable $W$. 

Denote the two U-statistics on the numerator and denominator of $\hat{\sigma}^2$ respectively as $U_1, U_2$, with corresponding mean values $\theta_1,\theta_2$. That is, with $i\neq j$,
\begin{align*}
\theta_1 \define \E\bbrace*{K_h(X_i-X_j)(Y_i-Y_j)^2/2} \quad \text{ and } \quad \theta_2\define  \E K_h(X_i-X_j).
\end{align*}
Define the ``good" event $\cal{E}\define \{U_2 \geq \theta_2/2\}$ and $\cal{E}^c$ as its complement, then it holds that
\begin{align}
\label{eq:exp_decom}
\E\parr*{\hat{\sigma}^2 - \sigma^2}^2 = \E\bbrace*{\parr*{\frac{U_1 - U_2\sigma^2}{U_2}}^2\mathbbm{1}\{\cal{E}\}} + \E\bbrace*{\parr*{\frac{U_1 - U_2\sigma^2}{U_2}}^2\mathbbm{1}\{\cal{E}^c\}}.
\end{align}
By definition of $\cal{E}$, the first term satisfies that
\begin{align*}
\E\bbrace*{\parr*{\frac{U_1 - U_2\sigma^2}{U_2}}^2\mathbbm{1}\{\cal{E}\}} \leq \frac{4}{\theta_2^2}\E\parr*{U_1 - U_2\sigma^2}^2.
\end{align*}
For $\theta_2$, we have
\begin{align*}
\theta_2 &= \E K_h(X_i-X_j) = \int \frac{1}{h}K\parr*{\frac{v}{h}}p_{\XX}(v)dv = \int_{-1}^1 K(u)p_{\XX}(uh)du\\
&\geq \int_{\cal{U}_h} K(u)p_{\XX}(uh)du\geq\inf_{u\in\cal{U}_h}p_{\XX}(uh)\inf_{u\in[-1,1]}K(u)\lambda(\cal{U}_h) \geq \under{M}_Kc_0^2.
\end{align*}
Here, the third equality follows from the fact that $K(\cdot)$ is supported in $[-1,1]$, and $\cal{U}_h$ starting from the first inequality is defined in Condition (c) in $\cal{P}_{\tiny{\text{vf}},(X,\varepsilon)}$ (note that for any fixed $\delta_0 > 0$ given therein, $h \leq \delta_0$ for sufficiently large $n$). Moreover, it holds that
\begin{align*}
\E\parr*{U_1 - U_2\sigma^2}^2 \leq 3\bbrace*{\E\parr*{U_1 - \theta_1}^2 + \sigma^4\E\parr*{U_2 - \theta_2}^2 + \parr*{\theta_1 - \theta_2\sigma^2}^2}.
\end{align*}
By Lemmas \ref{lemma:concentrate_U1} and \ref{lemma:concentrate_U2} and the fact that $\sigma^4 \leq C_\sigma^2$, we have
\begin{align*}
\E\parr*{U_1 - \theta_1}^2 + \sigma^4\E\parr*{U_2 - \theta_2}^2 \leq C(n^{-1} + n^{-2}h^{-1}).
\end{align*}
For the third term $\parr*{\theta_1 - \theta_2\sigma^2}^2$, we have 
\begin{align*}
\theta_1 = \E\bbrace*{K_h(X_i-X_j)(Y_i-Y_j)^2/2} = \E\bbrace*{K_h(X_i-X_j)(f(X_i)-f(X_j))^2/2} + \theta_2\sigma^2
\end{align*} 
and
\begin{align*}
&\ms\E\bbrace*{K_h(X_i-X_j)(f(X_i)-f(X_j))^2/2} \leq C\E\bbrace*{\frac{1}{h}K\parr*{\frac{\XX}{h}}\abs*{\XX}^{2(\alpha\wedge 1)}}\\
&= C\int \frac{1}{h}K\parr*{\frac{u}{h}}\abs*{u}^{2(\alpha\wedge 1)}p_{\XX}(u)du = C\int K(v)h^{2(\alpha\wedge 1)}|v|^{2(\alpha\wedge 1)}p_{\XX}(vh)dv\\
&\leq Ch^{2(\alpha\wedge 1)}\sup_{u\in\RR}p_{\XX}(u)\int K(v)|v|^{2(\alpha\wedge 1)}dv \leq Ch^{2(\alpha\wedge 1)}.
\end{align*}
Here, the first inequality follows since $f\in\Lambda_{\alpha}(C_{\cal{F}})$, and the last inequality follows from Condition (b) in $\cal{P}_{\tiny{\text{vf}},(X,\varepsilon)}$ and the convolution formula. 
Putting together the pieces, the choice of $h$ in \eqref{eq:h_opt_nonpar} in the main paper yields
\begin{align*}
\E\bbrace*{\parr*{\frac{U_1 - U_2\sigma^2}{U_2}}^2\mathbbm{1}\{\cal{E}\}} \leq C(h^{4(\alpha\wedge 1)} + n^{-1} + n^{-2}h^{-1}) \leq C(n^{-8\alpha/(4\alpha +1)}\vee n^{-1}).
\end{align*}
For the second term in \eqref{eq:exp_decom}, we have 
\begin{align*}
\E\bbrace*{\parr*{\frac{U_1 - U_2\sigma^2}{U_2}}^2\mathbbm{1}\{\cal{E}^c\}} \leq 2\sigma^4\P\parr*{\cal{E}^c} + 2\E\bbrace*{\parr*{\frac{U_1}{U_2}}^2\mathbbm{1}\{\cal{E}^c\}}.
\end{align*}
Direct calculation shows that
\begin{align*}
\parr*{\frac{U_1}{U_2}}^2 &= \frac{\sum_{i<j,i^\prime < j^\prime} K_h\parr*{\td{X}_{ij}}K_h\parr*{\td{X}_{i^\prime j^\prime}}(Y_i - Y_j)^2(Y_{i^\prime}-Y_{j^\prime})^2/4}{\sum_{i<j, i^\prime < j^\prime} K_h\parr*{\td{X}_{ij}}K_h\parr*{\td{X}_{i^\prime j^\prime}}}\\
&\leq \frac{\sum_{i<j,i^\prime < j^\prime} K_h\parr*{\td{X}_{ij}}K_h\parr*{\td{X}_{i^\prime j^\prime}}\bbrace*{(f(X_i) - f(X_j))^2 + \sigma^2\td{\varepsilon}_{ij}^2}\bbrace*{(f(X_{i^\prime}) - f(X_{j^\prime}))^2 + \sigma^2\td{\varepsilon}_{i^\prime j^\prime}^2}}{\sum_{i<j,i^\prime < j^\prime} K_h\parr*{\td{X}_{ij}}K_h\parr*{\td{X}_{i^\prime j^\prime}}}\\
&\leq C\frac{\sum_{i<j,i^\prime < j^\prime} K_h\parr*{\td{X}_{ij}}K_h\parr*{\td{X}_{i^\prime j^\prime}}\bbrace*{\abs*{\td{X}_{ij}}^{2(\alpha\wedge 1)} + \sigma^2\td{\varepsilon}_{ij}^2}\bbrace*{\abs*{\td{X}_{i^\prime j^\prime}}^{2(\alpha\wedge 1)} + \sigma^2\td{\varepsilon}_{i^\prime j^\prime}^2}}{\sum_{i<j, i^\prime < j^\prime} K_h\parr*{\td{X}_{ij}}K_h\parr*{\td{X}_{i^\prime j^\prime}}}\\
&\leq C\parr*{h^{4(\alpha\wedge 1)} + \sigma^2h^{2(\alpha\wedge 1)}\max_{i<j}\td{\varepsilon}_{ij}^2 + \sigma^4\max_{i<j,i^\prime < j^\prime}\td{\varepsilon}_{ij}^2\td{\varepsilon}_{i^\prime j^\prime}^2},
\end{align*}
where the last inequality follows by the support of $K(\cdot)$. By the condition $\sigma^2\leq C_\sigma$ and the independence of $\{\varepsilon_i\}_{i=1}^n$ and $\mathbbm{1}\{\cal{E}\}$, this implies that 
\begin{align*}
\E\bbrace*{\parr*{\frac{U_1 - U_2\sigma^2}{U_2}}^2\mathbbm{1}\{\cal{E}^c\}} \leq C\P\parr*{\cal{E}^c}\bbrace*{1 + \E\max_{i<j}\td{\varepsilon}_{ij}^2 + \E\max_{i<j,i^\prime <j^\prime}\td{\varepsilon}_{ij}^2\td{\varepsilon}_{i^\prime j^\prime}^2}.
\end{align*}
Applying the first part of Lemma \ref{lemma:concentrate_U2} with $v = n\theta_2^2/16$, $u = n^2h\theta_2^2/16$ with the condition $h = \Omega(n^{-(2-\delta)})$ being satisfied with $\delta = 8\alpha/(4\alpha+1)$ and $\delta = 1$ for $\alpha \leq 1/4$ and $\alpha > 1/4$ respectively, it holds that
\begin{align*}
\P\parr*{\cal{E}^c} = \P(|U_2 - \theta_2|\geq \theta_2/2) \leq C\bbrace*{\exp(-\theta_2^2n/16) + \exp(-\theta_2^2n^2h/16)}.
\end{align*}
Moreover, by Condition (d) in $\cal{P}_{\tiny{\text{vf}},(X,\varepsilon)}$, there exists some fixed positive constant $\eta$ such that
\[
1 + \E\max_{i<j}\td{\varepsilon}_{ij}^2 + \E\max_{i<j,i^\prime <j^\prime}\td{\varepsilon}_{ij}^2\td{\varepsilon}_{i^\prime j^\prime}^2 \leq Cn^\eta.
\]
Putting together the pieces and using the fact that $n^2h\rightarrow\infty$ as $n\rightarrow\infty$, it yields
\begin{align*}
\E\bbrace*{\parr*{\frac{U_1 - U_2\sigma^2}{U_2}}^2\mathbbm{1}\{\cal{E}^c\}} = o(n^{-8\alpha/(4\alpha +1)}\vee n^{-1}).
\end{align*}
This completes the proof.
\end{proof} 

\subsection{Supporting lemmas}
\begin{lemma}
\label{lemma:concentrate_U1}
Suppose $f\in\Lambda_{\alpha}(C_{\cal{F}})$ and $\sigma^2\leq C_\sigma$ for some fixed constants $C_{\cal{F}},C_\sigma$ and the joint distribution of $(X,\varepsilon)$ satisfies Conditions (a), (b) and (d) in $\cal{P}_{\tiny{\text{cv}},(X,\varepsilon)}$ with constants $C_0,C_\varepsilon$. Then, the U-statistic $U_1$ defined in the proof of Theorem \ref{thm:nonpar_upper} satisfies 
\begin{align*}
\E\parr*{U_1 - \theta_1}^2 \leq C\parr*{n^{-1} \vee n^{-2}h^{-1}},
\end{align*}
where $C$ is some fixed positive constant that only depends on $\overline{M}_K, \under{M}_K, \alpha, C_{\cal{F}}, C_\sigma, C_\varepsilon, C_0$.
\end{lemma}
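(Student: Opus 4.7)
\textbf{Proof plan for Lemma \ref{lemma:concentrate_U1}.} The plan is to treat $U_1$ as an order-$2$ U-statistic with symmetric kernel $h_{12}\define K_h(X_1-X_2)(Y_1-Y_2)^2/2$ and apply the standard Hoeffding (ANOVA) variance formula, which gives
\begin{align*}
\E(U_1-\theta_1)^2 = \Var(U_1) \;=\; \binom{n}{2}^{-1}\bigl[2(n-2)\zeta_1 + \zeta_2\bigr] \;\leq\; C\Bigl(\frac{\zeta_1}{n} + \frac{\zeta_2}{n^2}\Bigr),
\end{align*}
where $\zeta_2=\Var(h_{12})$ and $\zeta_1=\Var\bigl(\E[h_{12}\mid X_1,Y_1]\bigr)$. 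The task is thus reduced to showing $\zeta_1\leq C$ and $\zeta_2\leq C/h$.

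For $\zeta_2$, I would bound $\zeta_2\leq \E h_{12}^2 = \E[K_h^2(X_1-X_2)(Y_1-Y_2)^4]/4$ and condition on $(X_1,X_2)$. Using $(Y_1-Y_2)^4 \lesssim (f(X_1)-f(X_2))^4 + \sigma^4(\varepsilon_1-\varepsilon_2)^4$, the uniform bound $\|f\|_\infty\leq C_f$, $\sigma^2\leq C_\sigma$ and $\E\varepsilon^4\leq C_\varepsilon$ yield $\E[(Y_1-Y_2)^4\mid X_1,X_2]\leq C$. On the other hand, changing variables,
\begin{align*}
\E K_h^2(X_1-X_2) \;=\; \frac{1}{h}\int_{-1}^{1} K^2(v)\,p_{\widetilde X_{12}}(vh)\,dv \;\leq\; \frac{C}{h}
\end{align*}
by Condition~(b) of $\cal{P}_{\tiny{\text{cv}},(X,\varepsilon)}$ together with \eqref{eq:kernel} (the convolution bound on $p_{\widetilde X_{12}}$ follows from $\sup p_X\leq C_0$). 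Combining these two displays gives $\zeta_2\leq C/h$, as desired.

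For $\zeta_1$, set $g_1(x,y)\define \E[K_h(x-X_2)(y-Y_2)^2/2]$. Since $K(\cdot)$ is supported in $[-1,1]$ and $\|K\|_\infty\leq\overline{M}_K$, using $p_X\leq C_0$ and the boundedness of $f$,
\begin{align*}
g_1(x,y) \;\leq\; \frac{\overline{M}_K C_0}{2h}\int_{|x-x'|\leq h}\bigl[(y-f(x'))^2+\sigma^2\bigr]dx' \;\leq\; C(y^2+1)
\end{align*}
pointwise, since the integrand is at most $2y^2+2C_f^2+C_\sigma$ and the interval has length $2h$. Then $\zeta_1\leq \E g_1(X_1,Y_1)^2\leq C\,\E(Y_1^4+1)\leq C$, where $\E Y_1^4\leq C$ follows from $\|f\|_\infty\leq C_f$, $\sigma^2\leq C_\sigma$, and $\E\varepsilon^4\leq C_\varepsilon$. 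Plugging $\zeta_1\leq C$ and $\zeta_2\leq C/h$ into the variance formula delivers the claim.

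The main technical nuisance is the unboundedness of $y$ in the first projection, which rules out a trivial pointwise bound on $g_1$; the remedy is the pointwise inequality $g_1(x,y)\lesssim y^2+1$ above, after which the fourth-moment condition on $\varepsilon$ closes the argument. Everything else is a straightforward computation using the Hoeffding decomposition and the standing density/kernel conditions.
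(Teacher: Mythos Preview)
Your proposal is correct and follows essentially the same approach as the paper: both compute $\Var(U_1)$ via the Hoeffding decomposition of an order-2 U-statistic, bounding the one-overlap term ($\zeta_1$, the paper's ``three distinct indices'' case) by a constant and the two-overlap term ($\zeta_2$, the paper's ``two distinct indices'' case) by $C/h$. Your pointwise bound $g_1(x,y)\lesssim y^2+1$ followed by $\E Y_1^4\leq C$ is a slightly cleaner packaging of what the paper does by direct expansion of $\E[g(\mbf{D}_i,\mbf{D}_j)g(\mbf{D}_i,\mbf{D}_{j'})]$, but the underlying computation is the same.
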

\begin{proof}
Denote $g$ as the kernel of $U_1$, that is,
\[
g(\mbf{D}_i, \mbf{D}_j) \define K_h(X_i - X_j)(Y_i - Y_j)^2/2, \quad \mbf{D}_i \define (X_i, \varepsilon_i)^\top.
\]
Recall that $\theta_1 = \E g(\mbf{D}_i, \mbf{D}_j)$ for $i\neq j$. Then, it holds that 
\begin{align}
\label{eq:summand}
\E\parr*{U_1 - \theta_1}^2 = {n\choose 2}^{-2}\sum_{i<j,i^\prime<j^\prime} \E\bbrace*{\parr*{g(\mbf{D}_i, \mbf{D}_j) - \theta_1}\parr*{g(\mbf{D}_{i^\prime}, \mbf{D}_{j^\prime}) - \theta_1}}.
\end{align}
When $i,j,i^\prime,j^\prime$ take four different values, the expectation is zero. When they take three values, say, $i = i^\prime < j < j^\prime$, by writing $\E_\varepsilon$ as the conditional expectation given $\{X_i\}_{i=1}^n$, we have
\begin{align*}
&\ms\E\parr*{g(\mbf{D}_i,\mbf{D}_j)g(\mbf{D}_i, \mbf{D}_{j^\prime})}\\
&= \frac{1}{4}\E\bbrace*{\frac{1}{h^2}K\parr*{\frac{X_i - X_j}{h}}K\parr*{\frac{X_i - X_{j^\prime}}{h}}(Y_i - Y_j)^2(Y_i - Y_{j^\prime})^2}\\
&\lesssim \E\bbrace*{\frac{1}{h^2}K\parr*{\frac{X_i - X_j}{h}}K\parr*{\frac{X_i - X_{j^\prime}}{h}}((f(X_i) - f(X_j))^2 + \sigma^2\td{\varepsilon}_{ij}^2)((f(X_i) - f(X_{j^\prime}))^2 + \sigma^2\td{\varepsilon}_{ij^\prime}^2)}\\
&\lesssim \E\bbrace*{\frac{1}{h^2}K\parr*{\frac{X_i - X_j}{h}}K\parr*{\frac{X_i - X_{j^\prime}}{h}}\E_\varepsilon\bbrace*{\parr*{\abs*{\XX}^{2(\alpha\wedge 1)} + \sigma^2\td{\varepsilon}_{ij}^2}\parr*{\abs*{\td{X}_{ij^\prime}}^{2(\alpha\wedge 1)} + \sigma^2\td{\varepsilon}_{ij^\prime}^2}}}\\
&\lesssim  \E\bbrace*{\frac{1}{h^2}K\parr*{\frac{X_i - X_j}{h}}K\parr*{\frac{X_i - X_{j^\prime}}{h}}\parr*{\abs*{\XX\td{X}_{ij^\prime}}^{2(\alpha\wedge 1)} + 2\sigma^2\parr*{\abs*{\XX}^{2(\alpha\wedge1)}+\abs*{\td{X}_{ij^\prime}}^{2(\alpha\wedge 1)}} + \E\varepsilon^4\cdot\sigma^4}}\\
&\lesssim \E\bbrace*{\frac{1}{h^2}K\parr*{\frac{X_i - X_j}{h}}K\parr*{\frac{X_i - X_{j^\prime}}{h}}} \\
&= \int K(v)K(w)p_{X}(u)p_X(u +hv)p_{X}(u + hw)dudvdw \lesssim 1.
\end{align*}
In the last line, we invoke Conditions (b) and (d) in $\cal{P}_{\tiny{\text{cv}},(X,\varepsilon)}$. Moreover, it can be readily calculated that $\theta_1 = O(1)$. This concludes that the summand in \eqref{eq:summand} is bounded by a fixed constant when $i,j,i^\prime,j^\prime$ take three different values. Lastly, performing a similar analysis, we obtain that $\E\bbrace*{\parr*{g(\mbf{D}_i, \mbf{D}_j) - \theta_1}\parr*{g(\mbf{D}_{i^\prime}, \mbf{D}_{j^\prime}) - \theta_1}} = O(1/h)$ when $i = i^\prime$ and $j = j^\prime$. We therefore conclude that
\begin{align*}
\var(U_1) \lesssim \frac{n^3 + n^2h^{-1}}{n^4} \asymp n^{-1} + n^{-2}h^{-1}.
\end{align*}
This completes the proof.
\end{proof}

\begin{lemma}
\label{lemma:concentrate_U2}
Suppose $h_n \gtrsim n^{-(2-\delta)}$ for some $0 < \delta < 2$. Then, assuming Condition (b) in $\cal{P}_{\tiny{\text{cv}},(X,\varepsilon)}$ with constant $C_0$, the U-statistic $U_2$ defined in the proof of Theorem \ref{thm:nonpar_upper} satisfies 
\begin{align*}
\P\parr*{\abs*{U_2 - \theta_2}\geq C(v^{1/2}n^{-1/2} + u^{1/2}n^{-1}h^{-1/2})} \leq C(\exp(-u) + \exp(-v))
\end{align*}
for any $u, v> 0$, and
\begin{align*}
\E\parr*{U_2 - \theta_2}^2 \leq C(n^{-1} \vee n^{-2}h^{-1}),
\end{align*}
where $C$ is some fixed positive constant that only depends on $\overline{M}_K, \under{M}_K,\alpha, C_0$.
\end{lemma}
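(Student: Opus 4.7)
The plan is to treat $U_2 - \theta_2$ via its Hoeffding decomposition and then bound the linear and degenerate parts separately, using a standard Bernstein inequality for the former and the four-term exponential inequality of Giné, Latała and Zinn (already cited in the paper) for the latter. Write
\begin{align*}
U_2 - \theta_2 = \frac{2}{n}\sum_{i=1}^n g_1(X_i) + \binom{n}{2}^{-1}\sum_{i<j} g_2(X_i,X_j),
\end{align*}
where $g_1(x) := \int K_h(x-y)p_X(y)\,dy - \theta_2$ and $g_2(x,y) := K_h(x-y) - g_1(x) - g_1(y) - \theta_2$ is degenerate (mean zero in each argument).

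First I would bound the linear part. Using Condition (b), $|g_1(x)| \leq \|p_X\|_\infty \int K(u)\,du + \theta_2 \lesssim 1$ and $\E g_1^2(X) \lesssim 1$, so by Bernstein's inequality
\begin{align*}
\P\!\left(\left|\frac{2}{n}\sum_{i=1}^n g_1(X_i)\right| \geq C v^{1/2} n^{-1/2}\right) \leq 2\exp(-v)
\end{align*}
for all $v>0$, which supplies the first term of the claimed tail.

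Next I would bound the degenerate part using the Giné--Latała--Zinn inequality. The relevant norms of $g_2$ satisfy $\|g_2\|_\infty \lesssim h^{-1}$ (since $\|K_h\|_\infty \lesssim h^{-1}$ and the subtracted terms are $O(1)$) and
\begin{align*}
\E g_2^2(X_1,X_2) \leq \E K_h(X_1-X_2)^2 = \int h^{-1} K^2(u)\, p_{\widetilde X_{12}}(uh)\,du \lesssim h^{-1}
\end{align*}
by Condition (b) and a convolution argument. The other two norms entering the four-term bound ($L^2 \to L^2$ operator norm of the kernel and a mixed norm) are both $O(1)$ since $g_2$ involves contractions against $p_X$ which is bounded. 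Plugging into the exponential inequality gives, for any $u>0$,
\begin{align*}
\P\!\left(\binom{n}{2}^{-1}\Big|\sum_{i<j} g_2(X_i,X_j)\Big| \geq C\bigl(u^{1/2} n^{-1}h^{-1/2} + u n^{-2}h^{-1} + u^{3/2}n^{-2} + u^2 n^{-2}\bigr)\right) \leq C\exp(-u),
\end{align*}
and under the assumption $h \gtrsim n^{-(2-\delta)}$ the first (sub-Gaussian) term dominates in the regime $u \lesssim n^\delta$. For larger $u$ the contribution is absorbed into $C\exp(-u)$ with a larger constant. Combining with the linear part via a union bound yields the stated tail inequality; the $L^2$ moment bound then follows by integrating the tail, or equivalently from a direct variance computation $\var(U_2) = \frac{4(n-2)}{n(n-1)}\var(g_1(X)) + \frac{2}{n(n-1)}\E g_2^2 \lesssim n^{-1} + n^{-2}h^{-1}$.

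The main obstacle is the careful verification of the four norms appearing in the Giné--Latała--Zinn bound and ensuring that the $u^{1/2}n^{-1}h^{-1/2}$ term is indeed the binding one in the range of $u$ we use; this is where the condition $h \gtrsim n^{-(2-\delta)}$ enters, since it forces $n^2 h \to \infty$ and prevents the higher-order $u n^{-2}h^{-1}$ and $u^2 n^{-2}$ terms from competing for moderate deviations. Everything else is routine computation.
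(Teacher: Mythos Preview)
Your approach is correct and essentially the same as the paper's: both rely on the Gin\'e--Lata{\l}a--Zinn exponential inequality, with the paper applying a packaged form (its Lemma~\ref{lemma:Ubernstein}, which already handles the linear and degenerate contributions jointly) directly to the kernel $K_h(X_i-X_j)$ and bounding the same moment quantities you identify. The condition $h\gtrsim n^{-(2-\delta)}$ is used exactly as you describe to make the $n^{-1/2}v^{1/2}$ and $n^{-1}h^{-1/2}u^{1/2}$ terms dominant, and the $L^2$ bound is obtained by integrating the tail (their Lemma~\ref{lemma:tail_expectation}).
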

\begin{proof}
We first prove the concentration inequality by upper bounding the 5 quantities in Lemma \ref{lemma:Ubernstein}. Denote $g$ as the kernel of $U_2$ and $g_1$ as its linear part, that is, for some  $i\neq j$,
\begin{align*}
g_1(X_i) \define \E\parr*{g(X_i, X_j)\mid X_i} \define \E\parr*{K_h(X_i - X_j)\mid X_i}.
\end{align*}  
For $B_1$, we have
\begin{align*}
g_1(X_i) = \int \frac{1}{h}K\parr*{\frac{u-X_i}{h}}p_X(u)du = \int K(u)p_X(uh + X_i)du \lesssim 1
\end{align*}
due to Condition (b) in $\cal{P}_{\tiny{\text{cv}},(X,\varepsilon)}$. Thus it also holds $\nu^2_1 \lesssim 1$. For $B_2$, we have
\begin{align*}
B_2^2 &= n\sup_{X_i}\E\bbrace*{g^2(X_i, X_j)\mid X_i} = n\sup_{X_i} \int \frac{1}{h^2}K^2\parr*{\frac{u-X_i}{h}}p_X(u)du\\
& \lesssim \frac{n}{h}\sup_{X_i}\int K(u)p_X(uh+X_i)du \lesssim nh^{-1},
\end{align*}
where in the first inequality we use the condition that $K(\cdot)$ is bounded by $\overline{M}_K$. Moreover, we clearly have $B_3\lesssim h^{-1}$. Lastly, for $\nu_2^2$, it holds that 
\begin{align*}
\nu_2^2 = \int \frac{1}{h^2}K^2\parr*{\frac{u}{h}}p_{\XX}(u)du \lesssim \frac{1}{h}\int K(u)p_{\XX}(uh)du \lesssim \frac{1}{h},
\end{align*}
where the last inequality follows by Condition (b) in $\cal{P}_{\tiny{\text{cv}},(X,\varepsilon)}$ and the convolution formula
\begin{align*}
p_{\XX}(u)= \int p_X(t)p_X(t-u)dt \leq \sup_{u\in\RR}p_X(u)\int p_X(t)dt = \sup_{u\in\RR}p_X(u).
\end{align*}
Therefore, Lemma \ref{lemma:Ubernstein} yields that
\begin{align*}
\P(|U_2 - \theta_2| \geq a_1v^{1/2} + a_2v + b_1u^{1/2} + b_2u + b_3u^{3/2} + b_4u^2) \leq C(\exp(-v) + \exp(-u)),
\end{align*}
where $a_1 \lesssim n^{-1/2}, a_2 \lesssim n^{-1}, b_1 \lesssim n^{-1}h^{-1/2}, b_2\lesssim n^{-1}, b_3 \lesssim n^{-3/2}h^{-1/2}, b_4 \lesssim n^{-2}h^{-1}$. Under the condition that $h \gtrsim n^{-(2-\delta)}$ for some $\delta > 0$ and $n$ is sufficiently large, the dominant terms in the above inequality are $a_1$ and $b_1$, that is,
\begin{align*}
n^{-1/2} \vee n^{-1}h^{-1/2}.
\end{align*}
This proves the first part of the theorem. The expectation version follows by Lemma \ref{lemma:tail_expectation}.
\end{proof}

\begin{lemma}
\label{lemma:tail_expectation}
Suppose a random variable $X$ satisfies the tail condition $\P(|X|\geq a_1t^{1/2} + a_2t + a_3t^{3/2} + a_4t^2) \leq C_1\exp(-C_2t)$ for any $t > 0$ and some positive constants $a_1,a_2,a_3,a_4, C_1,C_2$. Then, for any positive integer $p$, it holds that
\begin{align*}
\E(|X|^p)^{1/p} \leq C_3p^{1/p}(a_1 + a_2 + a_3 + a_4)
\end{align*}
for some positive constant $C_3$ that only depends on $C_1,C_2$.
\end{lemma}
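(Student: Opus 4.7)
The plan is to convert the tail bound into a moment bound via the standard identity $\E |X|^p = \int_0^\infty p s^{p-1}\,\P(|X|\geq s)\,ds$, combined with a change of variable that reparameterizes the integral by the exponential decay parameter $t$ rather than the threshold $s$. Set $A \define a_1+a_2+a_3+a_4$ and define the auxiliary function $\psi(t) \define t^{1/2}+t+t^{3/2}+t^2$. Since each $a_i \leq A$, we have the pointwise comparison $a_1t^{1/2}+a_2t+a_3t^{3/2}+a_4t^2 \leq A\psi(t)$, and so the hypothesis yields $\P(|X|\geq A\psi(t)) \leq C_1\exp(-C_2 t)$ for every $t>0$.

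Next, I would perform the change of variable $s = A\psi(t)$ in the moment identity. Since $\psi$ is strictly increasing on $(0,\infty)$ with $\psi(0)=0$ and $\psi(t)\to\infty$, this is a legitimate bijection $[0,\infty)\to[0,\infty)$, and $ds = A\psi'(t)\,dt$. Substituting gives
\begin{align*}
\E |X|^p \;\leq\; \int_0^\infty p\bigl(A\psi(t)\bigr)^{p-1} A\psi'(t)\cdot C_1 e^{-C_2 t}\,dt \;=\; C_1 A^p \int_0^\infty \bigl(\psi(t)^p\bigr)' e^{-C_2 t}\,dt .
\end{align*}
An integration by parts (using that $\psi(0)^p=0$ and $\psi(t)^p e^{-C_2 t}\to 0$ as $t\to\infty$) converts this to $C_1 C_2 A^p \int_0^\infty \psi(t)^p e^{-C_2 t}\,dt$.

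It then remains to bound the scalar integral by a constant depending only on $p$ and $C_2$. Using the elementary inequality $\psi(t)^p \leq 4^{p-1}(t^{p/2}+t^p+t^{3p/2}+t^{2p})$ and the gamma-function identity $\int_0^\infty t^{\alpha}e^{-C_2 t}\,dt = \Gamma(\alpha+1)/C_2^{\alpha+1}$ applied to $\alpha \in \{p/2, p, 3p/2, 2p\}$, the integral is bounded by a finite constant $K(p,C_2)$. Hence $\E|X|^p \leq C_1 C_2 K(p,C_2) A^p$, and taking $p$-th roots gives the claim with $C_3 = (C_1 C_2 K(p,C_2))^{1/p}$, which depends only on $C_1,C_2,p$.

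There is no real obstacle here: the only points requiring care are (i) verifying that $A\psi(t) \geq a_1 t^{1/2}+a_2 t+a_3 t^{3/2}+a_4 t^2$ so that the tail bound applies after the substitution, and (ii) the boundary term in the integration by parts, both of which are immediate from the definitions.
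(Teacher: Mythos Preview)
Your proof is correct. The route differs from the paper's, though: the paper first inverts the threshold map to obtain
\[
\P(|X|\geq s)\;\leq\; C\exp\Bigl\{-C\Bigl(\tfrac{s^2}{a_1^2}\wedge\tfrac{s}{a_2}\wedge\tfrac{s^{2/3}}{a_3^{2/3}}\wedge\tfrac{s^{1/2}}{a_4^{1/2}}\Bigr)\Bigr\},
\]
then partitions $(0,\infty)$ into four regions according to which term attains the minimum, and evaluates a gamma integral on each piece, arriving at $\E|X|^p \lesssim a_1^p+a_2^p+a_3^p+a_4^p$. You instead coarsen the threshold to $A\psi(t)$ with $A=\sum_i a_i$ up front and use a single change of variable plus one integration by parts. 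Your argument is shorter and avoids the case split; the paper's partition yields the marginally sharper intermediate bound $\sum_i a_i^p$ rather than $(\sum_i a_i)^p$, but the two are equivalent up to a constant depending only on $p$, so nothing is lost for the stated conclusion.
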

\begin{proof}
We use $C_3$ to denote a positive constant that only depends on $C_1$ and $C_2$, which might have different values at each occurrence. The tail condition in the assumption is equivalent to
\begin{align*}
\P(|X| \geq t) \leq C_3\exp\bbrace*{-C_3\parr*{\frac{t^2}{a_1^2}\wedge \frac{t}{a_2} \wedge \frac{t^{2/3}}{a_3^{2/3}}\wedge \frac{t^{1/2}}{a_4^{1/2}}}}.
\end{align*} 
Let $I_1$-$I_4$ be a partition of $(0,+\infty)$ such that for $t\in I_1$, $t^2/a_1^2 = \min\bbrace*{\frac{t^2}{a_1^2}\wedge \frac{t}{a_2} \wedge \frac{t^{2/3}}{a_3^{2/3}}\wedge \frac{t^{1/2}}{a_4^{1/2}}}$, and similarly for $I_2,I_3, I_4$. Then, we have
\begin{align*}
&\ms\E(|X|^p)\\
&= \int_0^\infty pt^{p-1}\P(|X|\geq t)dt\\
&\leq C_3\big\{\int_{I_1} pt^{p-1}\exp\parr*{-C_3\frac{t^2}{a_1^2}}dt + \int_{I_2} pt^{p-1}\exp\parr*{-C_3\frac{t}{a_2}}dt + \int_{I_3} pt^{p-1}\exp\parr*{-C_3\frac{t^{2/3}}{a_3^{2/3}}}dt +\\
&\ms \int_{I_4} pt^{p-1}\exp\parr*{-C_3\frac{t^{1/2}}{a_4^{1/2}}}dt\big\}\\
&\leq C_3\big\{\int_0^\infty pt^{p-1}\exp\parr*{-C_3\frac{t^2}{a_1^2}}dt + \int_0^\infty pt^{p-1}\exp\parr*{-C_3\frac{t}{a_2}}dt + \int_0^\infty pt^{p-1}\exp\parr*{-C_3\frac{t^{2/3}}{a_3^{2/3}}}dt + \\
&\ms\int_0^\infty pt^{p-1}\exp\parr*{-C_3\frac{t^{1/2}}{a_4^{1/2}}}dt\big\}\\
&= C_3p\big\{a_1^p\int_0^\infty t^{p-1}\exp\parr*{-C_3t^2}dt + a_2^p\int_0^\infty t^{p-1}\exp\parr*{-C_3t}dt + a_3^p\int_0^\infty t^{p-1}\exp\parr*{-C_3t^{2/3}}dt + \\
&\ms a_4^p\int_0^\infty t^{p-1}\exp\parr*{-C_3t^{1/2}}dt\big\}\\
&\leq C_3p(a_1^p + a_2^p + a_3^p + a_4^p).
\end{align*}
This completes the proof.
\end{proof}

\begin{lemma}
\label{lemma:marginal_condition}
Recall the Condition (c) in the definition of $\cal{P}_{\tiny{\text{cv}},(X,\varepsilon)}$ in the main paper, and Condition $(c^\prime)$ in the subsequent paragraph. We have $(c^\prime)\Rightarrow (c)$.
\end{lemma}
\begin{proof}
Choose some $\delta_0 < 1/8$ and fix any $\delta \leq \delta_0$ and $u\in[-1,1]$. By the convolution formula, we have
\begin{align*}
p_{\XX}(u\delta) = \int_S p_X(s)p_X(s - u\delta)ds \geq c_0\int_S p_X(s - u\delta)ds.
\end{align*}
Therefore, it suffices to show that $\lambda\parr*{\bbrace*{S - u\delta}\bigcap[0,1]\bigcap S}$ is lower bounded by some fixed constant, say, $1/8$. Assume this does not hold, then we have
\begin{align*}
1 = \lambda([0,1])\geq \lambda\parr*{\bbrace*{S - u\delta}\bigcap[0,1]} + \lambda(S) - 1/8 \geq \lambda(S) - \delta + \lambda(S) - 1/8 \geq 5/4,
\end{align*}
which is a contradiction. 
\end{proof}

\begin{lemma}
\label{lemma:PD}
The covariance $\mbf{\Sigma}_1$ defined in the proof of Theorem \ref{thm:nonpar_lower} in the main paper is positive definite.
\end{lemma}
\begin{proof} 
We will prove that for any $\mbf{a}\in\RR^n$ with $\|\mbf{a}\| = 1$, it holds that $\mbf{a}^\top\mbf{\Sigma}_1 \mbf{a} > 0$. For each realization of $\{X_i\}_{i=1}^n$, partition the set $[n]$ into $L$ clusters for some positive integer $1\leq L \leq n$ such that the $Y_i$'s in each cluster fall into the same trapezoid in the lower bound construction. Denote these $L$ clusters as $\cal{M}_1,\ldots,\cal{M}_L$. Then, it follows that
\begin{align*}
\var\parr*{\sum_{i=1}^n a_iY_i\mid \{X_i\}_{i=1}^n} = \sum_{\ell=1}^L\var\parr*{\sum_{i\in\cal{M}_\ell} a_iY_i\mid\{X_i\}_{i=1}^n}.
\end{align*}
Since $\|\mbf{a}\| = 1$, there exists some $\ell_0\in[L]$ such that $A\define \sum_{i\in \cal{M}_{\ell_0}}a_i^2 > 0$. Partition $\{i:i\in \cal{M}_{\ell_0}\}$ according to the sign:
\begin{align*}
\cal{A}_+ \define \{i\in\cal{M}_{\ell_0}: a_i\geq 0\}, \quad \cal{A}_-\define\{i\in\cal{M}_{\ell_0}: a_i<0\}
\end{align*}
and define $S_+ \define \sum_{i\in\cal{A}_+} a_i$ and $A_+ \define \sum_{i\in\cal{A}_+} a_i^2$, and $S_-$ and $A_-$ similarly. Then, $A = A_+ + A_-$. Moreover, it holds that
\begin{align*}
&\ms\var\parr*{\sum_{i\in\cal{M}_{\ell_0}} a_iY_i\mid\{X_i\}_{i=1}^n}\\
&= (1+h_n^{2\alpha})\sum_{i\in\cal{M}_{\ell_0}}a_i^2 + h_n^{2\alpha}\parr*{\sum_{i,j\in \cal{A}_+;i\neq j}a_ia_j + \sum_{i,j\in \cal{A}_-;i\neq j}a_ia_j + 2\sum_{i\in\cal{A}_+,j\in \cal{A}_-}a_ia_j}\\
&= A(1+h_n^{2\alpha}) + h_n^{2\alpha}\parr*{S_+^2 - A_+ + S_-^2  - A_- + 2S_+S_-}\\
&=A + h_n^{2\alpha}\parr*{S_+ + S_-}^2 \geq A > 0.
\end{align*}
This completes the proof.
\end{proof}

\begin{lemma}[Theorem 3.3, \cite{gine2000exponential}]
\label{lemma:Ubernstein}
Let $Z_1,\ldots,Z_n,Z\in\cal{Z}$ be i.i.d., and $g:\cal{Z}^2\rightarrow \RR$ be a symmetric measurable function with $\E\bbrace*{g(Z_1,Z_2)} < \infty$. Write $U_n(g) \define \sum_{i<j}g(Z_i,Z_j)$ and $g_1(z) \define \E\bbrace*{g(Z,z)}$. Define
\begin{align*}
B_1 \define \sup_{Z_2}\E\bbrace*{\abs*{g(Z_1,Z_2)}\mid Z_2}, \quad B_2 \define \parr*{n\sup_{Z_2}\E\bbrace*{g^2(Z_1,Z_2)\mid Z_2}}^{1/2},
\quad B_3 \define \|g\|_\infty 
\end{align*}
and
\begin{align*}
\nu_1^2 \define \E\bbrace*{g_1^2(Z_2)}, \quad  \nu_2^2 \define \E\bbrace*{g^2(Z_1,Z_2)}.
\end{align*} 
Then, it holds that
\begin{align*}
&\ms\P\parr*{\abs*{U_n(g) - \E\bbrace*{U_n(g)}}\geq t + C_1n\nu_2 u^{1/2} + C_2nB_1u + C_3B_2u^{3/2} + C_4B_3u^2} \\
&\leq 2\exp\parr*{\frac{-t^2/n^2}{8n\nu_1^2 + 4B_1\cdot t/n}} + C_5e^{-u}, 
\end{align*}
where $C_1$-$C_5$ are absolute constants.
\end{lemma}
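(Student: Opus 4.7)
The plan is to use Hoeffding decomposition to split the U-statistic into a linear term and a canonical (degenerate) term, then apply a classical Bernstein bound to the linear part and the deep four-term Bernstein inequality for canonical U-statistics of order two to the degenerate part. Concretely, define $h_1(z) := g_1(z) - \E g(Z_1, Z_2)$ and the canonical kernel $\pi g(z_1, z_2) := g(z_1, z_2) - g_1(z_1) - g_1(z_2) + \E g(Z_1, Z_2)$, which satisfies $\E[\pi g(Z_1, z_2)] = 0$ for every $z_2$. Writing $U_n(g) - \E U_n(g)$ in this basis yields
\[
U_n(g) - \E U_n(g) \;=\; (n-1)\sum_{i=1}^n h_1(Z_i) \;+\; \sum_{i<j}\pi g(Z_i, Z_j) \;=:\; L_n + D_n,
\]
so the final bound will follow from a union bound with the deviation budget split as $t$ for $L_n$ and $C_1 n\nu_2 u^{1/2} + C_2 B_1 u + C_3 B_2 u^{3/2} + C_4 B_3 u^2$ for $D_n$.

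For $L_n$, the summands $h_1(Z_i)$ are i.i.d., centered, bounded by $2 B_1$ in absolute value, and have variance at most $\nu_1^2$. Classical Bernstein therefore yields $\P(|L_n|\geq t) \leq 2\exp\bigl(-\tfrac{(t/(n-1))^2/2}{n\nu_1^2 + 2 B_1 (t/(n-1))/3}\bigr)$, which after absorbing universal constants and using $(n-1)^2 \asymp n^2$ collapses to the first exponential tail $2\exp(-t^2/n^2/(8 n\nu_1^2 + 4 B_1 t/n))$ appearing in the statement.

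The main obstacle is controlling $D_n$ to produce the four-term tail, and this is where the depth of the inequality resides. I would follow the Gin\'e-Lata\l a-Zinn strategy in three layers. First, apply the de la Pe\~na-Montgomery-Smith decoupling inequality to replace $D_n$ by its decoupled version $\sum_{i,j}\pi g(Z_i, Z_j')$ with an independent copy $\{Z_j'\}$, at the cost of a universal constant. Second, condition on $\{Z_j'\}$ and introduce Rademacher multipliers to reduce the problem to a Rademacher chaos of order two. Third, bound the moments of this chaos by combining Talagrand's concentration inequality for empirical processes with chaining in Orlicz norms; the four complexity measures $\nu_2, B_1, B_2, B_3$ correspond respectively to the Hilbert-Schmidt $L^2$ scale, the $L^2\!\to\! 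L^2$ operator scale, a mixed $\sqrt{n}$-weighted scale, and the uniform scale, and they produce the four tail exponents $u^{1/2}, u, u^{3/2}, u^2$ via the tail-$\leftrightarrow$-moment duality characteristic of mixed Orlicz functionals.

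A union bound combining the Bernstein tail for $L_n$ with the four-term tail for $D_n$, followed by relabelling of the universal constants as $C_1,\ldots,C_5$, yields the stated inequality. Since the degenerate part's bound is precisely Theorem 3.3 of \cite{gine2000exponential}, in practice I would invoke it as a black box rather than reproduce the decoupling-Rademacher-chaining machinery; the Hoeffding decomposition together with scalar Bernstein for $L_n$ is then the only self-contained step.
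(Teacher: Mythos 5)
This lemma is not proved in the paper at all: it is imported verbatim as Theorem~3.3 of \cite{gine2000exponential}, so there is no internal proof to compare against. Your outline is the standard derivation of that theorem and is sound in structure: the Hoeffding decomposition is written correctly, and your Bernstein step for the linear part checks out ($|h_1|\leq 2B_1$, $\Var(h_1)\leq \nu_1^2$, and the resulting exponent dominates the stated one after adjusting constants and $(n-1)$ versus $n$). Two caveats. First, as written your argument is circular: you invoke ``Theorem 3.3 of \cite{gine2000exponential}'' as a black box for the degenerate piece, but that theorem is precisely the statement being proved; what you should cite is the canonical-kernel (completely degenerate) exponential inequality of that paper, from which the present statement follows by exactly your decomposition-plus-Bernstein reduction. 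Second, the canonical inequality must be applied to the projected kernel $\pi g$, not to $g$, so you need to record that the parameters of $\pi g$ are controlled by those of $g$ up to absolute constants — e.g.\ $\|\pi g\|_\infty\leq 4B_3$, $\E\{\pi g\}^2\leq \nu_2^2$, $\sup_{z}\E\{\pi g(Z_1,z)^2\mid z\}\lesssim B_2^2/n$, and the $L^2\to L^2$ operator norm of $\pi g$ is bounded by $B_1$ via Schur's test (which is also why $B_1$ can stand in for the operator-norm term in the stated four-term tail). With those two repairs your proposal is a faithful reconstruction of the cited proof rather than an alternative to anything in this paper.
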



\section{Proofs of results in Section \ref{sec:v_function}}

\subsection{Proof of Theorem \ref{thm:v_function_upper}}
\begin{proof}
Throughout the proof, $C$ and $c$ will denote two generic positive constants that do not depend on $n$ and might have different values at each occurrence. We only prove the case for the pointwise error and the result for the integrated error will follow. Consider a fixed $x^*\in\supp(X)$. We will continue to use the notation $\ell,\mbf{q}(\cdot),\mathbf{B}_n, X_{ij},K_{ij}$ introduced in Section \ref{subsec:v_upper} in the main paper. We will drop the subscript in $\widehat{V}_{\tiny{\LP}}(x^*)$ for notational simplicity. Recall the choice of $(h_1,h_2)$ in \eqref{eq:v_h_choice} in the main paper.

Define $\mathbf{B} \define \E \mathbf{B}_n$ and the good event $\Omega_n \define \{\nm*{\mathbf{B}_n - \mathbf{B}}\leq 1/(2\|\mathbf{B}^{-1}\|)\}$. Note that $\Omega_n$ is well-defined as we now prove $\mathbf{B}$ is indeed invertible. For any $\mbf{a}\in\mathbb{S}^{\ell}$, it holds that
\begin{align*}
&\ms \mbf{a}^\top \mathbf{B}\mbf{a}\\
&= \int\int \bbrace*{\mbf{a}^\top \mbf{q}\parr*{\frac{(u+v)/2 - x^*}{h_2}}}^2 \frac{1}{h_1}K\parr*{\frac{u-v}{h_1}}\frac{1}{h_2}K\parr*{\frac{(u+v)/2 - x^*}{h_2}}p_X(u)p_X(v)dudv\\
&= \int\int \bbrace*{\mbf{a}^\top \mbf{q}(v)}^2 K(u)K(v)p_X(x^* + h_2v + h_1u/2)p_X(x^* + h_2v - h_1u/2)dudv\\
&= \int\int \bbrace*{\mbf{a}^\top \mbf{q}(v - h_1u/(2h_2))}^2K(u)K(v - h_1u/(2h_2))p_X(x^* + h_2v)p_X(x^* + h_2v - h_1u)dudv\\
&= \int_{-1}^1\int_{-1+h_1u/(2h_2)}^{1+h_1u/(2h_2)}\bbrace*{\mbf{a}^\top \mbf{q}\parr*{v - \frac{h_1u}{2h_2}}}^2K(u)K\parr*{v - \frac{h_1u}{2h_2}}p_X(x^* + h_2v)p_X(x^* + h_2v - h_1u)dvdu\\
&\geq \under{M}_K^2\int_{-1}^1\int_{-1+h_1u/(2h_2)}^{1+h_1u/(2h_2)}\bbrace*{\mbf{a}^\top \mbf{q}\parr*{v - \frac{h_1u}{2h_2}}}^2p_X(x^* + h_2v)p_X(x^* + h_2v - h_1u)dvdu,
\end{align*}
where in the last inequality we use the lower bound $\under{M}_K$ on $K(\cdot)$. Note that the first term of the integrand $\bbrace*{\mbf{a}^\top \mbf{q}\parr*{v - \frac{h_1u}{2h_2}}}^2$ is a polynomial of variables $u,v$ and thus only takes zero value with Lebesgue measure at most $0$. By the second part of Condition (c) in $\cal{P}_{\tiny{\text{vf}},(X,\varepsilon)}$ with $\delta = h_2$ (note that $h_2\leq \delta_0$ for any fixed $\delta_0 >0$ and sufficiently large $n$), for the given $x^*$, there exists a set $\cal{A}_{x^*}\subset [-1,1]$ with Lebesgue measure at least $c_0$ such that for all $v\in\cal{A}_{x^*}$, $x^* + h_2v\in\supp(X)$, and moreover, for each $v\in\cal{A}_{x^*}$, the second part of Condition (c) with $\delta = h_1$ again implies the existence of a set $\cal{A}_{x^*,v}\subset[-1,1]$ with Lebesgue measure at least $c_0$ such that for all $u\in\cal{A}_{x^*,v}$, it holds that $x^* + h_2v - h_1u\in\supp(X)$. Therefore, by the first part of Condition (c)  and the fact that $h_1/h_2\rightarrow 0$, there exists a fixed positive constant $c$ such that
\begin{align*}
\lambda_{\tiny{\min}}\define \inf_{\mbf{a}\in\mathbb{S}^{\ell}}\mbf{a}^\top \mathbf{B}\mbf{a}\geq c > 0.
\end{align*}
This concludes that $\Omega_n$ is well-defined. By triangle inequality, we have
\begin{equation}
\begin{aligned}
\label{eq:v_event_decom}
&\ms\E\parr*{\widehat{V}(x^*) - V(x^*)}^2\\
&\lesssim \E\parr*{\widehat{V}(x^*)\indc - V(x^*)}^2 +\E\parr*{\widehat{V}(x^*)\indcc}^2\\
&\lesssim \bbrace*{\E\parr*{(\widehat{V}(x^*) - V(x^*))\indc}}^2 + \E\parr*{\widehat{V}(x^*)\indc - \E\parr*{\widehat{V}(x^*)\indc}}^2 + \\
&\ms \E\parr*{\parr*{V^2(x^*) + \widehat{V}^2(x^*)}\indcc}.
\end{aligned}
\end{equation}
By Lemma \ref{lemma:v_bias}, we have for the first term
\begin{align*}
\bbrace*{\E\parr*{(\widehat{V}(x^*) - V(x^*))\indc}}^2 \leq C\parr*{h_1^{4(\alpha\wedge 1)} + h_2^{2\beta} + h_1^{2(\beta\wedge 1)} + \tau_n^2}.
\end{align*} 
By Lemma \ref{lemma:v_variance} with conditions $nh_2\rightarrow\infty$ and $n^2h_1h_2\rightarrow\infty$ satisfied with the choices of $(h_1,h_2)$ in \eqref{eq:v_h_choice}, we have for the second term
\begin{align*}
\E\parr*{\widehat{V}(x^*)\indc - \E\parr*{\widehat{V}(x^*)\indc}}^2 \leq C\parr*{n^{-1}h_2^{-1} + n^{-2}(h_1h_2)^{-1} + \tau_n^2}.
\end{align*}
Plugging in the values of $(h_1,h_2)$ as in \eqref{eq:v_h_choice} and choosing $\tau_n \asymp n^{-\kappa}$ for some fixed $\kappa \geq 1$, we obtain that
\begin{align*}
\E\parr*{\widehat{V}(x^*) - V(x^*)}^2 \leq C(n^{-\frac{8\alpha\beta}{4\alpha\beta + 2\alpha + \beta}} + n^{-\frac{2\beta}{2\beta + 1}}) + \E\parr*{\parr*{V^2(x^*) + \widehat{V}^2(x^*)}\indcc}.
\end{align*}
Lastly, note that $\widehat{V}(x^*) = \sum_{i<j}D_{ij}w_{ij} / (\sum_{i<j}w_{ij} + \tau_n)$ is a linear estimator with weight $w_{ij} = {n\choose 2}^{-1}\mbf{q}^\top(0)\mathbf{B}_n^*\mbf{q}((X_{ij}-x^*)/h_2)K_{ij}$. By definition of $\mathbf{B}_n^*$, $w_{ij}$ is thus a weighted polynomial of $K_{ij}(X_{ij}-x^*)/h_2$ up to some order that only depends on $\ell$. Therefore, in view of the choice of $\tau_n$ (decaying to $0$ polynomially with $n$), $h_1,h_2$, there exists some sufficiently large constant $\eta$ (only depending on $\alpha,\beta,\kappa$) such that
\begin{align*}
\E\parr*{\E\parr*{\widehat{V}^2(x^*)\mid \{X_i\}_{i=1}^n}}^2 \lesssim  n^{\eta},
\end{align*}
and thus by Cauchy-Schwarz and the exponential inequality in Lemma \ref{lemma:B_concentration}, it holds that
\begin{align*}
\E\parr*{\parr*{V^2(x^*) + \widehat{V}^2(x^*)}\indcc} &= \E\parr*{(\E \widehat{V}^2(x^*)\mid \{X_i\}_{i=1}^n + V^2(x^*)) \indcc}\\
&\lesssim (\E(\E\widehat{V}^2(x^*)\mid\{X_i\}_{i=1}^n)^2 + V^4(x^*))^{1/2}\P^{1/2}(\Omega_n^c) \\
&= o\parr*{n^{-\frac{8\alpha\beta}{4\alpha\beta + 2\alpha + \beta}} + n^{-\frac{2\beta}{2\beta + 1}}}.
\end{align*}
This completes the proof.
\end{proof}

\subsection{Proof of Theorem \ref{thm:v_lower_point}}
\begin{proof}
Note that the boundary of $n^{-8\alpha\beta/(4\alpha\beta + \beta+ 2\alpha)}$ and $n^{-2\beta/(2\beta + 1)}$ lies at $\alpha = \beta/(4\beta + 2)$. When $\alpha\geq \beta/(4\beta + 2)$, the statement can be proved using a slight variation of the proof of Theorem 4.2 in \cite{brown2007variance}, and we omit the details here. Next, we will focus on the case where $\alpha < \beta/(4\beta + 2)$. Consider a fixed point $x^*\in\supp(X)$. Throughout the proof, $C$ and $c$ represent two generic positive constants which only depend on $\alpha,\beta,C_{\cal{F}},C_{\cal{V}}, C_\sigma,C_0,c_0,C_\varepsilon$ 
and might have different values at each occurrence, but like in the proof of Theorem \ref{thm:nonpar_lower}, let c be always smaller than 1/4. Also, without loss of generality, assume that the sample size $n$ and $C_{\cal{F}},C_{\cal{V}}, C_\sigma,C_\varepsilon,C_0$ are sufficiently large, $c_0$ is sufficiently small, and $[0,1]\subset I$.

We will make use of Le Cam's two point method. Introduce the constants
\begin{align}
\label{eq:v_constant}
\theta_n^2 \define h_1^{2\alpha} \define h_2^\beta \define cn^{-\frac{4\alpha\beta}{4\alpha\beta + \beta + 2\alpha}}, \quad M\define h_2/(4h_1) - 1/2, \quad N \define 2M + 1 = h_2/(2h_1),
\end{align}
where we tune the constant $c$ in $\theta_n^2$ so that $M$ is a positive integer. Note that under the above choice, $h_2/h_1\rightarrow\infty$ as $n\rightarrow\infty$. We now specify $f(\cdot), V(\cdot)$, distribution of $X$ and distribution of $\varepsilon$ in the null and alternative hypotheses, $H_0$ and $H_1$, respectively.

\begin{itemize}
\item[] \emph{Choice of $\varepsilon$}: Under both $H_0$ and $H_1$, let $\varepsilon\sim\cal{N}(0,1)$.
\item[] \emph{Choice of $V(\cdot)$}: Under $H_0$, let $V\equiv 1$. Under $H_1$, let $V = 1 - \theta_n^2H((x - x^*)/h_2)$, where $H(\cdot)$ is $\beta$-H\"older smooth, infinitely differentiable, compactly supported on $[-2,2]$, and takes value $1$ on $[-1,1]$.
\item[] \emph{Choice of $f(\cdot)$}: Under $H_0$, let $f\equiv 0$. Under $H_1$, let $f$ be zero outside $[x^* - h_2, x^* + h_2]$, and inside this interval, the linear interpolant of the function that takes value $r_i$ on $[x^* - h_2 + (4i-3)h_1, x^* - h_2 +  (4i-1)h_1]$ and zero at $x^* - h_2 + 4(i-1)h_1$ for all $i\in[N]$, where $\{r_i\}_{i=1}^N$ is an i.i.d. sequence of symmetric and compactly supported random variables with distribution $\G$ satisfying
\begin{align*}
\int_{-\infty}^\infty x^j\G(dx) = \int_{-\infty}^\infty x^j\varphi(x)dx, \quad j=1,\ldots,q,
\end{align*} 
where $q$ is some fixed odd integer strictly larger than $1 + (\beta+2\alpha)/(2\alpha\beta)$.
\item[] \emph{Choice of $X$}: Under both $H_0$ and $H_1$, let $X$ be uniformly distributed on the union of the intervals 
\begin{align*}
[0, 1] \bigcap \parr*{[0,x^* - 2h_2] \bigcup [x^* + 2h_2, 1] \bigcup_{i=1}^N [x^* - h_2 + (4i-3)h_1, x^* - h_2 +  (4i-1)h_1]}.
\end{align*}
\end{itemize}
See Figure \ref{fig:v_point_lower} for an illustration. We now make a few remarks about the above construction. For the design of $V(\cdot)$ under $H_1$, one example of the smooth bump function $H(\cdot)$ is $(\mathbbm{1}_{[-3/2,3/2]}\ast \varphi_{1/2})(\cdot)$, where $\varphi_\varepsilon(x)\define \varphi(x/\varepsilon)/\varepsilon$ with $\varphi(x)\define \exp(-1/(1-x^2))\mathbbm{1}\{|x|\leq 1\}$ being a smooth and compactly supported mollifier. The design of $f(\cdot)$ under $H_1$ is a ``localized" version of $f(\cdot)$ in the proof of Theorem \ref{thm:nonpar_lower}. The existence of $\{r_i\}_{i=1}^N$ is again guaranteed by Lemma \ref{lemma:moment_matching}, and their range, which we denote as $B$, only depends on $\alpha$ and $\beta$ and is thus fixed. Lastly, we indeed have $x^*\in\supp(X)$ since it is in the $(M+1)$th interval in the $N$ intervals specified in the support of $X$. Moreover, under $H_1$, conditioning on the event that $X_i \in [x^*-h_2, x^* + h_2]$ and any realization of $\{r_i\}_{i=1}^{N}$, $f(X_i)$ is uniformly distributed over $\{h_1^{\alpha}r_1,\ldots,h_1^{\alpha}r_N\}$.

Clearly, under both the null and the alternative hypotheses, $V(\cdot)$ is $\beta$-H\"older smooth, and under $H_1$, $f(\cdot)$ is $\alpha$-H\"older smooth for each realization of $\{r_i\}_{i=1}^N$ due to their compact support. Next, we show that the joint distribution of $(X,\varepsilon)$ satisfies the three conditions in $\cal{P}_{\tiny{\text{vf}},(X,\varepsilon)}$. Condition (d) clearly holds and Condition (a) holds with $I=[0,1]$. Condition (b) holds as well since for any $u$ in the support of $X$, $p_X(u) = 1/(1-3h_2)$ for $x^*\in(0,1)$ and $p_X(u) = 2/(2-3h_2)$ for $x^*\in\{0,1\}$, both of which are smaller than $2$ for sufficiently large $n$. Lastly, for Condition (c), the first part clearly holds since $\inf_{u\in\supp(X)} p_X(u) \geq 1$. For the second part, define $\cal{A}_{x,\delta} \define \{u\in[-1,1]: x+\delta u\in\supp(X)\}$. Then, for any $x^*\in(0,1)$ and any $0 < \delta \leq 1/2$, we have $\lambda(\cal{A}_{x,\delta}) \geq 1/2$ if $x\in(0,x^* - 2h_2]\bigcup[x^* + 2h_2, 1)$ and $\lambda(\cal{A}_{x,\delta}) \geq 1/4$ if $x\in \bigcup_{i=1}^N [x^* - h_2 + (4i-3)h_1, x^* - h_2 +  (4i-1)h_1]$. A similar statement holds for $x^*\in\{0,1\}$. We therefore conclude that Condition (c) also holds.

Denote $\P_0$ and $\P_1$ to be the joint distributions of $\{X_i,Y_i\}_{i=1}^n$ under $H_0$ and $H_1$, then the pointwise squared distance between $\P_0$ and $\P_1$ $\parr*{V_0(x^*) - V_1(x^*)}^2 \asymp \theta_n^4$ is the desired minimax rate. Further define $\td{\P}_1$ as the corresponding joint distributions of $\{X_i,Y_i\}_{i=1}^n$ under $H_1$ with $\{r_i\}_{i=1}^N$ replaced by an i.i.d. standard normal sequence $\{\td{r}_i\}_{i=1}^N$. Then, following the same line of proof of Theorem \ref{thm:nonpar_lower}, it suffices to show that $\TV(\P_0, \td{\P}_1) \leq c$ and $\TV(\P_1, \td{\P}_1) \leq c$. 

For the first inequality, in view of \eqref{eq:tv1_1} in the proof of Theorem \ref{thm:nonpar_lower}, it suffices to upper bound $\TV(\P_0(\mbf{y}\mid\mbf{x}), \td{\P}_1(\mbf{y}\mid \mbf{x}))$ for each realization $\{x_i\}_{i=1}^n$.
Note that under $\P_0$, $\mbf{y}\mid \mbf{x}\sim\cal{N}_n(0, \mbf{\Sigma_0})$, with $\mbf{\Sigma_0} = \mathbf{I}_n$. Denote $\{b_i\}_{i=1}^n$ as the location index sequence of $\{X_i\}_{i=1}^n$ taking values in $\{0,1,\ldots, N\}$, that is, $b_i = 0$ if $X_i\notin [x^* - h_2, x^* + h_2]$ and $b_i = j$ if $X_i\in[x^* - h_2 + (4j-3)h_1, x^* - h_2 +  (4j-1)h_1]$ for $j\in[N]$. Then, due to the symmetry of $\{r_i\}_{i=1}^N$, the design of the nonparametric component $f$, and the fact that $H(\cdot)$ takes value $1$ on $[-1, 1]$, it holds that under $\td{\P}_1$, $\mbf{y}\mid \mbf{x}\sim \cal{N}_n(0, \mbf{\Sigma_1})$, with 
\begin{align*}
(\mbf{\Sigma_1})_{ii} = 1 - \theta_n^2\mathbbm{1}\bbrace*{\abs*{X_i-x^*}\leq h_2} +  h_1^{2\alpha}\mathbbm{1}\bbrace*{\abs*{X_i-x^*}\leq h_2} = 1
\end{align*}
and $(\mbf{\Sigma_1})_{ij} = h_1^{2\alpha}\mathbbm{1}\{b_i = b_j, b_i \geq 1, b_j\geq 1\}$ for $i\neq j$. Define $N_0\define \sum_{i\neq j}\mathbbm{1}\{b_i = b_j, b_i \geq 1, b_j\geq 1\}$. Then, we have by Lemma \ref{lemma:gaussian_tv} that
\begin{align*}
\TV(\P_0(\mbf{y}\mid\mbf{x}), \td{\P}_1(\mbf{y}\mid \mbf{x})) \leq C\parr*{h_1^{4\alpha}N_0}^{1/2} = C\theta_n^2N_0^{1/2}.
\end{align*}
Note that $N_0$ is a random variable that depends on $\{X_i\}_{i=1}^n$, and by \eqref{eq:tv1_1} in the proof of Theorem \ref{thm:nonpar_lower},
\begin{align*}
\TV(\P_0, \td{\P}_1) \leq C\theta_n^2\E(N_0^{1/2}) \leq C\theta_n^2(\E N_0)^{1/2}.
\end{align*}
Since direct calculation implies that $\E(N_0) \leq Cn^2Nh_1^2 = Cn^2h_1h_2$, we have $\TV(\P_0, \td{\P}_1) \lesssim 1$ under the given choice of $h_1,h_2$ and $\theta_n$. 

Using a conditioning argument, the second part of proving $\TV(\td{\P}_1, \P_1) \lesssim 1$ follows similarly from that of Theorem \ref{thm:nonpar_lower} by noting that $n^2h_1 \rightarrow \infty$ and $nh_1\rightarrow 0$ as $n\rightarrow\infty$ under the constraint $\alpha < \beta/(4\beta+2)$ so that Lemma \ref{lemma:multinomial} can be similarly applied. The proof is complete. 
\end{proof}

\subsection{Proof of Theorem \ref{thm:v_lower_int}}
\begin{proof}
As in the proof of Theorem \ref{thm:v_lower_point}, we focus on the regime $\alpha < \beta/(4\beta + 2)$. We will couple the proof of Theorem \ref{thm:v_lower_point} with a standard technique via multiple hypotheses in the classic setting of mean function estimation. 

Introduce the following notation:
\begin{align*}
&\theta_n^2 \define h_1^{2\alpha} \define h_2^\beta \define  cn^{-\frac{4\alpha\beta}{4\alpha\beta + \beta + 2\alpha}}, \quad N_2 \define 1/(4h_2), \quad N_1 \define h_2/(2h_1),\\
&\text{and } \quad x^*_i \define 2h_2 + (i-1)4h_2, \quad i\in[N_2],
\end{align*}
where we tune the constant $c$ in $\theta_n^2$ so that $N_1$ and $N_2$ are both positive integers. Note that under the above choice, $h_2/h_1\rightarrow\infty$ as $n\rightarrow\infty$. By the renowned Varshamov-Gilbert bound (cf. Lemma 2.8 in \cite{tsybakov2009introduction}), there exists a set of length-$N_2$ binary sequences $\{\mbf{\Delta}_j\}_{j=0}^{M}$ with $M \geq 2^{N_2/8}$ such that $\mbf{\Delta}_0 = \mbf{0}_{N_2}$ and for any $0 \leq k < \ell \leq M$, it holds that $\rho(\mbf{\Delta}_k, \mbf{\Delta}_\ell) \geq N_2/8$, where $\rho$ is the Hamming distance. We now choose a number of $M + 1$ hypotheses with $\{\mbf{\Delta}_j\}_{j=0}^{M}$ satisfying the above property, which we denote as $\P_0,\P_1,\ldots,\P_M$. We now specify $f(\cdot),V(\cdot)$, distribution of $X$ and distribution of $\varepsilon$ under each hypothesis. 

\begin{itemize}
\item[] \emph{Choice of $\varepsilon$}: Under $\P_0$ and $\P_j$ for all $j\in[M]$, let $\varepsilon\sim\cal{N}(0,1)$.
\item[] \emph{Choice of $V(\cdot)$}: Under $\P_0$, let $V_0\equiv 1$. Under $\P_j$, let $V_j(x) \define 1 - \sum_{i=1}^{N_2}\Delta_{j,i}\theta_n^2H\parr*{(x-x^*_i)/h_2}$, where $H(\cdot)$ is infinitely differentiable, compactly supported on $[-2,2]$ and takes value $1$ on $[-1,1]$.
\item[] \emph{Choice of $f(\cdot)$}: Under $\P_0$, let $f_0\equiv 0$. Under $\P_j$, for all $i\in[N_2]$ such that $\Delta_{j,i} = 1$, let $f$ be the linear interpolation of the function that takes value $r^{(j)}_{i,k}$ on the interval $[x^*_i - h_2 + (4k-3)h_1, x^*_i - h_2 + (4k-1)h_1]$ and value zero at $x^*_i - h_2 + 4(k-1)h_1$ for all $k\in[N_1]$, where by denoting $m_j\define \|\Delta_j\|_0$, $\{r^{(j)}_{i,k}\}_{j\in[M], i\in[m_j], k\in[N_1]}$ is an i.i.d. sequence of symmetric and compactly supported random variables with distribution $\G$ satisfying
\begin{align*}
\int_{-\infty}^\infty x^j\G(dx) = \int_{-\infty}^\infty x^j\varphi(x)dx, \quad j=1,\ldots,q,
\end{align*}
where $q$ is some fixed odd integer that only depends on $\alpha$ and $\beta$.
\item[] \emph{Choice of $X$}: Under $\P_0$ and $\P_j$ for all $j\in[M]$, let $X$ be uniformly distributed on the union of the disjoint intervals 
\begin{align*}
\bigcup_{i=1}^{N_2}\bigcup_{k=1}^{N_1} [x^*_i - h_2 + (4k-3)h_1, x^*_i - h_2 + (4k-1)h_1].
\end{align*}
\end{itemize}
The existence of $H(\cdot)$ in the design of $V(\cdot)$ and variables $\{r^{(j)}_{i,k}\}_{j\in[M], i\in[m_j], k\in[N_1]}$ is as argued in the proof of Theorem \ref{thm:v_lower_point}. Moreover, one can readily check that for each $0\leq k<\ell \leq M$, the integrated squared distance between each $\P_k$ and $\P_\ell$ satisfies
\begin{align*}
d(\P_k, \P_\ell)\define \int \parr*{V_k(x) - V_\ell(x)}^2p_X(x)dx \gtrsim h_2^{2\beta} \asymp n^{-\frac{8\alpha\beta}{4\alpha\beta + \beta + 2\alpha}},
\end{align*}
which is the desired lower bound.

Clearly, under each $\P_j$, $0\leq j\leq M$ and for each realization of $\{r^{(j)}_{i,k}\}_{j\in[M], i\in[m_j], k\in[N_1]}$, $f_j(\cdot)$ and $V_j(\cdot)$ are $\alpha$- and $\beta$-H\"older smooth, respectively, due to the compact support of $\{r^{(j)}_{i,k}\}_{j\in[M], i\in[m_j], k\in[N_1]}$. Moreover, the joint distribution of $(X,\varepsilon)$ (same in all hypothese) satisfies the conditions in $\cal{P}_{\tiny{\text{vf}},(X,\varepsilon)}$ with a similar argument as in Theorem \ref{thm:v_lower_point}.

We now proceed with the proof. Note that under the above design, the support of $X$ is segmented into $N_3\define N_1\times N_2$ intervals, and we let $\{b_i\}_{i=1}^n$ be the location index of $\{X_i\}_{i=1}^n$, taking values in $[N_2]\times [N_1]$, that is, $b_i = (k,\ell)$ if $X_i\in[x^*_k - h_2 + (4\ell-3)h_1, x^*_k - h_2 + (4\ell-1) h_1]$. As in the proof of Theorem \ref{thm:nonpar_lower}, define the event $\Omega_n \define \big\{\max_{(k,\ell)\in\P[N_2]\times[N_1]}\#\bbrace*{b_i = (k,\ell)}\leq K\big\}$, where $K$ is the smallest integer strictly larger than $2\beta/(\beta - 4\alpha\beta - 2\alpha)$. Then, by Lemma \ref{lemma:multinomial}, it holds that $\Omega_n$ has asymptotic probability $1$ under all of $\P_j$ and $\td{\P}_j$ for $0\leq j\leq M$. Now, by a standard reduction scheme with multiple hypotheses (cf. Chapter 2.2 in \cite{tsybakov2009introduction}) and Lemma \ref{lemma:cond_kull}, it suffices to show that
\begin{align}
\label{eq:lower_multiple}
\frac{1}{M}\sum_{j=1}^M K(\P_j, \P_0; \Omega_n) \leq c\log(M)
\end{align}
for some $0 < c < 1/8$, where $K(\P, \Q; \Omega_n)$ is the ``conditional" Kullback divergence between probability measures $\P$ and $\Q$ defined as $K(\P, \Q; \cal{E}) \define \int_{\cal{E}}\log(d\P/d\Q)d\P$ for any measurable set $\cal{E}$. In order to show \eqref{eq:lower_multiple}, it further suffices to show that $K(\P_j, \P_0; \Omega_n) \leq \log(M)$ for all $j\in[M]$. We now focus on a particular $j\in[M]$. For notational brevity, we will drop the superscript $(j)$ in the sequence of variables $\{r^{(j)}_{i,k}\}_{i\in[m_j], k\in[N_1]}$ for this particular $j$. Note that $N_2/8 \leq m_j \leq N_2$ by the property that $\rho(\mbf{\Delta}_0, \mbf{\Delta}_j)\geq N_2/8$. Moreover, by the design of $f$, there are a total of $m_jN_1$ trapezoids in the union of the intervals $[x^*_i - h_2, x^*_i + h_2]$ for those $i$ such that $\Delta_{j,i} = 1$. Define $\td{\P}_j$ as the joint distribution of $\{(X_i,Y_i)\}_{i=1}^n$ under $\P_j$ but with $\{r_{i,k}\}_{i\in[m_j], k\in[N_1]}$ replaced by a sequence of i.i.d. standard normal variables denoted as $\{\td{r}_{i,k}\}_{i\in[m_j], k\in[N_1]}$. By definition, it holds that 
\begin{align*}
K(\P_j, \P_0; \Omega_n) &= \int_{\Omega_n} p_j\log\frac{p_j}{p_0}\\
& = \int_{\Omega_n} p_j\log\frac{p_j}{\td{p}_j} + \int_{\Omega_n} \td{p}_j\log\frac{\td{p}_j}{p_0} + \int_{\Omega_n} (p_j - \td{p}_j) \log\frac{\td{p}_j}{p_0}\\
&= K(\P_j, \td{\P}_j;\Omega_n) + K(\td{\P}_j, \P_0; \Omega_n) + \int_{\Omega_n} (p_j - \td{p}_j) \log\frac{\td{p}_j}{p_0}
\end{align*}
for density functions with respect to some common dominating measure. Next, we will show respectively that, by matching the moments of $\{r_{i,k}\}_{i\in[m_j],k\in[N_1]}$ and the standard Gaussian random variable up to some sufficiently high order, it holds that
\begin{align*}
K(\P_j, \td{\P}_j; \Omega_n)\lesssim 1,\quad K(\td{\P}_j, \P_0;\Omega_n)\lesssim \log(M),\quad \text{ and }\quad \int_{\Omega_n} (p_j - \td{p}_j) \log\parr*{\td{p}_j/p_0}\lesssim 1.
\end{align*}

First note that, by denoting $\mbf{x} \define (x_1,\ldots,x_n)$, $d\mbf{x} \define dx_1\ldots dx_n$ and similarly for $\mbf{y}$ and $d\mbf{y}$, we have
\begin{align*}
K(\P_j, \td{\P}_j; \Omega_n) &= \int \mathbbm{1}\bbrace*{\Omega_n}p(\mbf{x})d\mbf{x}\int \log\parr*{\frac{d\P_j(\mbf{y}\mid \mbf{x})}{d\td{\P}_j(\mbf{y}\mid \mbf{x})}}\P_j(d\mbf{y}\mid \mbf{x})\\
&= \E\bbrace*{\mathbbm{1}\bbrace*{\Omega_n}K\parr*{\P_j(\mbf{y}\mid \mbf{x}), \td{\P}_j(\mbf{y}\mid \mbf{x})}}\\
&\leq \E\bbrace*{\mathbbm{1}\bbrace*{\Omega_n}\chi^2\parr*{\P_j(\mbf{y}\mid \mbf{x}), \td{\P}_j(\mbf{y}\mid \mbf{x})}}\\
&\define \chi^2\parr*{\P_j, \td{\P}_j; \Omega_n},
\end{align*}
where the inequality follows by Lemma 2.7 in \cite{tsybakov2009introduction}. Therefore the first inequality $K(\P_j, \td{\P}_j; \Omega_n)\lesssim 1$ holds by Lemma \ref{lemma:chi_match}. 

Next we prove $K(\td{\P}_j, \P_0;\Omega_n)\lesssim \log(M)$. Again, it suffices to prove that for any realization of $\{X_i\}_{i=1}^n$ in $\Omega_n$, it holds that $K(\td{\P}_j(\mbf{y}\mid\mbf{x}), \P_0(\mbf{y}\mid\mbf{x})) \lesssim \log(M) \asymp N_2$. Note that under $\P_0$, $\mbf{y}\mid \mbf{x}\sim\cal{N}_n(0, \mbf{\Sigma_0})$, with $\mbf{\Sigma_0} = \mathbf{I}_n$. Recall that the location index sequence $\{b_i\}_{i=1}^n = \{(k_i,\ell_i)\}_{i=1}^n$ takes the value $(k_i,\ell_i) = (k,\ell)$ if $X_i\in[x^*_k - h_2 + (4\ell-3)h_1, x^*_k - h_2 + (4\ell-1) h_1]$. Then, due to the symmetry of $\{r_{i,k}\}_{i\in[m_j],k\in[N_1]}$, the design of the nonparametric component $f$, and the fact that $K(\cdot)$ takes value $1$ on $[-1, 1]$, it holds that under $\td{\P}_j$, $\mbf{y}\mid \mbf{x}\sim \cal{N}_n(0, \mbf{\Sigma_1})$, with 
\begin{align*}
(\mbf{\Sigma_1})_{ii} = 1 - \theta_n^2\mathbbm{1}\bbrace*{\Delta_{j,k_i} = 1} +  h_1^{2\alpha}\mathbbm{1}\bbrace*{\Delta_{j,k_i} = 1} = 1
\end{align*}
and $(\mbf{\Sigma_1})_{i_1i_2} = h_1^{2\alpha}\mathbbm{1}\bbrace*{\Delta_{j,k_{i_1}} = 1, (k_{i_1},\ell_{i_1}) = (k_{i_2},\ell_{i_2})}$ for $i_1\neq i_2$. Define 
\begin{align*}
N_0\define \sum_{i_1\neq i_2}\mathbbm{1}\bbrace*{\Delta_{j,k_{i_1}} = 1, (k_{i_1},\ell_{i_1}) = (k_{i_2},\ell_{i_2})}.
\end{align*}
Then, by the proof of Lemma 3.6 in \cite{gao2016rate}, it holds that
\begin{align*}
K(\td{\P}_j(\mbf{y}\mid\mbf{x}), \P_0(\mbf{y}\mid \mbf{x})) \leq Ch_1^{4\alpha}N_0 = C\theta_n^4N_0.
\end{align*}
Note that $N_0$ is a random variable that depends on $\{X_i\}_{i=1}^n$, and by direct calculation we have 
\begin{align*}
\E(N_0) \leq n^2m_jh_2h_1 \asymp n^2N_2h_1h_2.
\end{align*}
Putting together the pieces, we have $K(\td{\P}_j, \P_0; \Omega_n) \leq \theta_n^4n^2h_1h_2N_2 \lesssim N_2$. This completes the proof of the second inequality.

Lastly, we show that $\int_{\Omega_n} (p_j - \td{p}_j) \log\parr*{\td{p}_j/p_0}\lesssim 1$. First note that
\begin{align*}
&\ms\int_{\Omega_n} (p_j - \td{p}_j) \log\parr*{\td{p}_j/p_0}\\
&\leq \int_{\Omega_n} \abs*{p_j - \td{p}_j}\abs*{\log(\td{p}_j/p_0)}\\
&\leq \parr*{\int_{\Omega_n} \abs*{p_j - \td{p}_j}}^{1/2}\parr*{\int \abs*{p_j - \td{p}_j} \log^2(\td{p}_j/p_0)}^{1/2}\\
&\leq \parr*{\int_{\Omega_n} \abs*{p_j - \td{p}_j}}^{1/2}\bbrace*{\parr*{\int p_j \log^2(\td{p}_j/p_0)}^{1/2} + \parr*{\int \td{p}_j \log^2(\td{p}_j/p_0)}^{1/2}}.
\end{align*}
By Lemmas \ref{lemma:cond_pinsker} and \ref{lemma:chi_match}, by matching moments up to some sufficiently high order, the first term above can be upper bounded (up to some constant) by $n^{-\eta}$ for any $\eta > 0$, therefore it suffices to show that both $\int p_j \log^2(\td{p}_j/p_0)$ and $\int \td{p}_j \log^2(\td{p}_j/p_0)$ can be upper bounded by some polynomial of $n$ of fixed order. Consider any realization of $\{X_i\}_{i=1}^n$ in $\Omega_n$, and assume that based on their location indices $\{b_i\}_{i=1}^n$, the $n$ data points are partitioned into $L_1 + L_2$ clusters with cardinality $s_\ell$ such that the $X_i$'s in the same cluster have the same value $b_i$. Moreover, for each data point in the first $L_1$ clusters, the location index $b_i = (k_i,\ell_i)$ satisfies that $\Delta_{j,k_i} = 1$ while for the data points in the last $L_2$ clusters, it holds that $\Delta_{j,k_i} = 0$. Apparently, we have the relations $1\leq L_1 + L_2 \leq n$, $\sum_{\ell=1}^{L_1+L_2}s_\ell = n$ and $1\leq s_\ell \leq K$ for $\ell\in[L_1 + L_2]$. Moreover, denoting $\td{\P}_{j,\pi_\ell}$ and $\P_{0,\pi_\ell}$ (resp. $\td{p}_{j,\pi_\ell}$ and $p_{0,\pi_\ell}$) for each $\ell\in[L_1 + L_2]$ as the joint distribution (resp. density) of those $Y_i$'s in the $\ell$th cluster conditioning on the given realization $\{X_i\}_{i=1}^n$ under $\td{\P}_j$ and $\P_0$, we have 
\begin{align*}
\td{p}_j = \prod_{\ell=1}^{L_1 + L_2} \td{p}_{j,\pi_\ell} \quad \text{ and }\quad  p_0 = \prod_{\ell=1}^{L_1 + L_2} p_{0,\pi_\ell}.
\end{align*}
Moreover, for any $L_1 + 1\leq \ell \leq L_1 + L_2$, it holds that $\td{p}_{j,\pi_\ell} = p_{0,\pi_\ell}$, therefore it holds that
\begin{align*}
\log^2\parr*{\frac{\td{p}_j}{p_0}} = \parr*{\sum_{\ell=1}^{L_1} \log(\td{p}_{j,\pi_\ell}) - \log\parr*{p_{0,\pi_\ell}}}^2 \lesssim n\sum_{\ell=1}^{L_1}\log^2(\td{p}_{j,\pi_\ell}/p_{0,\pi_\ell}).
\end{align*}
Now consider any $\ell\in[L_1]$ and assume that $s_\ell = d$ for some positive integer $d$. Without loss of generality, assume the $y_i$'s in this cluster are $\{y_1,\ldots,y_d\}$, and they take the form $Y_i = h_1^\alpha \td{r}_{1,1} + (1-h_2^\beta)^{1/2}\varepsilon_i = \theta_n\td{r}_{1,1} + (1-h_2^\beta)^{1/2}\varepsilon_i$ under $\td{\P}_j$ and $Y_i = \varepsilon_i$ under $\P_0$. Define $\sigma^2 \define (1 - h_2^\beta)$ which is positive for large enough $n$. Then, the previous equalities imply that
\begin{align*}
p_{0,\pi_\ell} = \varphi(y_1)\ldots\varphi(y_d) = (2\pi)^{-d/2}\exp\parr*{-\frac{\sum_{i=1}^d y_i^2}{2}}
\end{align*}
and
\begin{align*}
\td{p}_{j,\pi_\ell} &= \int \frac{1}{\sigma}\varphi\parr*{\frac{y_1-\theta_nv}{\sigma}}\ldots\frac{1}{\sigma}\varphi\parr*{\frac{y_{d}-\theta_nv}{\sigma}}\varphi(v)dv\\
&= (2\pi)^{-d/2}\frac{1}{\sigma^{d-1}(d\theta_n^2 + \sigma^2)^{1/2}}\exp\parr*{-\frac{\sum_{i=1}^d y_i^2}{2\sigma^2} + \frac{(\sum_{i=1}^d y_i\theta_n)^2}{2\sigma^2(d\theta_n^2 + \sigma^2)^2}}.
\end{align*} 
Putting together the pieces, we obtain that
\begin{align*}
\log^2(\td{p}_{j,\pi_\ell}/p_{0,\pi_\ell}) \lesssim d^2\log^2(1/\sigma) + \parr*{\sum_{i=1}^d y_i^2}^2 + \parr*{\sum_{i=1}^d y_i\theta_n}^4 \lesssim 1 + \sum_{i=1}^d y_i^4.
\end{align*}
Therefore we have
\begin{align*}
\int p_j \log^2(\td{p}_j/p_0) \lesssim n\sum_{\ell=1}^{L_1} \int p_j(1 + \sum_{i=1}^d y_i^4) \lesssim n\sum_{\ell=1}^{L_1}\sum_{i=1}^d \int y_i^4\P_j(dy_i) \lesssim n^2,
\end{align*}
where we use the fact that $L_1 \leq n$. Similarly, we have $\int \td{p}_j \log^2(\td{p}_j/p_0)\lesssim n^2$. The proof is thus complete.
\end{proof}

\subsection{Supporting lemmas}

\begin{lemma}
\label{lemma:v_bias}
Suppose $f\in\Lambda_{\alpha}(C_{\cal{F}})$, $V\in\Lambda_{\beta}(C_{\cal{V}})$, $\sigma^2\leq C_\sigma$ for some fixed constants $C_{\cal{F}},C_{\cal{V}},C_\sigma$, and the joint distribution of $(X,\varepsilon)$ belongs to $\cal{P}_{\tiny{\text{vf}},(X,\varepsilon)}$. Then, with $\Omega_n$ defined in the proof of Theorem \ref{thm:v_function_upper}, it holds that
\begin{align*}
\abs*{\E\bbrace*{\parr*{\widehat{V}(x^*) - V(x^*)}\indc}} \leq C\parr*{h_1^{2(\alpha\wedge 1)} + h_2^{\beta} + h_1^{\beta\wedge 1} + \tau_n}
\end{align*}
for some fixed positive constant $C$ that only depends on $\alpha,\beta,C_{\cal{F}},C_{\cal{V}},C_\sigma,C_0,C_\varepsilon$.
\end{lemma}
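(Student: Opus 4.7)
The plan is to condition on $\{X_i\}_{i=1}^n$ (which also determines $\indc$), compute the conditional mean of $\widehat{V}_{\tiny{\LP}}(x^*)$, and then split the resulting bias into three interpretable pieces. Since $\E[D_{ij}\mid X_i,X_j]=\tfrac{1}{2}(f(X_i)-f(X_j))^2+\tfrac{1}{2}(V(X_i)+V(X_j))$, and since $\mathbbm{1}_{\Omega_n}$ is $\sigma(\{X_i\}_{i=1}^n)$-measurable, the quantity to bound becomes
\begin{align*}
\E\bbrace*{\indc\pth{\sum_{i<j}\td{w}_{ij}\qth{\tfrac{1}{2}(f(X_i)-f(X_j))^2+\tfrac{1}{2}(V(X_i)+V(X_j))}-V(x^*)}}.
\end{align*}
Inserting and subtracting $V(X_{ij})$ and $V(x^*)$ inside the bracket yields a clean decomposition $T_1+T_2+T_3+T_4$ with $T_1\define\tfrac{1}{2}\sum_{i<j}\td{w}_{ij}(f(X_i)-f(X_j))^2$, $T_2\define\sum_{i<j}\td{w}_{ij}\qth{\tfrac{1}{2}(V(X_i)+V(X_j))-V(X_{ij})}$, $T_3\define\sum_{i<j}\td{w}_{ij}(V(X_{ij})-V(x^*))$, and $T_4\define V(x^*)(\sum_{i<j}\td{w}_{ij}-1)$.

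Each of $T_1,T_2,T_3$ will be controlled by combining a Hölder bound on the support of $K_{ij}$ with a uniform bound on $\sum_{i<j}|\td{w}_{ij}|$. For $T_1$, the factor $K_{h_1}(X_i-X_j)$ forces $|X_i-X_j|\le h_1$ on the support of $\td{w}_{ij}$, so $f\in\Lambda_{\alpha,I}(C_f)$ gives $|f(X_i)-f(X_j)|^2\le Ch_1^{2(\alpha\wedge 1)}$ and hence $|T_1|\le Ch_1^{2(\alpha\wedge 1)}\sum_{i<j}|\td{w}_{ij}|$. For $T_2$, the same kernel restriction together with $V\in\Lambda_{\beta,I}(C_V)$ yields $|V(X_i)-V(X_{ij})|\vee|V(X_j)-V(X_{ij})|\le C h_1^{\beta\wedge 1}$ (the crude absolute-value bound, without exploiting the symmetric cancellation), so $|T_2|\le Ch_1^{\beta\wedge 1}\sum_{i<j}|\td{w}_{ij}|$. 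For $T_3$, Taylor-expand $V$ around $x^*$ to order $\ell=\floor{\beta}$; the reproducing property \eqref{eq:rep_lp} annihilates the polynomial part ($k=1,\ldots,\ell$) and the Hölder remainder $|V(X_{ij})-\sum_{k=0}^{\ell}V^{(k)}(x^*)(X_{ij}-x^*)^k/k!|\le C|X_{ij}-x^*|^\beta\le Ch_2^\beta$ (from the $K_{h_2}(X_{ij}-x^*)$ restriction) gives $|T_3|\le Ch_2^\beta\sum_{i<j}|\td{w}_{ij}|$. For $T_4$, using $\sum_{i<j}w_{ij}=|\mathbf{B}_n|$ we have $\sum_{i<j}\td{w}_{ij}=|\mathbf{B}_n|/(|\mathbf{B}_n|+\tau_n)$, so $|T_4|\le V(x^*)\tau_n/(|\mathbf{B}_n|+\tau_n)$, which is $O(\tau_n)$ on $\Omega_n$.

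The step that will demand the most care is uniformly controlling $\indc\sum_{i<j}|\td{w}_{ij}|$ in expectation; once this is done, all four pieces combine to give the stated bound. On $\Omega_n$, the inequality $\nm{\mathbf{B}_n-\mathbf{B}}\le 1/(2\nm{\mathbf{B}^{-1}})$ together with the deterministic lower bound $\eigmin(\mathbf{B})\ge c>0$ established in the proof of Theorem \ref{thm:v_function_upper} yields both $\eigmin(\mathbf{B}_n)\ge c/2$ and $\nm{\mathbf{B}_n}\le C$, from which $\nm{\mathbf{B}_n^*}\le C$ and $|\mathbf{B}_n|\ge c$ (so the denominator in $\td{w}_{ij}$ is bounded below by a fixed constant). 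Since $\nm{\mbf{q}(u)}\lesssim 1$ for $|u|\le 1$, the support of $K_{h_2}(X_{ij}-x^*)$ forces the polynomial argument $(X_{ij}-x^*)/h_2$ into $[-1,1]$, and so $|\td{w}_{ij}|\le C\num K_{ij}$ on $\Omega_n$ (deterministically). Taking expectations, $\E[\num\sum_{i<j}K_{ij}]=\E K_{12}$ is bounded by $C_0^2$ via the usual change of variables $u=(X_1+X_2)/2-x^*$, $v=X_1-X_2$ combined with $\sup p_X\le C_0$. Assembling $T_1,\ldots,T_4$ with this bound yields the claimed inequality with $C$ depending only on $\alpha,\beta,C_f,C_V,C_\sigma,C_0,C_\varepsilon$ (and implicitly on the $c_0$ entering $\eigmin(\mathbf{B})\ge c$).
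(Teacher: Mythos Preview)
Your proof is correct and follows essentially the same route as the paper's: the same conditioning on $\{X_i\}_{i=1}^n$, the same four-way decomposition (mean-difference term, $V(X_i)-V(X_{ij})$ term, $V(X_{ij})-V(x^*)$ term via the reproducing property, and the $\tau_n$ correction), and the same key bound $|\td w_{ij}|\indc\le C\num K_{ij}$ reducing everything to $\E K_{12}\le C$. The only cosmetic difference is that the paper bounds $(|\mathbf{B}_n|+\tau_n)^{-1}\|\mathbf{B}_n^*\|$ via the identity $\mathbf{B}_n^*=|\mathbf{B}_n|\mathbf{B}_n^{-1}$ on $\Omega_n$ (so it needs only $\eigmin(\mathbf{B}_n)\ge c$), whereas you bound $\|\mathbf{B}_n^*\|$ and $|\mathbf{B}_n|$ separately, which additionally requires $\|\mathbf{B}_n\|\le C$ on $\Omega_n$; that extra fact is easily available since $\|\mathbf{B}\|\le C\,\E K_{12}$, so both arguments are valid.
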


\begin{proof}
We adopt the notation $\ell,\mbf{q}(\cdot), \mathbf{B}_n, X_{ij}$, and $K_{ij}$ from the proof of Theorem \ref{thm:v_function_upper}. Also recall the definition of $\mathbf{B}_n^*$, $D_{ij}$, $w_{ij}$, and $\td{w}_{ij}$ from the definition of $\widehat{V}(x^*)$. Writing $\E_\varepsilon$ as the conditional expectation given $\{X_i\}_{i=1}^n$, it holds that
\begin{align*}
\abs*{\E\bbrace*{\parr*{\widehat{V}(x^*) - V(x^*)}\indc}} = \abs*{\E\bbrace*{\indc\E_\varepsilon\parr*{\widehat{V}(x^*) - V(x^*)}}}.
\end{align*}
Then $\widehat{V}(x^*) = \sum_{i<j}\td{w}_{ij}D_{ij}$ and
\begin{align*}
&\ms\abs*{\E\bbrace*{\indc\E_\varepsilon\parr*{\widehat{V}(x^*) - V(x^*)}}}\\
& \leq \abs*{\E\bbrace*{\indc \sum_{i<j}\td{w}_{ij}\parr*{\E_\varepsilon D_{ij} - V(x^*)}} + V(x^*)\tau_n\abs*{\E\bbrace*{\indc (|\mathbf{B}_n| + \tau_n)^{-1}}}}.
\end{align*}
By definition, it holds on $\Omega_n$ that $\|\mathbf{B}_n - \mathbf{B}\| \leq \lambda_{\tiny{\min}}(\mathbf{B})/2$, where $\lambda_{\tiny{\min}}(\mathbf{B})$ is the smallest eigenvalue of $\mathbf{B}$. Thus by Weyl's inequality, it holds that $\lambda_{\tiny{\min}}(\mathbf{B}_n)\geq \lambda_{\tiny{\min}}(\mathbf{B})/2 \geq c$ for some fixed positive constant $c$ due to the invertibility of $\mathbf{B}$ as proved in Theorem \ref{thm:v_function_upper} under Condition (c) in $\cal{P}_{\tiny{\text{vf}},(X,\varepsilon)}$. Then, using the fact that $|\mathbf{B}_n| \geq \lambda^{\ell+1}_{\tiny{\min}}(\mathbf{B}_n)$ and the boundedness of $V(\cdot)$, it holds that
\begin{align*}
\abs*{\E\bbrace*{\indc\E_\varepsilon\parr*{\widehat{V}(x^*) - V(x^*)}}} \leq \abs*{\E\bbrace*{\indc \sum_{i<j}\td{w}_{ij}\parr*{\E_\varepsilon D_{ij} - V(x^*)}}} + C\tau_n.
\end{align*}
Direct calculation shows that $\E_\varepsilon D_{ij} - V(x^*)= (f(X_i) - f(X_j))^2/2 + (V(X_i) - V(x^*))/2 + (V(X_j) - V(x^*))/2$. Due to symmetry, we only need to control the first two terms. For the first term, using the fact $\mathbf{B}_n^* = |\mathbf{B}_n|\mathbf{B}_n^{-1}$ on $\Omega_n$ ($\mathbf{B}_n$ invertible on $\Omega_n$), we have
\begin{align*}
&\ms\E\bbrace*{\indc\sum_{i<j} (|\mathbf{B}_n| +\tau_n)^{-1}w_{ij}(f(X_i) - f(X_j))^2}\\
&= \E\bbrace*{\indc {n\choose 2}^{-1}\sum_{i<j} (|\mathbf{B}_n|+\tau_n)^{-1}\mbf{q}^\top(0)\mathbf{B}_n^*\mbf{q}\parr*{\frac{X_{ij}-x^*}{h_2}}K_{ij}(f(X_i)-f(X_j))^2}\\
&= \E\bbrace*{(|\mathbf{B}_n|+\tau_n)^{-1}\mbf{q}^\top(0)\mathbf{B}_n^*\mbf{q}\parr*{\frac{X_{ij} - x^*}{h_2}}K_{ij}(f(X_i)-f(X_j))^2\indc}\\
&\lesssim h_1^{2(\alpha\wedge 1)}\E\bbrace*{(|\mathbf{B}_n|+\tau_n)^{-1}\|\mathbf{B}_n^*\|K_{ij}\indc} = h_1^{2(\alpha\wedge1)}\E\bbrace*{\frac{|\mathbf{B}_n|}{|\mathbf{B}_n| + \tau_n}\|\mathbf{B}_n^{-1}\|K_{ij}\indc}\\
&\lesssim Ch_1^{2(\alpha\wedge 1)}\E K_{ij} \lesssim Ch_1^{2(\alpha\wedge 1)}.
\end{align*}
Here, the second inequality follows from the fact that $\|\mathbf{B}_n^{-1}\| = \lambda_{\tiny{\min}}(\mathbf{B}_n)^{-1}$ is bounded by some fixed constant on $\Omega_n$, and in the last inequality we use the fact that $\E K_{ij}$ is bounded Condition (b) in $\cal{P}_{\tiny{\text{vf}},(X,\varepsilon)}$. For the second term, it holds that
\begin{align*}
&\ms\abs*{\E\parr{\indc\sum_{i<j}\td{w}_{ij}(V(X_i) - V(x^*))}}\\
& \leq \abs*{\E\parr{\indc{\sum_{i<j}\td{w}_{ij}(V(X_{ij}) - V(x^*))}}} + \abs*{\E\parr{\indc\sum_{i<j}\td{w}_{ij}(V(X_i) - V(X_{ij}))}}\define I + II.
\end{align*}
For $I$, using the H\"older property of $V(\cdot)$ and the reproducing property of local polynomial estimators (see \eqref{eq:rep_lp} in the main paper), that is,
\begin{align*}
\sum_{i<j} w_{ij}(X_{ij} - x^*)^k = \sum_{i<j} \td{w}_{ij}(X_{ij} - x^*)^k = 0, \quad k=1,2,\ldots, \ell,
\end{align*}
it holds that
\begin{align*}
&\ms\abs*{\E\bbrace*{\indc{\sum_{i<j}\td{w}_{ij}(V(X_{ij}) - V(x^*))}}}\\
&= \abs*{\E\bbrace*{\indc\sum_{i<j} \td{w}_{ij}\parr*{\sum_{k=1}^{\ell-1} \frac{V^{(k)}(x^*)}{k!}(X_{ij} - x^*)^k + \frac{V^{(\ell)}(x^* + \tau(X_{ij} - x^*))}{\ell!}(X_{ij} - x^*)^\ell}}}\\
&= \abs*{\E\bbrace*{\indc\sum_{i<j}\td{w}_{ij}\frac{(X_{ij}-x^*)^\ell}{\ell!}\parr*{V^{(\ell)}(x^* + \tau(X_{ij} - x^*)) - V^{(\ell)}(x^*)}}}\\
&= \abs*{\E\bbrace*{\indc (|\mathbf{B}_n|+\tau_n)^{-1}\mbf{q}^\top(0)\mathbf{B}_n^*\mbf{q}\parr*{\frac{X_{ij}-x^*}{h_2}}K_{ij}\frac{(X_{ij}-x^*)^\ell}{\ell!}\parr*{V^{(\ell)}(x^* + \tau(X_{ij} - x^*)) - V^{(\ell)}(x^*)}}}\\
&\lesssim \E\bbrace*{\indc |X_{ij} -x^*|^\beta (|\mathbf{B}_n|+\tau_n)^{-1}\|\mathbf{B}_n^*\|K_{ij}}\\
&\lesssim h_2^\beta\E\parr*{\indc \|\mathbf{B}_n^{-1}\|K_{ij}} \lesssim h_2^\beta, 
\end{align*}
where in the second line we use the Taylor expansion of $V(\cdot)$ around $x^*$ with some $\tau\in[0,1]$, in the fifth line we use the fact that $\mbf{q}((X_{ij}-x^*)/h_2)$ has bounded $\ell_2$ norm due to the compact support of $K(\cdot)$, and in the last inequality we use again the fact that $\|\mathbf{B}_n^{-1}\|$ is bounded by some fixed constant on $\Omega_n$. With a similar calculation and the fact that $\beta$-smooth functions are Lipschitz for $\beta \geq 1$, we have $II \lesssim h_1^{\beta\wedge 1}$. Therefore, putting together the pieces, we obtain
\begin{align*}
\abs*{\E\bbrace*{\parr*{\widehat{V}(x^*) - V(x^*)}\indc}} \leq C\parr*{h_1^{2(\alpha\wedge 1)} + h_2^{\beta} + h_1^{\beta\wedge 1} + \tau_n}.
\end{align*}
This completes the proof.
\end{proof}

\begin{lemma}
\label{lemma:v_variance}
Suppose $f\in\Lambda_{\alpha}(C_{\cal{F}})$, $V\in\Lambda_{\beta}(C_{\cal{V}})$, $\sigma^2\leq C_\sigma$ for some fixed constants $C_{\cal{F}},C_{\cal{V}},C_\sigma$, and the joint distribution of $(X,\varepsilon)$ belongs to $\cal{P}_{\tiny{\text{vf}},(X,\varepsilon)}$. Assume that $nh_2\rightarrow\infty$ and $n^2h_1h_2\rightarrow\infty$ as $n\rightarrow\infty$. Then, with $\Omega_n$ defined in the proof of Theorem \ref{thm:v_function_upper}, it holds that
\begin{align*}
\var\parr*{\widehat{V}(x^*)\indc} \leq C\parr*{n^{-1}h_2^{-1} + n^{-2}(h_1h_2)^{-1} + \tau_n^2}
\end{align*}
for some fixed positive constant $C$ that only depends on $\alpha,\beta,C_{\cal{F}},C_{\cal{V}},C_\sigma,C_0,C_\varepsilon$.
\end{lemma}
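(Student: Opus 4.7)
The plan is to linearize the estimator around the deterministic limit of $\mathbf{B}_n$, so that its variance is controlled via a standard U-statistic calculation, and then to absorb the linearization error using the exponential concentration of $\mathbf{B}_n$ on $\Omega_n$.

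\textbf{Step 1: Reduction to a true U-statistic.} Set $\mathbf{B}\define\E\mathbf{B}_n$. As shown in the proof of Theorem~\ref{thm:v_function_upper}, Condition~(c) of $\cal{P}_{\tiny{\text{vf}},(X,\varepsilon)}$ gives $\lambda_{\min}(\mathbf{B})\geq c>0$, so on $\Omega_n$ the same is true (up to constants) of $\mathbf{B}_n$, and $(|\mathbf{B}_n|+\tau_n)^{-1}$ is bounded by a fixed constant. Define the surrogate
\begin{align*}
\widehat{V}_*(x^*)\define {n\choose 2}^{-1}\sum_{i<j} D_{ij}(|\mathbf{B}|+\tau_n)^{-1}\mbf{q}^\top(0)\mathbf{B}^*\mbf{q}\parr*{\frac{X_{ij}-x^*}{h_2}}K_{ij},
\end{align*}
which is a genuine U-statistic with pair-kernel $\tilde h(\cdot,\cdot)$. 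I would then use the trivial bound
\begin{align*}
\var(\widehat{V}(x^*)\indc)\leq 2\var(\widehat{V}_*(x^*))+2\E\parr*{\widehat{V}(x^*)\indc-\widehat{V}_*(x^*)}^2,
\end{align*}
so that it suffices to control these two terms separately.

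\textbf{Step 2: Hoeffding decomposition for $\widehat V_*$.} Write $\tilde h_1(X_1,Y_1)\define \E[\tilde h(X_1,Y_1,X_2,Y_2)\mid X_1,Y_1]$ and apply the standard variance identity
\begin{align*}
\var(\widehat{V}_*)=\frac{2(n-2)}{n(n-1)}\var(\tilde h_1)+\frac{2}{n(n-1)}\var(\tilde h).
\end{align*}
For $\var(\tilde h)$, I would change variables $(u,v)\mapsto(s,t)=((u-v)/h_1,((u+v)/2-x^*)/h_2)$ with Jacobian $h_1h_2$ to obtain $\E K_{ij}^2\lesssim (h_1h_2)^{-1}$; using Conditions~(b),(d) in $\cal{P}_{\tiny{\text{vf}},(X,\varepsilon)}$ together with boundedness of $f,V$ and of $\mathbf{q}(\cdot)$ on the support of $K$, this yields $\var(\tilde h)\lesssim (h_1h_2)^{-1}$, contributing $n^{-2}(h_1h_2)^{-1}$. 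For $\var(\tilde h_1)$, the inner kernel $K_{h_1}$ integrates to $1$ while $K_{h_2}((X_{12}-x^*)/h_2)/h_2$ contributes a factor $1/h_2$ on the event $|X_1-x^*|\lesssim h_2$ (which has probability $\lesssim h_2$ under Condition~(b)), giving $\E\tilde h_1^2\lesssim h_2^{-2}\cdot h_2=h_2^{-1}$ and contributing $n^{-1}h_2^{-1}$.

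\textbf{Step 3: Controlling the linearization error.} On $\Omega_n$, using $\mathbf{B}_n^*=|\mathbf{B}_n|\mathbf{B}_n^{-1}$, one checks
\begin{align*}
\nm*{(|\mathbf{B}_n|+\tau_n)^{-1}\mathbf{B}_n^*-(|\mathbf{B}|+\tau_n)^{-1}\mathbf{B}^*}\lesssim \|\mathbf{B}_n-\mathbf{B}\|+\tau_n,
\end{align*}
so $(\widehat{V}(x^*)-\widehat{V}_*(x^*))\indc$ is bounded in absolute value by $C(\|\mathbf{B}_n-\mathbf{B}\|+\tau_n)\cdot T_n$ with $T_n\define {n\choose 2}^{-1}\sum_{i<j}|D_{ij}|\cdot\|\mathbf{q}((X_{ij}-x^*)/h_2)\|\,K_{ij}$, which satisfies $\E T_n^p\lesssim 1$ for any fixed $p$ by the computation already used for $\var(\tilde h)$ and the fourth-moment hypothesis on $\varepsilon$. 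Cauchy--Schwarz then gives $\E[(\widehat V-\widehat V_*)^2\indc]\lesssim (\E\|\mathbf{B}_n-\mathbf{B}\|^2+\tau_n^2)^{1/2}(\E T_n^4)^{1/2}$, and an entry-wise application of Lemma~\ref{lemma:concentrate_U2} (exactly as in Lemma~\ref{lemma:B_concentration}) supplies $\E\|\mathbf{B}_n-\mathbf{B}\|^2\lesssim n^{-1}+n^{-2}(h_1h_2)^{-1}$. Finally, the complement term $\E\widehat V_*^2(x^*)\indcc$ is negligible: $\widehat V_*$ is a polynomial-in-$n$-bounded random variable and $\P(\Omega_n^c)$ decays exponentially by the same concentration lemma. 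Combining the three contributions yields the claimed bound.

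\textbf{Main obstacle.} The delicate part is Step~2: the Hoeffding decomposition requires tracking the localization induced by the \emph{two} bandwidths $h_1$ and $h_2$ simultaneously, and in particular showing that after integrating out $(X_2,Y_2)$ the resulting projection $\tilde h_1$ only feels the slow bandwidth $h_2$. The matrix factor $\mathbf{q}^\top(0)\mathbf{B}^*\mathbf{q}((X_{ij}-x^*)/h_2)$ must be shown to stay bounded uniformly on the effective support of $K_{ij}$, which relies on the lower bound on $\lambda_{\min}(\mathbf{B})$ established earlier via Condition~(c).
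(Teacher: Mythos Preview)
Your plan is essentially the paper's own argument: linearize $\widehat V(x^*)$ around $\mathbf{B}=\E\mathbf{B}_n$, compute the U-statistic variance of the deterministic-weight surrogate (the paper packages this as $Z=\mbf{q}^\top(0)\mathbf{B}^{-1}\mbf{U}_n$ in Lemma~\ref{lemma:v_term_variance}), and control the remainder via Cauchy--Schwarz and the concentration of $\mathbf{B}_n$ (Lemma~\ref{lemma:B_concentration}). Step~2 is fine, and the paper carries out exactly the two-bandwidth calculation you describe for the ``three shared indices'' and ``two shared indices'' cases.

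There is, however, a slip in Step~3 that would not close. From $|(\widehat V-\widehat V_*)\indc|\lesssim(\|\mathbf{B}_n-\mathbf{B}\|+\tau_n)T_n$, squaring and applying Cauchy--Schwarz gives
\[
\E[(\widehat V-\widehat V_*)^2\indc]\lesssim \bigl(\E\|\mathbf{B}_n-\mathbf{B}\|^4\bigr)^{1/2}\bigl(\E T_n^4\bigr)^{1/2}+\tau_n^2,
\]
not $(\E\|\mathbf{B}_n-\mathbf{B}\|^2)^{1/2}$ as you wrote. The second moment would only deliver $n^{-1/2}h_2^{-1/2}$, far too slow. The paper uses precisely $(\E\|\mathbf{B}_n-\mathbf{B}\|^4)^{1/2}$, and Lemma~\ref{lemma:B_concentration} together with Lemma~\ref{lemma:tail_expectation} yields $(\E\|\mathbf{B}_n-\mathbf{B}\|^4)^{1/2}\lesssim n^{-1}h_2^{-1}+n^{-2}(h_1h_2)^{-1}$, which is the needed rate. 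Note also that the linear-order term in the concentration of $\mathbf{B}_n$ is $n^{-1/2}h_2^{-1/2}$, not $n^{-1/2}$: the projection $g_1$ in Lemma~\ref{lemma:B_concentration} satisfies $\nu_1^2\lesssim h_2^{-1}$, so your claimed bound $\E\|\mathbf{B}_n-\mathbf{B}\|^2\lesssim n^{-1}+n^{-2}(h_1h_2)^{-1}$ is missing a factor $h_2^{-1}$ in the first term (harmless for the final statement, but worth tracking). With these two corrections your argument goes through and coincides with the paper's.
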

\begin{proof}
We adopt the notation $\ell,\mbf{q}(\cdot), \mathbf{B}_n, X_{ij},$ and $K_{ij}$ from the proof of Theorem \ref{thm:v_function_upper}. Also recall the definition of $\mathbf{B}_n^*$ and $D_{ij}$ from the definition of $\widehat{V}(x^*)$. Define the vector-valued U-statistic 
\begin{align*}
\mbf{U}_n\define {n\choose 2}^{-1} \mbf{g}(X_i,X_j) \define {n\choose 2}^{-1}\sum_{i<j} D_{ij}\mbf{q}\parr*{\frac{X_{ij}-x^*}{h_2}}K_{ij}
\end{align*}
and let $\mbf{\theta} \define \E \mbf{g}(X_i,X_j)\in\RR^{\ell+1}$. Then, for each $j\in[\ell+1]$, writing $\E_\varepsilon$ as the conditional expectation given $\{X_i\}_{i=1}^n$, we have
\begin{align*}
\theta_j &= \E\parr*{D_{ij}\frac{((X_{ij}-x^*)/h_2)^j}{j!}K_{ij}} = \E\parr*{\frac{((X_{ij}-x^*)/h_2)^j}{j!}K_{ij}\E_\varepsilon D_{ij}} \\
&= \E\parr*{\frac{((X_{ij}-x^*)/h_2)^j}{2j!}K_{ij}((f(X_i) -f(X_j))^2 + V(X_i) + V(X_j))} \lesssim \E K_{ij} \lesssim 1,
\end{align*}
where we have used Condition (b) in $\cal{P}_{\tiny{\text{vf}},(X,\varepsilon)}$, boundedness of $V(\cdot)$ and compact support of $K(\cdot)$. Therefore we have $\|\mbf{\theta}\| = O(1)$. With the above notation, and using the fact that $\mathbf{B}_n$ is invertible and satisfies $\mathbf{B}_n^* = |\mathbf{B}_n|\mathbf{B}_n^{-1}$ on $\Omega_n$, we have
\begin{align*}
\widehat{V}(x^*)\indc = \frac{|\mathbf{B}_n|}{|\mathbf{B}_n|+\tau_n} \mbf{q}^\top(0)\mathbf{B}_n^{-1}\mbf{U}_n\indc = \mbf{q}^\top(0)\mathbf{B}_n^{-1}\mbf{U}_n\indc - \frac{\tau_n}{|\mathbf{B}_n|+\tau_n}\mbf{q}^\top(0)\mathbf{B}_n^{-1}\mbf{U}_n\indc.
\end{align*}
Thus in order to upper bound $\var\parr*{\widehat{V}(x^*)\indc}$, it suffices to upper bound the variances of the two terms in the above display. For the second term, using the fact that $|\mathbf{B}_n|$ is bounded away from zero on $\Omega_n$ under Condition (c) in $\cal{P}_{\tiny{\text{vf}},(X,\varepsilon)}$, we have
\begin{align*}
\var\parr*{\frac{\tau_n}{|\mathbf{B}_n|+\tau_n}q^\top(0)\mathbf{B}_n^{-1}\mbf{U}_n\indc} &\lesssim \tau_n^2\E\parr*{q^\top(0)\mathbf{B}_n^{-1}\mbf{U}_n\indc}^2 \lesssim \tau_n^2\E\parr*{\|\mathbf{B}_n^{-1}\indc\|^2\|\mbf{U}_n\|^2}\lesssim \tau_n^2,
\end{align*}
where the last inequality follows since $\|\mbf{\theta}\| = O(1)$ and $\mbf{U}_n$ concentrates to $\theta$ by Lemma \ref{lemma:v_term_variance} since $nh_2\rightarrow\infty$ and $n^2h_1h_2\rightarrow\infty$. The first term can be decomposed as
\begin{align*}
\mbf{q}^\top(0)\mathbf{B}_n^{-1}\mbf{U}_n\indc = \mbf{q}^\top(0)(\mathbf{B}_n^{-1} - \mathbf{B}^{-1})\mbf{U}_n\indc + \mbf{q}^\top(0)\mathbf{B}^{-1}\mbf{U}_n\indc \define I + II.
\end{align*}
For the first term, it holds on the event $\Omega_n$ that
\begin{align*}
\|\mathbf{B}_n^{-1} - \mathbf{B}^{-1}\| = \|\mathbf{B}^{-1}\|\|\mathbf{B}_n^{-1}\|\|\mathbf{B}_n - \mathbf{B}\| \leq \|\mathbf{B}^{-1}\|(\|\mathbf{B}_n^{-1} - \mathbf{B}^{-1}\| + \|\mathbf{B}^{-1}\|)\|\mathbf{B}_n - \mathbf{B}\|.
\end{align*}
Thus on the event $\Omega_n$, it holds that $\|\mathbf{B}_n^{-1} - \mathbf{B}^{-1}\|\leq \|\mathbf{B}^{-1}\|^2\|\mathbf{B}_n - \mathbf{B}\|/(1 - \|\mathbf{B}^{-1}\|\|\mathbf{B}_n - \mathbf{B}\|)\leq 2\|\mathbf{B}^{-1}\|^2\|\mathbf{B}_n-\mathbf{B}\|$. This implies that
\begin{align*}
\var\parr*{I} &\leq \E\parr*{\mbf{q}^\top(0)(\mathbf{B}_n^{-1} - \mathbf{B}^{-1})\mbf{U}_n\indc}^2 \leq \E\parr*{\|(\mathbf{B}_n^{-1} - \mathbf{B}^{-1})\indc\|^2\|\mbf{U}_n\|^2}\\
& \lesssim \E\parr*{\|\mathbf{B}_n -\mathbf{B}\|^2\|\mbf{U}_n\|^2} \leq (\E\|\mathbf{B}_n - \mathbf{B}\|^4)^{1/2}(\E\|\mbf{U}_n\|^4)^{1/2}.
\end{align*}
Clearly, $(\E\|\mbf{U}_n\|^4)^{1/2} = O(1)$ since $\mbf{U}_n$ concentrates to $\mbf{\theta}$ and $\|\mbf{\theta}\| = O(1)$, and by Lemmas \ref{lemma:B_concentration} and \ref{lemma:tail_expectation}, it holds that $(\E\|\mathbf{B}_n - \mathbf{B}\|^4)^{1/2}\lesssim n^{-1}h_2^{-1} + n^{-2}h_1^{-1}h_2^{-1}$. This concludes that
\begin{align*}
\var(I) \lesssim n^{-1}h_2^{-1} + n^{-2}h_1^{-1}h_2^{-1}.
\end{align*}
Lastly, for $II$, writing $Z\define \mbf{q}^\top(0)\mathbf{B}^{-1}{n\choose 2}^{-1}\sum_{i<j}\mbf{g}(X_i,X_j)$, we have
\begin{align*}
\var(II) = \E(Z\indc - \E(Z\indc))^2 = \E((Z-\E Z) + \E(Z\indcc) - Z\indcc)^2 \lesssim \var(Z)  + \E(Z\indcc)^2.
\end{align*} 
By Lemma \ref{lemma:v_term_variance}, it holds that
\begin{align*}
\var(Z) \lesssim n^{-1}h_2^{-1} + n^{-2}h_1^{-1}h_2^{-1}.
\end{align*}
Lastly, by Cauchy's inequality and Lemma \ref{lemma:B_concentration}, 
\begin{align*}
\E(Z\indcc)^2 \leq (\E Z^4)^{1/2}\P^{1/2}(\Omega_n^c) \lesssim \parr*{ \|\mathbf{B}^{-1}\|^4\E\|\mbf{U}_n\|^4}^{1/2}\P^{1/2}(\Omega_n^c) = o(n^{-1}h_2^{-1} + n^{-2}h_1^{-1}h_2^{-1}).
\end{align*}
Thus we conclude that
\begin{align*}
\var(II) \lesssim n^{-1}h_2^{-1} + n^{-2}h_1^{-1}h_2^{-1}.
\end{align*}
Putting together the pieces, we have proved 
\begin{align*}
\E\parr*{\widehat{V}(x^*)\indc - \E\parr*{\widehat{V}(x^*)\indc}}^2 \lesssim n^{-1}h_2^{-1} + n^{-2}(h_1h_2)^{-1} + \tau_n^2.
\end{align*}
This completes the proof.
\end{proof}

\begin{lemma}
\label{lemma:v_term_variance}
Consider the term $Z$ defined in the proof of Lemma \ref{lemma:v_variance}:
\begin{align*}
Z = \mbf{q}^\top(0)\mathbf{B}^{-1}{n\choose 2}^{-1}\sum_{i<j} D_{ij}\mbf{q}\parr*{\frac{X_{ij}-x^*}{h_2}}K_{ij}.
\end{align*}
Then, under the same conditions of Lemma \ref{lemma:v_variance}, it holds that 
\begin{align*}
\var\parr*{Z} \leq C\parr*{n^{-1}h_2^{-1} + n^{-2}h_1^{-1}h_2^{-1}}
\end{align*}
for some fixed positive constant $C$ that only depends on $\alpha,\beta,C_0$.
\end{lemma}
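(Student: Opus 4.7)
The plan is to reduce the problem to a standard Hoeffding-type variance bound for a scalar U-statistic after peeling off the deterministic prefactor $\mbf{q}^\top(0)\mathbf{B}^{-1}$, and then to control the degenerate and linear pieces separately by exploiting the bandwidth scaling of the kernel products.

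First, write $Z = \mbf{a}^\top \mbf{U}_n$, where $\mbf{a} \define \mathbf{B}^{-\top}\mbf{q}(0) \in \RR^{\ell+1}$ is a deterministic, \emph{bounded} vector (by the lower bound $\lambda_{\min}(\mathbf{B}) \geq c > 0$ established inside the proof of Theorem~\ref{thm:v_function_upper}), and
\begin{align*}
\mbf{U}_n = {n\choose 2}^{-1}\sum_{i<j}\mbf{g}(X_i,X_j), \qquad \mbf{g}(X_i,X_j) \define D_{ij}\mbf{q}\parr*{\frac{X_{ij}-x^*}{h_2}}K_{ij}.
\end{align*}
Then $\var(Z) \leq \|\mbf{a}\|^2 \cdot \opnorm{\Cov(\mbf{U}_n)} \leq \|\mbf{a}\|^2 \sum_{k=0}^{\ell}\var(U_{n,k})$, so it suffices to bound the variance of each coordinate $U_{n,k}$, which is a scalar U-statistic with symmetric kernel $g_k(X_i,X_j) \define D_{ij}(k!)^{-1}((X_{ij}-x^*)/h_2)^k K_{ij}$.

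Next, apply the Hoeffding variance identity: for each $k$,
\begin{align*}
\var(U_{n,k}) \leq \frac{4}{n}\zeta_{1,k} + \frac{2}{n(n-1)}\zeta_{2,k}, \quad \zeta_{1,k} \define \var(g_{1,k}(X_1)),\quad \zeta_{2,k}\define \var(g_k(X_1,X_2)),
\end{align*}
where $g_{1,k}(x)\define \E g_k(x,X_2)$. The two key estimates to establish are then $\zeta_{2,k}\lesssim h_1^{-1}h_2^{-1}$ and $\zeta_{1,k}\lesssim h_2^{-1}$. For $\zeta_{2,k}$, use the elementary bound $K_{h_1}^2 K_{h_2}^2 \leq \overline{M}_K^2 (h_1 h_2)^{-1}K_{h_1}K_{h_2}$, so that
\begin{align*}
\E g_k^2(X_1,X_2) \lesssim \frac{1}{h_1h_2}\,\E\bbrace*{D_{12}^2 K_{h_1}(X_1-X_2)K_{h_2}(X_{12}-x^*)};
\end{align*}
conditioning on $(X_1,X_2)$, $\E(D_{12}^2\mid X_1,X_2)$ is bounded using $f\in\Lambda_{\alpha}(C_f)$, $V\in\Lambda_{\beta}(C_V)$, and $\E\varepsilon^4\leq C_\varepsilon$, while the remaining expectation of the product of kernels is $O(1)$ by Condition~(b) in $\cal{P}_{\tiny{\text{vf}},(X,\varepsilon)}$ and the convolution formula.

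For $\zeta_{1,k}$, the main point is that averaging over $X_2$ through $K_{h_1}(X_1-X_2)$ \emph{integrates out one bandwidth}: substituting $u=(X_1-X_2)/h_1$, one finds $|g_{1,k}(X_1)| \lesssim K_{h_2}(X_1 - x^* + O(h_1))$ uniformly in $X_1$, using that $\E(D_{12}\mid X_1,X_2)$ and $(X_{12}-x^*)/h_2$ are bounded on the kernel support. Squaring and taking expectation, together with $p_X$ bounded (Condition~(b)), yields $\zeta_{1,k}\lesssim h_2^{-1}$. Combining the two bounds with the Hoeffding identity and summing over $k\in\{0,\ldots,\ell\}$ gives $\var(Z) \lesssim n^{-1}h_2^{-1} + n^{-2}h_1^{-1}h_2^{-1}$, as required.

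The only delicate step is the bound for $\zeta_{1,k}$: one must be careful to keep track of the shift $h_1 u/2$ in $X_{12}-x^*$ when integrating out $X_2$, since a naive bound that throws away the $K_{h_2}$ factor would give only $\zeta_{1,k}\lesssim 1$ and lose the $h_2^{-1}$ improvement. The change of variables $u=(X_1-X_2)/h_1$, combined with $h_1/h_2 \to 0$ under the scaling \eqref{eq:v_h_choice}, ensures that $K_{h_2}$ is essentially unchanged and provides the required $h_2^{-1}$ scaling.
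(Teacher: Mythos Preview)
Your approach is essentially the same as the paper's: both peel off the bounded factor $\mathbf{B}^{-1}$ and then control the variance of the vector U-statistic by bounding the ``three overlapping indices'' contribution by $O(h_2^{-1})$ and the ``two overlapping indices'' contribution by $O((h_1h_2)^{-1})$. You package this via the Hoeffding variance identity, while the paper expands the cross-moments $\E[(\mbf{g}(X_i,X_j)-\mbf{\theta})^\top(\mbf{g}(X_{i'},X_{j'})-\mbf{\theta})]$ directly for each overlap pattern and performs the same change of variables; these two routes are equivalent.

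There is, however, a technical slip in your $\zeta_{1,k}$ step. The U-statistic here is over i.i.d.\ data points $\mbf{D}_i=(X_i,\varepsilon_i)$, so the correct first Hoeffding projection is $g_{1,k}(X_1,\varepsilon_1)=\E[g_k\mid X_1,\varepsilon_1]$, not $\E[g_k\mid X_1]$. Because $D_{12}$ carries the term $V(X_1)\varepsilon_1^2/2$, the relevant conditional mean $\E[D_{12}\mid X_1,X_2,\varepsilon_1]$ is \emph{not} uniformly bounded, and your pointwise estimate $|g_{1,k}(X_1)|\lesssim K_{h_2}(X_1-x^*+O(h_1))$ fails as stated; bounding only $\var(\E[g_k\mid X_1])$ would underestimate $\zeta_{1,k}$. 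The fix is immediate: the same change of variables gives
\[
|g_{1,k}(X_1,\varepsilon_1)|\lesssim (1+\varepsilon_1^2)\,h_2^{-1}\mathbbm{1}\{|X_1-x^*|\lesssim h_2\},
\]
and squaring and integrating now uses the fourth-moment bound $\E\varepsilon^4\leq C_\varepsilon$ from Condition~(d) together with $p_X\leq C_0$ to recover $\zeta_{1,k}\lesssim h_2^{-1}$. This is precisely how the paper handles the three-index cross term, invoking $\E_\varepsilon(D_{ij}D_{ij'})\lesssim 1$ via the fourth-moment condition before carrying out the kernel integral.
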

\begin{proof}
Denote $\mbf{g}(X_i, X_j)\define D_{ij}\mbf{q}\parr*{\frac{X_{ij}-x^*}{h_2}}K_{ij}$ and $\mbf{\theta} \define \E \mbf{g}(X_i,X_j)$. Then, it holds that
\begin{align*}
\var(Z) &= \E\parr*{\mbf{q}^\top(0)\mathbf{B}^{-1}{n\choose 2}^{-1}\sum_{i<j} (\mbf{g}(X_i, X_j) - \mbf{\theta})}^2\\
&\leq \|\mathbf{B}^{-1}\|^2\E\nm*{{n\choose 2}^{-1}\sum_{i<j}(\mbf{g}(X_i,X_j)-\mbf{\theta})}^2\\
&\lesssim n^{-4}\sum_{i<j,i^\prime < j^\prime}\E\bbrace*{(\mbf{g}(X_i,X_j) - \mbf{\theta})^\top(\mbf{g}(X_{i^\prime},X_{j^\prime}) - \mbf{\theta})},
\end{align*}
where the last inequality follows since $\mathbf{B}^{-1}$ is invertible under Condition (c) in $\cal{P}_{\tiny{\text{vf}},(X,\varepsilon)}$ as proved in Theorem \ref{thm:v_function_upper}. Apparently, when $i,j,i^\prime, j^\prime$ are all different, the summand equals to zero. When $i,j,i^\prime, j^\prime$ take three different values, say, $i = i^\prime < j < j^\prime$, we have
\begin{align*}
&\ms\E\bbrace*{(\mbf{g}(X_i,X_j) - \mbf{\theta})^\top(\mbf{g}(X_i,X_{j^\prime}) - \mbf{\theta})}\\
 &= \E\bbrace*{D_{ij}D_{ij^\prime}\mbf{q}^\top\parr*{\frac{X_{ij}-x^*}{h_2}}\mbf{q}^\top\parr*{\frac{X_{ij^\prime}-x^*}{h_2}}K_{ij}K_{ij^\prime}} - \|\mbf{\theta}\|^2.
\end{align*}
Let $\mbf{Z}_1\define \mbf{g}(X_i, X_j)$ and $\mbf{Z}_2 \define \mbf{g}(X_i, X_{j^\prime})$. Then, for any $k\in[\ell+1]$, it holds that $\abs*{Z_{1,k}}\lesssim D_{ij}K_{h_1}(X_i-X_j)K_{h_2}(X_{ij}-x^*)$ and similarly for $Z_{2,k}$. Therefore, using the finite fourth moment of $\varepsilon$ in Condition (d) of $\cal{P}_{\tiny{\text{vf}},(X,\varepsilon)}$ in the calculation of $\E_\varepsilon (D_{ij}D_{ij^\prime})$ and the fact that both $f(\cdot)$ and $V(\cdot)$ are bounded, we have
\begin{align*}
&\ms\E(|Z_{1,k}Z_{2,k}|)\\
&\lesssim \E\bbrace*{D_{ij}K_{h_1}(X_i-X_j)K_{h_2}(X_{ij}-x^*)D_{ij^\prime}K_{h_1}(X_i - X_{j^\prime})K_{h_2}(X_{ij^\prime}-x^*)}\\
&\lesssim \E\bbrace*{K_{h_1}(X_i-X_j)K_{h_2}(X_{ij}-x^*)K_{h_1}(X_i - X_{j^\prime})K_{h_2}(X_{ij^\prime}-x^*)}\\
&= \int_{\RR^3} \frac{1}{h_1^2h_2^2}K\parr*{\frac{v-u}{h_1}}K\parr*{\frac{\frac{u+v}{2} - x^*}{h_2}}K\parr*{\frac{w-u}{h_1}}K\parr*{\frac{\frac{u+w}{2} - x^*}{h_2}}p_X(u)p_X(v)p_X(w)dudvdw\\
&= \frac{1}{h_2}\int_{\RR^3} K(\td{u})K(\td{v})K(\td{w})K\parr*{\frac{\td{u}h_2 + h_1(\td{w}-\td{v})/2}{h_2}}p_X(s_1)p_X(s_2)p_X(s_3)d\td{u}d\td{v}d\td{w}\\
&\lesssim 1/h_2,
\end{align*}
where we again invoke Condition (b) in $\cal{P}_{\tiny{\text{vf}},(X,\varepsilon)}$ and the compact support of $K(\cdot)$, and
\begin{align*}
s_1 = \td{u}h_2+x^*-\frac{h_1\td{v}}{2}, \quad s_2 = \td{u}h_2+x^*+\frac{h_1\td{v}}{2}, \quad s_3 = \td{u}h_2 + x^* + \td{w}h_1 - \frac{\td{v}h_1}{2}.
\end{align*}
This, along with the fact that $\|\mbf{\theta}\| = O(1)$, concludes that 
\begin{align*}
\E\bbrace*{(\mbf{g}(X_i,X_j) - \mbf{\theta})^\top(\mbf{g}(X_{i^\prime},X_{j^\prime}) - \mbf{\theta})}\lesssim h_2^{-1}
\end{align*}
when $i,j,i^\prime, j^\prime$ take three different values. Similarly, one can prove that when $i,j,i^\prime, j^\prime$ take two different values, that is $i = i^\prime$ and $j = j^\prime$,
\begin{align*}
\E\bbrace*{(\mbf{g}(X_i,X_j) - \mbf{\theta})^\top(\mbf{g}(X_{i^\prime},X_{j^\prime}) - \mbf{\theta})}\lesssim (h_1h_2)^{-1}.
\end{align*}
Putting together the pieces, we obtain that 
\begin{align*}
\var(Z) \lesssim \frac{n^3h_2^{-1} + n^2(h_1h_2)^{-1}}{n^4} = n^{-1}h_2^{-1} + n^{-2}h_1^{-1}h_2^{-1}.
\end{align*}
This completes the proof.
\end{proof}

\begin{lemma}
\label{lemma:B_concentration}
Suppose Condition (b) in $\cal{P}_{\tiny{\text{vf}},(X,\varepsilon)}$ holds. Assume $n^2h_1h_2\rightarrow\infty$, $nh_2\rightarrow \infty$, and $h_1/h_2\rightarrow 0$. Then, for any $u,v > 0$, the matrices $\mathbf{B}_n$ and $\mathbf{B}$ defined in the proof of Theorem \ref{thm:v_function_upper} satisfy
\begin{align*}
\P\parr*{\|\mathbf{B}_n - \mathbf{B}\|\geq C(v^{1/2}n^{-1/2}h_2^{-1/2} + u^{1/2}n^{-1}h_1^{-1/2}h_2^{-1/2})} \leq C\parr*{\exp(-u) + \exp(-v)}
\end{align*}
for some fixed positive constant $C$.
\end{lemma}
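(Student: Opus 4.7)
\medskip

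\textbf{Proof proposal.} The plan is to apply the four-term Bernstein inequality for U-statistics (Lemma \ref{lemma:Ubernstein}) entry by entry. Since $\mathbf{B}_n - \mathbf{B}$ is an $(\ell+1)\times(\ell+1)$ matrix of fixed dimension, its spectral norm is controlled up to a constant by its largest entry in absolute value, so after establishing an entry-wise tail bound I can finish with a union bound over the finitely many $(k,m)$ pairs.

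Fix an entry index $(k,m)$ and write $(\mathbf{B}_n)_{km} = {n\choose 2}^{-1}\sum_{i<j} g(X_i,X_j)$ where
\[
g(X_i,X_j) = \frac{1}{k!\,m!}\Big(\tfrac{X_{ij}-x^*}{h_2}\Big)^{k+m} K_{h_1}(X_i-X_j)K_{h_2}(X_{ij}-x^*).
\]
Because $K$ is supported on $[-1,1]$ and the polynomial factor is bounded on that support, the five quantities in Lemma \ref{lemma:Ubernstein} need to be estimated by standard changes of variables. Using the compact support of $K$, the uniform density bound in Condition (b), and a change of variables $s=v-u$ followed by rescaling by $h_1$ and $h_2$, I would obtain:
\[
B_3=\|g\|_\infty\lesssim h_1^{-1}h_2^{-1},\qquad \nu_2^2=\E g^2 \lesssim (h_1h_2)^{-1},\qquad B_2^2=n\sup_x\E(g^2\mid X_i=x)\lesssim n h_1^{-1}h_2^{-2},
\]
while the linear part $g_1(x)=\E g(x,X_j)$ satisfies $|g_1(x)|\lesssim h_2^{-1}$ pointwise and is supported (essentially) on $|x-x^*|\lesssim h_2$, giving $B_1\lesssim h_2^{-1}$ and $\nu_1^2=\E g_1^2 \lesssim h_2^{-1}$. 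The supremum that defines $B_2$ is pinned at $X_i$ near $x^*$, where the factor $\frac{1}{h_2^2}K^2(\cdot/h_2)$ contributes $h_2^{-2}$ and the $u$-integral over $K^2$ contributes a constant.

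Plugging these into Lemma \ref{lemma:Ubernstein}, dividing the deviation by ${n\choose 2}\asymp n^2$, and choosing $t$ so that the sub-Gaussian tail $\exp(-t^2/(8n^3\nu_1^2+4B_1 t/n))$ equals $e^{-v}$, the five resulting contributions become
\[
\underbrace{v^{1/2}n^{-1/2}h_2^{-1/2}}_{\text{from }\nu_1}, \quad \underbrace{u^{1/2}n^{-1}(h_1h_2)^{-1/2}}_{\text{from }\nu_2}, \quad \underbrace{u\, n^{-2}h_2^{-1}}_{B_1},\quad \underbrace{u^{3/2}n^{-3/2}h_1^{-1/2}h_2^{-1}}_{B_2},\quad \underbrace{u^{2}n^{-2}(h_1h_2)^{-1}}_{B_3}.
\]
The two hypotheses $nh_2\to\infty$ and $n^2 h_1 h_2\to\infty$ are exactly what is needed for the first two terms to dominate the remaining three: $n^{-1}(h_1h_2)^{-1/2}$ majorizes $n^{-2}h_2^{-1}$ iff $n^2h_1 h_2\to\infty$, and analogous comparisons handle the $B_2$ and $B_3$ terms. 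This yields the per-entry bound, after which a union bound over the $(\ell+1)^2$ entries absorbs into the constant $C$.

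The main technical obstacle I expect is a careful bookkeeping of the $h_1/h_2$ ratios in the change-of-variable arguments used to bound $B_2$, $\nu_2$, and $\nu_1$, in particular ensuring that the supremum defining $B_2$ (as opposed to the average defining $\nu_2$) really does pick up only one $h_2^{-1}$ instead of the $h_1^{-1}h_2^{-1}$ one might naively fear; keeping track of which moments are averaged and which are taken in sup is the place where the proof could go wrong.
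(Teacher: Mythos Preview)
Your proposal is correct and essentially the same as the paper's proof: both reduce to applying Lemma~\ref{lemma:Ubernstein} to finitely many scalar U-statistics, obtain the identical bounds $B_1\lesssim h_2^{-1}$, $\nu_1^2\lesssim h_2^{-1}$, $B_2^2\lesssim nh_1^{-1}h_2^{-2}$, $\nu_2^2\lesssim (h_1h_2)^{-1}$, $B_3\lesssim (h_1h_2)^{-1}$, and then argue that the $\nu_1$ and $\nu_2$ contributions dominate under $nh_2\to\infty$ and $n^2h_1h_2\to\infty$. The only cosmetic difference is that the paper passes from the spectral norm to scalars via a $1/2$-net on $\mathbb{S}^\ell$ and quadratic forms $\mbf{a}^\top(\mathbf{B}_n-\mathbf{B})\mbf{a}$, whereas you work entry by entry; since $\ell+1$ is fixed, either reduction is fine (and your worry about $B_2$ is unfounded---the change of variables is exactly the one the paper carries out).
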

\begin{proof}
Using a standard entropy argument (see, for example, Lemma 5.3 in \cite{vershynin2010introduction}), it holds that for any $t>0$,
\begin{align*}
\P(\|\mathbf{B}_n - \mathbf{B}\| \geq t) \leq N\max_{1\leq i\leq N} \P\parr*{\abs*{\mbf{a}_i^\top(\mathbf{B}_n - \mathbf{B})\mbf{a}_i}\geq t/2},
\end{align*} 
where $N \define 5^{\ell+1}$ and $\{\mbf{a}_i\}_{i=1}^{N}$ is a $1/2$-net on the unit sphere $\mathbb{S}^{\ell}$. We now upper bound $\P\parr*{\abs*{\mbf{a}^\top(\mathbf{B}_n - \mathbf{B})\mbf{a}}\geq t/2}$ for an arbitrary $\mbf{a}\in\mathbb{S}^{\ell}$ with the help of Lemma \ref{lemma:Ubernstein}. For this, we upper bound the five quantities $B_1,B_2,B_3,\nu_1^2,\nu_2^2$ therein. Denote the kernel of $\mbf{a}^\top\mathbf{B}_n\mbf{a}$ as $g$ and its linear part as $g_1$, that is,
\begin{align*}
g(X_i, X_j) \define \parr*{\mbf{a}^\top \mbf{q}\parr*{\frac{X_{ij}-x^*}{h_2}}}^2 K_{ij} \text{ and } g_1(X_i) = \E\parr*{g(X_i,X_j)\mid X_i}.
\end{align*}
Then, we have
\begin{align*}
g_1(X_i) &= \int \parr*{\mbf{a}^\top \mbf{q}\parr*{\frac{(u+X_i)/2 - x^*}{h_2}}}^2\frac{1}{h_1}K\parr*{\frac{u - X_i}{h_1}}\frac{1}{h_2}K\parr*{\frac{(u+X_i)/2 - x^*}{h_2}}p_X(u)du\\
&\lesssim \frac{1}{h_2}\int K(\td{u})K\parr*{\frac{X_i + h_1\td{u}/2 - x^*}{h_2}}p_X(X_i + \td{u}h_1)d\td{u} \lesssim h_2^{-1},
\end{align*}
where in the first inequality we use the fact that $\mbf{q}\parr*{\frac{(u+X_i)/2 - x^*}{h_2}}$ has bounded $\ell_2$ norm due to the compact support of $K(\cdot)$, and in the last inequality we apply Condition (b) in $\cal{P}_{\tiny{\text{vf}},(X,\varepsilon)}$. Therefore $B_1\lesssim h_2^{-1}$ and one can similarly show that $\nu_1^2\lesssim h_2^{-1}$.

For $B_2$, we have
\begin{align*}
B_2^2 &= n\sup_{X_i}\E\bbrace*{\parr*{\mbf{a}^\top \mbf{q}\parr*{\frac{X_{ij}-x^*}{h_2}}}^4 \frac{1}{h_1^2}K^2\parr*{\frac{X_i-X_j}{h_1}}\frac{1}{h_2^2}K^2\parr*{\frac{X_{ij}-x^*}{h_2}}\mid X_i}\\
&\lesssim \frac{n}{h_1h_2^2}\sup_{X_i}\E\bbrace*{\frac{1}{h_1}K\parr*{\frac{X_i - X_j}{h_1}}\mid X_i}\\
&= \frac{n}{h_1h_2^2}\sup_{X_i}\int K(u)p_X(X_i + uh_1)du \lesssim nh_1^{-1}h_2^{-2}.
\end{align*}
This concludes that $B_2\lesssim n^{1/2}h_1^{-1/2}h_2^{-1}$. Moreover, one can easily show that $\nu_2^2 \lesssim (h_1h_2)^{-1}$ and $B_3\lesssim (h_1h_2)^{-1}$. Putting together the pieces and applying Lemma \ref{lemma:Ubernstein}, we obtain that for any $u,v>0$,
\begin{align*}
\P\parr*{\abs*{\mbf{a}^\top(\mathbf{B}_n - \mathbf{B})\mbf{a}} \geq a_1v^{1/2} + a_2v + b_1u^{1/2} + b_2u + b_3u^{3/2} + b_4u^2} \leq C\parr*{\exp(-v) + \exp(-u)}, 
\end{align*}
where $a_1\lesssim n^{-1/2}h_2^{-1/2}, a_2\lesssim n^{-1}h_2^{-1}$ and $b_1\lesssim n^{-1}h_1^{-1/2}h_2^{-1/2}, b_2\lesssim n^{-1}h_2^{-1}, b_3\lesssim n^{-3/2}h_1^{-1/2}h_2^{-1}, b_4\lesssim n^{-2}h_1^{-1}h_2^{-1}$. Under the conditions $n^2h_1h_2\rightarrow \infty$, $nh_2\rightarrow \infty$ and $h_1/h_2\rightarrow 0$ as $n\rightarrow \infty$, the dominant terms are $a_1$ and $b_1$, that is,
\begin{align*}
n^{-1/2}h_2^{-1/2} \vee n^{-1}h_1^{-1/2}h_2^{-1/2}.
\end{align*}
This completes the proof.
\end{proof}

\begin{lemma}
\label{lemma:chi_match}
Under the setting and conditions of Theorem \ref{thm:v_lower_int}, for any positive $\eta > 0$, there exists an i.i.d. sequence $\{r_{i,k}\}_{i\in[m_j],k\in[N_1]}$ (with $m_j, N_1$ defined in Theorem \ref{thm:v_lower_int}) with range contained in $[-B, B]$ for some $B$ only depending on $\alpha,\beta,\eta$, such that the probability measures $\P_j$ and $\td{\P}_j$ defined therein satisfy that
\begin{align*}
\chi^2\parr*{\P_j, \td{\P}_j; \Omega_n} \lesssim n^{-\eta},
\end{align*}
where, for any measurable subset $\cal{E}$ and two probability measures $\P$ and $\Q$, $\chi^2(\P,\Q; \cal{E})$ is the conditional $\chi^2$-distance defined as
\begin{align*}
\chi^2(\P,\Q; \cal{E}) \define \int_{\cal{E}}\frac{(p-q)^2}{q}
\end{align*}
with $p,q$ being the densities of $\P$ and $\Q$ with respect to some common dominating measure.
\end{lemma}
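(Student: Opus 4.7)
The plan is to reduce to a conditional chi-square computation given $\mbf{X}$, then exploit cluster-wise independence on $\Omega_n$ to factorize, and finally bound each cluster-wise chi-square via a Hermite-based calculation that uses the moment-matching property of $\G$.

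Since $\P_j$ and $\td{\P}_j$ share the same $\mbf{X}$-marginal, the definition of the conditional chi-square gives
\begin{align*}
\chi^2(\P_j, \td{\P}_j; \Omega_n) = \E\bbrace*{\mathbbm{1}\{\Omega_n\}\chi^2(\P_j(\cdot\mid\mbf{X}), \td{\P}_j(\cdot\mid\mbf{X}))}.
\end{align*}
On $\Omega_n$ the $n$ data points split into at most $n$ clusters of size $\leq K$ as in the proof of Theorem \ref{thm:v_lower_int}, each cluster gathering the $Y_i$'s associated with a common trapezoid. Clusters indexed by $\Delta_{j,k_i}=0$ contribute nothing, since both conditional measures reduce to the same i.i.d.\ standard Gaussian vector; clusters with $\Delta_{j,k_i}=1$ and cardinality $d\leq K$ carry a shared latent height $\theta_n r_\ell$ (with $r_\ell\sim\G$ under $\P_j$ and $r_\ell\sim\cal{N}(0,1)$ under $\td{\P}_j$), so cluster-wise independence yields
\begin{align*}
1 + \chi^2(\P_j(\cdot\mid\mbf{X}), \td{\P}_j(\cdot\mid\mbf{X})) = \prod_{\ell=1}^{L_1}(1 + \chi^2_\ell), \qquad \chi^2_\ell \define \int \frac{(p_{j,\pi_\ell} - \td{p}_{j,\pi_\ell})^2}{\td{p}_{j,\pi_\ell}}\,d\mbf{y}.
\end{align*}

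For the per-cluster term, setting $\sigma^2\define 1 - \theta_n^2$, both cluster densities are mixtures of $\prod_{i=1}^d \varphi_\sigma(y_i - \theta_n v)$ against different mixing laws, and $\td{p}_{j,\pi_\ell}$ is precisely the density of $\cal{N}_d(\mbf{0}, \sigma^2\mathbf{I}_d + \theta_n^2 \mathbf{1}_d\mathbf{1}_d^\top)$. A direct Gaussian calculation gives
\begin{align*}
\td{p}_{j,\pi_\ell}(\mbf{y})\Big/\prod_{i=1}^d \varphi_\sigma(y_i) = (1+d\theta_n^2/\sigma^2)^{-1/2}\exp\bbrace*{\frac{\theta_n^2(\sum_i y_i)^2}{2\sigma^2(\sigma^2+d\theta_n^2)}} \geq c(K) > 0,
\end{align*}
uniformly in $\mbf{y}$ for $n$ large, since the exponent is non-negative and $d\leq K$. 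This pointwise comparison converts the chi-square into a weighted $L_2$ norm with reference $\prod_i \varphi_\sigma(y_i)$, to which Hermite orthonormality applies. Expanding $\varphi_\sigma(y_i - \theta_n v)$ in Hermite polynomials about $\varphi_\sigma$ and using the matching of the first $q$ moments of $\G$ with those of $\Phi$ (together with symmetry of $\G$, which kills odd orders) gives
\begin{align*}
p_{j,\pi_\ell} - \td{p}_{j,\pi_\ell} = \prod_i \varphi_\sigma(y_i) \sum_{k\geq 2p}\theta_n^k \sigma^{-k} m_k\!\!\sum_{k_1+\ldots+k_d=k}\!\!\frac{1}{\prod_i k_i!}\prod_i H_{k_i}(y_i/\sigma),
\end{align*}
with $p\define (q+1)/2$ and $m_k \define \int v^k(\G-\Phi)(dv)$. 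Tensor Hermite orthonormality under $\prod_i \varphi_\sigma$, the multinomial identity $\sum_{k_1+\ldots+k_d = k}1/\prod_i k_i! = d^k/k!$, and the crude moment bound $|m_k|\leq B^k + (k-1)!!$ then yield $\chi^2_\ell \leq C_{B,K,p}\,\theta_n^{4p}$, the chi-square analogue of the $\theta_n^{2p}$ total variation bound appearing in the proof of Theorem \ref{thm:nonpar_lower}.

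Inserting this estimate into the product factorization, using $L_1\leq n$ and $(1+x)^n - 1 \leq 2nx$ for $nx$ small, produces
\begin{align*}
\chi^2(\P_j, \td{\P}_j; \Omega_n) \lesssim n\theta_n^{4p} \asymp n^{1-8p\alpha\beta/(4\alpha\beta+\beta+2\alpha)}.
\end{align*}
Choosing $q$ large enough that $8p\alpha\beta/(4\alpha\beta+\beta+2\alpha)\geq 1+\eta$ and invoking Lemma \ref{lemma:moment_matching} with this $q$ then delivers the claim, the resulting support $B=B(\alpha,\beta,\eta)$. The main technical obstacle I anticipate is establishing the uniform lower bound $\td{p}_{j,\pi_\ell}\geq c(K)\prod_i \varphi_\sigma(y_i)$: this step is what permits switching the chi-square denominator to a product Gaussian reference for which Hermite orthonormality gives an exact diagonalization, and it hinges crucially on the cluster size $d$ being bounded, which is precisely what the event $\Omega_n$ guarantees.
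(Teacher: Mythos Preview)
Your proposal is correct and follows essentially the same approach as the paper's proof: conditioning on $\mbf{X}$, cluster-wise factorization of the chi-square on $\Omega_n$, a pointwise lower bound on $\td{p}_{j,\pi_\ell}$ by a constant times $\prod_i \varphi_\sigma(y_i)$, Hermite expansion with moment matching, and the multinomial identity to sum the coefficients. The only noteworthy difference is in the denominator lower bound: you compute the Gaussian mixture density $\td{p}_{j,\pi_\ell}$ explicitly (as the density of $\cal{N}_d(\mbf{0},\sigma^2\mathbf{I}_d+\theta_n^2\mathbf{1}_d\mathbf{1}_d^\top)$) and observe that the ratio to $\prod_i\varphi_\sigma(y_i)$ is bounded below by $(1+d\theta_n^2/\sigma^2)^{-1/2}\geq c(K)$, whereas the paper obtains the same type of bound via Jensen's inequality applied to the mixing integral (using $\E Z=0$, $\E Z^2=1$), yielding the constant $\exp(-d\theta_n^2/(2\sigma^2))$. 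Both arguments give what is needed; your explicit computation is arguably cleaner here since $\td{p}_{j,\pi_\ell}$ is in fact a closed-form Gaussian.
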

\begin{proof}
Without loss of generality, let $j = 1$. First note that the conditional $\chi^2$-distance can be written as
\begin{align*}
\chi^2\parr*{\P_1, \td{\P}_1; \Omega_n} &= \int \mathbbm{1}\{\Omega_n\}\frac{\parr*{p_1(\mbf{x}, \mbf{y}) - \td{p}_1(\mbf{x}, \mbf{y})}^2}{\td{p}_1(\mbf{x}, \mbf{y})} d\mbf{x}d\mbf{y}\\
&= \int \mathbbm{1}\{\Omega_n\}p(\mbf{x})d\mbf{x}\int \frac{\parr*{p_1(\mbf{y}\mid\mbf{x}) - \td{p}_1(\mbf{y}\mid \mbf{x})}^2}{\td{p}_1(\mbf{y}\mid \mbf{x})}d\mbf{y}\\
&= \E\bbrace*{\mathbbm{1}\{\Omega_n\}\chi^2\parr*{\P_1(\mbf{y}\mid \mbf{x}), \td{\P}_1(\mbf{y}\mid \mbf{x})}},
\end{align*}
where we have used $p(\cdot)$ to represent the density of $\{X_i\}_{i=1}^n$ under both $\P_1$ and $\td{\P}_1$.

Recall the definition of the location index sequence $\{b_i\}_{i=1}^n$ in Theorem \ref{thm:v_lower_int}. Consider any realization of $\{X_i\}_{i=1}^n$ in $\Omega_n$, and assume that based on their location indices $\{b_i\}_{i=1}^n$, $\{X_i\}_{i=1}^n$ is partitioned into $L_1 + L_2$ clusters with cardinality $s_\ell$ such that the $X_i$'s in the same cluster have the same value $b_i$. Moreover, for those data points in the first $L_1$ clusters, the location index $b_i = (k_i,\ell_i)$ satisfies that $\Delta_{1,k_i} = 1$ while for the data points in the last $L_2$ clusters, it holds that $\Delta_{1,k_i} = 0$ (recall the definition of $\mbf{\Delta}_1 = (\Delta_{1,1},\ldots, \Delta_{1,N_2})$ in the lower bound design of Theorem \ref{thm:v_lower_int}. Apparently, we have the relations $1\leq L_1 + L_2 \leq n$, $\sum_{\ell=1}^{L_1+L_2}s_\ell = n$ and $1\leq s_\ell \leq K$ for $\ell\in[L_1 + L_2]$ (recall the definition of $K$ in Theorem \ref{thm:v_lower_int}). Moreover, denoting $\P_{1,\pi_\ell}$ and $\td{\P}_{1,\pi_\ell}$ (resp. $p_{1,\pi_\ell}$ and $\td{p}_{1,\pi_\ell}$) for each $\ell\in[L_1 + L_2]$ as the joint distribution (resp. density) of those $Y_i$'s in the $\ell$th cluster conditioning on the given realization $\{X_i\}_{i=1}^n$ under $\P_1$ and $\td{\P}_1$, we have 
\begin{align*}
\chi^2\parr*{\P_1(\mbf{y}\mid \mbf{x}), \td{\P}_1(\mbf{y}\mid \mbf{x})} &= \prod_{\ell=1}^{L_1+L_2}\parr*{1 + \chi^2\parr*{\P_{1,\pi_\ell}, \td{\P}_{1,\pi_\ell}}} - 1\\
&=  \prod_{\ell=1}^{L_1}\parr*{1 + \chi^2\parr*{\P_{1,\pi_\ell}, \td{\P}_{1,\pi_\ell}}} - 1\\
&\leq \exp\bbrace*{\sum_{\ell=1}^{L_1}\chi^2\parr*{\P_{1,\pi_\ell}, \td{\P}_{1,\pi_\ell}}} - 1
\end{align*}  
where the first equality follows by mutual independence of data points in each cluster, the second inequality follows from the fact that for those data points $Y_i$'s in the latter $L_2$ clusters, each $Y_i\mid X_i\sim\cal{N}(0,1)$ under both $\P_1$ and $\td{\P}_1$. Since $L_1\leq n$, it suffices to show that for any realization of $\{X_i\}_{i=1}^n$ in $\Omega_n$, by matching enough moments, we have $\chi^2\parr*{\P_{1,\pi_\ell}, \td{\P}_{1,\pi_\ell}} \leq n^{-\eta}$ for any $\eta > 0$.

For each $\ell\in[L_1]$, $\abs*{p_{1,\pi_\ell} - \td{p}_{1,\pi_\ell}}$ only depends on the $\ell$th cluster via its cardinality, which we now control for a general cluster size $1\leq d\leq K$. Without loss of generality, we assume that $\ell=1$ and the $y_i$'s in this cluster are $\{y_1,\ldots, y_d\}$ with common location index $b_i = (1,1)$. In this case, under the choice of $\theta_n^2$ and $h_1$ given in Theorem \ref{thm:v_lower_int}, we clearly have $Y_i = \theta_nr_{1,1} + (1-h_2^\beta)^{1/2}\varepsilon_i$ under $\P_1$ and $Y_i = \theta_n\td{r}_{1,1} + (1-h_2^\beta)^{1/2}\varepsilon_i$ under $\td{\P}_1$ for $i\in[d]$, where the sequence $\{\varepsilon_i\}_{i=1}^d$ follows the standard normal distribution under both $\P_1$ and $\td{\P}_1$. Define $\sigma^2 \define 1-h_2^\beta = 1-\theta_n^2$. Then, it holds that
\begin{align*}
p_{1,\pi_\ell}(y_1,\ldots,y_d) &= \int_{-\infty}^\infty \frac{1}{\sigma}\varphi\parr*{\frac{y_1-\theta_nv}{\sigma}}\ldots\frac{1}{\sigma}\varphi\parr*{\frac{y_d - \theta_nv}{\sigma}}\G(dv),\\
\td{p}_{1,\pi_\ell}(y_1,\ldots,y_d) &= \int_{-\infty}^\infty \frac{1}{\sigma}\varphi\parr*{\frac{y_1-\theta_nv}{\sigma}}\ldots\frac{1}{\sigma}\varphi\parr*{\frac{y_d - \theta_nv}{\sigma}}\varphi(v)dv,
\end{align*}
where $\G$ is the distribution of $\{r_{i,k}\}_{i\in[m_j],k\in[N_1]}$. Using the well-known equality $\varphi(t - \theta_nv) = \varphi(t)\parr*{\sum_{k=0}^\infty v^k\theta_n^kH_k(t)/k!}$ for any $t,v$, where $H_k$ is the $k$th order Hermite polynomial, it holds that
\begin{align*}
&\ms\varphi\parr*{\frac{y_1 -\theta_nv}{\sigma}}\ldots\varphi\parr*{\frac{y_d -\theta_nv}{\sigma}}\\
 &= \varphi\parr*{\frac{y_1}{\sigma}}\ldots\varphi\parr*{\frac{y_d}{\sigma}}\sum_{k_1,\ldots,k_d = 0}^\infty \parr*{\frac{\theta_nv}{\sigma}}^{k_1+\ldots+k_d}\frac{H_{k_1}(y_1/\sigma)}{k_1!}\ldots\frac{H_{k_d}(y_d/\sigma)}{k_d!}\\
&= \varphi\parr*{\frac{y_1}{\sigma}}\ldots\varphi\parr*{\frac{y_d}{\sigma}}\sum_{k=0}^\infty \parr*{\frac{\theta_nv}{\sigma}}^k\sum_{k_1+\ldots+k_d = k}^\infty \parr*{\frac{\theta_nv}{\sigma}}^k\frac{H_{k_1}(y_1/\sigma)}{k_1!}\ldots\frac{H_{k_d}(y_d/\sigma)}{k_d!},
\end{align*}
and therefore by the symmetry of $\G$ and by matching the moments of $\G$ and the standard normal distribution up to order $2p$ for some positive integer $p$ to be chosen later, we obtain
\begin{align*}
&\ms p_{1,\pi_\ell}(y_1,\ldots,y_d) - \td{p}_{1,\pi_\ell}(y_1,\ldots,y_d)\\
&= \varphi\parr*{\frac{y_1}{\sigma}}\ldots\varphi\parr*{\frac{y_d}{\sigma}}\sum_{k=0}^\infty \parr*{\frac{\theta_n}{\sigma}}^k \sum_{k_1+\ldots+k_d=k}\frac{H_{k_1}(y_1/\sigma)}{k_1!}\ldots\frac{H_{k_d}(y_d/\sigma)}{k_d!}\int v^k(\G - \Phi)(dv)\\
&= \varphi\parr*{\frac{y_1}{\sigma}}\ldots\varphi\parr*{\frac{y_d}{\sigma}}\sum_{k=p}^\infty \parr*{\frac{\theta_n}{\sigma}}^{2k} \sum_{k_1+\ldots+k_d=2k}\frac{H_{k_1}(y_1/\sigma)}{k_1!}\ldots\frac{H_{k_d}(y_d/\sigma)}{k_d!}\int v^{2k}(\G - \Phi)(dv).
\end{align*}
Define $\delta_{2k} \define \int v^{2k}(\G - \Phi)(dv)$. Then, the above inequality further implies that
\begin{equation}
\begin{aligned}
\label{eq:chi_1}
&\ms\parr*{p_{1,\pi_\ell}(y_1,\ldots,y_d) - \td{p}_{1,\pi_\ell}(y_1,\ldots,y_d)}^2\\
& = \varphi^2\parr*{\frac{y_1}{\sigma}}\ldots\varphi^2\parr*{\frac{y_d}{\sigma}}\sum_{k,\ell=p}^\infty \parr*{\frac{\theta_n}{\sigma}}^{2k+2\ell}\sum_{\substack{k_1+\ldots+k_d=2k\\ \ell_1+\ldots+\ell_d = 2\ell}}\frac{H_{k_1}(y_1/\sigma)}{k_1!}\frac{H_{\ell_1}(y_1/\sigma)}{\ell_1!}\ldots\frac{H_{k_d}(y_d/\sigma)}{k_d!}\frac{H_{\ell_d}(y_d/\sigma)}{\ell_d!}
\end{aligned}
\end{equation}
On the other hand, letting $Z\sim \G$, we have
\begin{align}
\td{p}_{1,\pi_\ell}(y_1,\ldots,y_d) &= \int \frac{1}{\sigma}\varphi\parr*{\frac{y_1-\theta_nv}{\sigma}}\ldots\frac{1}{\sigma}\varphi\parr*{\frac{y_d-\theta_nv}{\sigma}}\G(dv)\nonumber\\
&= \frac{1}{\sigma^d}\varphi\parr*{\frac{y_1}{\sigma}}\ldots\varphi\parr*{\frac{y_d}{\sigma}}\int \exp\bbrace*{-\frac{d}{2\sigma^2}(\theta_nv)^2 +\frac{\sum_{i=1}^d y_i\theta_nv}{\sigma^2}}\G(dv)\nonumber\\
&= \frac{1}{\sigma^d}\varphi\parr*{\frac{y_1}{\sigma}}\ldots\varphi\parr*{\frac{y_d}{\sigma}}\E\bbrace*{\exp\bbrace*{-\frac{d}{2\sigma^2}\theta_n^2Z^2+ \frac{\sum_{i=1}^d y_i\theta_n}{\sigma^2}Z}}\nonumber\\
&\geq \frac{1}{\sigma^d}\varphi\parr*{\frac{y_1}{\sigma}}\ldots\varphi\parr*{\frac{y_d}{\sigma}}\exp\parr*{-\frac{d\theta_n^2}{2\sigma^2}}\label{eq:chi_2},
\end{align}
where the last inequality follows from Jensen's inequality and the fact $\E Z = 0$, $\E Z^2 = 1$ from moment matching. Combining \eqref{eq:chi_1} and \eqref{eq:chi_2}, we obtain that
\begin{align*}
\chi^2(\P_{1,\pi_\ell}, \td{\P}_{1,\pi_\ell}) &= \int \frac{\parr*{p_{1,\pi_\ell}(y_1,\ldots,y_d) - \td{p}_{1,\pi_\ell}(y_1,\ldots,y_d)}^2}{\td{p}_{1,\pi_\ell}(y_1,\ldots,y_d)}dy_1\ldots dy_d\\
&\leq \sigma^d\sum_{k,\ell=p}^\infty\delta_{2k}\delta_{2\ell}\sum_{\substack{k_1+\ldots+k_d=2k\\ \ell_1+\ldots+\ell_d = 2\ell}} \parr*{\frac{\theta_n}{\sigma}}^{2k+2\ell}\prod_{j=1}^d\int \varphi\parr*{\frac{y_j}{\sigma}}\frac{H_{k_j}(y_j/\sigma)}{k_j!}\frac{H_{\ell_j}(y_j/\sigma)}{\ell_j!}dy_j\\
&= \sigma^{2d} \sum_{k,\ell=p}^\infty\delta_{2k}\delta_{2\ell}\sum_{\substack{k_1+\ldots+k_d=2k\\ \ell_1+\ldots+\ell_d = 2\ell}} \parr*{\frac{\theta_n}{\sigma}}^{2k+2\ell}\prod_{j=1}^d\int \varphi\parr*{y_j}\frac{H_{k_j}(y_j)}{k_j!}\frac{H_{\ell_j}(y_j)}{\ell_j!}dy_j\\
&\leq \sum_{k=p}^\infty \parr*{\frac{\theta_n}{\sigma}}^{4k}\delta_{2k}^2\sum_{k_1+\ldots+k_d=2k}\frac{1}{k_1!}\ldots\frac{1}{k_d!},
\end{align*} 
where the last inequality follows from the fact that $\sigma \leq 1$ and $\int \varphi(t)H_k(t)H_\ell(t)dt = k!\mathbbm{1}\parr*{k = \ell}$. Now, using the multinomial identity 
\begin{align*}
\sum_{k_1+\ldots+k_d = 2k}\frac{(2k)!}{k_1!\ldots k_d!}\parr*{\frac{1}{d}}^{2k} = 1,
\end{align*}
we obtain that $\sum_{k_1+\ldots+k_d=2k}1/(k_1!\ldots k_d!) = d^{2k}/(2k)!$, therefore it holds that
\begin{align*}
\chi^2(\P_{1,\pi_\ell}, \td{\P}_{1,\pi_\ell}) \leq \sum_{k=p}^\infty \parr*{\frac{\theta_n\sqrt{d}}{\sigma}}^{4k}\frac{1}{(2k)!}\delta_{2k}^2.
\end{align*}
Now, by Lemma \ref{lemma:moment_matching}, for any positive integer $p$,  we can find a symmetric distribution $\G$ that has the same first $p$ moments as the standard normal distribution and is compactly supported on $[-B,B]$ for some $B$ that only depends on $p$. This combined with the fact that $\int t^{2k}\varphi(t)dt = (2k-1)!!$ implies that $\delta_{2k}^2\lesssim B^{4k} + (2k)!$. We therefore obtain
\begin{align*}
\chi^2(\P_{1,\pi_\ell}, \td{\P}_{1,\pi_\ell}) \lesssim \sum_{k=p}^\infty \parr*{\frac{\theta_n\sqrt{d}}{\sigma}}^{4k} \lesssim n^{-\eta}
\end{align*}
by choosing a sufficiently large $p$ that only depends on $\alpha,\beta$ and $\eta$, where we also use the fact that $d$ is bounded by an absolute constant and for sufficiently large $n$, it holds that $\sigma > 1/2$. This completes the proof.
\end{proof}

\begin{lemma}
\label{lemma:cond_kull}
For some $M\geq 2$, let $\P_0,\P_1,\ldots, \P_M$ be $M+1$ hypotheses on some measurable space $(\cal{X}, \cal{A})$ such that for each $0 \leq i\neq j\leq M$, $\P_i$ and $\P_j$ are mutually absolutely continuous. Let $\Omega$ be a measurable subset of $\cal{X}$ such that $\P_j(\Omega)$ is identical for all $0\leq j\leq M$. Define the ``conditional" version of Kullback divergence as
\begin{align}
\label{eq:kull_cond}
K(\P,\Q; \Omega) \define \int_{\Omega} \log\parr*{\frac{d\P}{d\Q}}d\P.
\end{align}
Then, if
\begin{align*}
\frac{1}{M}\sum_{j=1}^MK(\P_j, \P_0; \Omega) \leq c^*\log(M)
\end{align*}
for some $0 < c^* < 1/8$, the following statement holds
\begin{align*}
\inf_{\psi}p_{e,M}(\psi) \geq \frac{\sqrt{M}}{1+\sqrt{M}}\parr*{\P_0(\Omega) - 2c^* - \sqrt{\frac{2c^*}{\log(M)}}},
\end{align*}  
where the infimum ranges over all tests taking values in $\{0,1,\ldots, M\}$ and $p_{e,M}(\psi)\define \max_{0\leq j\leq M}\P_j(\psi \neq j)$.
\end{lemma}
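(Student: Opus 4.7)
The plan is to reduce the statement to the unconditional Fano-type bound, namely Theorem 2.5 in \cite{tsybakov2009introduction}, by passing to conditional probability measures restricted to $\Omega$. Write $p \define \P_0(\Omega)$ and, for each $j \in \{0, 1, \ldots, M\}$, define the conditional probability measure on $(\cal{X}, \cal{A})$ by $\P_j^\Omega(A) \define \P_j(A \cap \Omega)/p$. These are well-defined probability measures (since $\P_j(\Omega) = p$ for all $j$ by hypothesis) and are mutually absolutely continuous because the original family is.

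The first step is to verify the key identity $K(\P_j^\Omega, \P_0^\Omega) = K(\P_j, \P_0; \Omega)/p$ for each $j \in [M]$. Since $d\P_j^\Omega/d\P_0^\Omega = d\P_j/d\P_0$ on $\Omega$, a direct computation gives
\begin{align*}
K(\P_j^\Omega, \P_0^\Omega) = \int_\Omega \log\parr*{\frac{d\P_j}{d\P_0}} d\P_j^\Omega = \frac{1}{p}\int_\Omega \log\parr*{\frac{d\P_j}{d\P_0}} d\P_j = \frac{1}{p}K(\P_j, \P_0; \Omega).
\end{align*}
Thus the hypothesis $\frac{1}{M}\sum_{j=1}^M K(\P_j, \P_0; \Omega) \leq c^*\log M$ translates into $\frac{1}{M}\sum_{j=1}^M K(\P_j^\Omega, \P_0^\Omega) \leq (c^*/p)\log M$.

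Assuming $p > 8c^*$ (the only interesting case, as otherwise $p - 2c^* - \sqrt{2c^*/\log M}$ is non-positive and the conclusion reduces to $\inf_\psi p_{e,M}(\psi) \geq 0$), the constant $c^*/p$ lies in $(0, 1/8)$, so Theorem 2.5 of \cite{tsybakov2009introduction} applies to the family $\{\P_j^\Omega\}_{j=0}^M$ and yields
\begin{align*}
\inf_\psi \max_{0 \leq j \leq M} \P_j^\Omega(\psi \neq j) \geq \frac{\sqrt M}{1+\sqrt M}\parr*{1 - \frac{2c^*}{p} - \sqrt{\frac{2c^*}{p\log M}}}.
\end{align*}

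Finally, the translation back to the original measures is immediate: for every test $\psi$, $\P_j(\psi \neq j) \geq \P_j(\{\psi \neq j\} \cap \Omega) = p \cdot \P_j^\Omega(\psi \neq j)$, so
\begin{align*}
\inf_\psi p_{e,M}(\psi) \geq \frac{p\sqrt M}{1+\sqrt M}\parr*{1 - \frac{2c^*}{p} - \sqrt{\frac{2c^*}{p\log M}}} = \frac{\sqrt M}{1+\sqrt M}\parr*{p - 2c^* - \sqrt{\frac{2c^*p}{\log M}}},
\end{align*}
which is at least the claimed RHS since $p \leq 1$. The only substantive step is the conditional-KL identity; otherwise the argument is a routine conditioning reduction, and the main obstacle — the boundary regime $p \leq 8c^*$ — is handled by observing that the stated lower bound is vacuous (non-positive) there.
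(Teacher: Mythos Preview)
Your reduction to conditional measures $\P_j^\Omega$ is a genuinely different and more economical route than the paper's. The paper does not pass to conditional measures; instead it re-derives the Fano-type bound from Proposition 2.2 in \cite{tsybakov2009introduction}, working directly with the unconditional $\P_j$ and establishing two auxiliary conditional inequalities along the way (a conditional Pinsker inequality $\TV(\P,\Q;\Omega)\le\sqrt{K(\P,\Q;\Omega)/2}$ and the bound $\int_\Omega p(\log(p/q))_- \leq \TV(\P,\Q;\Omega)$). Your argument sidesteps both of these by quoting Theorem 2.5 as a black box, and even yields the slightly sharper term $\sqrt{2c^* p/\log M}$ in place of $\sqrt{2c^*/\log M}$.

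There is, however, a real gap in your handling of the boundary regime. Your claim that $p - 2c^* - \sqrt{2c^*/\log M} \leq 0$ whenever $p \leq 8c^*$ is false: for $c^* = 1/10$, $p = 1/2$, and $M$ large, one has $p \leq 8c^* = 4/5$ yet the expression is close to $3/10$. The hypothesis $\alpha < 1/8$ in Theorem 2.5 is exactly what blocks the black-box application when $c^*/p \geq 1/8$, so this range of $p$ is not covered. The easy fix is to observe that the \emph{proof} of Theorem 2.5 (via Proposition 2.2 and Pinsker) nowhere uses the restriction $\alpha < 1/8$; its displayed conclusion holds formally for all $\alpha > 0$ and is merely non-positive for large $\alpha$. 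With that remark your argument goes through for every $p$ without a case split.
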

\begin{proof}
This is exactly the conditional version of Theorem 2.5 in \cite{tsybakov2009introduction}. We first show that subject to the condition
\begin{align*}
\frac{1}{M}\sum_{j=1}^MK(\P_j, \P_0; \Omega) \leq c
\end{align*}
for some $c > 0$, for all $0 < \tau < 1$, it holds that
\begin{align}
\label{eq:kull_condition}
\frac{1}{M}\sum_{j=1}^M\P_j\parr*{\frac{d\P_0}{d\P_j}\geq \tau} \geq \P_0(\Omega)- c^\prime,
\end{align}
where $c^\prime\define -(c+\sqrt{c/2})/\log(\tau)$. For this, we have for each $j\in[M]$
\begin{align*}
\P_j\parr*{\frac{d\P_0}{d\P_j}\geq \tau} &= \P_j\parr*{\frac{d\P_j}{d\P_0}\leq \frac{1}{\tau}}\\
&=1 - \bbrace*{\P_j\parr*{\bbrace*{\frac{d\P_j}{d\P_0}\geq \frac{1}{\tau}} \bigcap \Omega} + \P_j\parr*{\bbrace*{\frac{d\P_j}{d\P_0}\geq \frac{1}{\tau}} \bigcap \Omega^c}}\\
&\geq \P_0(\Omega) - \P_j\parr*{\bbrace*{\frac{d\P_j}{d\P_0}\geq \frac{1}{\tau}} \bigcap \Omega}\\
&= \P_0(\Omega) - \P_j\parr*{\bbrace*{\log\parr*{\frac{d\P_j}{d\P_0}}\geq \log\parr*{\frac{1}{\tau}}} \bigcap \Omega}\\
&\geq \P_0(\Omega) - (\log(1/\tau))^{-1}\E_{\P_j}\parr*{\log\parr*{\frac{d\P_j}{d\P_0}}_+\mathbbm{1}\bbrace*{\Omega}},
\end{align*}
where in the third line we use the fact that $\P_j(\Omega) = \P_0(\Omega)$, and for a real number $a$, $a_+ \define \max\{0,a\}$. Let $p_0$ and $p_j$ be the densities of $\P_0$ and $\P_j$ with respect to some common dominating measure. Then, by definition of the conditional Kullback divergence, Lemma \ref{lemma:upper_total}, and Lemma \ref{lemma:cond_pinsker}, it holds that
\begin{align*}
\E_{\P_j}\parr*{\log\parr*{\frac{d\P_j}{d\P_0}}_+\mathbbm{1}\bbrace*{\Omega}} &= \int_{\Omega} p_j\parr*{\log\frac{p_j}{p_0}}_+\\
&= K(\P_j, \P_0; \Omega) + \int_{\Omega} p_j\parr*{\log\frac{p_j}{p_0}}_-\\
&\leq K(\P_j, \P_0; \Omega) + \TV(\P_j, \P_0; \Omega)\\
&\leq K(\P_j, \P_0; \Omega) + \sqrt{K(\P_j, \P_0; \Omega)/2}.
\end{align*}
Now, by the condition $\sum_{j=1}^MK(\P_j, \P_0; \Omega)/M \leq c$ and Cauchy's inequality, it holds that
\begin{align*}
\frac{1}{M}\sum_{j=1}^M \sqrt{K(\P_j, \P_0; \Omega)}\leq \bbrace*{\frac{1}{M}\sum_{j=1}^M K(\P_j, \P_0; \Omega)}^{1/2} \leq \sqrt{c}.
\end{align*}
We therefore conclude that \eqref{eq:kull_condition} is true.
Next, by Proposition 2.2 in \cite{tsybakov2009introduction}, we obtain that
\begin{align*}
\inf_{\psi}p_{e,M}(\psi) \geq \sup_{0<\tau<1}\frac{\tau M}{\tau M+ 1}\bbrace*{\frac{1}{M}\sum_{j=1}^M \P_j\parr*{\frac{d\P_0}{d\P_j}\geq \tau}} \geq \sup_{0<\tau<1}\frac{\tau M}{\tau M+ 1}\parr*{\P_0(\Omega) + \frac{c + \sqrt{c/2}}{\log \tau}}.
\end{align*}
Lastly, by choosing $c = c^*\log M$ and $\tau = 1/\sqrt{M}$, we obtain
\begin{align*}
\inf_{\psi}p_{e,M}(\psi) &\geq \sup_{0<\tau<1}\frac{\tau M}{\tau M+ 1}\bbrace*{\frac{1}{M}\sum_{j=1}^M \P_j\parr*{\frac{d\P_0}{d\P_j}\geq \tau}}\\
 &\geq \frac{\sqrt{M}}{1+\sqrt{M}}\parr*{\P_0(\Omega)-2c^* - \sqrt{\frac{2c^*}{\log M}}}.
\end{align*}
This completes the proof.
\end{proof}

\begin{lemma}
\label{lemma:upper_total}
Let $\P$ and $\Q$ be two probability measures on a measurable space $(\cal{X},\cal{A})$ such that $\P\ll \Q$, and $\Omega$ be a measurable subset of $\cal{X}$. Define the conditional version of the total variation distance as follows
\begin{align}
\label{eq:tv_cond}
\TV(\P, \Q; \Omega) \define \sup_{A\in\cal{A}}\abs*{\P(A\bigcap \Omega) - \Q(A\bigcap \Omega)}.
\end{align}
Then, it holds that
\begin{align*}
\int_{\Omega} \parr*{\log\parr*{\frac{d\P}{d\Q}}}_- \leq \TV(\P,\Q; \Omega),
\end{align*}
where $a_- \define \max\{0, -a\}$.
\end{lemma}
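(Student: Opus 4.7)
The plan is to concentrate the integral on the set where $d\P/d\Q < 1$, then use a pointwise bound to collapse the integrand into a difference of measures on that set. Write $f := d\P/d\Q$, so that $d\P = f\, d\Q$ and, by absolute continuity, $\{f = 0\}$ carries no $\P$-mass. Since $\parr*{\log f}_- = 0$ on $\{f \geq 1\}$, the integral of interest reduces to
\[
\int_\Omega \parr*{\log f}_- d\P \;=\; \int_B (-\log f)\, f\, d\Q, \qquad B := \Omega \cap \{0 < f < 1\}.
\]

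The heart of the argument is the elementary inequality $-f \log f \leq 1 - f$ for $f \in (0,1]$, which follows from $g(f) := 1 - f + f\log f$ satisfying $g(1) = 0$ and $g'(f) = \log f < 0$ on $(0,1)$. Substituting into the display above gives
\[
\int_\Omega \parr*{\log f}_- d\P \;\leq\; \int_B (1 - f)\, d\Q \;=\; \Q(B) - \P(B),
\]
using $\P(B) = \int_B f\, d\Q$. Since $f < 1$ throughout $B$, we have $\P(B) \leq \Q(B)$, so the right-hand side coincides with $\abs*{\P(B) - \Q(B)}$.

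To conclude, I would observe that $B = A \cap \Omega$ for the measurable set $A := \{0 < f < 1\}$, whence
\[
\abs*{\P(B) - \Q(B)} \;\leq\; \sup_{A' \in \cal{A}} \abs*{\P(A' \cap \Omega) - \Q(A' \cap \Omega)} \;=\; \TV(\P, \Q; \Omega).
\]
There is no genuine obstacle here: the proof is essentially a one-liner once the pointwise bound $-f\log f \leq 1-f$ is identified. The only subtlety worth noting is the boundary behavior at $\{f = 0\}$, where $\parr*{\log f}_-$ is infinite but $d\P$ vanishes by $\P \ll \Q$, so under the standard convention $0 \cdot \infty = 0$ this set contributes nothing to the integral and can be absorbed silently into $B$.
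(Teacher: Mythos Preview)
Your proof is correct and follows essentially the same route as the paper's: restrict to the set where $f = d\P/d\Q < 1$, apply the elementary bound $-f\log f \leq 1-f$ (the paper writes this as $p\log(q/p)\leq q-p$ on $\{q\geq p>0\}$), and recognize the resulting $\Q(B)-\P(B)$ as bounded by the conditional total variation.
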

\begin{proof}
Let $p$ and $q$ be the densities of $\P$ and $\Q$ with respect to some common dominating measure, and define $A\define \{q\geq p > 0\}$. Then, it holds that
\begin{align*}
\int_{\Omega}  \parr*{\log\parr*{\frac{d\P}{d\Q}}}_-d\P &= \int_{\Omega \bigcap \{p,q>0\}} p\parr*{\log\frac{p}{q}}_- = \int_{A\bigcap\Omega} p\log\frac{q}{p} \leq \int_{A\bigcap\Omega} q - p \leq \TV(\P,\Q; \Omega).
\end{align*}
This completes the proof.
\end{proof}

\begin{lemma}
\label{lemma:cond_pinsker}
Let $\P$ and $\Q$ be two probability measures on a measurable space $(\cal{X},\cal{A})$ such that $\P\ll \Q$, and $\Omega$ be a measurable subset of $\cal{X}$ such that $\P(\Omega) = \Q(\Omega)$. For the conditional version of the Kullback divergence (defined in \eqref{eq:kull_cond}) and total variation distance (defined in \eqref{eq:tv_cond}),
it holds that
\begin{align*}
\TV(\P,\Q; \Omega) \leq \sqrt{K(\P,\Q;\Omega)/2}.
\end{align*}
\end{lemma}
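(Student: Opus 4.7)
The plan is to reduce this conditional Pinsker inequality to the classical (unconditional) Pinsker inequality by a conditioning argument. I would first introduce the conditional probability measures $\P_\Omega(\cdot) := \P(\cdot\cap\Omega)/\P(\Omega)$ and $\Q_\Omega(\cdot) := \Q(\cdot\cap\Omega)/\Q(\Omega)$, which are genuine probability measures provided $\P(\Omega) = \Q(\Omega) > 0$ (the boundary case $\P(\Omega) = 0$ makes both sides of the target inequality trivially zero).

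Next I would verify two short bookkeeping identities linking the conditional versions of TV and KL to their restricted counterparts. For the total variation side, a direct computation using the hypothesis $\P(\Omega) = \Q(\Omega)$ yields
\begin{align*}
\TV(\P,\Q;\Omega) = \sup_{A\in\cal{A}}\abs*{\P(A\cap\Omega) - \Q(A\cap\Omega)} = \P(\Omega)\cdot \TV(\P_\Omega,\Q_\Omega).
\end{align*}
For the Kullback side, since $\P \ll \Q$ and the normalizing constants $\P(\Omega), \Q(\Omega)$ coincide and cancel, the Radon--Nikodym derivative $d\P_\Omega/d\Q_\Omega$ agrees with $d\P/d\Q$ on $\Omega$, so that
\begin{align*}
K(\P_\Omega,\Q_\Omega) = \int_\Omega \log\parr*{\frac{d\P}{d\Q}} d\P_\Omega = \frac{1}{\P(\Omega)}\,K(\P,\Q;\Omega).
\end{align*}

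The last step is to apply the classical Pinsker inequality $\TV(\P_\Omega,\Q_\Omega) \leq \sqrt{K(\P_\Omega,\Q_\Omega)/2}$ to the \emph{genuine} probability measures $\P_\Omega, \Q_\Omega$, and then chain together the two identities above:
\begin{align*}
\TV(\P,\Q;\Omega) = \P(\Omega)\,\TV(\P_\Omega,\Q_\Omega) \leq \P(\Omega)\sqrt{\frac{K(\P,\Q;\Omega)}{2\,\P(\Omega)}} \leq \sqrt{\frac{K(\P,\Q;\Omega)}{2}},
\end{align*}
where the final inequality uses $\P(\Omega) \leq 1$. There is no real analytic obstacle here; the only delicate point is the bookkeeping surrounding the Radon--Nikodym derivative under restriction, which is precisely where the hypothesis $\P(\Omega) = \Q(\Omega)$ is used essentially --- without it the normalizing constants fail to cancel and neither identity above remains clean.
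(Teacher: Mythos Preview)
Your proof is correct, but it takes a different route from the paper's. The paper does not pass to the conditional measures $\P_\Omega,\Q_\Omega$; instead it first verifies the $L^1$ characterization $\TV(\P,\Q;\Omega)=\tfrac{1}{2}\int_\Omega |p-q|$ (this is where $\P(\Omega)=\Q(\Omega)$ enters), and then reruns the standard Pinsker computation directly on the domain $\Omega$: it uses the pointwise bound $|x-1|^2\le \bigl(\tfrac{4}{3}+\tfrac{2x}{3}\bigr)\psi(x)$ with $\psi(x)=x\log x - x + 1$, applies Cauchy--Schwarz, and observes that $\int_\Omega(\tfrac{4q}{3}+\tfrac{2p}{3})=2\P(\Omega)$ while $\int_\Omega q\,\psi(p/q)=K(\P,\Q;\Omega)$, both identities again using $\P(\Omega)=\Q(\Omega)$.

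Your conditioning argument is cleaner and more modular: it treats classical Pinsker as a black box rather than replaying its proof, and the two bookkeeping identities you state are exactly right. One small bonus of your approach is that it makes transparent why $K(\P,\Q;\Omega)\ge 0$ under the hypothesis $\P(\Omega)=\Q(\Omega)$, since it equals $\P(\Omega)\cdot K(\P_\Omega,\Q_\Omega)$. The paper's approach, on the other hand, is self-contained and does not require the reader to look up Pinsker elsewhere. Both arrive at the same slightly sharper intermediate bound $\TV(\P,\Q;\Omega)\le \sqrt{\P(\Omega)/2}\cdot K(\P,\Q;\Omega)^{1/2}$ before discarding the factor $\sqrt{\P(\Omega)}\le 1$.
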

\begin{proof}
Firstly, using the condition $\P(\Omega) = \Q(\Omega)$, it can be readily verified that the conditional total variation distance can be equivalently written as
\begin{align*}
\TV(\P,\Q; \Omega) = \frac{1}{2}\int_{\Omega}|p - q|,
\end{align*}
where $p$ and $q$ are the densities of $\P$ and $\Q$ with respect to some common dominating measure (cf. Lemma 2.1 in \cite{tsybakov2009introduction}). Then, following the proof of the first Pinsker's inequality in Lemma 2.5 in \cite{tsybakov2009introduction}, it holds that
\begin{align*}
\TV(\P,\Q;\Omega) &= \frac{1}{2}\int_{\Omega} \abs*{p-q}\\
&= \frac{1}{2}\int_{\Omega\bigcap\{q>0\}} \abs*{\frac{p}{q}-1}q\\
&\leq \frac{1}{2}\int_{\Omega\bigcap\{q>0\}} q\sqrt{\parr*{\frac{4}{3} + \frac{2p}{3q}}\psi(\frac{p}{q})}\\
&\leq \frac{1}{2}\bbrace*{\int_{\Omega}\parr*{\frac{4q}{3}+\frac{2p}{3}}}^{1/2}\bbrace*{\int_{\Omega} q\parr*{\frac{p}{q}\log\frac{p}{q} - \frac{p}{q} + 1}}^{1/2}\\
&= \sqrt{\frac{\P(\Omega)}{2}}K^{1/2}(\P,\Q; \Omega)\\
&\leq \sqrt{K(\P,\Q; \Omega)/2},
\end{align*}
where $\psi(x) \define x\log x - x + 1$. This completes the proof.
\end{proof}

\section{Proofs of results in Section \ref{sec:discussion}}
We only provide the proofs for Propositions \ref{prop:multivariate_upper}, \ref{prop:multivariate_lower}, \ref{prop:additive_fixed_GD}, \ref{prop:additive_random_independent}, \ref{prop:additive_random_known}-\ref{prop:cv_lower}. The proofs of Propositions \ref{prop:multivariate_fixed_DD}, \ref{prop:additive_fixed_DD}, \ref{prop:additive_random_lower}, and \ref{prop:quad_est} are straightforward, and the proof of Proposition \ref{prop:multivariate_random_known} is similar to that of Proposition \ref{prop:additive_random_known}.

\subsection{Proof of Proposition \ref{prop:multivariate_upper}}
\begin{proof}

Given the proof of Theorem \ref{thm:nonpar_upper} and its supporting lemmas, the proof here is relatively straightforward. We only provide here a sketched version for completeness. For simplicity, we only prove the case with $d = 2$, and we will show that the desired upper bound can be achieved with the bandwidths choices 
\begin{align*}
h_1\asymp n^{-2\alpha_2/(4\alpha_1\alpha_2 + \alpha_1 + \alpha_2)} \text{ and } h_2\asymp n^{-2\alpha_1/(4\alpha_1\alpha_2 + \alpha_1 + \alpha_2)}.
\end{align*}
$C$ and $c$ still represent two generic fixed positive constants whose values may change at each occurrence. 

Define $U_1,U_2, \theta_1, \theta_2$ and the ``good" event $\cal{E}$ the same way as in Theorem \ref{thm:nonpar_upper}. Following its proof, we now lower bound $\theta_2$ and upper bound the term $\abs*{\theta_1 - \theta_2\sigma^2}$. For $\theta_2$, we have
\begin{align*}
\theta_2 &= \E\bbrace*{K_{h_1}(X_{i,1}-X_{j,1})K_{h_2}(X_{i,2}-X_{j,2})}\\
&= \int_{\RR^2} \frac{1}{h_1h_2}K\parr*{\frac{u}{h_1}}K\parr*{\frac{v}{h_2}}p_{\bXX}(u,v)dudv\\
&= \int_{\RR^2} K(u)K(v)p_{\bXX}(uh_1, vh_2)dudv\\
&= \int_{-1}^1 \int_{-1}^1 K(u)K(v)p_{\bXX}(uh_1, vh_2)dudv\\
&\geq \int_{\cal{U}_{(h_1,h_2)}} K(u)K(v)p_{\bXX}(uh_1, vh_2)dudv\\
&\geq \mbf{\lambda}(\cal{U}_{(h_1,h_2)})\inf_{\mbf{u}\in\cal{U}_{(h_1,h_2)}}p_{\mbf{\XX}}(u_1h_1,u_2h_2)\under{M}_K\\
&\geq c_0^2\under{M}_K.
\end{align*}
Here, the fourth equality follows from the kernel condition that $K(\cdot)$ is supported in $[-1,1]$, and the set $\cal{U}_{(h_1,h_2)}$ starting from the first inequality follows from Condition (b) in $\cal{P}_{\text{\tiny{mcv}},(\mbf{X},\varepsilon)}$ since for any fixed $\delta_0 > 0$ chosen therein, $\|\mbf{\delta}\|_\infty\define \|(h_1, h_2)\|_\infty\leq \delta_0$ for sufficiently large $n$. For $\abs*{\theta_1 - \theta_2\sigma^2}$, using the condition $\alpha_i\in(0,1]$, $i=1,2$, it holds that
\begin{align*}
&\ms\abs*{\theta_1 - \theta_2\sigma^2}\\
&= \E\bbrace*{K_{h_1}(\td{X}_{ij,1})K_{h_2}(\td{X}_{ij,2})(f(\mbf{X}_i)-f(\mbf{X}_j))^2/2}\\
& \lesssim\E\bbrace*{\frac{1}{h_1h_2}K\parr*{\frac{\td{X}_{ij,1}}{h_1}}K\parr*{\frac{\td{X}_{ij,2}}{h_2}}\parr*{\abs*{\td{X}_{ij,1}}^{2\alpha_1} + \abs*{\td{X}_{ij,2}}^{2\alpha_2}}}\\
&= \int \frac{1}{h_1h_2}K\parr*{\frac{u}{h_1}}K\parr*{\frac{v}{h_1}}\parr*{\abs*{u}^{2\alpha_1} + \abs*{v}^{2\alpha_2}}p_{\bXX}(u,v)dudv\\
&= h_1^{2\alpha_1}\int K(u)K(v)|u|^{2\alpha_1}p_{\bXX}(uh_1,vh_2)dudv + \\
&\ms h_2^{2\alpha_2}\int K(u)K(v)|v|^{2\alpha_2}p_{\bXX}(uh_1,vh_2)dudv\\
&\leq C_0\parr*{h_1^{2\alpha_1}\int K(u)K(v)|u|^{2\alpha_1}dudv + h_2^{2\alpha_2}\int K(u)K(v)|v|^{2\alpha_2}dudv}\\
&\leq C(h_1^{2\alpha_1} + h_2^{2\alpha_2}),
\end{align*}
where we have applied Condition (a) in $\cal{P}_{\text{\tiny{mcv}},(\mbf{X},\varepsilon)}$ and the compact support of $K(\cdot)$. Therefore, Lemmas \ref{lemma:d_concentrate_1} and \ref{lemma:d_concentrate_2} and the above estimates imply that 
\begin{align*}
\E\bbrace*{\parr*{\frac{U_1 - U_2\sigma^2}{U_2}}^2\mathbbm{1}\{\cal{E}\}} \lesssim (h_1^{4\alpha_1} + h_2^{4\alpha_2} + n^{-1} + n^{-2}(h_1h_2)^{-1}) \asymp (n^{-\frac{8\alpha_1\alpha_2}{4\alpha_1\alpha_2 + \alpha_1 + \alpha_2}} + n^{-1}).
\end{align*}

Moreover, using the same argument as in the proof of Theorem \ref{thm:nonpar_upper}, it holds that
\begin{align*}
\E\bbrace*{\parr*{\frac{U_1 - U_2\sigma^2}{U_2}}^2\mathbbm{1}\{\cal{E}^c\}} = o(n^{-\frac{8\alpha_1\alpha_2}{4\alpha_1\alpha_2 + \alpha_1 + \alpha_2}} + n^{-1}).
\end{align*}
This completes the proof for $d = 2$.
In the case of general dimension $d$ with heterogeneous smoothness index $\mbf{\alpha} = (\alpha_1, \ldots, \alpha_d)^\top$, the upper bound takes the form
\begin{align*}
\E\parr*{\hat{\sigma}_d^2 - \sigma^2}^2\lesssim n^{-1} + n^{-2}\parr*{\prod_{k=1}^dh_k}^{-1} + \sum_{k=1}^d h_k^{4\alpha_k}
\end{align*}
and we choose $h_k \asymp n^{-2\under{\alpha}/(\alpha_k(4\under{\alpha}+d))}$. This completes the proof.
\end{proof}

\subsection{Proof of Proposition \ref{prop:multivariate_lower}}
\begin{proof}
Given the proof of Theorem \ref{thm:nonpar_lower}, the proof here is relatively straightforward. We will thus only present the construction of the hardest sub-problem. For simplicity, we will only prove the case for $d = 2$. We also only consider the regime of $(\alpha_1,\alpha_2)$ in which the lower bound is sub-parametric: $4\alpha_1\alpha_2  < \alpha_1+ \alpha_2$. Throughout the proof, $C$ represents some generic positive constant and does not depend on $n$, and $c$ represents a generic sufficiently small positive constant which also does not depend on $n$. In particular, $c$ is always taken to be smaller than $1$. Both $C$ and $c$ might have different values for each occurrence.

Introduce the following constants:
\begin{align}
\label{eq:constraint}
\theta_n^2 \define h_1^{2\alpha_1} \define h_2^{2\alpha_2} \define cn^{-\frac{4\alpha_1\alpha_2}{4\alpha_1\alpha_2 + \alpha_1 + \alpha_2}}, \quad N_1 \define 1/(6h_1), \quad N_2 \define 1/(6h_2),
\end{align}
where we tune the constant $c$ in $\theta_n^2$ so that $N_1$ and $N_2$ are both positive integers. We now specify $f(\cdot)$, distribution of $\mbf{X}$, $\sigma^2$ and distribution of $\varepsilon$ in the null and alternative hypotheses, $H_0$ and $H_1$, respectively.

\begin{itemize}
\item[]\emph{Choice of $\sigma^2$}: Under $H_0$, let $\sigma^2 = 1 + \theta_n^2$. Under $H_1$, let $\sigma^2 = 1$.
\item[]\emph{Choice of $\varepsilon$}: Under both $H_0$ and $H_1$, let $\varepsilon\sim\cal{N}(0,1)$.
\item[]\emph{Choice of $\mbf{X}$}: Under both $H_0$ and $H_1$, let $\mbf{X}$ be uniformly distributed on the union of the rectangles $[(6i_1-5)h_1, (6i_1-1)h_1]\times [(6i_2-5)h_2, (6i_2-1)h_2]$ for $i_1\in[N_1]$ and $i_2\in[N_2]$.
\item[]\emph{Choice of $f(\cdot)$}: Under $H_0$, let $f\equiv 0$. Under $H_1$, let $f$ be a smooth bump function that takes value $\theta_nr_{i_1,i_2}$ on the rectangle $[(6i_1-5)h_1, (6i_1-1)h_1]\times [(6i_2-5)h_2, (6i_2-1)h_2]$, and then smoothly decays to 0 on the union of the segments $\{x_1 = 6(i_1-1)h_1, 0 \leq x_2\leq 1\}$ for $i_1\in[N_1]$ and $\{0\leq x_1\leq 1, x_2 = 6(i_2-1)h_2\}$ for $i_2\in[N_2]$. Here, the double indexed sequence $\{r_{i_1,i_2}\}_{i_1\in[N_1], i_2\in[N_2]}$ are $N_1\times N_2$ i.i.d. symmetric and compactly supported random variables with distribution $\G$ satisfying
\begin{align*}
\int_{-\infty}^\infty x^j\G(dx) = \int_{-\infty}^\infty x^j\varphi(x)dx, \quad j=1,\ldots, q,
\end{align*}
where $q$ is some odd integer strictly larger than $1+(\alpha_1+\alpha_2)/(2\alpha_1\alpha_2)$.
\end{itemize}
In the definition of $f(\cdot)$ under $H_1$, the existence of the distribution $G$ is guaranteed by Lemma \ref{lemma:moment_matching} and its range only depends on $\alpha_1$ and $\alpha_2$. The smoothness property of $f(\cdot)$ can be achieved by mollifying an indicator function.

We only verify Condition (c) in $\cal{P}_{\text{\tiny{mcv}},(\mbf{X},\varepsilon)}$, which holds by the convolution formula that for any $0\leq u\leq 1/2$ and $0 \leq v\leq 1/2$
\begin{align*}
p_{\bXX}(u,v) &= \int_{u}^1\int_v^1 p_{\mbf{X}}(t_1,t_2)p_{\mbf{X}}(t_1-u, t_2-v)dt_1dt_2\\
&\geq \sum_{i_1=\ceil{u/(6h_1)}+1}^{N_1}\sum_{i_2=\ceil{v/(6h_2)}+1}^{N_2}\int_{(6i_1-5)h_1}^{(6i_1-1)h_1}\int_{(6i_2-5)h_2}^{(6i_2-1)h_2} p_{\mbf{X}}(t_1,t_2)p_{\mbf{X}}(t_1-u, t_2-v)dt_1dt_2\\
&\geq \sum_{i_1=\ceil{u/(6h_1)}+1}^{N_1}\sum_{i_2=\ceil{v/(6h_2)}+1}^{N_2}(2h_1)(2h_2)\cdot\frac{9}{4}\cdot\frac{9}{4}\\
&\geq \frac{9}{256}
\end{align*}
for sufficiently large $n$. A similar calculation holds for all $|u|\leq 1/2$ and $|v|\leq 1/2$. Therefore, Condition (c) holds with $\delta_0 = 1/2$ and $\cal{U}_{\mbf{\delta}}\equiv[-1,1]^2$.
\end{proof}

\subsection{Proof of Proposition \ref{prop:additive_fixed_GD}}
\begin{proof}
We employ an iterative usage of pairwise difference. Under the regular design, we have $Y_{i_1,\ldots,i_d} = \sum_{k=1}^d f_k(i_k / n^{1/d}) + \sigma\varepsilon_{i_1,\ldots,i_d}$ for $(i_1,\ldots,i_d)\in [n^{1/d}]\times\ldots\times[n^{1/d}]$, where we assume without loss of generality that $n^{1/d}$ is an even integer. Let $m \define n^{1/d}/2$ and define $\cal{I} \define \{(1,2),\ldots, (2m-1, 2m)\}$ with cardinality $m$. For all index pairs $(i^{(1)}_k, i^{(2)}_k)\in \cal{I}$, $k\in[d]$, we have
\begin{align*}
Y_{(i^{(1)}_1,i^{(2)}_1),\ldots,(i^{(1)}_d,i^{(2)}_d)} &\define \sum_{j_k\in\{i_k^{(1)},i_k^{(2)}\},k\in[d]} Y_{j_1,\ldots, j_d}(-1)^{\sum_{k=1}^d \mathbbm{1}\{j_k = i_k^{(1)}\}}\\
& = \sum_{j_k\in\{i_k^{(1)},i_k^{(2)}\},k\in[d]} \sigma\varepsilon_{j_1,\ldots, j_d}(-1)^{\sum_{k=1}^d \mathbbm{1}\{j_k = i_k^{(1)}\}}.
\end{align*}
Clearly, we have $\E\parr*{Y_{(i^{(1)}_1,i^{(2)}_1),\ldots,(i^{(1)}_d,i^{(2)}_d)}} = 0$ and $\var\parr*{Y_{(i^{(1)}_1,i^{(2)}_1),\ldots,(i^{(1)}_d,i^{(2)}_d)}} = 2^d\sigma^2$. More importantly, the newly formed data sequence $\{Y_{(i^{(1)}_1,i^{(2)}_1),\ldots,(i^{(1)}_d,i^{(2)}_d)}\}_{(i_k^{(1)},i^{(2)}_k)\in\cal{I}, k\in[d]}$ with cardinality $m^d = n/2^d$ is i.i.d. with mean $0$ and variance $2^d\sigma^2$. Therefore, by defining $\bar{Y}$ to be the average of this newly formed data sequence and $\widehat{\sigma}^2$ to be 
\begin{align*}
\widehat{\sigma}^2 \define \frac{1}{n}\sum_{(i^{(1)}_k, i^{(2)}_k)\in\cal{I}, k\in[d]} \parr*{Y_{(i^{(1)}_1,i^{(2)}_1),\ldots,(i^{(1)}_d,i^{(2)}_d)} - \bar{Y}}^2,
\end{align*}
we clearly have $\E\parr*{\widehat{\sigma}^2 - \sigma^2}^2 \lesssim n^{-1}$ for some absolute positive constant $C$ under a finite fourth moment assumption, which is clearly not improvable. Thus the proof is complete.
\end{proof}

\subsection{Proof of Proposition \ref{prop:additive_random_independent}}
\begin{proof}
Throughout the proof, $C$ represents a positive constant that only depends $\mbf{\alpha}$ and $C_0$. Following the argument before the statement of Proposition \ref{prop:additive_random_independent}, define 
\begin{align*}
\varepsilon_i^{(\ell)} \define \sum_{k\in[d], k\neq \ell}f_k(X_{i,k}) + \varepsilon_i
\end{align*}
and its variance
\begin{align*}
\sigma^2_{(\ell)}\define \sum_{k\in[d],k\neq \ell}\E f^2_k(X_{i,k}) + \sigma^2
\end{align*}
for all $\ell\in[d]$. Clearly, under the mutual independence of the components of $(X_{i,1},\ldots, X_{i,d})$, it holds that $\E\varepsilon_i^{(\ell)} = 0$ and $\varepsilon_i^{(\ell)}$ is independent of $f_\ell(X_{i,\ell})$. For each $\ell\in[d]$, by viewing the model equivalently as $Y_i = f_{\ell}(X_{i,\ell}) + \varepsilon^{(\ell)}_i$ for $i\in[n]$ and then applying the univariate kernel smoother defined in \eqref{eq:nonpar_estimator}, which renders an estimator which we denote as $\widehat{\sigma}^2_{(\ell)}$, we obtain by Theorem \ref{thm:nonpar_upper}
\begin{align*}
\E\parr*{\widehat{\sigma}^2_{(\ell)} - \sigma^2_{(\ell)}}^2 = \E\parr*{\widehat{\sigma}^2_{(\ell)} - \sum_{k\in[d],k\neq \ell}\E f_k^2(W_{i,k}) - \sigma^2}^2 \leq Cn^{-\frac{8\alpha_\ell}{4\alpha_\ell+1}}.
\end{align*}
Moreover, letting $\bar{Y}$ be the average of $\{Y_i\}_{i=1}^n$ and $\widehat{\sigma}^2_Y$ be the sample variance estimator, that is, $\widehat{\sigma}^2_Y \define \sum_{i=1}^n (Y_i - \bar{Y})^2/n$, it holds that
\begin{align*}
\E\parr*{\widehat{\sigma}^2_Y - \var(Y)}^2 = \E\parr*{\widehat{\sigma}^2_Y - \sum_{k\in[d]}\E f_k^2(X_{i,k}) - \sigma^2}^2 \leq Cn^{-1}.
\end{align*}
Since $\sigma^2 = \sum_{\ell=1}^d \sigma^2_{(\ell)} - (d-1)\var(Y)$, thus by defining $\hat{\sigma}^2 \define \sum_{\ell=1}^d \widehat{\sigma}^2_{(\ell)} - (d-1)\widehat{\sigma}^2_Y$, we obtain that
\begin{align*}
\E\parr*{\widehat{\sigma}^2 - \sigma^2}^2 &= \E\bbrace*{\parr*{\widehat{\sigma}^2_{(1)} - \sigma^2_{(1)}} + \ldots + \parr*{\widehat{\sigma}^2_{(d)} - \sigma^2_{(d)}} + (d-1)\parr*{\widehat{\sigma}^2_Y - \var(Y)}}^2\\
&\leq C\parr*{n^{-\frac{8\alpha_{\min}}{4\alpha_{\min}+1}} + n^{-1}}.
\end{align*}
This completes the proof.
\end{proof}


\subsection{Proof of Proposition \ref{prop:additive_random_known}}
\begin{proof}
For simplicity, we only prove the case with two additive components $f(X)$ and $g(W)$ which are $\alpha$- and $\beta$-H\"older smooth, respectively. Throughout the proof, $C$ represents a generic fixed positive constant that only depends on $\alpha,\beta, C_0$ and the joint distribution of $(X,W)$. Denote the marginal distribution of $X$ and $W$ as $F_X$ and $F_W$. Since the transition boundary for both $\alpha$ and $\beta$ is $1/4$, we may assume without of loss of generality that $0 < \alpha, \beta < 1$. As a result, since $F^{-1}_X$ and $F^{-1}_W$ are both Lipschitz with fixed positive constants, $\bar{f}\define f\circ F^{-1}_X$ and $\bar{g}\define g\circ F^{-1}_W$ are still $\alpha$- and $\beta$-H\"older smooth. With a standard wavelet expansion (cf. Proposition 2.5 in \cite{meyer1990ondelettes}), we can write the model equivalently as
\begin{align*}
Y_i = \bar{f}_1(U_{1,i}) + \sum_{j=1}^{2^{J_1}} \psi_j(U_{1,i})\gamma_{1,j} + \bar{g}_1(U_{2,i}) + \sum_{j=1}^{2^{J_2}} \varphi_j(U_{2,i})\gamma_{2,j} + \sigma\varepsilon_i,
\end{align*}
where $\{\psi_j\}_{j=1}^{\infty}$ and $\{\varphi_j\}_{j=1}^{\infty}$ are two sets of orthonormal wavelet basis (with respect to the Lebesgue measure on $[0,1]$), $\{U_{1,i}\}_{i=1}^n = \{F_X(X_i)\}_{i=1}^n$ and $\{U_{2,i}\}_{i-=1}^n = \{F_W(W_i)\}_{i=1}^n$ are two uniform $[0,1]$ sequences, and $\|\bar{f}_1\|_\infty \leq C(2^{-\alpha J_1})$ and $\|\bar{g}_1\|_\infty \leq C(2^{-\beta J_2})$. Define $\mbf{U}_i \define (\psi_1(U_{1,i}), \ldots, \psi_{2^{J_1}}(U_{1,i}), \varphi_1(U_{2,i}),\ldots, \varphi_{2^{J_2}}(U_{2,i}))$ as the new feature vector of length $2^{J_1} + 2^{J_2}$. Without loss of generality, we assume $\E\mbf{U}_i = 0$ (a mean shift does not affect the estimation of variance) and let $\mbf{\Sigma} \define \cov(\mbf{U}_i)$. Without loss of generality, we can assume $\mbf{\Sigma}$ is strictly positive definite (otherwise we can orthogonalize with respect to the linear span of $(\psi_1(U_{1,i}), \ldots, \psi_{2^{J_1}}(U_{2,i}), \varphi_1(U_{1,i}),\ldots, \varphi_{2^{J_2}}(U_{2,i}))$ in \eqref{eq:additive_new_model} below), and thus it holds that 
\begin{align}
\label{eq:additive_new_model}
Y_i = \mbf{V}_i^\top \mbf{\gamma} + \bar{f}_1(U_{1,i}) + \bar{g}_1(U_{2,i}) + \sigma\varepsilon_i,
\end{align}
where $\mbf{\gamma} \define \mbf{\Sigma}^{1/2}(\gamma_{1,1},\ldots,\gamma_{1,2^{J_1}},\gamma_{2,1},\ldots,\gamma_{2,2^{J_2}})$, and $\mbf{V}_i\define \mbf{\Sigma}^{-1/2}\mbf{U}_i$.

We now calculate the bias and variance of the estimator $\widehat{\sigma}^2_{\tiny{\text{proj}},\tiny{\text{add}}}$ defined as
\begin{align*}
\widehat{\sigma}^2_{\tiny{\text{proj}},\tiny{\text{add}}} \define \frac{1}{n-1}\sum_{i=1}^n(Y_i - \bar{Y})^2 - {n\choose 2}^{-1}\sum_{i<j}Y_iY_j\mbf{V}_i^\top\mbf{V}_i.
\end{align*}
Direct calculation shows that
\begin{align*}
\abs*{\widehat{\sigma}^2_{\tiny{\text{proj}},\tiny{\text{add}}} - \sigma^2} &= \abs*{\E \bar{f}_1^2(U_1) + \E \bar{g}_1^2(U_2) + 2\E\parr*{\bar{f}_1(U_1)\bar{g}_1(U_2)} - \nm*{\E\parr*{(\bar{f}_1(U_1) - \bar{g}_1(U_2))\mbf{V}}}_2^2}\\
&\lesssim \E \bar{f}_1^2(U_1) + \E \bar{g}_1^2(U_2) + \nm*{\E\parr*{\bar{f}_1(U_1)\mbf{V}}}_2^2 + \nm*{\E\parr*{\bar{g}_1(U_2)\mbf{V}}}_2^2\\
&\lesssim 2^{-2\alpha J_1} + 2^{-2\beta J_2} +  \nm*{\E\parr*{\bar{f}_1(U_1)\mbf{V}}}_2^2 + \nm*{\E\parr*{\bar{g}_1(U_2)\mbf{V}}}_2^2.
\end{align*} 
Moreover, we have
\begin{align*}
\nm*{\E\parr*{\bar{f}_1(U_1)\mbf{V}}}_2^2 = \sup_{\|\mbf{a}\|\leq 1} \parr*{\E\parr*{\bar{f}_1(U_1)\mbf{V}^\top \mbf{a}}}^2 \leq \E \bar{f}_1^2(U_1)\cdot\sup_{\|\mbf{a}\|\leq 1} \bbrace*{\mbf{a}^\top\E(\mbf{V}\mbf{V}^\top)\mbf{a}} \lesssim 2^{-2\alpha J_1},
\end{align*}
where the last inequality again follows by the identity covariance of $\mbf{V}$. Similarly, it holds that $\nm*{\E\parr*{\bar{g}_1(U_2)\mbf{V}}}_2^2 \lesssim 2^{-2\beta J_2}$. We therefore conclude that the bias of $\widehat{\sigma}^2_{\tiny{\text{proj}},\tiny{\text{add}}}$ is smaller than the order $2^{-2\alpha J_1} + 2^{-2\beta J_2}$. 

Next, we calculate the variance of $\widehat{\sigma}^2_{\tiny{\text{proj}},\tiny{\text{add}}}$. For this, it suffices to upper bound the variance of $\sum_{i=1}^n (Y_i - \bar{Y})^2/(n-1)$ and ${n\choose 2}^{-1}\sum_{i<j}Y_iY_j\mbf{V}_i^\top\mbf{V}_j$. The first variance is clearly of the order $n^{-1}$ under the boundedness of $f(\cdot)$ and $g(\cdot)$ and the fact that $\E\varepsilon_i^4\leq C_\varepsilon$, so we focus on the second variance. Direct calculation shows that
\begin{align}
\label{eq:known_variance}
&\ms\var\parr*{\sum_{i<j}Y_iY_j\mbf{V}_i^\top\mbf{V}_j}\notag\\
&= \sum_{i<j,i^\prime<j^\prime} \parr*{\E\parr*{Y_iY_jY_{i^\prime}Y_{j^\prime}(\mbf{V}_i^\top\mbf{V}_j)(\mbf{V}_{i^\prime}^\top\mbf{V}_{j^\prime})} - \E\parr*{Y_iY_j\mbf{V}_i^\top\mbf{V}_j}\E\parr*{Y_{i^\prime}Y_{j^\prime}\mbf{V}_{i^\prime}^\top\mbf{V}_{j^\prime}}}.
\end{align} 
When $i,j,i^\prime,j^\prime$ take four different values, the above summand is clearly $0$. When they take $3$ values ($i = i^\prime < j < j^\prime$), denoting $z_i \define \bar{f}_1(U_{1,i}) + \bar{g}_1(U_{2,i})$, $i\in[n]$, we have
\begin{align*}
&\ms\E\parr*{Y_i^2Y_jY_{j^\prime}(\mbf{V}_i^\top\mbf{V}_j)(\mbf{V}_i^\top\mbf{V}_{j^\prime})}\\
& = \E\parr*{(z_i + \mbf{V}_i^\top\mbf{\gamma} + \sigma\varepsilon_i)^2(z_j + \mbf{V}_j^\top\mbf{\gamma} + \sigma\varepsilon_j)(z_{j^\prime} + \mbf{V}_{j^\prime}^\top\mbf{\gamma} + \sigma\varepsilon_{j^\prime})(\mbf{V}_i^\top\mbf{V}_j)(\mbf{V}_i^\top\mbf{V}_{j^\prime})}.
\end{align*}
Next, we expand the above display and upper bound each term individually. For simplicity, we only show the calculation for the following dominating term and the other terms follow similarly. 
\begin{align*}
&\ms\E\parr*{(\mbf{V}_i^\top\gamma)^2(\mbf{V}_j^\top\mbf{\gamma})(\mbf{V}_{j^\prime}^\top\mbf{\gamma})(\mbf{V}_i^\top\mbf{V}_j)(\mbf{V}_i^\top\mbf{V}_{j^\prime})}\\
&= \E\parr*{(\mbf{V}_j^\top\mbf{\gamma})\mbf{V}_j^\top\parr*{(\mbf{V}_i^\top\mbf{\gamma})^2\mbf{V}_i\mbf{V}_i^\top}\mbf{V}_{j^\prime}(\mbf{V}_{j^\prime}^\top\mbf{\gamma})}\\
&= \E\parr*{\mbf{\gamma}^\top\mbf{V}_j\mbf{V}_{j}^\top}\E\parr*{(\mbf{V}_i^\top\mbf{\gamma})^2\mbf{V}_i\mbf{V}_i^\top}\E\parr*{\mbf{V}_{j^\prime}\mbf{V}_{j^\prime}^\top\mbf{\gamma}}\\
&= \mbf{\gamma}^\top\E\parr*{(\mbf{V}_i^\top\mbf{\gamma})^2\mbf{V}_i\mbf{V}_i^\top}\mbf{\gamma}\\
&= \E (\mbf{V}_i^\top\mbf{\gamma})^4,
\end{align*}
where in the second equality we use the independence of $\mbf{V}_i,\mbf{V}_j,\mbf{V}_{j^\prime}$ and in the third equality we use the fact that $\E(\mbf{V}_i^\top\mbf{V}_i) = \cov(\mbf{V}_i) = \mathbf{I}_L$ since $\E\mbf{V}_i = \mbf{\Sigma}^{-1/2}\E\mbf{U}_i = 0$, where $L\define 2^{J_1} + 2^{J_2}$. Moreover, by definition, it holds that $\abs*{\mbf{V}_i^\top\mbf{\gamma}} = \abs*{f(X_i) + g(W_i) - z_i} \leq \|f\|_\infty + \|g\|_\infty + |z_i| \leq C$ due to the boundedness of $f(\cdot),g(\cdot)$ and $|z_i|$. This concludes that when $i,j,i^\prime,j^\prime$ take three different values, the summand in \eqref{eq:known_variance} can be upper bounded by a fixed constant.
When they take two different values ($i = i^\prime, j = j^\prime$), we have
\begin{align*}
\E\parr*{Y_i^2Y_j^2(\mbf{V}_i^\top\mbf{V}_j)^2} = \E\parr*{(z_i + \mbf{V}_i^\top\mbf{\gamma} + \sigma\varepsilon_i)^2(z_j + \mbf{V}_j^\top\mbf{\gamma} + \sigma\varepsilon_j)^2(\mbf{V}_i^\top\mbf{V}_j)^2}.
\end{align*}
We again upper bound the dominating term in the expansion of the above display.
\begin{align*}
\E\parr*{(\mbf{V}_i^\top\mbf{\gamma})^2(\mbf{V}_j^\top\mbf{\gamma})^2(\mbf{V}_i^\top\mbf{V}_j)^2} \leq C\E(\mbf{V}_i^\top\mbf{V}_j)^2 = C\tr(\mathbf{I}_L) = CL,
\end{align*}
where we again use the fact that $(\mbf{V}_i^\top\mbf{\gamma})$ and $(\mbf{V}_j^\top\mbf{\gamma})$ are bounded by a fixed constant. Putting together the pieces, we obtain that
\begin{align*}
\var\parr*{\widehat{\sigma}^2_{\tiny{\text{proj}},\tiny{\text{add}}}} \leq C(n^{-1} + n^{-4}(n^3 + n^2L)) = C\frac{n + 2^{J_1} + 2^{J_2}}{n^2}.
\end{align*}
Optimal choice of $2^{J_1} \asymp n^{2/(4\alpha+1)}$ and $2^{J_2}\asymp n^{2/(4\beta +1)}$ then gives the desired error bound.
\end{proof}

\subsection{Proof of Proposition \ref{prop:cv_upper}}
\begin{proof}
We use $\cal{H}_*$ as a shorthand for $\cal{H}_{\delta^*}$. Fix any $\alpha\geq \alpha_*$. Define the oracle bandwidth 
\begin{align*}
h^*\define
\begin{cases}
n^{-1}, & \alpha > 1/4,\\
\max\bbrace*{h\in\cal{H}_*: h^{2\alpha}\leq c\sqrt{\log n/(n^2h)}}, & 0<\alpha\leq 1/4,
\end{cases}
\end{align*}
 for some positive constant $c$ to be specified later. When $0<\alpha\leq 1/4$, $h^*$ is taken to be $n^{-2/(4\alpha+1)}$ if the set being maximized is empty. If not, then it holds that $(2h^*)^{2\alpha}> c\sqrt{\log n/(2n^2h^*)}$, and thus $(h^*)^{2\alpha}\asymp \sqrt{\log n/(n^2h^*)}$, or $h^*\asymp (\log n/n^2)^{1/(4\alpha+1)}$.

We first prove that with high probability, we have $\widehat{h}_{\delta_*}\geq h^*$. For this, we have
\begin{align*}
\P\parr*{\widehat{h}_{\delta_*}<h^*} &\leq \P\parr*{\exists h\in\cal{H}_*, h\leq h^*, \abs*{\wh{\sigma}^2(h) - \wh{\sigma}^2(h^*)}\geq \tau\sqrt{\log n/(n^2h)}}\\
&\leq \sum_{h\in\cal{H}_*,h\leq h^*} \P\parr*{\abs*{\wh{\sigma}^2(h) - \wh{\sigma}^2(h^*)}\geq \tau\sqrt{\log n/(n^2h)}}\\
&\leq \sum_{h\in\cal{H}_*,h\leq h^*}\P\parr*{\abs*{\wh{\sigma}^2(h) - \sigma^2}\geq \frac{\tau}{2}\sqrt{\log n/(n^2h)}} + \abs*{\cal{H}_*}\cdot\P\parr*{\abs*{\wh{\sigma}^2(h^*) - \sigma^2}\geq \frac{\tau}{2}\sqrt{\log n/(n^2h^*)}}.
\end{align*}
We now upper bound each probability in the above summation for any $h\leq h^*$. As in the proof of Theorem \ref{thm:nonpar_upper}, denote the two U-statistics on the numerator and denominator of $\wh{\sigma}^2(h)$ as $U_1$,$U_2$, with corresponding mean values $\theta_1,\theta_2$. That is, 
\begin{align*}
\theta_1\define \E\bbrace*{K_h(X_i - X_j)(Y_i-Y_j)^2/2} \text{ and } \theta_2 \define \E K_h(X_i-X_j). 
\end{align*}
Define the ``good" event $\cal{E}\define \bbrace*{U_2\geq \theta_2/2}$ and $\cal{E}^c$ as its complement, then it holds that
\begin{align*}
&\ms\P\parr*{\abs*{\wh{\sigma}^2(h) - \sigma^2}\geq \frac{\tau}{2}\sqrt{\log n/(n^2h)}\bigcap\cal{E}}\\
&\leq \P\parr*{\abs*{U_1 - U_2\sigma^2}\geq \frac{\tau}{\theta_2}\sqrt{\log n/(n^2h)}}\\
&= \P\parr*{\abs*{(U_1-\theta_1) + (\theta_1 - \theta_2\sigma^2) + (U_2-\theta_2)}\geq \frac{\tau}{\theta_2}\sqrt{\log n/(n^2h)}}\\
&\leq \P\parr*{\abs*{(U_1 - \theta_1)}\geq \frac{\tau}{4\theta_2}\sqrt{\log n/(n^2h)}} + \P\parr*{\abs*{(U_2 - \theta_2)}\geq \frac{\tau}{4\theta_2}\sqrt{\log n/(n^2h)}},
\end{align*}
where the last inequality follows from the fact $h\leq h^*$ and the bound $\abs*{\theta_1 - \theta_2\sigma^2}\lesssim h^{2(\alpha\wedge 1)}$ calculated in the proof Theorem \ref{thm:nonpar_upper}. By choose $u\asymp \log n$ and $v\asymp \log n/(nh)$ in Lemma \ref{lemma:concentrate_U2} and $\tau$ to be sufficiently large, it holds that
\begin{align*}
\P\parr*{\abs*{U_2 - \theta_2}\geq \frac{\tau}{4\theta_2}\sqrt{\log n/(n^2h)}} \lesssim n^{-C}
\end{align*}
for arbitrarily large $C$. Furthermore, for sufficiently large $\tau$ and $\eta$ in Lemma \ref{lemma:exp_U1} below, choosing the same $u$ and $v$ yields that
\begin{align*}
\P\parr*{\abs*{U_1 - \theta_1}\geq \frac{\tau}{4\theta_2}\sqrt{\log n/(n^2h)}} \lesssim n^{-C}
\end{align*}
for arbitrarily large $C$. This, combined with the calculation 
\begin{align*}
\P\parr*{\cal{E}^c}\lesssim \exp(-\theta_2^2n/16)+\exp(-\theta_2^2n^2h/16)
\end{align*}
in the proof of Theorem \ref{thm:nonpar_upper}, concludes that $\P(\td{\cal{E}}^c)\lesssim n^{-C}$, where $\td{\cal{E}}\define \bbrace*{\widehat{h}_{\delta_*}<h^*}$. Therefore, we have
\begin{align*}
\E\parr*{\widehat{\sigma}_{\text{\tiny{adapt}}}^2 - \sigma^2}^2 &\lesssim \E\bbrace*{\parr*{\widehat{\sigma}_{\text{\tiny{adapt}}}^2 - \sigma^2}^2\mathbbm{1}\bbrace*{\td{\cal{E}}^c}} + \E\bbrace*{\parr*{\widehat{\sigma}_{\text{\tiny{adapt}}}^2 - \wh{\sigma}^2(h^*)}^2\mathbbm{1}\bbrace*{\td{\cal{E}}}} + \E\bbrace*{\parr*{\wh{\sigma}^2(h^*) - \sigma^2}^2}\\
&\lesssim n^{-C} +  \frac{\log n}{n^2h^*} + \parr*{n^{-1} + (h^*)^{4(\alpha\wedge 1)}+ (n^2h^*)^{-1}}\\
&\lesssim \parr*{\frac{\log n}{n^2}}^{4\alpha/(4\alpha+1)} + n^{-1}.
\end{align*}
This completes the proof.
\end{proof}

\subsection{Proof of Proposition \ref{prop:cv_lower}}
\begin{proof}

Note that the desired result is equivalent to the following statement:
\begin{align*}
\inf_{\td{\sigma}^2}\max\bbrace*{\sup_{\substack{f\in\Lambda_{\alpha_1}(C_\cal{F})\\\sigma^2\leq C_\sigma,\P_{(X,\varepsilon)}\in\cal{P}^{\text{\tiny{adapt}}}_{\text{\tiny{cv}}, (X,\varepsilon)}}}\E\parr*{(\td{\sigma}^2 - \sigma^2)/\phi_{n,\alpha_1}}^2,\sup_{\substack{f\in\Lambda_{\alpha_2}(C_\cal{F})\\\sigma^2\leq C_\sigma,\P_{(X,\varepsilon)}\in\cal{P}^{\text{\tiny{adapt}}}_{\text{\tiny{cv}}, (X,\varepsilon)}}}\E\parr*{(\td{\sigma}^2 - \sigma^2)/\phi_{n,\alpha_2}}^2}\geq c
\end{align*}
for sufficiently large $n$ and sufficiently small $c$. By applying Lemma \ref{lemma:poisson} with $\cal{A}=\{\alpha_1,\alpha_2\}$ with $\alpha_*\leq \alpha_1<\alpha_2$, it suffices to lower bound the adaptive minimax rate under measure $\td{\P}$ defined therein. More precisely, we will prove that for $n\geq n_0$ with some sufficiently large $n_0$,
\begin{align*}
\inf_{\td{\sigma}^2}\max\bbrace*{\sup_{\substack{f\in\Lambda_{\alpha_1}(C_{\cal{F}})\\\sigma^2\leq C_\sigma,\P_{(X,\varepsilon)}\in\cal{P}^{\text{\tiny{adapt}}}_{\text{\tiny{cv}}, (X,\varepsilon)}}}\E_{\td{\P}}\parr*{(\td{\sigma}^2 - \sigma^2)/\phi_{n,\alpha_1}}^2,\sup_{\substack{f\in\Lambda_{\alpha_2}(C_{\cal{F}})\\\sigma^2\leq C_\sigma,\P_{(X,\varepsilon)}\in\cal{P}^{\text{\tiny{adapt}}}_{\text{\tiny{cv}}, (X,\varepsilon)}}}\E_{\td{\P}}\parr*{(\td{\sigma}^2 - \sigma^2)/\phi_{n,\alpha_2}}^2} > c
\end{align*} 
for some sufficiently small positive $c$. In order to show this, we will prove that, for any $n\geq n_0$ and any estimator $\td{\sigma}^2$, if 
\begin{align}
\label{eq:fast}
\sup_{\sigma^2\leq C_\sigma}\sup_{f\in\Lambda_{\alpha_2}(C_{\cal{F}})}\sup_{\P_{(X,\varepsilon)}\in\cal{P}^{\text{\tiny{adapt}}}_{\text{\tiny{cv}}, (X,\varepsilon)}}\E_{\td{\P}}\parr*{(\td{\sigma}^2 - \sigma^2)/\phi_{n,\alpha_2}}^2 \leq c,
\end{align}
then it holds that
\begin{align}
\label{eq:slow}
\sup_{\sigma^2\leq C_\sigma}\sup_{f\in\Lambda_{\alpha_1}(C_{\cal{F}})}\sup_{\P_{(X,\varepsilon)}\in\cal{P}^{\text{\tiny{adapt}}}_{\text{\tiny{cv}}, (X,\varepsilon)}}\E_{\td{\P}}\parr*{(\td{\sigma}^2 - \sigma^2)/\phi_{n,\alpha_1}}^2 > c.
\end{align}
If $\alpha_2>1/4$, then $\phi_{n,\alpha_2}\asymp n^{-1/2}$, and we can choose a sufficiently small $c$ such that \eqref{eq:fast} never holds. Therefore, in what follows, we will assume $\alpha_*\leq\alpha_1<\alpha_2\leq 1/4$, in which case $\phi_{n,\alpha_i}\asymp (\log n/n^2)^{2\alpha_i/(4\alpha_i+1)}$ for $i=1,2$. 

We will now apply Lemma \ref{lemma:constrained_risk}. To this end, we adopt a two-point method and introduce the two probability measures $\td{\P}_0$ and $\td{\P}_1$ (conditioning on a fixed realization of $m\sim \text{Poi}(2n)$) as follows. Introduce
\begin{align*}
h_n = c\parr*{\frac{\log n}{n^2}}^{1/(4\alpha_1+1)} \text{ and } N\define N_n\define h_n^{-1},
\end{align*}
where $c$ is some sufficiently small constant tuned such that $N$ is a positive integer. Let $Q$ be a discrete distribution that takes value $0$ with probability $1/2$, $-1$ with probability $1/4$ and $1$ with probability $1/4$. It then can be readily checked that $m_1(Q) = 0$ and $m_2(Q) = 1/2$.

\begin{itemize}
\item[] Choice of $\varepsilon$: Under $H_0$, let $\varepsilon\sim (1+h_n^{2\alpha_1}/2)^{-1/2}((h_n^{\alpha_1} Q)*\cal{N}(0,1))$. Under $H_1$, let $\varepsilon\sim \cal{N}(0,1)$. 
\item[] Choice of $\sigma^2$: Under $H_0$, let $\sigma^2 = 1 + h_n^{2\alpha_1}/2$. Under $H_1$, let $\sigma^2 = 1$.
\item[]\textit{Choice of $X$}: Under both $H_0$ and $H_1$, let $X$ be uniformly distributed on the union of the intervals $[(6i-5)h_n, (6i-1)h_n]$ for $i\in[N]$.
\item[] Choice of $f(\cdot)$: Under $H_0$, let $f\equiv 0$. Under $H_1$, let $f$ take the value $h_n^{\alpha_1} r_i$ on $[(6i-5)h_n, (6i-1)h_n]$, where $\{r_i\}_{i=1}^N$ are $N$ i.i.d. variables with law $Q$. 
\end{itemize}
Clearly, by the boundedness of $Q$, $H_0$ belongs to the model class indexed by the smoothness index $\alpha_2$, and $H_1$ belongs to the model class indexed by $\alpha_1$. Moreover, the absolute difference in $\sigma^2$ under $H_0$ and $H_1$ is lower bounded by the order $h_n^{2\alpha_1}\asymp (\log n/n^2)^{2\alpha_1/(4\alpha_1+1)}$.

Denote $\td{p}_0$ and $\td{p}_1$ as the densities of $\td{\P}_0$ and $\td{\P}_1$. Define $f_{\max}\define \ceil{2/(1-4\alpha_1)}+1$ and let $d_i$ be the number of $X$'s that fall into $[(6i-5)h_n, (6i-1)h_n]$ for each $i\in [N]$. Consider the following event 
\begin{align*}
\cal{E}\define \bbrace*{\{m\leq 3n\}\bigcap \{\max_{1\leq i\leq N}d_i \leq f_{\max}\}}.
\end{align*}
Note that under both $\td{\P}_0$ and $\td{\P}_1$, the sequence $\{d_i\}_{i=1}^N$ are i.i.d. Poisson variables with mean $2n/N$. Thus, a standard Poisson tail estimate and Lemma \ref{lemma:finite} imply that the event $\cal{E}$ has asymptotic probability $1$ under both $\td{\P}_0$ and $\td{\P}_1$. Next, we calculate the $\chi^2$ distance between $\td{\P}_0$ and $\td{\P}_1$ conditioning on the event $\cal{E}$. First, we have
\begin{align*}
\int \frac{\td{p}_1^2}{\td{p}_0}\mathbbm{1}\bbrace*{\cal{E}} &= \int p(m)p(x_1,\ldots,x_m)\mathbbm{1}\bbrace*{\cal{E}}\int \prod_{j=1}^N\frac{p_{1,j}^2}{p_{0,j}}\\
&= \int p(m)p(x_1,\ldots,x_m)\mathbbm{1}\bbrace*{\cal{E}}\int \prod_{j=1}^N \parr*{1 + \chi^2\parr*{p_{1,j}, p_{0,j}}},
\end{align*}
where $p(m)$ and $p(x_1,\ldots,x_m)$ are the pmf and pdf of $m$ and $\{X_i\}_{i=1}^m$ under both $\td{\P}_0$ and $\td{\P}_1$, and $p_{0,j}$ is the conditional density of all those $Y_i$'s with corresponding $X_i\in[(6j-5)h_n, (6j-1)h_n]$ and similarly for $p_{1,j}$. 

We now upper bound each $\chi^2(p_{0,j}, p_{1,j})$, where we assume that there are $d_j$ $X_i$'s that belong to $[(6j-5)h_n, (6j-1)h_n]$. Write $d$ instead of $d_j$ for short. Here, $d$ is a random variable that only depends on $m$ and $\{X_i\}_{i=1}^m$, and on the event $\cal{E}$, we have $d\leq f_{\max}$. Clearly, if $d = 0$ or $1$, then $\chi^2(p_{0,j}, p_{1,j}) = 0$. Assume $d\geq 2$. Assume for simplicity that the $d$ data points are $y_1,\ldots,y_d$. Then, by definition of $\td{\P}_0$, we have
\begin{align*}
\td{p}_{0,j} \geq (1/2)^{f_{\max}}\varphi(y_1)\ldots\varphi(y_d)\geq c\varphi(y_1)\ldots\varphi(y_d). 
\end{align*}
On the other hand, we have by direct calculation 
\begin{align*}
\int \td{p}_{0,j}^2/(\varphi(y_1)\ldots\varphi(y_d)) &= \E_{\substack{s_1,\ldots,s_d\sim h^{\alpha_1} Q\\ \td{s}_1,\ldots,\td{s}_d\sim h^{\alpha_1} Q}}\exp\parr*{\sum_{i=1}^d s_i\td{s}_i},\\
\int \td{p}_{1,j}^2/(\varphi(y_1)\ldots\varphi(y_d)) &= \E_{t,\td{t}\sim h^{\alpha_1} Q}\exp(dt\td{t}),\\
\int \td{p}_{0,j}\td{p}_{1,j}/(\varphi(y_1)\ldots\varphi(y_d)) &= \E_{t,s_1,\ldots,s_d\sim h^{\alpha_1} Q} \exp\parr*{\sum_{i=1}^d ts_i}.
\end{align*}
We therefore conclude that
\begin{align*}
\chi^2(p_{0,j}, p_{1,j}) &\lesssim \sum_{k=1}^\infty \frac{1}{k!}\sum_{1\leq i_1,\ldots,i_k\leq d}\parr*{\E_{s_1,\ldots,s_d\sim h^{\alpha_1} Q}\parr*{s_{i_1}\ldots s_{i_k}} - \E_{t\sim h^{\alpha_1} Q}t^k}^2\\
&\define \sum_{k=1}^\infty \frac{1}{k!}\sum_{1\leq i_1,\ldots,i_k\leq d}\Delta_{i_1,\ldots,i_k}^2.
\end{align*}
For any $k\geq 1$, if $(i_1,\ldots,i_k)$ are all identical, then $\Delta_{i_1,\ldots,i_k} = 0$. More generally, if $(i_1,\ldots,i_k)$ take $\ell$ different values, where $\ell \leq d\leq f_{\max}$ on the event $\cal{E}$, there are ${d\choose \ell}$ ways of choosing $\ell$ different values among $[d]$, and there are a total of ${k-1\choose \ell-1}$ ways to distribute $\ell$ values in $(i_1,\ldots,i_k)$, thus we obtain the estimate
\begin{align*}
\sum_{k=1}^\infty \frac{1}{k!}\sum_{1\leq i_1,\ldots,i_k\leq d}\Delta_{i_1,\ldots,i_k}^2 &\leq \sum_{k=2}^\infty \frac{1}{k!}\sum_{\ell=2}^k h^{2\alpha_1 k}{d\choose \ell}{k-1\choose \ell-1}\\
&= \sum_{\ell=2}^{f_{\max}}{d\choose \ell}\sum_{k=\ell}^\infty \frac{1}{k!}h^{2\alpha_1 k}{k-1\choose \ell-1}.
\end{align*}
For each $2\leq \ell\leq f_{\max}$, by the Stirling's formula, we have
\begin{align*}
\sum_{k=\ell}^\infty \frac{1}{k!}h^{2\alpha_1 k}{k-1\choose \ell-1} \lesssim \sum_{k=\ell}^\infty \frac{h^{2\alpha_1 k}}{k^{k+1/2}e^{-k}}(\frac{ek}{\ell})^k \lesssim \sum_{k=\ell}^\infty \frac{(e^2h^{2\alpha_1})^k}{k^{1/2}\ell^k}\lesssim h^{2\alpha_1\ell}.
\end{align*}
Next, using the trivial bound ${d\choose \ell}\leq d^2f_{\max}^{\ell-2}$, we obtain that $\chi^2\parr*{\td{p}_{0,j}, \td{p}_{1,j}}\lesssim d^2h^{4\alpha_1}$ if $d\geq 2$. This implies that
\begin{align*}
\int \frac{\td{p}_1^2}{\td{p}_0}\mathbbm{1}\bbrace*{\cal{E}} \lesssim \int p(m)p(x_1,\ldots,x_m)\prod_{j=1}^N \parr*{1+\chi^2_j},
\end{align*}
where $\chi^2_j = 0$ for $d_j = 0,1$ and $\chi^2_j\leq d_j^2h^{4\alpha_1}$ for $d_j\geq 2$. Next, 
\begin{align*}
\prod_{j=1}^N \parr*{1+\chi^2_j} = 1 + \sum_{j=1}^N \chi^2_j + \sum_{1\leq i< j\leq N}\chi^2_i\chi^2_j + \ldots + \sum_{1\leq i_1<\ldots<i_N\leq N}\chi^2_{i_1}\ldots\chi^2_{i_N}.
\end{align*}
Consider the $k$th term in the above display. Note that on the event $\cal{E}$, we have $\sum_{j=1}^N d_j\leq 3n$, thus there are at most $(3n/2)$ $j$'s with $d_j\geq 2$. Then, using the estimate ${n\choose k}\leq n^k/k!$, we have
\begin{align*}
\sum_{1\leq i_1<\ldots<i_k\leq N}\chi^2_{i_1}\ldots\chi^2_{i_k} \leq {(3n/2)\choose k}h^{4k\alpha_1}d^2_{i_1}\ldots d^2_{i_k} \leq \frac{C^kn^k}{k!}h^{4k\alpha_1}d^2_{i_1}\ldots d^2_{i_k}.
\end{align*}
We therefore conclude that
\begin{align*}
\int \frac{\td{p}_1^2}{\td{p}_0}\mathbbm{1}\bbrace*{\cal{E}} \leq \E_{d_1,\ldots,d_N\sim \text{Poi}(2n/N)}\parr*{1+\sum_{k=1}^{N}\frac{C^kn^kh^{4\alpha_1 k}}{k!}d_1^2\ldots d_k^2} \leq 1 + \sum_{k=1}^\infty \frac{(c\log n)^k}{k!} = n^c
\end{align*}
for some sufficiently small constant $c$. Then, by Lemma \ref{lemma:constrained_risk} and the calculation
\begin{align*}
\frac{\varepsilon\sqrt{I_\cal{E}}}{\Delta} \lesssim \frac{n^{c/2}(\log n/n^2)^{4\alpha_2/(4\alpha_2+1)}}{(\log n/n^2)^{4\alpha_1/(4\alpha_1+1)}}\rightarrow 0,
\end{align*}
where $\Delta,I,\varepsilon$ are defined as in Lemma \ref{lemma:constrained_risk}, we conclude that for sufficiently large $n$ and any considered estimator $\td{\sigma}^2$, if \eqref{eq:fast} holds, then \eqref{eq:slow} will follow. This shows that, even over two smooth classes $\alpha\in\{\alpha_1,\alpha_2\}$, the adaptive minimax rate can be no faster than $\phi_{n,\alpha}$.
\end{proof}

\subsection{Supporting Lemmas}
\begin{lemma}
\label{lemma:d_concentrate_1}
Suppose $f\in\Lambda_{\mbf{\alpha}}(C_{\cal{F}})$ and $\sigma^2\leq C_\sigma$for some fixed constants $C_{\cal{F}},C_\sigma$, and the joint distribution of $(\mbf{X},\varepsilon)$ satisfies the conditions in $\cal{P}_{\text{\tiny{mcv}},(\mbf{X},\varepsilon)}$. Then, the U-statistic $U_1$ defined in the proof of Proposition \ref{prop:multivariate_upper} satisfies 
\begin{align*}
\E\parr*{U_1 - \theta_1}^2 \leq C(n^{-1} \vee n^{-2}(h_1h_2)^{-1})
\end{align*}
for some positive constant $C$ that only depends on $\overline{M}_K,\under{M}_K,\mbf{\alpha},C_{\cal{F}},C_\sigma,C_0,C_\varepsilon$.
\end{lemma}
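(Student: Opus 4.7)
The plan is to mirror the argument of Lemma \ref{lemma:concentrate_U1}, replacing the univariate kernel $K_h(\cdot)$ by the product kernel $K_{h_1}(\cdot)K_{h_2}(\cdot)$, and keeping track of the extra bandwidth factor. Write $\mbf{D}_i \define (\mbf{X}_i, \varepsilon_i)^\top$ and
\[
g(\mbf{D}_i,\mbf{D}_j) \define K_{h_1}(X_{i,1}-X_{j,1})K_{h_2}(X_{i,2}-X_{j,2})(Y_i-Y_j)^2/2,
\]
so that $U_1 = {n\choose 2}^{-1}\sum_{i<j}g(\mbf{D}_i,\mbf{D}_j)$ and $\theta_1=\E g(\mbf{D}_i,\mbf{D}_j)$. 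Expand
\[
\E(U_1-\theta_1)^2 = {n\choose 2}^{-2}\sum_{i<j,i'<j'}\E\bbrace*{(g(\mbf{D}_i,\mbf{D}_j)-\theta_1)(g(\mbf{D}_{i'},\mbf{D}_{j'})-\theta_1)}
\]
and split the sum according to how many indices the pairs $(i,j)$ and $(i',j')$ share.

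First I would dispose of the easy cases. When $i,j,i',j'$ are all distinct, the two kernels are independent and the summand is zero. When the pairs share exactly one index (say $i=i'$, $j\neq j'$), I would condition on $\mbf{X}_i$ and use the $\alpha_k$-H\"older bound on $f$ to control $(f(\mbf{X}_i)-f(\mbf{X}_j))^2$ by $C(|\td{X}_{ij,1}|^{2(\alpha_1\wedge1)}+|\td{X}_{ij,2}|^{2(\alpha_2\wedge 1)})$, and bound the $\varepsilon$-contribution using $\E\varepsilon^4\leq C_\varepsilon$ and $\sigma^2\leq C_\sigma$. The compact support of $K$ together with Condition (b) of $\cal{P}_{\text{\tiny{mcv}},(\mbf{X},\varepsilon)}$ then gives, via the change of variables $u_k = (X_{i,k}-X_{j,k})/h_k$, that each such summand is uniformly $O(1)$ (the $1/(h_1h_2)$ factors from two kernels are killed by the two Jacobians when integrating in $\mbf{X}_j$ and $\mbf{X}_{j'}$ separately). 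Since there are $O(n^3)$ such triples, this contribution to $\var(U_1)$ is $O(n^{-1})$.

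When the pairs share both indices ($i=i'$, $j=j'$) we get ${n\choose 2}$ summands, each of the form $\E g^2(\mbf{D}_i,\mbf{D}_j)$. Here only one kernel product is available, so one change of variables gives a factor $(h_1h_2)^{-1}$ that cannot be cancelled; more precisely,
\[
\E g^2(\mbf{D}_i,\mbf{D}_j) \lesssim \E\bbrace*{\tfrac{1}{h_1^2h_2^2}K^2\!\parr*{\tfrac{\td{X}_{ij,1}}{h_1}}K^2\!\parr*{\tfrac{\td{X}_{ij,2}}{h_2}}} \lesssim (h_1h_2)^{-1},
\]
using $K$ bounded, the convolution formula, and the bounded design density. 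Hence this contribution to $\var(U_1)$ is $O(n^{-2}(h_1h_2)^{-1})$. Combining the two regimes, and noting that $\theta_1=O(1)$ so the ``centering'' subtraction is absorbed, yields the stated bound $\E(U_1-\theta_1)^2 \leq C(n^{-1}\vee n^{-2}(h_1h_2)^{-1})$, with $C$ depending only on $\overline{M}_K,\under{M}_K,\mbf{\alpha},C_f,C_\sigma,C_0,C_\varepsilon$.

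There is no real obstacle: the proof is a routine bookkeeping extension of Lemma \ref{lemma:concentrate_U1}. The only mildly delicate point is organising the change of variables in the three-indices case so that the $(h_1h_2)^{-2}$ prefactor from the two product kernels is fully absorbed by integration against the densities $p_{\mbf{X}}(\cdot)p_{\mbf{X}}(\cdot)p_{\mbf{X}}(\cdot)$ of the three independent covariates, leaving an $O(1)$ bound rather than a blown-up one; the general $d$-dimensional version is identical with $(h_1h_2)$ replaced by $\prod_{k=1}^d h_k$.
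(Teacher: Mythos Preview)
Your proposal is correct and follows essentially the same approach as the paper: both expand $\var(U_1)$ as a double sum over pairs, split into the four/three/two distinct index cases, invoke the argument of Lemma \ref{lemma:concentrate_U1} to get $O(1)$ for the three-index case and $O((h_1h_2)^{-1})$ for the two-index case, and combine to obtain $n^{-1}+n^{-2}(h_1h_2)^{-1}$. The paper's proof is in fact terser than yours and simply refers back to Lemma \ref{lemma:concentrate_U1} for the three-index computation without writing out the change of variables.
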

\begin{proof}
Denote $g$ as the kernel of $U_1$, that is,
\begin{align*}
g(\mbf{D}_i, \mbf{D}_j) \define K_{h_1}(X_{i,1}- X_{j,1})K_{h_2}(X_{i,2}-X_{j,2})(Y_i - Y_j)^2/2, \quad \mbf{D}_i \define (\mbf{X}_i, \varepsilon_i)^\top.
\end{align*}
Then, it holds that
\begin{align*}
\var(U_1) = {n\choose 2}^{-1} \sum_{i<j,i^\prime < j^\prime} \E\bbrace*{\parr*{g(\mbf{D}_i, \mbf{D}_j) - \theta_1}\parr*{g(\mbf{D}_{i^\prime}, \mbf{D}_{j^\prime}) - \theta_1}}.
\end{align*}
When $i,j,i^\prime, j^\prime$ take four different values, the expectation is zero. Using a similar argument as in the proof of Lemma \ref{lemma:concentrate_U1}, when $i,j,i^\prime, j^\prime$ take three different values, it holds that
\begin{align*}
\E\bbrace*{\parr*{g(\mbf{D}_i, \mbf{D}_j) - \theta_1}\parr*{g(\mbf{D}_{i^\prime}, \mbf{D}_{j^\prime}) - \theta_1}} = O(1).
\end{align*}
When they take two different values,
\begin{align*}
\E\bbrace*{\parr*{g(\mbf{D}_i, \mbf{D}_j) - \theta_1}\parr*{g(\mbf{D}_{i^\prime}, \mbf{D}_{j^\prime}) - \theta_1}} = O((h_1h_2)^{-1}).
\end{align*}
We therefore conclude that
\begin{align*}
\var(U_1) \lesssim \frac{n^3 + n^2(h_1h_2)^{-1}}{n^4} \asymp n^{-1} + n^{-2}(h_1h_2)^{-1}.
\end{align*}
This completes the proof.
\end{proof}

\begin{lemma}
\label{lemma:d_concentrate_2}
Suppose $h_1h_2 \gtrsim n^{-(2-\delta)}$ for some $0 < \delta < 2$, and the joint distribution of $(\mbf{X},\varepsilon)$ satisfies the conditions in $\cal{P}_{\text{\tiny{mcv}},(\mbf{X},\varepsilon)}$ Then, for any $u, v> 0$, the U-statistic $U_2$ defined in the proof of Theorem \ref{thm:nonpar_upper} satisfies 
\begin{align*}
\P\parr*{\abs*{U_2 - \theta_2}\geq C(v^{1/2}n^{-1/2} + u^{1/2}n^{-1}(h_1h_2)^{-1/2})} \leq C(\exp(-u) + \exp(-v))
\end{align*}
for sufficiently large $n$ and
\begin{align*}
\E\parr*{U_2 - \theta_2}^2 \leq C(n^{-1} \vee n^{-2}(h_1h_2)^{-1}),
\end{align*}
where $C$ is some positive constant that only depends on $\overline{M}_K,\under{M}_K,\mbf{\alpha},C_0$. 
\end{lemma}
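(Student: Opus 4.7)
The plan is to mirror the proof of Lemma \ref{lemma:concentrate_U2} in the multivariate setting, using the four-term Bernstein inequality for U-statistics (Lemma \ref{lemma:Ubernstein}) applied to the kernel
\[
g(\mbf{X}_i,\mbf{X}_j) \define K_{h_1}(X_{i,1}-X_{j,1})K_{h_2}(X_{i,2}-X_{j,2}),
\]
and then converting the resulting tail bound to a moment bound via Lemma \ref{lemma:tail_expectation}. The product structure of the kernel together with the multivariate density/convolution bounds in $\cal{P}_{\text{\tiny{mcv}},(\mbf{X},\varepsilon)}$ will give direct analogues of the five quantities $B_1, B_2, B_3, \nu_1^2, \nu_2^2$ controlled in Lemma \ref{lemma:concentrate_U2}.

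First I would bound the linear part $g_1(\mbf{X}_i) = \E(g(\mbf{X}_i,\mbf{X}_j)\mid \mbf{X}_i)$. Writing
\[
g_1(\mbf{X}_i) = \int K(u)K(v)\,p_{\mbf{X}}(X_{i,1}+uh_1,\, X_{i,2}+vh_2)\,du\,dv,
\]
Condition (b) in $\cal{P}_{\text{\tiny{mcv}},(\mbf{X},\varepsilon)}$ and the compact support of $K(\cdot)$ yield $B_1 \lesssim 1$ and $\nu_1^2 \lesssim 1$. For $B_2^2$, the same change-of-variable gives
\[
B_2^2 \lesssim \frac{n}{h_1 h_2}\sup_{\mbf{X}_i}\int K^2(u)K^2(v)\,p_{\mbf{X}}(X_{i,1}+uh_1,X_{i,2}+vh_2)\,du\,dv \lesssim \frac{n}{h_1 h_2},
\]
so $B_2 \lesssim n^{1/2}(h_1h_2)^{-1/2}$, and trivially $B_3 \lesssim (h_1h_2)^{-1}$. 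Finally, for $\nu_2^2$, the bivariate convolution formula $p_{\bXX}(u,v) = \int p_{\mbf{X}}(t_1,t_2)p_{\mbf{X}}(t_1-u,t_2-v)\,dt_1dt_2 \lesssim C_0$ (using Condition (b) again) gives
\[
\nu_2^2 = \int \frac{1}{h_1^2 h_2^2} K^2\!\left(\frac{u}{h_1}\right)K^2\!\left(\frac{v}{h_2}\right)p_{\bXX}(u,v)\,du\,dv \lesssim \frac{1}{h_1 h_2}.
\]

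Plugging these five bounds into Lemma \ref{lemma:Ubernstein}, one obtains a tail inequality with the six terms $a_1 v^{1/2} + a_2 v + b_1 u^{1/2} + b_2 u + b_3 u^{3/2} + b_4 u^2$, where $a_1 \lesssim n^{-1/2}$, $a_2 \lesssim n^{-1}$, $b_1 \lesssim n^{-1}(h_1h_2)^{-1/2}$, $b_2 \lesssim n^{-2}$, $b_3 \lesssim n^{-3/2}(h_1h_2)^{-1/2}$, and $b_4 \lesssim n^{-2}(h_1h_2)^{-1}$. Under the assumption $h_1 h_2 \gtrsim n^{-(2-\delta)}$ with $\delta > 0$ and for $n$ sufficiently large, the terms $a_1$ and $b_1$ dominate, yielding the stated concentration bound. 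The expectation bound then follows by applying Lemma \ref{lemma:tail_expectation} with $p = 2$.

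There is no serious obstacle; the argument is a routine adaptation of Lemma \ref{lemma:concentrate_U2}. The only mildly delicate point is verifying that the bivariate density $p_{\bXX}$ is uniformly bounded from above — this was trivial in the univariate case but here requires the convolution calculation above, which uses only the upper bound on $p_{\mbf{X}}$ from Condition (b) and not the lower bound in Condition (c). Condition (c) does not enter the upper bound analysis at all (it was needed only to lower bound $\theta_2$ in the proof of Proposition \ref{prop:multivariate_upper}).
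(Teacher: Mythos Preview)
Your proposal is correct and follows essentially the same route as the paper: both apply Lemma \ref{lemma:Ubernstein} after bounding the five quantities $B_1,B_2,B_3,\nu_1^2,\nu_2^2$ by the direct bivariate analogue of the computations in Lemma \ref{lemma:concentrate_U2}, and then invoke Lemma \ref{lemma:tail_expectation} for the moment bound. Your write-up is in fact slightly more explicit than the paper's (which just lists the orders of the five quantities), and your assignments $B_2\lesssim n^{1/2}(h_1h_2)^{-1/2}$, $B_3\lesssim (h_1h_2)^{-1}$ are the correct ones --- the paper appears to have swapped the labels of $B_2$ and $B_3$ in its statement, though its final $a_i,b_i$ coefficients agree with yours.
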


\begin{proof}
The proof is similar to that of Lemma \ref{lemma:concentrate_U2}. In the application of Lemma \ref{lemma:Ubernstein}, the five quantities are of the order $B_1 \lesssim 1$, $B_2\lesssim (h_1h_2)^{-1}$, $B_3\leq n^{1/2}(h_1h_2)^{-1/2}$, $\nu_1^2\lesssim 1$, $\nu_2^2 \lesssim (h_1h_2)^{-1}$. Therefore, for any $u,v> 0$, it holds that
\begin{align*}
\P(|U_2 - \theta_2| \geq a_1v^{1/2} + a_2v + b_1u^{1/2} + b_2u + b_3u^{3/2} + b_4u^2) \leq C(\exp(-v) + \exp(-u)),
\end{align*}
where $a_1 \lesssim n^{-1/2}, a_2 \lesssim n^{-1}, b_1 \lesssim n^{-1}(h_1h_2)^{-1/2}, b_2\lesssim n^{-1}, b_3 \lesssim n^{-3/2}(h_1h_2)^{-1/2}, b_4 \lesssim n^{-2}(h_1h_2)^{-1}$. Under the condition that $h_1h_2 = \Omega(n^{-(2-\delta)})$ for some $\delta > 0$ and $n$ is sufficiently large, the dominant terms in the above inequality are $a_1$ and $b_1$, that is,
\begin{align*}
n^{-1/2} \vee n^{-1}(h_1h_2)^{-1/2}.
\end{align*}
This proves the first part of the theorem. The expectation version follows by Lemma \ref{lemma:tail_expectation}. 
\end{proof}

\begin{lemma}
\label{lemma:exp_U1}
Suppose the conditions of Proposition \ref{prop:cv_upper} hold. For the U-statistic $U_1$ defined therein, suppose $h\gtrsim n^{-(2-\delta)}$ for some $0\leq \delta\leq 2$. Then, for any $\eta>0$, there exists some positive constant $C = C(\overline{M}_K,\underline{M}_K, \alpha, \eta)$ such that
\begin{align*}
\P\parr*{\abs*{U_1 - \theta_1}\geq C(v^{1/2}n^{-1/2} + u^{1/2}n^{-1}h^{-1/2})} \leq C\parr*{\exp(-u) + \exp(-v)} + n^{-\eta}.
\end{align*}
\end{lemma}
\begin{proof}
Denote $g_1$ as the kernel of $U_1$, and consider its truncated version of defined as
\begin{align*}
\bar{g}_1(D_i,D_j) \define \frac{1}{2h}K\parr*{\frac{\XX}{h}}\bbrace*{(f(X_i) - f(X_j)) + \widetilde{\varepsilon}_{ij}}^2\mathbbm{1}\bbrace*{|\varepsilon_i|\leq \kappa_n}\mathbbm{1}\bbrace*{|\varepsilon_j|\leq \kappa_n},
\end{align*}
where $D_i = (X_i,\varepsilon_i)$ and $\kappa_n$ is some truncation parameter satisfying $\kappa_n\uparrow\infty$ as $n\rightarrow\infty$ to be specified later. We first consider the concentration of $\bar{g}_1$ around its mean value $\bar{\theta}_1\define \E\bbrace*{\bar{g}(D_i,D_j)}$. For this, we will make use of Lemma \ref{lemma:Ubernstein} by upper bounding the 5 quantities $B_1,B_2,B_3, \nu_1^2, \nu_2^2$ therein. 

For $B_1$, denoting $\td{g}_1(D) = \E\bbrace*{\bar{g}_1(D,D_j)\mid D}$, it holds that
\begin{align*}
\td{g}_1(D_i) &= \E\bbrace*{\frac{1}{2h}K\parr*{\frac{\XX}{h}}((f(X_i) - f(X_j)) + \widetilde{\varepsilon}_{ij})^2\mathbbm{1}\bbrace*{|\varepsilon_i|\leq \kappa_n}\mathbbm{1}\bbrace*{|\varepsilon_j|\leq \kappa_n} \mid D_i}\\
&\lesssim \E\bbrace*{\frac{1}{h}K\parr*{\frac{\XX}{h}}(|X_i-X_j|^{2\alpha} + \widetilde{\varepsilon}^2_{ij})\mathbbm{1}\bbrace*{|\varepsilon_i|\leq \kappa_n}\mathbbm{1}\bbrace*{|\varepsilon_j|\leq \kappa_n}\mid D_i}\\
&\lesssim \E\bbrace*{\frac{1}{h}K\parr*{\frac{\XX}{h}}|\XX|^{2\alpha}\mid X_i} + \E\bbrace*{\frac{1}{h}K\parr*{\frac{\XX}{h}}\widetilde{\varepsilon}_{ij}^2\mathbbm{1}\bbrace*{|\varepsilon_i|\leq \kappa_n}\mid X_i,\varepsilon_i}.
\end{align*}
For the first term, we have
\begin{align*}
&\ms\E\bbrace*{\frac{1}{h}K\parr*{\frac{\XX}{h}}|\XX|^{2\alpha}\mid X_i} = \int \frac{1}{h}K\parr*{\frac{u-X_i}{h}}|u-X_i|^{2\alpha}p_X(u)du\\
&= \int K(v)|vh|^{2\alpha}p_X(X_i + vh)dv\leq \sup_{u\in\RR}p_X(u)h^{2\alpha}\int K(v)|v|^{2\alpha}dv \lesssim h^{2\alpha}.
\end{align*}
For the second term, we obtain similarly that 
\begin{align*}
\E\bbrace*{\frac{1}{h}K\parr*{\frac{\XX}{h}}\widetilde{\varepsilon}_{ij}^2\mathbbm{1}\bbrace*{|\varepsilon_i|\leq \kappa_n}\mid X_i,\varepsilon_i} &\lesssim (\varepsilon_i^2+\sigma^2)\mathbbm{1}\bbrace*{|\varepsilon_i|\leq \kappa_n} \lesssim \kappa_n^2 + \sigma^2.
\end{align*}
Putting together the pieces and using the fact that $\kappa_n\uparrow\infty$ as $n\rightarrow\infty$, we obtain that 
\begin{align*}
B_1 = \|\td{g}_1\|_\infty \lesssim \kappa_n^2.
\end{align*}
Moreover, with similar analysis, it can be readily checked that $\nu_1^2\lesssim 1$.

For $B_2$, it holds that 
\begin{align*}
\abs*{\bar{g}_1(D_i,D_j)} &\lesssim \frac{1}{h}K\parr*{\frac{\XX}{h}}\bbrace*{(f(X_i)-f(X_j))^2 + \widetilde{\varepsilon}_{ij}^2}\mathbbm{1}\bbrace*{|\varepsilon_i|\leq \kappa_n}\mathbbm{1}\bbrace*{|\varepsilon_j|\leq \kappa_n}\\
&\lesssim \frac{1}{h}K\parr*{\frac{\XX}{h}}\bbrace*{|\XX|^{2\alpha} + \widetilde{\varepsilon}_{ij}^2}\mathbbm{1}\bbrace*{|\varepsilon_i|\leq \kappa_n}\mathbbm{1}\bbrace*{|\varepsilon_j|\leq \kappa_n}\\
&\lesssim \frac{1}{h}K\parr*{\frac{\XX}{h}}\abs*{\frac{\XX}{h}}^{2\alpha}h^{2\alpha} + \frac{1}{h}K\parr*{\frac{\XX}{h}}\kappa_n^2 \lesssim \frac{1}{h}\kappa_n^2.
\end{align*}
We therefore conclude that $B_2 = \|\bar{g}_1\|_\infty \lesssim h^{-1}\kappa_n^2$.

For $B_3$, we have
\begin{align*}
B_3^2 &= n\sup_{D_i}\E\bbrace*{\bar{g}_1^2(D_i,D_j)\mid D_i}\\
&= n\sup_{D_i}\E\bbrace*{\frac{1}{h^2}K^2\parr*{\frac{\XX}{h}}(f(X_i) - f(X_j) + \td{\varepsilon}_{ij})^4\mathbbm{1}\bbrace*{\abs*{\varepsilon_i}\leq \kappa_n}\mathbbm{1}\bbrace*{\abs*{\varepsilon_j}\leq \kappa_n}\mid X_i,\varepsilon_i}\\
&\lesssim \frac{nM_K}{h}\E\bbrace*{\frac{1}{h}K\parr*{\frac{\XX}{h}}\bbrace*{(f(X_i) - f(X_j))^4 + \td{\varepsilon}_{ij}^4}\mathbbm{1}\bbrace*{\abs*{\varepsilon_i}\leq \kappa_n}\mathbbm{1}\bbrace*{\abs*{\varepsilon_j}\leq \kappa_n}\mid X_i,\varepsilon_i}\\
& \lesssim \frac{nM_K}{h}\E\bbrace*{\frac{1}{h}K\parr*{\frac{\XX}{h}}\abs*{\XX}^{4\alpha}\mid X_i,\varepsilon_i} + \frac{nM_K}{h}\E\bbrace*{\frac{1}{h}K\parr*{\frac{\XX}{h}}\td{\varepsilon}_{ij}^4\mathbbm{1}\bbrace*{\abs*{\varepsilon_i}\leq \kappa_n}\mathbbm{1}\bbrace*{\abs*{\varepsilon_j}\leq \kappa_n}\mid X_i,\varepsilon_i}.
\end{align*}
Now, using similar calculation in the analysis of $B_1$, it holds that
\begin{align*}
&\E\bbrace*{\frac{1}{h}K\parr*{\frac{\XX}{h}}\abs*{\XX}^{4\alpha}\mid X_i,\varepsilon_i} \lesssim h^{4\alpha},\\
&\E\bbrace*{\frac{1}{h}K\parr*{\frac{\XX}{h}}\td{\varepsilon}_{ij}^4\mathbbm{1}\bbrace*{\abs*{\varepsilon_i}\leq \kappa_n}\mathbbm{1}\bbrace*{\abs*{\varepsilon_j}\leq \kappa_n}\mid X_i,\varepsilon_i} \lesssim \kappa_n^4.
\end{align*}
Putting together the pieces, we conclude that $B_3 \lesssim \kappa_n^4nh^{-1}$. 

Lastly, for $\nu_2^2$, we have
\begin{align*}
\ms\nu_2^2 &= \E\bbrace*{\bar{g}_1^2(D_i,D_j)}\lesssim \E\bbrace*{\frac{1}{h^2}K^2\parr*{\frac{\XX}{h}}\parr*{f(X_i) - f(X_j) + \widetilde{\varepsilon}_{ij}}^4}\\
&\lesssim h^{-1}\E\bbrace*{\frac{1}{h}K\parr*{\frac{\XX}{h}}\abs*{f(X_i) - f(X_j)}^4} + \frac{M_K}{h}\E\bbrace*{\frac{1}{h}K\parr*{\frac{\XX}{h}}\widetilde{\varepsilon}_{ij}^4}\\
&\lesssim h^{-1}(h^{4\alpha} + \E(\varepsilon_i^4))\lesssim h^{-1}.
\end{align*}

Define $\bar{U}_1$ to be the U-statistic generated by the kernel $\bar{g}_1$, that is, $\bar{U}_1 \define {n\choose 2}^{-1}\sum_{i<j}\bar{g}_1(D_i,D_j)$, and define $\bar{\theta}_1$ to be the mean value $\E\bbrace*{\bar{g}_1(D_i,D_j)}$. Define the event $\cal{E} \define \bbrace*{\abs*{\varepsilon_i} \leq \kappa_n \text{ for all $i\in[n]$}}$. Then, we have for any $t\geq 0$,
\begin{align*}
\P\parr*{\abs*{U_1 - \theta_1}\geq t} &= \P\parr*{\abs*{\bar{U}_1 - \theta_1}\geq t \bigcap \cal{E}} + \P\parr*{\cal{E}^c}\\
&\leq \P\parr*{\abs*{\bar{U}_1 - \bar{\theta}_1}\geq t - \abs*{\theta_1 - \bar{\theta}_1}} + \P\parr*{\cal{E}^c}.
\end{align*}
For the first term, we have by Lemma \ref{lemma:Ubernstein} that, for any given $u,v> 0$, it holds that
\begin{align*}
\P\parr*{\abs*{\bar{U}_1 - \bar{\theta}_1}\geq a_1v^{1/2} + a_2v + b_1u^{1/2} + b_2u + b_3u^{3/2} + b_4u^2} \leq C(\exp(-u) + \exp(-v)),
\end{align*}
where $a_1 \lesssim n^{-1/2}, a_2 \lesssim \kappa_n^2/n, b_1 \lesssim n^{-1}h^{-1/2}, b_2\lesssim n^{-1}\kappa_n^2, b_3\lesssim \kappa_n^2n^{-3/2}h^{-1/2}, b_4\lesssim n^{-2}h^{-1}\kappa_n^{2}$. Choosing $\kappa_n = \kappa\sqrt{\log n}$ for some sufficiently large constant $\kappa$, then as long as $h = \Omega(n^{-(2-\delta)})$ for some $\delta > 0$, then the dominant terms in the above inequality are $a_1$ and $b_1$, that is, 
\begin{align*}
n^{-1/2} \vee n^{-1}h^{-1/2}.
\end{align*}
Therefore, Lemma \ref{lemma:tail_expectation} implies that
\begin{align*}
\E\parr*{\abs*{\bar{U}_1 - \bar{\theta}_1}} \leq C(n^{-1/2}\vee n^{-1}h^{-1/2}).
\end{align*}
Now we calculate the difference between $\theta_1$ and $\bar{\theta}_1$. By definition, we have
\begin{align*}
\abs*{\theta_1 - \bar{\theta}_1} &= \abs*{\E\bbrace*{\frac{1}{h}K\parr*{\frac{\XX}{h}}(f(X_i) - f(X_j) + \widetilde{\varepsilon}_{ij})^2\mathbbm{1}\bbrace*{|\varepsilon_i|\geq \kappa_n\bigcup |\varepsilon_j|\geq \kappa_n}}}\\
&\lesssim \abs*{\E\bbrace*{\frac{1}{h}K\parr*{\frac{\XX}{h}}(f(X_i) - f(X_j) + \widetilde{\varepsilon}_{ij})^2\mathbbm{1}\bbrace*{|\varepsilon_i|\geq \kappa_n}}}\\
&\lesssim \abs*{\E\bbrace*{\frac{1}{h}K\parr*{\frac{\XX}{h}}(f(X_i) - f(X_j))^2\mathbbm{1}\bbrace*{|\varepsilon_i|\geq \kappa_n}}}+\abs*{\E\bbrace*{\frac{1}{h}K\parr*{\frac{\XX}{h}}\widetilde{\varepsilon}_{ij}^2\mathbbm{1}\bbrace*{|\varepsilon_i|\geq \kappa_n}}}\\
&\lesssim h^{2\alpha}\P(|\varepsilon_i|\geq \kappa_n) + \E\bbrace*{\varepsilon_i^2\mathbbm{1}\parr*{|\varepsilon_i|\geq \kappa_n}}\\
&\lesssim h^{2\alpha}n^{-\kappa^2/(2\kappa_\varepsilon^2)} + \kappa_\varepsilon^2n^{-\kappa^2/(4\kappa_\varepsilon^2)},
\end{align*}
where the second line is by symmetry, and in the last line we use the sub-Gaussianity of $\varepsilon_i$. 	Therefore, as long as $h = \Omega(n^{-(2-\delta)})$ for some $\delta > 0$, by choosing $\kappa$ large enough (depending only on $\delta,\kappa_\varepsilon,u,v$), it holds that 
\begin{align*}
\abs*{\theta_1 - \bar{\theta}_1} = o(a_1v^{1/2} + a_2v + b_1u^{1/2} + b_2u + b_3u^{3/2} + b_4u^2).
\end{align*}
Lastly, by the sub-Gaussanity of $\varepsilon_i$, it holds that
\begin{align*}
\P(\cal{E}^c) \leq n\P(\abs{\varepsilon_i}\geq \kappa_n) \lesssim n^{-\eta}
\end{align*}
for sufficiently large $\eta$ by choosing $\kappa$ correspondingly large enough. This completes the proof.
\end{proof}

The following Poissonization lemma reduces the original problem of Proposition \ref{prop:cv_lower} into the case with a random sample size, which facilitates the calculation of $\chi^2$ distance. We introduce some notation. Consider the following experiment: for any given positive integer $n$, $f(\cdot)$, $\sigma$, distribution $p_X(\cdot)$ of $X$, and distribution $p_\varepsilon(\cdot)$ of $\varepsilon$,  
\begin{itemize}
\item generate $m\sim \text{Poi}(2n)$;
\item generate $X_1,\ldots, X_m\sim p_X$ and $\varepsilon_1,\ldots,\varepsilon_m\sim p_\varepsilon$;
\item generate $Y_i = f(X_i) + \sigma\varepsilon_i$ for each $i\in[m]$.
\end{itemize}
Denote the original experiment and the above experiment as $\P$ and $\td{\P}$, respectively, where we omit the dependence on $n$, $f(\cdot)$, $\sigma$, distribution $p_X(\cdot)$ of $X$, and distribution $p_\varepsilon(\cdot)$ of $\varepsilon$.

\begin{lemma}[Poissonization]
\label{lemma:poisson}
Let $\phi_{n,\alpha}$ be defined as in Proposition \ref{prop:cv_lower}.
For any fixed $\alpha_*>0$ and set $\cal{A}\subset[\alpha_*, \infty)$, the following inequality holds:
\begin{align*}
&\ms\inf_{\td{\sigma}^2}\sup_{\alpha\in\cal{A}}\sup_{\sigma^2\leq C_\sigma}\sup_{f\in\Lambda_\alpha(C_{\cal{F}})}\sup_{\P_{(X,\varepsilon)}\in \cal{P}_{\tiny{\text{cv}},(X,\varepsilon)}}\E_{\P}\parr*{(\td{\sigma}^2 - \sigma^2)/\phi_{n,\alpha}}^2 \\
&\geq\inf_{\td{\sigma}^2}\sup_{\alpha\in\cal{A}}\sup_{\sigma^2\leq C_\sigma}\sup_{f\in\Lambda_\alpha(C_{\cal{F}})}\sup_{\P_{(X,\varepsilon)}\in \cal{P}_{\tiny{\text{cv}},(X,\varepsilon)}}\E_{\td{\P}}\parr*{(\td{\sigma}^2 - \sigma^2)/\phi_{n,\alpha}}^2  - 4nC_\sigma^2\exp(-n/6).
\end{align*}
\end{lemma}
\begin{proof}
Define the event $\cal{E}\define \{m\geq n\}$, where $m\sim\text{Poi}(2n)$. Then, a standard tail estimate has $\P(\cal{E})\geq 1 - e^{-n/6}$. For the adaptive minimax rate under $\td{\P}$, we have
\begin{align*}
&\ms\inf_{\td{\sigma}^2}\sup_{\alpha\in\cal{A}}\sup_{\sigma^2\leq C_\sigma}\sup_{f(\cdot),\P_{(X,\varepsilon)}}\E_{\td{\P}}\parr*{(\td{\sigma}^2 - \sigma^2)/\phi_{n,\alpha}}^2\\
&= \inf_{\td{\sigma}^2}\sup_{\alpha\in\cal{A}}\sup_{\sigma^2\leq C_\sigma}\sup_{f(\cdot),\P_{(X,\varepsilon)}}\E_{\td{\P}}\bbrace*{\parr*{(\td{\sigma}^2 - \sigma^2)/\phi_{n,\alpha}}^2\parr*{\mathbbm{1}\bbrace*{\cal{E}} + \mathbbm{1}\bbrace*{\cal{E}^c}}}\\
&= \inf_{\td{\sigma}^2\leq C_\sigma}\sup_{\alpha\in\cal{A}}\sup_{\sigma^2\leq C_\sigma}\sup_{f(\cdot),\P_{(X,\varepsilon)}}\E_{\td{\P}}\bbrace*{\parr*{(\td{\sigma}^2 - \sigma^2)/\phi_{n,\alpha}}^2\parr*{\mathbbm{1}\bbrace*{\cal{E}} + \mathbbm{1}\bbrace*{\cal{E}^c}}}\\
&\leq \inf_{\td{\sigma}^2\leq C_\sigma}\sup_{\alpha\in\cal{A}}\sup_{\sigma^2\leq C_\sigma}\sup_{f(\cdot),\P_{(X,\varepsilon)}}\E_{\td{\P}}\bbrace*{\parr*{(\td{\sigma}^2 - \sigma^2)/\phi_{n,\alpha}}^2\mathbbm{1}\bbrace*{\cal{E}}} + 4nC_\sigma^2\cdot\sup_{\sigma^2\leq C_\sigma}\sup_{f(\cdot), \P_{(X,\varepsilon)}}\td{\P}(\cal{E}^c)\\
&\leq \inf_{\td{\sigma}^2}\sup_{\alpha\in\cal{A}}\sup_{\sigma^2\leq C_\sigma}\sup_{f(\cdot),\P_{(X,\varepsilon)}}\E_{\td{\P}}\bbrace*{\parr*{(\td{\sigma}^2 - \sigma^2)/\phi_{n,\alpha}}^2\mathbbm{1}\bbrace*{\cal{E}}} + 4nC_\sigma^2\exp(-n/6)\\
&\leq \inf_{\td{\sigma}^2=\td{\sigma}^2\parr*{\{(X_i,Y_i)\}_{i=1}^n}}\sup_{\alpha\in\cal{A}}\sup_{\sigma^2\leq C_\sigma}\sup_{f(\cdot),\P_{(X,\varepsilon)}}\E_{\td{\P}}\bbrace*{\parr*{(\td{\sigma}^2 - \sigma^2)/\phi_{n,\alpha}}^2\mathbbm{1}\bbrace*{\cal{E}}} + 4nC_\sigma^2\exp(-n/6)\\
&\leq \inf_{\td{\sigma}^2}\sup_{\alpha\in\cal{A}}\sup_{\sigma^2\leq C_\sigma}\sup_{f(\cdot),\P_{(X,\varepsilon)}}\E_{\P}\bbrace*{\parr*{(\td{\sigma}^2 - \sigma^2)/\phi_{n,\alpha}}^2} + 4nC_\sigma^2\exp(-n/6).
\end{align*}
This completes the proof.
\end{proof}

The following lemma will also be used in the proof of Proposition \ref{prop:cv_lower}, and is a slight variation of the constrained risk inequality derived in \cite{brown1996constrained} (see Theorem 1 therein). We first introduce some notation. Consider some measurable space equipped with a class of probability measures $\{\P_\theta\}_{\theta\in\Theta}$, where $(\Theta, d)$ is a metric space. For each $\theta\in\Theta$, let $f_\theta$ be the density of $\P_\theta$ with respect to some common dominating measure $\nu$, and denote by $\E_{\theta}$ the expectation under the measure $\P_\theta$. For any estimator $T$ of $\theta$, define its risk as
\begin{align*}
R_\theta\define R(\theta, T) \define \E_\theta(T - \theta)^2 = \int (T(x) - \theta)^2f_\theta(x)\nu(dx).
\end{align*}
Now, fix two measures $\P_{\theta_1}$ and $\P_{\theta_2}$, and let $\cal{E}$ be a measurable set.  Define
\begin{align*}
I_\cal{E} \define I(\theta_1,\theta_2, \cal{E}) = \E_{\theta_1}(q^2(X)\mathbbm{1}\{X\in\cal{E}\}),
\end{align*}
where $q(x)\define f_{\theta_2}(x)/f_{\theta_1}(x)$.

\begin{lemma}
\label{lemma:constrained_risk}
Let $\Delta \define d(\theta_1,\theta_2)$ and assume that, for certain estimator $T$, $R(\theta_1,T)\leq \varepsilon^2$ and $0 < \varepsilon < \Delta((\P_{\theta_2}(\cal{E})/\sqrt{I_\cal{E}})\wedge 1)$. Then, 
\begin{align*}
R(\theta_2,T) \geq \Delta^2\P_{\theta_2}(\cal{E})^2\parr*{1 - \frac{2\varepsilon\sqrt{I_\cal{E}}}{\P_{\theta_2}(\cal{E})\Delta}}.
\end{align*}
\end{lemma}
\begin{proof}
We follow the proof of Theorem 1 in \cite{brown1996constrained} by considering the same estimator $T$ therein which minimizes $R(\theta_2,T)$ subject to the condition $R(\theta_1,T)\leq \varepsilon^2$. Then, with $\rho$ defined therein, we have
\begin{align*}
T(x) = \frac{\rho\Delta q(x)}{1+\rho q(x)}
\end{align*}
and $R(\theta_1,T) = \varepsilon^2$, so that (see Equation (2.6) in the proof of Theorem 1 in \cite{brown1996constrained})
\begin{align*}
\varepsilon^2 = \Delta^2\int \parr*{\frac{\rho q(x)}{1+\rho q(x)}}^2 f_{\theta_1}(x)\nu(dx)
\end{align*}
under the condition $\varepsilon<\Delta$. Then, by Cauchy-Schwarz, 
\begin{align*}
\varepsilon\sqrt{I_\cal{E}} &= \Delta\parr*{\int \parr*{\frac{\rho q(x)}{1+\rho q(x)}}^2 f_{\theta_1}(x)\nu(dx)}^{1/2}\parr*{\int q^2(x)f_{\theta_1}(x)\mathbbm{1}_\cal{E}(x)\nu(dx)}^{1/2}\\
&\geq \Delta\int \frac{\rho q(x)}{1+\rho q(x)}f_{\theta_2}(x)\mathbbm{1}_\cal{E}(x)\nu(dx).
\end{align*}
Then, under the condition $\varepsilon\sqrt{I_\cal{E}}\leq \Delta\P_{\theta_2}(\cal{E})$, we have
\begin{align*}
\parr*{\Delta\P_{\theta_2}(\cal{E}) - \varepsilon\sqrt{I_\cal{E}}}^2 &\leq \Delta^2\parr*{\P_{\theta_2}(\cal{E}) - \int \frac{\rho q(x)}{1+\rho q(x)}f_{\theta_2}(x)\mathbbm{1}_\cal{E}(x)\nu(dx)}^2\\
&= \Delta^2\parr*{\int \frac{1}{1+\rho q(x)}f_{\theta_2}(x)\mathbbm{1}_\cal{E}(x)\nu(dx)}^2\\
&\leq \Delta^2\parr*{\int \frac{1}{1+\rho q(x)}f_{\theta_2}(x)\nu(dx)}^2\\
&\leq \Delta^2\int \parr*{\frac{1}{1+\rho q(x)}}^2f_{\theta_2}(x)\nu(dx)\\
&= R_{\theta_2},
\end{align*}
where the last equality is true due to Equation (2.5) in the proof of Theorem 1 in \cite{brown1996constrained}. The statement then follows from $(a-b)^2\geq a^2(1-2b/a)$ for $a,b>0$.
\end{proof}

\begin{lemma}
\label{lemma:finite}
Suppose $X_1,\ldots,X_N$ are i.i.d. Poisson variables with mean value $n/N$, where $N = N_n \geq Cn^{1+\delta}$ for some positive constant $\delta$ and absolute constant $C$. Then, there exists some positive integer $f_{\max}$ that only depends on $\delta$ such that
\begin{align*}
\P\parr*{\max_{1\leq i\leq N}X_i\leq f_{\max}} \rightarrow 1
\end{align*}  
as $n\rightarrow \infty$.
\end{lemma}
\begin{proof}
Let $\lambda \define n/N$. We will show that the above statement holds for $f_{\max} = \ceil{(1+\delta)/\delta}$. For each variable $X_i$ and positive integer $k$, we have
\begin{align*}
\P\parr*{X_i \leq k} = \sum_{\ell=0}^k \frac{\lambda^\ell}{\ell!}e^{-\lambda} = e^{-\lambda}(1+\sum_{\ell=1}^k (n/N)^\ell/\ell!).
\end{align*}
Therefore, it holds that
\begin{align*}
\P\parr*{\max_{1\leq i\leq N}X_i\leq k} &= \exp(-n)\exp\parr*{N\log\parr*{1+\sum_{\ell=1}^k (n/N)^\ell/\ell!}}\\
&= \exp(-n)\exp\parr*{N\sum_{m=1}^\infty \frac{1}{m}(-1)^{m-1}\sum_{\ell_1,\ldots,\ell_m=1}^k \frac{1}{\ell_1!\ldots\ell_m!}(n/N)^{\ell_1+\ldots+\ell_m}}.
\end{align*}
The exponent in the above display is a polynomial function of $(n/N)$, and clearly the coefficient for $(n/N)^1$ is $1$. Next, we will show next that the coefficients corresponding to $(n/N)^\ell$ for $\ell=2,\ldots, k$ are all zero. For simplicity, we will show this for $\ell = k$. By Lemma \ref{lemma:identity}, the coefficient for $(n/N)^k$ is
\begin{align*}
\frac{1}{k!}\sum_{m=1}^k (-1)^{m-1}\frac{1}{m}\sum_{\ell=1}^m \ell^k(-1)^{m-\ell}{m\choose \ell} = \frac{1}{k!}\sum_{\ell=1}^k (-1)^\ell\ell^{k-1}\sum_{m=\ell}^k {m-1\choose \ell-1} = \frac{1}{k!}\sum_{\ell=1}^k(-1)^\ell\ell^{k-1}{m\choose \ell} = 0,
\end{align*}
where the first identity is by direct calculation, the second is the Hockey-Stick identity, and the third is proved in \cite{ruiz199680}.

Next, we consider the coefficient of $(n/N)^p$ for some general $p\geq k+1$, which takes the form
\begin{align*}
&\ms\sum_{m=1}^\infty \frac{1}{m}(-1)^{m-1}\sum_{\substack{1\leq \ell_1,\ldots,\ell_m\leq k\\ \ell_1+\ldots+\ell_m=p}}\frac{1}{\ell_1!\ldots\ell_m!}= \sum_{m=1}^p \frac{1}{m}(-1)^{m-1}\sum_{\substack{1\leq \ell_1,\ldots,\ell_m\leq k\\ \ell_1+\ldots+\ell_m=p}}\frac{1}{\ell_1!\ldots\ell_m!}\leq \sum_{m=1}^p \frac{1}{m}\frac{m^p}{p!}\\
&\lesssim \frac{1}{p}\frac{p^p}{p!} \lesssim e^pp^{-3/2}.
\end{align*}
Thus, in view of the fact that $N\geq Cn^{1+\delta}$, we have
\begin{align*}
\sum_{p={k+1}}^\infty (n/N)^p\sum_{m=1}^\infty \frac{1}{m}(-1)^{m-1}\sum_{\substack{1\leq \ell_1,\ldots,\ell_m\leq k\\ \ell_1+\ldots+\ell_m=p}}\frac{1}{\ell_1!\ldots\ell_m!}\lesssim \sum_{p=k+1}^\infty \parr*{\frac{en}{N}}^pp^{-3/2}\lesssim (en/N)^{k+1}.
\end{align*}
By definition of $f_{\max}$, we have $(n/N)^{f_{\max}+1}\rightarrow 0$ as $n\rightarrow \infty$, thus $\P\parr*{\max_{1\leq i\leq N}X_i\leq f_{\max}}\rightarrow 1$ as $n\rightarrow \infty$.
\end{proof}

\begin{lemma}
\label{lemma:identity}
Fix any positive integer $k$. Then, for any positive integer $1\leq m\leq k$, the following identity holds:
\begin{align*}
\sum_{\substack{1\leq \ell_1,\ldots,\ell_m\leq k\\ \ell_1+\ldots+\ell_m=k}}\frac{1}{\ell_1!\ldots\ell_m!} = \frac{1}{k!}\sum_{\ell=1}^m (-1)^{m-\ell}\ell^k{m\choose \ell}.
\end{align*}
\end{lemma}
\begin{proof}
We will prove by induction. Denote the LHS by $C(m)$. Suppose the statement holds up to $m$. Using the identity
\begin{align*}
\sum_{\substack{0\leq \ell_1,\ldots,\ell_{m+1}\leq k\\ \ell_1+\ldots+\ell_{m+1} = k}}\frac{k!}{\ell_1!\ldots\ell_{m+1}!}(m+1)^{-k} = 1,
\end{align*}
we obtain the recursive equation
\begin{align*}
C(m+1) + {m+1\choose 1}C(m) + \ldots + {m+1\choose m}C(1) = \frac{(m+1)^k}{k!}.
\end{align*}
Therefore, plugging in the equation for $C(\ell)$, $\ell=1,\ldots,m$, we obtain that
\begin{align*}
C(m+1) &= \frac{(m+1)^k}{k!} - \sum_{\ell=1}^m \frac{{m+1\choose \ell}}{k!}\sum_{j=0}^{\ell-1}(\ell-j)^k(-1)^j{\ell\choose j}\\
&= \frac{1}{k!}\fence*{(m+1)^k - \sum_{\ell=1}^m {m+1\choose \ell}\sum_{j=1}^\ell j^k(-1)^{\ell-j}{\ell\choose j}}\\
&= \frac{1}{k!}\fence*{(m+1)^k - \sum_{j=1}^m j^k\sum_{\ell=j}^m (-1)^{\ell-j}{\ell\choose j}{m+1\choose \ell}}.
\end{align*}
Thus it suffices to show that 
\begin{align*}
\sum_{\ell=j}^m (-1)^\ell {\ell\choose j}{m+1\choose \ell} = (-1)^m{m+1\choose j}.
\end{align*}
This is indeed true since the LHS equals
\begin{align*}
&\ms{m+1\choose j}\sum_{\ell=j}^m {m+1-j\choose \ell-j}(-1)^\ell = \sum_{\ell=0}^{m-j}{m+1\choose j}{m-j+1\choose \ell}(-1)^{\ell+j}\\
&= {m+1\choose j}(-1)^j\parr*{\sum_{\ell=0}^{m-j+1}{m-j+1\choose \ell}(-1)^\ell + (-1)^{m-j}} = {m+1\choose j}(-1)^m,
\end{align*}
which completes the proof.
\end{proof}


\end{document}